\newtheorem{theorem}{Theorem}[section]
\newtheorem{corollary}[theorem]{Corollary}
\newtheorem{definition}[theorem]{Definition}
\newtheorem{example}[theorem]{Example}
\newtheorem{lemma}[theorem]{Lemma}
\newtheorem{proposition}[theorem]{Proposition}
\numberwithin{equation}{section}
\theoremstyle{remark}
\newtheorem{remark}[theorem]{Remark}
\newcommand{\gl}{{\mathfrak{gl}}}
\newcommand{\g}{{\mathfrak{g}}}
\newcommand{\h}{{\mathfrak{h}}}
\newcommand{\n}{{\mathfrak{n}}}
\newcommand{\I}{ I}
\newcommand{\new}{\varrho}
\newcommand{\va}{\varsigma}
\newcommand{\ad}{\text{ad}}
\renewcommand{\t}{\mathfrak t}
\newcommand{\N}{\mathbb{N}}
\newcommand{\Z}{\mathbb{Z}}
\newcommand{\C}{\mathbb{C}}
\newcommand{\Fi}{\mathcal F_\io}
\newcommand{\Hy}{\mathscr{H}}
\newcommand{\End}{\text{End}}
\newcommand{\V}{\mathbb V}
\newcommand{\la}{\langle}
\newcommand{\ra}{\rangle}
\newcommand{\io}{\imath}
\newcommand{\bu}{\bullet}
\newcommand{\U}{\mathbf U}
\newcommand{\Ui}{{\mathbf U}^\imath}
\newcommand{\Uio}{{\mathbf U}^{\imath 0}}
\newcommand{\up}{\Upsilon}
\newcommand{\Veven}{\V_{\overline 0}}
\newcommand{\Vodd}{\V_{\overline 1}}
\newcommand{\Ieven}{\I_{\overline 0}}
\newcommand{\Iodd}{\I_{\overline 1}}
\newcommand{\Ub}{\U_{\bu}}
\newcommand{\qbinom}[2]{\begin{bmatrix} #1\\#2 \end{bmatrix} }
\newcommand{\lskew}{{}_ir}
\newcommand{\rskew}{r_i}
\newcommand{\blue}[1]{{\color{blue}#1}}
\newcommand{\zero}{{\bar{0}}}
\newcommand{\one}{{\bar{1}}}
\newcommand{\ns}{{\mathsf{ns}}}
\newcommand{\iso}{{\mathsf{iso}}}
\newcommand{\niso}{{\mathsf{n}\text{-}\mathsf{iso}}}
\newcommand{\yy}{\mbox{\Yingyang}
}
\renewcommand{\ad}{{\mathrm{ad}}}
\newcommand{\Ad}{{\mathrm{Ad}}}
\newcommand{\af}{\alpha}
\newcommand{\om}{\omega}
\newcommand{\Qy}{e}
\newcommand{\B}{\mathfrak B}
\newcommand{\set}[1]{\left\{#1\right\}}
\begin{document}

\title{Quantum supersymmetric pairs of basic types}

\author{Yaolong Shen}
 \author{Weiqiang Wang}
 \address{Department of Mathematics, University of Ottawa, Ottawa, ON, K1N 6N5, Canada}\email{ys8pfr@virginia.edu}
 \address{Department of Mathematics, University of Virginia,
Charlottesville, VA 22903, USA}\email{ww9c@virginia.edu}

\subjclass[2020]{Primary 17B37, 17B10}  
\keywords{Quantum supergroups, quantum symmetric pairs, Schur duality}

\begin{abstract}
    We formulate and classify super Satake diagrams under a mild assumption, building on arbitrary Dynkin diagrams for finite-dimensional basic Lie superalgebras. We develop a theory of quantum supersymmetric pairs associated to the super Satake diagrams. We establish the quantum Iwasawa decomposition and construct quasi $K$-matrix associated with the quantum supersymmetric pairs. We also formulate a Schur duality between an $\imath$quantum supergroup (which is a new $q$-deformation of an  ortho-symplectic Lie superalgebra) and the $q$-Brauer algebra.
\end{abstract}

\maketitle

\setcounter{tocdepth}{1}
\tableofcontents

\section{Introduction}
\renewcommand{\thetheorem}{\Alph{theorem}}
\setcounter{theorem}{0}

%\red{Comments}
%\begin{enumerate}
%    \item 
%\end{enumerate} 

\subsection{Background}
Drinfeld-Jimbo quantum groups are fundamental subjects from several viewpoints in mathematics (such as Lie theory and knot theory) or in physics (such as integrable systems, statistical mechanics, conformal field theory). Similarly, Lie superalgebras are not only important for supersymmetries in mathematical physics, but also have rich representation theories and deep connections to categorification.

Symmetric pairs $(\mathfrak g,\mathfrak g^\theta)$, where $\g$ is a complex simple Lie algebra and $\theta$ an involution on $\g$, are in bijection with real simple Lie algebras and can be classified in terms of the Satake diagrams; cf. \cite{Ara62}. By definition a Satake diagram $(I=\I_\circ \cup \I_\bu, \tau)$ consists of a bicolored Dynkin diagram $I=\I_\circ \cup \I_\bu$ with a diagram automomorphism $\tau$ of order $\le 2$.
The quantum symmetric pairs $(\U,\Ui)$, as developed by Letzter \cite{Let99,Let02}, provide a natural quantization of the symmetric pairs $(\mathfrak g,\mathfrak g^\theta)$. In this context, $\U$ stands for the quantum group associated with $\mathfrak g$ while $\Ui$ denotes a coideal subalgebra of $\U$, often referred to as an $\imath$quantum group. The constructions of quantum symmetric pairs has been extended to the Kac-Moody setting by Kolb \cite{Ko14}. In the last decade, many fundamental constructions for quantum groups, such as Jimbo-Schur duality, canonical basis, (quasi) $R$-matrix, and Yang-Baxter equation, have been generalized to the setting of $\imath$quantum groups; cf. \cite{BW18a, BW18b, BK19, WZ22}. 

Let $\g$ be a basic Lie superalgebra, i.e., a complex simple Lie superalgebra equipped with an even non-degenerate bilinear form (see \cite{K77, CW12}); we also regard $\gl(m|n)$ as a basic Lie superalgebra even though it is not simple. The fundamental systems $\I$ of the root system $\Phi$ associated with a general basic Lie superalgebra $\mathfrak g$ are not conjugated under the Weyl group actions due to the existence of isotropic odd roots. Consequently, the Dynkin diagrams associated with $\mathfrak g$ depend on the choice of Borel subalgebras $\mathfrak b^+$ and the associated positive roots $\Phi^+$, see Table~\ref{Table1}. In \cite{Ya94}, Yamane constructed  Serre presentations of quantum supergroups $\U=\U_q(\mathfrak g)$ associated with arbitrary Dynkin diagrams. A super feature is the existence of many different types of Serre relations involving 2, 3 or 4 Chevalley generators (cf. \cite{Ya94, CHW16}). The quantum supergroups of type $A$ have been studied in depth by Benkart-Kang-Kashiwara \cite{BKK00} and Mitsuhashi \cite{Mi06}. 

\subsection{Goal}
Let $\I$ be a Dynkin diagram of a basic Lie superalgebra $\g$. We formulate a notion of {\em super Satake diagrams} $(\I=\I_\circ \cup \I_\bu,\tau)$, which consist of bicolored Dynkin diagrams $\I=\I_\circ \cup \I_\bu$ and (possibly trivial) diagram involutions $\tau$, subject to the super admissible conditions. The super Satake diagrams are classified by classifying the even and odd rank 1 super Satake diagrams. 

We shall formulate quantum supersymmetric pairs $(\U,\Ui)$ associated to super Satake diagrams.
We develop fundamental properties of $(\U,\Ui)$ including a basis result for $\Ui$, quantum Iwasawa decomposition, and the quasi $K$-matrix (the $\imath$-analog of quasi $R$-matrix). We establish an $\imath$Schur duality between a distinguished $\imath$quantum supergroup (which quantizes the ortho-symplectic Lie superalgebra) and the $q$-Brauer algebra. 

Just as $\imath$quantum groups can be viewed as a generalization of Drinfeld-Jimbo quantum groups (see the survey \cite{Wa23}), we view $\imath$quantum supergroups $\Ui$ as a new family of quantum algebras which form a vast generalization of quantum supergroups. We note that a class of  quantum supersymmetric pairs associated to Satake diagrams with nontrivial diagram involutions has been constructed and studied by the first author \cite{Sh22}; the quasi-split (i.e., $\I_\bu=\emptyset$) case goes back to earlier work of Kolb-Yakimov \cite{KY20}. %These (when $\I_\bu\subset\Ieven$) are special cases of our quantum supersymmetric pairs

\subsection{Main results}

In the super setting, $\I =\Ieven \cup \Iodd$ is $\Z_2$-graded consisting of even and odd simple roots. In this paper from the outset we shall impose the condition that $\I_\bu$ is always even, i.e., $\I_\bu \subset \Ieven$, on super Satake diagrams $(\I=\I_\circ \cup\I_\bu,\tau)$. This ``evenness" assumption greatly simplifies the technical and notational complexities while it still allows most interesting families of quantum supersymmetric pairs to be constructed. In the Appendix \ref{sec:non-isotropic}, the conditions on super admissible pairs are relaxed to allow non-isotropic odd simple roots in $\I_\bu$. 

Toward a reasonable theory of quantum supersymmetric pairs, we first formulate super admissible conditions on $(\I=\I_\circ \cup\I_\bu,\tau)$ in Definition~\ref{def:superad} (as a super generalization of \cite{BBBR95, Ko14}); $(\I=\I_\circ \cup\I_\bu,\tau)$ with super admissible conditions are referred to interchangeably as super admissible pairs or super Satake diagrams. 
We provide a complete list of super Satake diagrams of real (odd) rank 1 in Table~\ref{Tableoddrank1}, which serve as building blocks for super Satake diagrams of higher rank. In contrast to the non-super setting (where the split rank $1$ $\imath$quantum group is a most fundamental building block), the split odd rank $1$ is not permissible in the super setting. We will comment on how the super admissible conditions arise. 

Associated to a super Satake diagram $(\I=\I_\circ \cup\I_\bu,\tau)$, following \cite{Ko14} we define an automorphism $\theta(I,\tau)$ of a basic Lie superalgebra $\g$ in \eqref{eq:theta}, leading to a subalgebra $\mathfrak t$ of $\g$ with a generating set \eqref{eq:t}. It follows by definition that $\mathfrak t +\mathfrak b^+ =\g$. As the Chevelley ``involution" (despite of the terminology) for a basic Lie superalgebra $\g$ is an automorphism of order $4$ in general, $\theta(I,\tau)$ can be order $4$ or order $2$ in our super setting; see Proposition~\ref{prop:thetaorder} for a criterion for $\theta(I,\tau)$ to be an order $2$ automorphism. In case when $\theta(I,\tau)$ is of order $4$, $\mathfrak t$ is not the fixed point subalgebra by $\theta(I,\tau)$ and (the even part of) $\mathfrak t$ may not be reductive; this is reminiscent of features of pseudo symmetric pairs developed by Regelskis-Vlaar \cite{RV20}.

The order 2 and 4 $\C$-linear automorphisms of basic Lie superalgebras $\g$ of type A--D have been classified in \cite[Tables~2, 5]{Ser83} (also cf. \cite{K77}). Not all such automorphisms will be suitable for our constructions of quantum supersymmetric pairs; see Remark~\ref{rem:2and4}. On the other hand, as $\g$ admits non-conjugate Dynkin diagrams (a super phenomenon), a fixed conjugacy class of order 2 or 4 automorphisms on $\g$ can arise from different super Satake diagrams; see Example~\ref{ex:superA}. In this case, the corresponding (classical) supersymmetric pairs can be isomorphic but the quantum supersymmetric pairs are not; see Remark~\ref{rem:yesno}.

Lusztig's braid group symmetries for quantum groups (cf. \cite[\S 37.1.3]{Lus93}) can be extended to algebra isomorphisms of the quantum supergroup $\U$ associated to $\g$, and these braid group operators $T_i$ (for $i\in \Ieven$) on $\U$ satisfy the braid group relations (cf. \cite[\S 6.3]{H10}). The braid group operators associated with $i\in \I_\bu$ and especially $T_{w_\bu}$ associated to the longest element $w_\bu$ in the Weyl group $W_{\I_\bu}$
are used in the definition of the $\imath$quantum supergroup $\Ui$. Following \cite{Ko14} we define an automorphism $\theta_q(I,\tau)$ \eqref{eq:thetaq} of $\U$, quantizing the aforementioned automorphism $\theta(I,\tau)$ of $\g$.

For a given super admissible pair, we define the {\em $\imath$quantum supergroup} $\Ui$ to be the $\C(q)$-subalgebra of $\U$ generated by  certain Cartan elements, $E_j,\ F_j$   $(j\in \I_\bu)$, and 
\begin{align}  \label{eq:parameter}
B_i:=F_i+\va_i T_{w_\bu}(E_i)K_i^{-1}\quad (i\in \I_\circ). 
\end{align} 
Conceptually, $B_i$ is defined in \eqref{eq:Bi} in terms of the automorphism $\theta_q(I,\tau)$ of $\U$, and for simplicity we have set $\kappa_i=0$ therein in the Introduction. Compare \cite{Let02, Ko14}. 

Let $\Uio$ denote the Cartan part of $\Ui$ and let $\U\cong \U^+\U^0\U^-$ denote the triangular decomposition of $\U$. The parameters $\{\va_i\}_{i\in \I_\circ} \in \C(q)^{\times, \I_\circ}$ in \eqref{eq:parameter} are required to satisfy the conditions \eqref{eq:kappai}--\eqref{eq:vai=vataui} throughout of the paper, which arise from ensuring that the identity $\Ui\cap \U^0=\Uio$ holds.

\begin{proposition} 
[Proposition~\ref{prop:coideal}]
    $\Ui$ is a right coideal subalgebra of $\U$. 
\end{proposition}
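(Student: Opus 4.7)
The plan is to verify the coideal condition $\Delta(x) \in \Ui \otimes \U$ for $x$ ranging over a set of algebra generators of $\Ui$, and then to extend to all of $\Ui$ using that $\Delta$ is an algebra homomorphism and $\Ui \otimes \U$ is closed under the (super) multiplication on $\U \otimes \U$. The generators fall into three families: (i) the Cartan part $\Uio$, (ii) $E_j, F_j$ for $j \in \I_\bu$, and (iii) the elements $B_i$ for $i \in \I_\circ$ defined in \eqref{eq:parameter}.

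Families (i) and (ii) are immediate from the standard coproduct formulas: for $K \in \Uio$, $\Delta(K) = K \otimes K \in \Ui \otimes \U$; for $j \in \I_\bu$, the formulas $\Delta(E_j) = E_j \otimes 1 + K_j \otimes E_j$ and $\Delta(F_j) = F_j \otimes K_j^{-1} + 1 \otimes F_j$ (up to the parity signs dictated by $p(j)$) place $\Delta(E_j), \Delta(F_j)$ in $\Ui \otimes \U$ since all the appearing generators already lie in $\Ui$.

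For family (iii), I would compute $\Delta(B_i) = \Delta(F_i) + \va_i \Delta(T_{w_\bu}(E_i))\Delta(K_i^{-1})$ and reorganise the result. By super-admissibility, $w_\bu(\alpha_i)$ is a positive root outside $\Phi^+_\bu$, so $T_{w_\bu}(E_i)$ is a weight vector in $\U^+$ of weight $w_\bu(\alpha_i)$. Fixing a reduced expression $w_\bu = s_{j_1}\cdots s_{j_\ell}$ with each $j_a \in \I_\bu$ and iterating Lusztig's (super) formulas for $\Delta \circ T_{j_a}$, one obtains a weight-graded expansion
\[
\Delta(T_{w_\bu}(E_i)) = K_{w_\bu(\alpha_i)} \otimes T_{w_\bu}(E_i) + T_{w_\bu}(E_i) \otimes 1 + \sum_\mu x_\mu \otimes y_\mu,
\]
in which each middle-weight $x_\mu$ is a monomial in $T_{w_\bu}(E_i)$ and in the generators $E_j,F_j,K_j^{\pm 1}$ with $j \in \I_\bu$, hence manifestly in $\Ui$. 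Multiplying by $K_i^{-1} \otimes K_i^{-1}$ and combining with $\Delta(F_i) = F_i \otimes K_i^{-1} + 1 \otimes F_i$, the two ``extreme'' pieces $F_i \otimes K_i^{-1}$ and $\va_i T_{w_\bu}(E_i) K_i^{-1} \otimes K_i^{-1}$ recombine into $B_i \otimes K_i^{-1}$, while the remaining first-tensor factors all survive in $\Ui$.

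The main technical obstacle is the middle-term bookkeeping in the super setting. Lusztig's braid formulas for $\Delta \circ T_{j_a}$ acquire parity-dependent signs and come in distinct versions for even, isotropic odd, and non-isotropic odd $j_a$, so one must verify uniformly that no $x_\mu K_i^{-1}$ escapes the subalgebra $\Ui$, and that the Cartan factor $\va_i K_{w_\bu(\alpha_i)-\alpha_i} \otimes T_{w_\bu}(E_i)K_i^{-1}$ produced alongside $1 \otimes F_i$ actually has first-slot in $\Uio$. This is precisely where the super-admissibility axioms of Definition~\ref{def:superad} and the parameter constraints \eqref{eq:kappai}--\eqref{eq:vai=vataui} are used in an essential way: $\tau$-equivariance identifies weight-$\alpha_{\tau(i)}$ pieces of the expansion with $B_{\tau(i)}$-type contributions, while the balance conditions on $\va_i$ force any Cartan residues to land in the prescribed sublattice generating $\Uio$. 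Once these identifications are made the check on generators closes, and the coideal property of $\Ui$ follows formally.
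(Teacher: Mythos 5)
Your overall strategy—check the coideal condition on the three families of generators and invoke that $\Delta$ is an algebra map—is the same as the paper's, and families (i)–(ii) are dispatched identically. The difference, and the gap, is in how you handle $\Delta(B_i)$.

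You propose to fix a reduced word for $w_\bu$ and iterate Lusztig's formulas for $\Delta\circ T_{j_a}$, then claim a weight-graded expansion of $\Delta\bigl(T_{w_\bu}(E_{\tau i})\bigr)$ whose middle terms $x_\mu\otimes y_\mu$ have $x_\mu$ a monomial in $T_{w_\bu}(E_{\tau i})$ and in $E_j,F_j,K_j^{\pm1}$ ($j\in\I_\bu$). This is precisely the assertion that needs proof, and you explicitly flag the ``middle-term bookkeeping'' as the main obstacle without carrying it out. As written there is no argument that the middle first-tensor factors land in $\U_\bu^+\U^{\imath 0}$: iterated application of $\Delta\circ T_{j_a}$ does not obviously produce terms factored through the subalgebra generated by $\U_\bu$ and the single root vector $T_{w_\bu}(E_{\tau i})$, and your proposal offers no mechanism (induction on $\ell(w_\bu)$, a triangularity statement, or otherwise) to control them.

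The paper avoids this entirely. It uses that $T_{w_\bu}(E_{\tau i})$ is (up to a scalar) the \emph{adjoint action} $\ad\bigl(Z^+_{\tau i,\bu}\bigr)(E_{\tau i})$ of a monomial $Z^+_{\tau i,\bu}\in\U_\bu^+$ (Lemma~3.4 / Eq.~\eqref{eq:adZ+} / Proposition~\ref{prop:thetaq}), together with the standard Hopf identity
\[
\Delta\bigl(\ad(x)(u)\bigr)=x_{(1)}u_{(1)}S(u_{(3)})\otimes\ad(x_{(2)})(u_{(2)}),
\]
which immediately gives
\[
\Delta\bigl(\ad(Z^+_{\tau i,\bu})(E_{\tau i})\bigr)-\ad(Z^+_{\tau i,\bu})(E_{\tau i})\otimes 1\in\U_\bu^+\U_\bu^0\,\new^{p(\tau i)}K_{\tau i}\otimes\U,
\]
and hence $\Delta(B_i)-B_i\otimes K_i^{-1}\in\U_\bu^+\U^{\imath 0}\otimes\U$. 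This single structural step replaces the entire reduced-word computation. Notably, the paper's proof does \emph{not} need the parameter constraints \eqref{eq:kappai}–\eqref{eq:vai=vataui} (those enter only later, for the triangular decomposition and quantum Iwasawa), whereas you invoked them here without a concrete role for them. If you want to salvage your route, you would need to prove the triangularity of $\Delta\circ T_{w_\bu}$ on lowest-weight vectors of finite-dimensional $\ad(\U_\bu)$-modules; the cleanest way to do that is exactly to switch to the $\ad(Z^+)$ realization, which is the paper's argument.
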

We shall refer to $(\U, \Ui)$ as a quantum supersymmetric pair. Compare \cite{Let99} and \cite[Proposition 5.2]{Ko14}. 

Roughly speaking, Theorem~\ref{th:B} below says that the algebra $\Ui$ is a flat deformation of $U(\mathfrak t)$, and Part~(2) is known as the quantum Iwasawa decomposition. Denote by $\{F_J \mid J \in \mathcal{J}\}$ a monomial basis for $\U^-$. We denote by $V_\bu^+$ (see \eqref{eq:VUb}) the subalgebra of $U^+$ generated by root vectors in $\U^+$ but not in $\U_\bu^+$, and denote by $\Uio_\tau$ the subalgebra generated by $\set{K_i^{\pm 1}\mid i\in \I_\tau}$ (see \eqref{eq:UtauUio}). It is convenient to set $B_i=F_i$, for $i\in \I_\bu$. We have the following super generalization of \cite{Let99} and \cite[Propositions 6.3, 6.13]{Ko14}. 

\begin{theorem}
[Theorem \ref{thm:basis}, Theorem \ref{thm:Iwa}]
 \label{th:B}
Suppose a super Satake diagram satisfies \eqref{exclude}.

 \begin{enumerate}
 \item The set $\{B_J \mid J \in \mathcal{J}\}$ forms a basis for the left $\U^{+}_{\bu}\Uio$-module $\Ui$.
     \item 
    The multiplication gives a vector space isomorphism 
    $V_\bu^+\otimes \Uio_\tau \otimes \Ui \cong \U$.
 \end{enumerate}
\end{theorem}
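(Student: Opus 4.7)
The plan is to prove both parts simultaneously by introducing a filtration on $\U$ whose associated graded trivialises the ``mixing'' terms inside each $B_i$. Concretely, I would assign filtration degree $1$ to $F_i$ for $i\in\I_\circ$, and degree $0$ to $F_j$ $(j\in\I_\bu)$, to all $E_j$, and to the Cartan elements $K_\mu^{\pm1}$; the defining relations of $\U$ are compatible with this assignment (for $i\in\I_\circ$ the Chevalley relation $[E_i,F_i]=(K_i-K_i^{-1})/(q_i-q_i^{-1})$ sits in $\mathcal F_1$, while its right-hand side lies in $\mathcal F_0$). Since $T_{w_\bu}(E_i)K_i^{-1}$ is a polynomial in the $E_j$ and Cartan elements, it lies in $\mathcal F_0$; hence $B_i\in\mathcal F_1$ with leading symbol $F_i$ in $\mathcal F_1/\mathcal F_0$ for $i\in\I_\circ$, while $B_j=F_j\in\mathcal F_0$ for $j\in\I_\bu$. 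More generally, $B_J\equiv F_J$ modulo $\mathcal F_{n(J)-1}$, where $n(J)$ denotes the number of $\I_\circ$-entries of $J$. The subalgebras $\U^+_\bu\Uio$, $V_\bu^+$, and $\Uio_\tau$ all sit in $\mathcal F_0$.

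For part (1), linear independence follows by a top-degree extraction: if $\sum_J x_JB_J=0$ with $x_J\in\U^+_\bu\Uio$ and some $x_{J_0}\neq0$, choose $n$ maximal with $x_J\neq0$ for some $n(J)=n$; reducing modulo $\mathcal F_{n-1}$ gives $\sum_{n(J)=n}x_JF_J=0$ in $\mathrm{gr}_n\U$, and injectivity of the multiplication map $\U^+_\bu\Uio\otimes\U^-\hookrightarrow\U$, a consequence of the triangular decomposition of $\U$, forces every such $x_J=0$, contradicting maximality. For spanning, I would show that every element of $\Ui$ is, modulo $\mathcal F_{n-1}$, of the form $\sum_{n(J)\le n}x_JB_J$ with $x_J\in\U^+_\bu\Uio$. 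Since $\Ui$ is generated as an algebra by $\U^+_\bu\Uio$ together with the $B_i$, the task reduces to showing that products of these generators can be rewritten in the claimed normal form. The key step is to verify that $[E_j,B_i]$ (for $j\in\I_\bu$, $i\in\I_\circ$) and $[K_\mu,B_i]$ both land in $\sum_J\U^+_\bu\Uio\cdot B_J$. The Cartan commutation is immediate since both summands of $B_i$ are weight vectors of weight $-\alpha_i$; the $\I_\bu$-Chevalley commutation uses that $T_{w_\bu}$ intertwines the $\U_\bu$-action, together with the super admissible conditions.

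For part (2), combining part (1) with the PBW theorem for $\U$ gives $\mathrm{gr}\Ui=\U^+_\bu\Uio\cdot\U^-$: the inclusion ``$\supseteq$'' follows from $B_J\mapsto F_J$ together with module generation, and ``$\subseteq$'' follows from part (1) applied filtration-degree by filtration-degree. Using the vector space equalities $\U^+=V_\bu^+\cdot\U^+_\bu$ and $\U^0=\Uio_\tau\cdot\Uio$, together with $\U^+_\bu\cdot\Uio_\tau=\Uio_\tau\cdot\U^+_\bu$ as vector spaces (coming from the fact that $\Uio_\tau$ is Cartan), multiplication in $\mathrm{gr}\U$ yields
\[V_\bu^+\cdot\Uio_\tau\cdot(\U^+_\bu\Uio\cdot\U^-)\;=\;V_\bu^+\U^+_\bu\cdot\Uio_\tau\Uio\cdot\U^-\;=\;\U^+\cdot\U^0\cdot\U^-\;=\;\U.\]
So the graded multiplication map $V_\bu^+\otimes\Uio_\tau\otimes\mathrm{gr}\Ui\to\mathrm{gr}\U$ is surjective, and injective by the triangular decomposition of $\mathrm{gr}\U$. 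Lifting from associated graded to filtered then yields the asserted vector space isomorphism $V_\bu^+\otimes\Uio_\tau\otimes\Ui\cong\U$.

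The principal obstacle is the identification $\mathrm{gr}\Ui=\U^+_\bu\Uio\cdot\U^-$, equivalently the spanning assertion in part (1). In the super setting one must control the commutators $[E_j,B_i]$ in the presence of Yamane's length-$2$, $3$, and $4$ super-Serre relations, and must track parities carefully when $T_{w_\bu}(E_i)$ has odd parity. The super admissible conditions on the diagram together with the technical exclusion \eqref{exclude} are precisely what is required to ensure that these rewriting steps stay inside $\sum_J\U^+_\bu\Uio\cdot B_J$; without them, commutators can produce new types of generators and the induction on filtration degree would fail. I expect this commutator bookkeeping, rather than the abstract filtration argument, to be the real technical bulk of the proof.
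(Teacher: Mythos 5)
Your linear-independence step is essentially the paper's argument (Proposition~\ref{prop:Ubasis}): top-degree extraction against the triangular decomposition of $\U$, and the choice of filtering by $\I_\circ$-count rather than by total length $|J|$ is immaterial there. Likewise, given part~(1), your derivation of the Iwasawa decomposition from $\U^+\cong V_\bu^+\otimes\U_\bu^+$ and $\U^0\cong\Uio_\tau\otimes\Uio$ matches the proof of Theorem~\ref{thm:Iwa}; the detour through associated graded is harmless, although the paper simply composes Proposition~\ref{prop:Ubasis} with Theorem~\ref{thm:basis} and the two tensor factorisations \eqref{eq:VUb}, \eqref{eq:UtauUio} directly.

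The gap is in the spanning half of part~(1), and it is not where you locate it. The commutators you single out are the easy part: by Lemma~\ref{lemma:eibk}, $E_jB_k-(-1)^{p(j)p(k)}B_kE_j=\delta_{jk}(K_j-K_j^{-1})/(q_j-q_j^{-1})$ for $j\in\I_\bu$, which lies in $\Uio$, and the Cartan commutations are trivial. What your proposal never addresses is the reduction of $B_L$ for an arbitrary tuple $L$ to a $\U_\bu^+\Uio$-combination of $B_J$ with $J\in\mathcal J$. Each such rewriting invokes a Serre relation $p(\underline{F})=0$ of $\U^-$, and the corresponding element $p(\underline{B})$ is not zero; one must show $p(\underline{B})\in\Fi^{m-1}(\Ui)$, i.e.\ that the correction is a $\U_\bu^+\Uio$-combination of at most $m-1$ of the $B_J$'s. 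This is not implied by $p(\underline{B})$ having lower $F$-degree in $\U$, since $\Fi^*$ is defined intrinsically on $\Ui$, and comparing it with the restriction of a $\U$-filtration is precisely what is being proved. This statement is Proposition~\ref{prop:p}, and it is the technical core of the theorem. The paper establishes it not by commutator bookkeeping but by the projection technique: the coideal property, together with the projections $P_\lambda$ and $\pi_{\alpha,\beta}$, reduces the claim (via Lemma~\ref{lemma:pcriterion} and Proposition~\ref{prop:P-lambda=0}) to the single condition $\pi_{0,0}\circ P_{-\lambda}(p(\underline{B}))\in\Uio$, which is then verified case by case over the super-Serre polynomials of Table~\ref{TableSerrePolyn} in Proposition~\ref{prop:pi00p}. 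The super admissibility conditions and the exclusion~\eqref{exclude} enter precisely there: the excluded diagram $\newmoon\!-\!\otimes\!-\!\newmoon$ is where the (AB) Serre polynomial would produce a nonzero $\pi_{0,0}\circ P_{-\lambda}$ outside $\Uio$, and the non-admissible isolated $\otimes$ already gives $B_i^2\notin\Uio$ (Example~\ref{example:sAI}). Without this machinery the induction on filtration degree cannot close, so the proposal, while correctly structured at a high level, is missing its load-bearing lemma.
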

The proof of this theorem relies crucially on the projection technique which originate from \cite{Let99, Let02, Ko14}; see also \cite{KY20, Sh22}. The key step in proving it is the establishment of Proposition~\ref{prop:pi00p} for all possible Serre relations as detailed in Table~\ref{TableSerrePolyn}. This process necessitates a thorough case-by-case examination, which motivates Definition~\ref{def:superad} on super admissible conditions and is also the reason behind the mild assumption \eqref{exclude} in Theorem~\ref{th:B}. Proposition~\ref{prop:pi00p} also leads to monomial bases for $\Ui$; see Theorem~\ref{thm:basis}. Theorem~\ref{th:B} in the special case when the super Satake diagram admits a nontrivial diagram involution (generalizing the non-super type AIII) was established in \cite{Sh22} (which goes back to \cite{KY20} under the additional assumption that $\I_\bu=\emptyset$).

A first formulation of the quasi $K$-matrix in quantum symmetric pairs (as an $\imath$-analog of quasi $R$-matrix) was given in \cite{BW18a} via intertwining property involving the bar involution on $\Ui$ under suitable assumption on parameters; see  \cite{BK19} for a general proof. Following the formulation in \cite{WZ22} via a new intertwining property in terms of anti-involutions, we establish  the quasi $K$-matrices of quantum supersymmetric pairs which are valid for general parameters. (This was earlier achieved in a more complicated formulation by Appel--Vlaar \cite{AV22}.)

\begin{theorem}
[Theorem~\ref{thm:ibar}]
\label{th:C}
Retain the assumption \eqref{exclude}.
There exists a unique element $\up =\sum_{\mu \in X^+} \up_\mu$, where $\up_0=1$ and $\up_\mu \in \U^+_\mu$, such that  
$B_{\tau i} \up=\up \sigma \tau (B_{i})$,
for $i\in \I_\circ$ and $x \up =\up x$,  for $x \in \U^{\imath 0} \U_\bu.$
\end{theorem}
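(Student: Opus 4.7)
My plan is to adapt the anti-involution framework of Wang--Zhang \cite{WZ22}, which characterizes the quasi $K$-matrix via an intertwining identity with the anti-involution $\sigma\tau$ rather than the bar involution used in \cite{BW18a,BK19}. A conceptual advantage of this framework is that no balance condition on the parameters is required, so the theorem holds for arbitrary admissible $\va_i$. I look for $\up=\sum_\mu \up_\mu$ formally with $\up_\mu\in \U^+_\mu$ and $\up_0=1$. The commutation conditions $x\up=\up x$ for $x\in \Uio\U_\bu$ are linear in $\up$ and separate by weight: conjugation by the Cartan part $\Uio$ forces $\up_\mu=0$ unless $\mu\in X^+$, while commutation with $E_j,F_j$ for $j\in\I_\bu$ imposes an equivariance constraint on each individual $\up_\mu$.

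Next I expand the identity $B_{\tau i}\up=\up\sigma\tau(B_i)$ using $B_{\tau i}=F_{\tau i}+\va_{\tau i}T_{w_\bu}(E_{\tau i})K_{\tau i}^{-1}$ and the analogous expression for $\sigma\tau(B_i)$, and separate by weight in $\U$. This produces, for each $i\in\I_\circ$ and $\mu\in X^+$, a recursion of the schematic form
\[
D_i(\up_\mu)=R_{i,\mu}\bigl(\up_\nu:\nu<\mu\bigr),
\]
where $D_i$ is a Lusztig-type skew derivation on $\U^+$ arising from the commutator with $F_{\tau i}$, and $R_{i,\mu}$ depends only on previously constructed $\up_\nu$. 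Since $D_i$ is injective on the relevant weight components of $\U^+$, the equation determines $\up_\mu$ uniquely given the $\up_\nu$ with $\nu<\mu$, so uniqueness of $\up$ follows by induction on $\mu$.

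For existence, two compatibility checks are required: that $R_{i,\mu}$ actually lies in the image of $D_i$, and that the values of $\up_\mu$ produced by the recursions indexed by different $i\in\I_\circ$ agree. Both are handled by applying the projection $\pi:\U\to\U^+$ coming from the quantum Iwasawa decomposition (Theorem~\ref{th:B}(2)) to the intertwining equations. This converts each recursion into an identity inside $\U^+$ modulo terms of strictly lower weight, and such identities reduce precisely to the super Serre-relation analysis already encapsulated in Proposition~\ref{prop:pi00p} and Table~\ref{TableSerrePolyn}.

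The hard part will be this last compatibility step, since it inherits the full case-by-case super Serre analysis underlying the proof of Theorem~\ref{th:B}; for instance, closed-form expressions for $\pi$ applied to each type of super Serre polynomial must be assembled and shown to fit together consistently. The mild assumption \eqref{exclude} is precisely what keeps the list of Serre polynomials on which $\pi$ behaves cleanly manageable; dropping it would require ad hoc treatment of additional relations. The $\U_\bu$-equivariance of each $\up_\mu$ and the restriction $\mu\in X^+$ imposed in the first step are preserved automatically by the recursion, so once existence in the formal completion $\widehat{\U^+}$ is established, all the stated properties follow by construction.
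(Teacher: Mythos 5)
Your high-level outline matches the paper's strategy: translate the intertwining identity $B_{\tau i}\up=\up\,\sigma\tau(B_i)$ into a weight-graded recursion expressed through skew derivatives $\rskew,\lskew$ on $\U^+$ (Proposition~\ref{prop:biup}), argue uniqueness from the non-degeneracy of the pairing that makes $\rskew,\lskew$ dual to left/right $F_i$-multiplication, and prove existence by verifying two compatibility conditions (Proposition~\ref{prop:recursive}). Your framing in terms of ``$D_i$ injective'' is slightly off — a single $\rskew$ is not injective on a weight component; it is the joint system over all $i$, dual to $F_i\cdot(\,\cdot\,)$ and $(\,\cdot\,)\cdot F_i$, that pins down $\up_\mu$ — but that is fixable.

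The real gap is in the compatibility step. You assert that the compatibility of the recursions ``reduce[s] precisely to the super Serre-relation analysis already encapsulated in Proposition~\ref{prop:pi00p}'' and that it is handled ``by applying the projection $\pi:\U\to\U^+$ coming from the quantum Iwasawa decomposition.'' Neither is accurate. Proposition~\ref{prop:pi00p} establishes $\pi_{0,0}\circ P_{-\lambda}(p(\underline{B}))\in\Uio$, a statement about the $\Ui$-module structure used to prove the basis theorem; it says nothing about the skew-derivative data $A_i$, ${}_iA$ entering the quasi $K$-matrix recursion. The paper needs, and proves separately as Theorem~\ref{thm:2b}, a genuinely different family of identities
\[
\sum_{1\leq k\leq t}f_k\,\bigl\langle \mathrm{Head}(p^{(k)}),\,A_{\mathrm{tail}(p^{(k)})}\bigr\rangle=0
\qquad\text{for every }p\in\mathcal S(\U),
\]
together with the skew-derivative symmetry $r_i({}_jA)=(-1)^{p(i)p(j)}\,{}_ir(A_j)$ of Lemma~\ref{lemma:2a}. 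Theorem~\ref{thm:2b} carries its own case-by-case analysis over Table~\ref{TableSerrePolyn} (ISO1, ISO2, N-ISO, AB, CD, D, F1, G2, G3), and it is here, not in Proposition~\ref{prop:pi00p}, that the assumption~\eqref{exclude} is needed a second time. Moreover, the Iwasawa decomposition $V_\bu^+\otimes\Uio_\tau\otimes\Ui\cong\U$ projects onto $\Ui$, not onto $\U^+$, so it is not the right device for extracting the $\U^+$-valued recursion; the correct mechanism is the non-degenerate bilinear pairing $\langle\cdot,\cdot\rangle$ on $\U^-\times\U^+$ and the identity $\langle p(\underline F),\Xi\rangle=0$ for Serre polynomials $p$. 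Without establishing Theorem~\ref{thm:2b} independently, the existence part of your argument does not close.
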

Besides adapting several main ingredients from \cite{BW18a, BK19, WZ22}, we formulate an essential super component for establishing Theorem~\ref{thm:ibar} as Theorem~\ref{thm:2b}. Similar to the quantum Iwasawa decomposition, the validation of Theorem~\ref{thm:2b} rely on various Serre relations, thus requiring an extensive case-by-case examination. For quantum supersymmetric pairs with nontrivial diagram involutions (generalizing the usual type AIII), quasi $K$-matrix in a characterization different from Theorem~\ref{th:C} was established in \cite{KY20, Sh22} (generalizing \cite{BW18a}).

The quasi $K$-matrix is a key ingredient for the construction of the universal K-matrix (cf. \cite{BW18a, BW18b, BK19}), which was used in constructing canonical bases and leads to solutions for reflection equations (see \cite{BK19, AV22}). It will be interesting to develop this further. There are many other constructions on quantum symmetric pairs (see the survey \cite{Wa23}) whose super analogue can be very interesting to develop. 

Finally, we illustrate by an example that $\imath$quantum supergroups are interesting new families of quantum algebras. More specifically, we consider the quantum supersymmetric pair associated with the following super Satake diagram: 
{\rm \[
\hspace{.75in}\xy
(-20,0)*{\fullmoon};(-10,0)*{\cdots}**\dir{-};(0,0)*{\fullmoon}**\dir{-};(10,0)*{\otimes}**\dir{-};(20,0)*{\newmoon}**\dir{-};(30,0)*{\fullmoon}**\dir{-};(40,0)*{\newmoon}**\dir{-};(50,0)*{\cdots}**\dir{-};(60,0)*{\fullmoon}**\dir{-};(70,0)*{\newmoon}**\dir{-};
(-20,-4)*{\scriptstyle 1};(0,-4)*{\scriptstyle m-1};(10,-4)*{\scriptstyle m};(20,-4)*{\scriptstyle m+1};(30,-4)*{\scriptstyle m+2};(40,-4)*{\scriptstyle m+3};(70,-4)*{\scriptstyle m+2n-1};
\endxy
\]}
where $\Iodd=\{m\}$,  $I_\bu=\{m+2a-1\mid 1\leq a\leq n\}$ and $I_\circ=I \backslash I_\bu$. It is worth noting that the $\imath$quantum supergroup $\Ui$ in this case is a new $q$-deformation of the enveloping superalgebra of $\mathfrak{osp}(m|2n)$, and we actually have various different $q$-deformations of the ortho-symplectic Lie superalgebras associated with different super type A Satake diagrams in Example~\ref{ex:superA}. Let $\V$ denote the natural representation of $\U$. 

\begin{theorem} [Theorem~\ref{thm:duality}]
\label{th:D}
   Let $m,n \ge 1$. The actions of the $q$-Brauer algebra $\B_d(q,q^{m-2n})$ and $\Ui$ on $\V^{\otimes d}$ commute with each other. Moreover, they satisfy a double centralizer property when $\B_d(q,q^{m-2n})$ is semisimple.
\end{theorem}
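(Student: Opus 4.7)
The plan is to make both actions explicit, reduce the commutativity check to a single invariance statement, and then deduce the double centralizer by a specialization argument. I would let $\U=\U_q(\gl(m|2n))$ act on the natural module $\V=\C(q)^{m|2n}$ and extend this to $\V^{\otimes d}$ via the iterated coproduct; by Proposition~\ref{prop:coideal} this restricts to an action of $\Ui$. On the Brauer side, I would present $\B_d(q,q^{m-2n})$ by its braid-type generators $g_1,\dots,g_{d-1}$ (satisfying the Hecke-type quadratic relation) together with contraction generators $e_1,\dots,e_{d-1}$ and the Wenzl-type mixed relations. Each $g_i$ acts on positions $(i,i+1)$ as the graded braiding $\check R$ produced from the universal $R$-matrix of $\U$, while each $e_i$ is the rank-one operator whose image is the line spanned by a distinguished vector $v_0\in \V\otimes \V$ that $q$-deforms the classical $\mathfrak{osp}(m|2n)$-invariant tensor coming from the orthosymplectic bilinear form on $\V$.

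Commutativity with the $g_i$ is automatic since $\check R$ already intertwines the full $\U$-coproduct, hence its restriction to $\Ui$. The substantive check is that each $e_i$ commutes with $\Ui$, which amounts to showing that $\C(q)\,v_0$ is $\Ui$-stable. Invariance under $\Uio$ and under the Levi generators $E_j,F_j$ for $j\in I_\bu$ is immediate from the classical branching $\gl(m|2n)\supset\mathfrak{osp}(m|2n)$. The crux is therefore the identity
\[
B_i\cdot v_0 \;=\; \bigl(F_i+\va_i\, T_{w_\bu}(E_i)K_i^{-1}\bigr)\cdot v_0 \;=\; 0, \qquad i\in I_\circ,
\]
which I would verify by a direct case analysis according as $i<m$ (even generators on the $\gl$-side), $i=m$ (the odd node), or $i>m$ with $i\in I_\circ$ (even nodes interleaved with $I_\bu$ in the orthosymplectic tail). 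The parameter constraints \eqref{eq:kappai}--\eqref{eq:vai=vataui} are precisely what force the two summands in $B_i\cdot v_0$ to cancel, with $T_{w_\bu}$ converting the raising part into a shape matching the contribution of $F_i$ to the invariant tensor.

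Once commutativity is established we obtain algebra homomorphisms
\[
\Phi:\Ui\longrightarrow \End_{\B_d(q,q^{m-2n})}(\V^{\otimes d}), \qquad \Psi:\B_d(q,q^{m-2n})\longrightarrow \End_{\Ui}(\V^{\otimes d}).
\]
For the double centralizer when $\B_d(q,q^{m-2n})$ is semisimple, I would combine the classical Berele--Regev Schur--Weyl duality for $\bigl(U(\mathfrak{osp}(m|2n)),B_d(m-2n)\bigr)$ on $\V^{\otimes d}$ with a specialization-at-$q=1$ argument: Theorem~\ref{th:B} gives flatness of $\Ui$ as a deformation of $U(\t)$, while $\V^{\otimes d}$ and $\B_d(q,q^{m-2n})$ are flat families over $\C(q)$; hence the ranks of $\Phi$ and $\Psi$ are upper semi-continuous and attain their classical (maximal) values at $q=1$, so both are surjective at generic $q$, and semisimplicity of $\B_d(q,q^{m-2n})$ then gives the double commutant. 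The main obstacle I anticipate is precisely the calculation $B_m\cdot v_0=0$ at the odd simple root: the fermionic signs in $v_0$ have to be reconciled with the graded coproduct of $\U$, the braid operator $T_{w_\bu}$ attached to the longest element of the right-tail Weyl group $W_{I_\bu}$, and the normalization $q^{m-2n}$ of the Brauer parameter; once this single super-reconciliation is carried out, the remaining checks for $i\neq m$ reduce to the non-super orthogonal/symplectic Schur--Weyl computation.
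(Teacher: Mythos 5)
Your overall architecture is close to the paper's, but the central reduction is incomplete. The paper realizes Wenzl's $q$-Brauer algebra $\B_d(q,q^{m-2n})$ with a single contraction generator $\Qy$ acting by $\Xi\otimes 1^{\otimes(d-2)}$, where $\Xi$ is the explicit rank-one operator of \eqref{eq:operatorXi}; since the Hecke generators already commute with all of $\U$ and transport $\Qy$ along the tensor factors, the whole commutativity question does reduce to one equivariance statement about $\Xi$ on $\V\otimes\V$, and there you have identified the right bottleneck. However, ``$\C(q)v_0$ is $\Ui$-stable'' (equivalently $B_i v_0=0$) is \emph{not} equivalent to $\Xi$ commuting with $\Ui$. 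Writing the rank-one map as $\Xi = v_0\otimes\phi$ for a linear functional $\phi$ on $\V\otimes\V$, commutativity $\Xi\circ u = u\circ\Xi$ for $u\in\Ui$ forces two independent conditions: $v_0$ must be $\Ui$-invariant \emph{and} $\phi$ must be an invariant covector (equivalently, $\ker\Xi$ is a $\Ui$-submodule with trivial quotient action). Your proposal only addresses the first. In the classical orthosymplectic picture the two collapse into one by self-duality of $\V$, but for the quantum $\Xi$ of \eqref{eq:operatorXi} the ``ket'' data (the coefficients $\tau_i^{-1}q^{m-2i+1}$, etc.\ inside $v_0$) and the ``bra'' data (the input scalings $\tau_i$, $\zeta_j\va_m^{-1}q^{3j-2}$, the extra $-q^{-1}$ on the odd swaps) are visibly not transposes of each other, so there is no automatic self-duality that lets you pass from the vector condition to the covector condition for free. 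The paper's Theorem~\ref{thm:duality} avoids this by directly checking $B_m\,\Xi(w) = \Xi\,B_m(w)$ on basis vectors $w$ (and deferring the remaining $B_i$ and $\Ub$ cases to \cite{CS22, Mi06}); this verifies both sides of the equivariance at once.

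Two smaller points. First, your classical input should be Ehrig--Stroppel \cite{ES16} (or the orthosymplectic Brauer duality more generally), not Berele--Regev, which is the $\gl(m|n)$--symmetric group duality. Second, your specialization argument for the double centralizer (flatness of $\Ui$ via Theorem~\ref{th:B}, semisimplicity of $\B_d$ via \cite{RSS24}, semicontinuity at $q=1$) is in substance the same argument the paper uses, once you flip the direction of semicontinuity: the rank of the module maps can only drop at the special fiber, so surjectivity at $q=1$ propagates to generic $q$. So the endgame is fine; the gap is the missing dual half of the $\Xi$-equivariance check.
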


This version of $q$-Brauer algebra was introduced in \cite{We12}. An explicit criterion on parameters for the semisimplicity of the algebra $\B_d(q,q^{m-2n})$ is given by Rui-Si-Song in \cite{RSS24}. 
The $\imath$Schur duality between an $\imath$quantum supergroup and the $q$-Brauer algebra in Theorem~\ref{th:D} unifies the $\imath$Schur dualities of type AI and AII given in \cite{CS22}; an earlier variant of such $\imath$Schur duality of type AI was due to Molev \cite{Mol03} where a different (larger) $q$-Brauer algebra was used. 
%An $\imath$Schur duality involving $\imath$quantum supergroup of type AIII was constructed by the first author in \cite{Sh22}. 

%\subsection{Further directions}

\subsection{Organization}
The paper is organized as follows. In Section~\ref{sec:symmetric pair}, we review the root datum of basic Lie superalgebras and formulate super admissible pairs or super Satake diagrams. We construct an automorphism $\theta(\I,\tau)$ of order $2$ or $4$ associated with a super admissible pair. In Section~\ref{sec:quantum involution}, we review the definition of quantum supergroups $\U$ and construct a quantum analogue of the automorphism $\theta_q(\I,\tau)$ of $\U$. 

In Section~\ref{sec:QSP}, we define the quantum supersymmetric pairs $(\U,\Ui)$ associated with super admissible pairs and establish a quantum Iwasawa decomposition for $(\U,\Ui)$. In Section~ \ref{sec:braid}, we show that the braid group action $T_i$, for $i\in \I_\bu$, on $\U$ preserves $\Ui$. 
In Section~\ref{sec:quasiK}, we establish the quasi $K$-matrices of  quantum supersymmetric pairs. An $\imath$Schur duality between a distinguished class of $\imath$quantum groups and the $q$-Brauer algebra is developed in Section~\ref{sec:duality}. In Appendix~\ref{sec:non-isotropic}, we further extend our constructions to Satake diagrams which allow non-isotropic odd roots in $\I_\bu$.

\vspace{2mm}
\noindent {\em Notes added.} 
A preprint \cite{AMS24} on similar topics appeared in \href{https://arxiv.org/abs/2407.19477}{arXiv:2407.19477}, and this prompted us to give a final touch to our paper (whose main results were completed a few months ago). The $\imath$quantum supergroups were independently defined and shown {\em loc. cit.} to be coideal subalgebras of quantum supergroups (of type ABCD only), though the main Theorems~\ref{th:B}-\ref{th:C}-\ref{th:D} of our paper did not appear.

\section{Super Satake diagrams}
\label{sec:symmetric pair}
In this section, we review various properties of basic Lie superalgebras $\g$ and the associated Dynkin diagrams. We formulate the notion of super Satake diagrams (or super admissible pairs), whose super admissible conditions are motivated by considerations in later sections. Then we construct a corresponding automorphism of $\g$ of order 2 or 4, following \cite{Ko14}. 

\renewcommand{\thetheorem}{\thesection.\arabic{theorem}}
\setcounter{theorem}{0}

\subsection{Root Datum} \label{SS:Root Data}
Let $\g=\g_\zero \oplus \g_\one$ be a complex basic Lie superalgebra of type A-G with a Cartan subalgebra $\h$ and a root system $\Phi$ (cf. \cite{CW12}). The Dynkin diagrams of $\g$ are listed in Table~\ref{Table1}. Given such a Dynkin diagram $\I$, we denote by $\Pi=\{\alpha_i\mid i\in \I\}$ the set of simple roots and $\Phi^+ =\Phi^+_\zero \cup \Phi^+_\one$ the associated positive system; as usual $\zero$ and $\one$ here and below indicate the even and odd roots respectively. Decompose $\Pi=\Pi_\zero \sqcup \Pi_\one$ where $\Pi_s=\Pi\cap\Phi_s$ with $s \in \{\zero, \one\}$, and accordingly we have $\I=\Ieven\sqcup\Iodd$. 
For $\beta \in \Phi$, we write $p(\beta)=s$ if $\beta\in \Phi_s$ with $s\in \{\zero,\one\}$. We often write $p(i):=p(\alpha_i)$, for $i\in \I$. By linearity, we have a parity function $p(\cdot)$ on the root lattice $X =\Z\Pi$. The Lie superalgebra $\g$ is generated by Chevalley generators $\set{e_i,f_i\mid i\in\I}$ and $\h$, and we have a  triangular decomposition $\g=\n^+\oplus \h\oplus\n^-$, where $\n^{\pm} =\oplus_{\beta\in \Phi^{\pm}}\g_\beta$. Set $\mathfrak b^+ =\h \oplus \n^+.$ 

The basic Lie superalgebras are examples of symmetrizable contragredient Lie superalgebras associated to Cartan matrices \cite{K77}, which are endowed with a non-degenerate even supersymmetric bilinear form. Denote by $A=(a_{ij})_{i,j\in \I}$ the Cartan matrix for $\g$.
There exist non-zero integers $d_i$, for $i\in \I$, such that 
\begin{align}
    \label{di}
 d_ia_{ij}=d_ja_{ji}, \qquad \gcd(d_i | i\in \I)=1.
\end{align}
The set of simple coroots $\Pi^\vee=\{h_i\mid i\in \I\} \subset \h$ is given by
\[
    \alpha_j(h_i)=a_{ij},\quad \forall i,j\in \I.
\]
Let $Y=\Z \Pi^\vee$ denote the coroot lattice. We define the parity function $p$ on $Y$ by $p(h_i)=p(i)$, for all $i \in \I$, and extend by linearity. 

Define a symmetric bilinear form $(\cdot,\cdot):X\times X\longrightarrow \Z$ by letting
$$ (\af_i,\af_j) =d_i a_{ij},\;\;\;i,j\in \I.$$
A root $\alpha$ is called isotropic if $(\alpha, \alpha)=0$, and this can only happen when $\alpha$ is odd. We further decompose
$$\Phi_\one=\Phi_\iso\sqcup\Phi_\niso$$
where $\Phi_\iso$ (resp. $\Phi_\niso$) is the set of isotropic (resp. non-isotropic) odd roots. Decompose
$\Pi_\one=\Pi_\iso\sqcup\Pi_\niso$ (resp. $\I_\one=\I_{\iso}\sqcup\I_{\niso}$) accordingly.

\begin{lemma}
The following are equivalent, for $i\in \I$:
\[
(1) \; a_{ii}=0;\qquad (2) \; i\in \I_{\iso};\qquad (3) \; (\af_i,\af_i)=0.
\]
\end{lemma}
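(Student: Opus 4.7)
The plan is to establish the three equivalences directly from the definitions, using the key identity $(\alpha_i,\alpha_i) = d_i a_{ii}$ which follows immediately from the definition of the bilinear form $(\cdot,\cdot)$ and the setting of $d_i \ne 0$ from \eqref{di}.

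First I would prove $(1) \Leftrightarrow (3)$. By definition, $(\alpha_i,\alpha_j) = d_i a_{ij}$, so specializing to $j = i$ gives $(\alpha_i,\alpha_i) = d_i a_{ii}$. Since $d_i$ is a nonzero integer, $a_{ii} = 0$ if and only if $(\alpha_i,\alpha_i) = 0$. This direction is immediate and needs no case analysis.

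Next I would establish $(2) \Leftrightarrow (3)$. The implication $(2) \Rightarrow (3)$ is a tautology: if $i \in \I_{\iso}$, then by definition $\alpha_i \in \Pi_{\iso} \subset \Phi_{\iso}$, hence $(\alpha_i,\alpha_i) = 0$. For the converse $(3) \Rightarrow (2)$, suppose $(\alpha_i,\alpha_i) = 0$. I need to show that $\alpha_i$ is odd, which together with the isotropy condition would place $i$ in $\I_{\iso}$. This is precisely the statement already recorded in the text just above the lemma: for basic Lie superalgebras, the bilinear form can vanish on a root only if that root is odd. Concretely, on the even part of $\g$, the root system is the ordinary root system of a reductive Lie algebra (as read off from Table~\ref{Table1}), whose roots all have nonzero squared length under the restricted form; hence if $\alpha_i$ were even, we would have $(\alpha_i,\alpha_i) \ne 0$.

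The only real point of substance is the assertion that even simple roots cannot be isotropic, and this is the step I expect to be the main (though very mild) obstacle; it can be verified either by appealing to the remark in the text preceding the lemma, or by a quick inspection of the Cartan matrices in Table~\ref{Table1}, which show $a_{ii} = 2$ for every $i \in \Ieven$. Everything else is a direct unpacking of definitions.
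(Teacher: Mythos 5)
Your proof is correct. The paper gives no proof of this lemma because it is an immediate consequence of the identity $(\alpha_i,\alpha_i)=d_i a_{ii}$ (with $d_i\neq 0$) together with the fact, recorded just above the lemma, that an isotropic root must be odd; you have unpacked precisely those two points, including the slightly substantive observation that even simple roots have $a_{ii}=2\neq 0$, so there is nothing to add.
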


In Table~\ref{Table1} below, we present the general Dynkin diagrams for basic Lie superalgebras of type $A$-$G$. The nodes of the diagrams are labeled with the corresponding labels for the simple roots. The symbols $\fullmoon$, $\otimes$, and $\LEFTcircle$ are assigned to denote even, isotropic odd, and non-isotropic odd simple roots, respectively. Additionally, we utilize the notation $\odot$ to represent a simple root that can be either even or isotropic odd, and $\yy$ to represent a simple root that can be either even or non-isotropic odd.

\begin{center}
\begin{table}[ht]
 \caption{Dynkin diagrams for basic Lie superalgebras}{\label{Table1}}
\scalebox{0.8}{
\begin{tabular}{|c|c|}\hline
$A(m,n)$&
$$ \xy
(-30,0)*{\odot};(-20,0)*{\odot}**\dir{-};(-15,0)*{\cdots};(-10,0)*{\odot};(0,0)*{\odot}**\dir{-};
(0,0)*{\odot};(10,0)*{\odot}**\dir{-};
(15,0)*{\cdots};(20,0)*{\odot};(30,0)*{\odot}**\dir{-};
(-30,-4)*{\scriptstyle 1};(-20,-4)*{\scriptstyle 2};(-10,-4)*{\scriptstyle n};(0,-4)*{\scriptstyle n+1};(10,-4)*{\scriptstyle n+2};
(20,-4)*{\scriptstyle m+n};(30,-4)*{\scriptstyle m+n+1};
(0,-8)*{};(0,8)*{};
\endxy$$
\\\hline
$B(m,n+1)$&
$$ \xy
(-30,0)*{\odot};(-20,0)*{\odot}**\dir{-};(-15,0)*{\cdots};(-10,0)*{\odot};(0,0)*{\odot}**\dir{-};
(0,0)*{\odot};(10,0)*{\odot}**\dir{-};
(15,0)*{\cdots};(20,0)*{\odot};(30,0)*{\yy}**\dir{=};(25,0)*{>};
(-30,-4)*{\scriptstyle 1};(-20,-4)*{\scriptstyle 2};(-10,-4)*{\scriptstyle n};(0,-4)*{\scriptstyle n+1};(10,-4)*{\scriptstyle n+2};
(20,-4)*{\scriptstyle m+n};(31,-4)*{\scriptstyle m+n+1};
(0,-8)*{};(0,8)*{};
\endxy$$
\\\hline
$C(n+1)$&
$$ \xy
(-20,0)*{\odot};(-10,0)*{\odot}**\dir{-};(-5,0)*{\cdots};(0,0)*{\odot};(10,0)*{\odot}**\dir{-};
(10,0)*{\odot};(20,0)*{\fullmoon}**\dir{=};(15,0)*{<};
(-20,-4)*{\scriptstyle 1};(-10,-4)*{\scriptstyle 2};
(10,-4)*{\scriptstyle n};(20,-4)*{\scriptstyle n+1};
(0,-8)*{};(0,8)*{};
\endxy$$
\\
$D(m,n+1)$&
$$\xy
(-30,0)*{\odot};(-20,0)*{\odot}**\dir{-};(-15,0)*{\cdots};
(-10,0)*{\odot};(0,0)*{\odot}**\dir{-};(0,0)*{\odot};(10,0)*{\odot}**\dir{-};
(15,0)*{\cdots};(20,0)*{\odot};(27,5)*{\fullmoon}**\dir{-};(20,0)*{\odot};(27,-5)*{\fullmoon}**\dir{-};
(-30,-4)*{\scriptstyle 1};(-20,-4)*{\scriptstyle 2};(-10,-4)*{\scriptstyle n};(0,-4)*{\scriptstyle n+1};(10,-4)*{\scriptstyle n+2};
(33,5)*{\scriptstyle m+n};(34,-5)*{\scriptstyle m+n+1};
(0,-8)*{};(0,8)*{};
\endxy$$
\\
&
$$\xy
(-30,0)*{\odot};(-20,0)*{\odot}**\dir{-};(-15,0)*{\cdots};
(-10,0)*{\odot};(0,0)*{\odot}**\dir{-};(0,0)*{\odot};(10,0)*{\odot}**\dir{-};
(15,0)*{\cdots};(20,0)*{\odot};(27,5)*{\otimes}**\dir{-};(20,0)*{\odot};(27,-5)*{\otimes}**\dir{-};
(27,5)*{\otimes};(27,-5)*{\otimes}**\dir{=};
(-30,-4)*{\scriptstyle 1};(-20,-4)*{\scriptstyle 2};(-10,-4)*{\scriptstyle n};(0,-4)*{\scriptstyle n+1};(10,-4)*{\scriptstyle n+2};
(33,5)*{\scriptstyle m+n};(34,-5)*{\scriptstyle m+n+1};
(0,-8)*{};(0,8)*{};
\endxy$$
\\\hline
$F(3|1)$&
$$\xy (0,8)*{};%(-20,0)*{(\star)};
(-15,0)*{\fullmoon};(-5,0)*{\fullmoon}**\dir{-};
(-5,0)*{\fullmoon};(5,0)*{\fullmoon}**\dir{=};(0,0)*{>};(5,0)*{\fullmoon};(15,0)*{\otimes}**\dir{-};
(-15,-4)*{\scriptstyle 1};(-5,-4)*{\scriptstyle 2};(5,-4)*{\scriptstyle 3};(15,-4)*{\scriptstyle 4};
(0,-8)*{};
\endxy$$
\\&
$$\xy (0,8)*{};
{\ar@3{-}(-15,0)*{\fullmoon};(-5,0)*{\otimes}};(-10,0)*{>};
(-5,0)*{\otimes};(5,0)*{\fullmoon}**\dir{=};(0,0)*{<};(5,0)*{\fullmoon};(15,0)*{\fullmoon}**\dir{-};
(-15,-4)*{\scriptstyle 1};(-5,-4)*{\scriptstyle 2};(5,-4)*{\scriptstyle 3};(15,-4)*{\scriptstyle 4};
(0,-8)*{};
\endxy$$ \hspace{.25in}
$$\xy (0,8)*{};
{\ar@3{-}(-15,0)*{\fullmoon};(-5,0)*{\otimes}};(-10,0)*{>};
(-5,0)*{\otimes};(5,0)*{\fullmoon}**\dir{-};(5,0)*{\fullmoon};(15,0)*{\fullmoon}**\dir{=};(10,0)*{<};
(-15,-4)*{\scriptstyle 1};(-5,-4)*{\scriptstyle 2};(5,-4)*{\scriptstyle 3};(15,-4)*{\scriptstyle 4};
(0,-8)*{};
\endxy$$
\\&
$$\xy (0,8)*{};
{\ar@3{-}(-10,0)*{\otimes};(-5,7)*{\fullmoon}};(-5,7)*{\fullmoon};(0,0)*{\otimes}**\dir{-};
(-10,0)*{\otimes};(0,0)*{\otimes}**\dir{=};(0,0)*{\otimes};(10,0)*{\fullmoon}**\dir{=};(5,0)*{<};
(-10,-4)*{\scriptstyle 1};(-5,9)*{\scriptstyle 2};(0,-4)*{\scriptstyle 3};(10,-4)*{\scriptstyle 4};
(0,-8)*{};
\endxy$$ \hspace{.25in}
$$\xy (0,8)*{};
(-10,0)*{\otimes};(-5,7)*{\fullmoon}**\dir{-};(-5,7)*{\fullmoon};(0,0)*{\otimes}**\dir{-};
(-10,0)*{\otimes};(0,0)*{\otimes}**\dir{=};(0,0)*{\otimes};(10,0)*{\fullmoon}**\dir{=};(5,0)*{<};
(-10,-4)*{\scriptstyle 1};(-5,9)*{\scriptstyle 2};(0,-4)*{\scriptstyle 3};(10,-4)*{\scriptstyle 4};
(0,-8)*{};
\endxy$$
\\\hline
$G(3)$&
$$ \xy(0,5)*{};%(-15,0)*{(\star)};
{\ar@3{-}(0,0)*{\fullmoon};
(10,0)*{\fullmoon}};(5,0)*{<};(-10,0)*{\otimes};(0,0)*{\fullmoon}**\dir{-};
(-10,-4)*{\scriptstyle 1};(0,-4)*{\scriptstyle 2};(10,-4)*{\scriptstyle 3};
(0,-8)*{};(0,8)*{};
\endxy$$ \hspace{.25in}
$$ \xy(0,5)*{};
{\ar@3{-}(0,0)*{\otimes};(10,0)*{\fullmoon}};(5,0)*{<};(-10,0)*{\otimes};(0,0)*{\otimes}**\dir{-};
(-10,-4)*{\scriptstyle 1};(0,-4)*{\scriptstyle 2};(10,-4)*{\scriptstyle 3};
(0,-8)*{};(0,8)*{};
\endxy$$
\\&
$$ \xy(0,5)*{};
{\ar@3{-}(0,0)*{\otimes};(10,0)*{\fullmoon}};(5,0)*{<};{\ar@2{-}(-10,0)*{\LEFTcircle};(0,0)*{\otimes}};(0,0)*{\otimes}**\dir{-};
(-10,-4)*{\scriptstyle 1};(0,-4)*{\scriptstyle 2};(10,-4)*{\scriptstyle 3};
(0,-8)*{};(0,8)*{};
\endxy$$ \hspace{.25in}
$$\xy (0,5)*{};
{\ar@2{-}(-5,0)*{\otimes};(0,7)*{\fullmoon}};(0,7)*{\fullmoon};(5,0)*{\otimes}**\dir{-};
{\ar@3{-}(-5,0)*{\otimes};(5,0)*{\otimes}};
(-5,-4)*{\scriptstyle 1};(0,9)*{\scriptstyle 2};(5,-4)*{\scriptstyle 3};
(0,-8)*{};(0,8)*{};
\endxy$$
\\\hline
$D(2|1;\af)$&
$$\xy
(0,0)*{\otimes};(7,5)*{\fullmoon}**\dir{-};(0,0)*{\otimes};(7,-5)*{\fullmoon}**\dir{-};(2,4)*{\scriptstyle -1};(2,-4)*{\scriptstyle 1+\af};
(-3,0)*{\scriptstyle 1};(10,5)*{\scriptstyle 2};(10,-5)*{\scriptstyle 3};
(0,8)*{};(0,-8)*{};
\endxy$$ \hspace{.25in}
$$\xy
(0,0)*{\otimes};(7,5)*{\fullmoon}**\dir{-};(0,0)*{\otimes};(7,-5)*{\fullmoon}**\dir{-};(2,4)*{\scriptstyle -\af};(2,-4)*{\scriptstyle 1+\af};
(-3,0)*{\scriptstyle 1};(10,5)*{\scriptstyle 2};(10,-5)*{\scriptstyle 3};
(0,8)*{};(0,-8)*{};
\endxy$$
\\$(\af\in\Z_{>0})$&
$$\xy
(-5,0)*{\otimes};(0,7)*{\otimes}**\dir{-};
(-5,0)*{\otimes};(5,0)*{\otimes}**\dir{-};
(0,7)*{\otimes};(5,0)*{\otimes}**\dir{-};
(-6,-3)*{\scriptstyle 1};(0,10)*{\scriptstyle 2};(6,-3)*{\scriptstyle 3};
(4,4)*{\scriptstyle \af};(0,-2)*{\scriptstyle -1-\af};
(0,8)*{};(0,-8)*{};
\endxy$$
\\\hline
\end{tabular}}
\end{table}
\end{center}
The Chevalley involution $\om$ on $\g$ (of order 4 in the super setting) is defined by 
\begin{equation}
\label{eq:Chevalley}
    \om(e_i)=-f_i,\ \om(f_i)=-(-1)^{p(i)}e_i,\ \om(h_i)=-h_i,\ \forall i\in\I.
\end{equation}

For each $i\in \Ieven$ the even reflection $r_i$ is defined by
\[
r_i(h)=h-\alpha_i(h)h_i,\ \text{ for all }h\in \h.
\] 
%The Weyl group $W$ is the Coxeter group generated all the even reflections. Moreover, the symmetric bilinear form $(\cdot,\cdot)$ is $W$-invariant.
Denote by $G_\zero$ the adjoint group of $\g_\zero$. 
Let $\text{Aut}(\g)$ denote the group of Lie superalgebra automorphisms of $\g$. For any $i\in \Ieven$, define an element $m_i \in G_\zero$ by 
\[
m_i=\exp(e_i) \exp(-f_i) \exp(e_i).
\]
The adjoint action $\g_\zero$ on $\g$ induces the adjoint action $\Ad\colon G_\zero \rightarrow \text{Aut} (\g)$. One has $\Ad(m_i)(\g_\alpha)=\g_{r_i(\alpha)}$, for all $\alpha\in \h^*$, and $\Ad(m_i)|_\h=r_i$, for all $i\in \Ieven$. Moreover, the elements $m_i$ satisfy the relations
\begin{equation*}\underbrace{m_im_jm_i\cdots}_{d_{ij}}=\underbrace{m_jm_im_j\cdots}_{d_{ij}}
\end{equation*}
where $d_{ij}=2,3,4,6$ if $|a_{ij}a_{ji}|=0,1,2,3$, respectively.

\subsection{Super admissible pairs}

For a subset $\I_\bu\subset \Ieven$, denote by $\g_\bu$ the semisimple Lie subalgebra of $\g$ associated with $\I_\bu$ and by $\omega_\bu$ the Chevalley involution of $\g_\bu$ which sends $e_i \rightarrow -f_i, h_i \rightarrow -h_i$, for $i\in \I_\bu$. Denote by $W_\bu$ the Weyl group for $\g_\bu$ with the longest element $w_\bu$. Let $\tau_\bu$ denote the diagram involution of $\g_\bu$ corresponding to the element $w_\bu$. Let $\Phi_\bu \subset \Phi$ be the corresponding root system and denote $\Phi_\bu^+ =\Phi_\bu \cap \Phi^+$. Fix any reduced expression $w_\bu=r_{i_1}\cdots r_{i_k}$, we can define $m_\bu=m_{i_1}\cdots m_{i_k}$ and the element $m_\bu$ does not depend on the choice of the reduced expression.

We view $\g_\bu$ as a Levi subalgebra of $\g$, and therefore we have the positive coroots for $\g_\bu$ as part of coroots for $\g$. Let $\rho_\bu^\vee$ denote half the sum of the positive coroots in $\Phi_\bu$. Let $\text{Aut} (I)$ denote the group of all parity-preserving permutations $\tau$ of $\I$ such that $d_ia_{ij}=d_{\tau i}a_{\tau i,\tau j}$ and 
\[
\text{Aut}(I,\I_\bu)=\set{\tau\in \text{Aut}(I)\mid \tau(\I_\bu)=\I_\bu}.
\]
This induces an automorphism $\tau$ of Lie superalgebra $\g$ such that
\begin{align}
\label{eq:tau}
\begin{split}
\tau \colon\g & \longrightarrow\g, 
\\
e_i &\mapsto \begin{cases}
    e_{i}, & \tau i=i \\
    (-1)^{p(i)}e_{\tau i},& \tau i\neq i,
\end{cases}
%e_i\mapsto e_{\tau i},
\quad 
f_i\mapsto f_{\tau i},\quad 
%h_i\mapsto h_{\tau i} %
h_i\mapsto \begin{cases}
    h_{i}, & \tau i=i \\
    (-1)^{p(i)}h_{\tau i},& \tau i\neq i.
\end{cases}
\end{split}
\end{align}
(In case $\tau i \neq i \in \Iodd$, the sign change between $d_i$ and $d_{\tau i}$ is consistent with the sign on the formula above.) 

The following is well known.

\begin{proposition} {\rm (cf. \cite[Proposition 2.2]{Ko14})}
\label{prop:Adm}
\begin{enumerate}
    \item 
The automorphism $\Ad(m_\bu)$ of $\g$ leaves $\g_\bu$ invariant and satisfies the relation
\[\Ad(m_\bu)|_{\g_\bu}=\tau_\bu \omega_\bu.\]

\item 
In $\text{Aut}(\g)$ we have 
\[\Ad(m_\bu^2)=\Ad\big( \exp(\sqrt{-1}\pi 2\rho_\bu^\vee)\big).
\]
\item 
The automorphism $\Ad(m_\bu)$ of $\g$ commutes with all elements in $\text{Aut}(I,\I_\bu)$ and with the Chevalley involution $\om$.
\end{enumerate}
\end{proposition}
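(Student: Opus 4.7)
The plan is to follow the classical argument of \cite[Proposition 2.2]{Ko14}, adapted to the super setting. The assumption $\I_\bu\subset\Ieven$ imposed at the outset ensures that $\g_\bu$ is an ordinary semisimple Lie algebra, so the classical long-element machinery transfers to $\g_\bu$ verbatim; the super aspects enter only through the action of $\Ad(m_\bu)$ on root spaces outside $\g_\bu$ and through the order-$4$ nature of $\om$ on the full $\g$.

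For part (1), I would chain the identities $\Ad(m_i)(\g_\alpha)=\g_{r_i\alpha}$ and $\Ad(m_i)|_\h=r_i$ (valid for $i\in\Ieven$) over a reduced expression $w_\bu=r_{i_1}\cdots r_{i_\ell}$ to obtain $\Ad(m_\bu)(\g_\alpha)=\g_{w_\bu\alpha}$ for every $\alpha\in\Phi$. Since $w_\bu$ preserves $\Phi_\bu$, $\Ad(m_\bu)$ stabilizes $\g_\bu$, and the identity $\Ad(m_\bu)|_{\g_\bu}=\tau_\bu\om_\bu$ is the classical result for the long-element lift in a semisimple Lie algebra, which applies directly since $\g_\bu$ is purely even. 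For part (2), I would use $m_i^2=\exp(\sqrt{-1}\pi h_i)$ in $G_\zero$ (valid for $i\in\Ieven$); expanding $m_\bu^2$ and repeatedly conjugating squared factors leftward via $m_j\exp(h)m_j^{-1}=\exp(r_jh)$ yields $m_\bu^2=\exp\bigl(\sqrt{-1}\pi\sum_{k=1}^\ell r_{i_1}\cdots r_{i_{k-1}}(h_{i_k})\bigr)$, and the inner sum telescopes to $\sum_{\beta\in\Phi_\bu^+}h_\beta=2\rho_\bu^\vee$ by the standard enumeration of positive roots of $\Phi_\bu$ through a reduced expression of $w_\bu$.

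For part (3), I handle the two commutations separately. For $\tau\in\text{Aut}(I,\I_\bu)$: $\tau$ restricts to a diagram automorphism of $\g_\bu$ and hence permutes reduced expressions of $w_\bu$ (the unique longest element of $W_\bu$); by the reduced-expression independence of $m_\bu$ stated above the proposition, $\tau(m_\bu)=m_\bu$, giving $\Ad(m_\bu)\circ\tau=\tau\circ\Ad(m_\bu)$. For commutation with $\om$: applying $\om$ to $m_i=\exp(e_i)\exp(-f_i)\exp(e_i)$ with $i\in\I_\bu\subset\Ieven$ (where $\om(e_i)=-f_i$ and $\om(f_i)=-e_i$) produces $\om(m_i)=\exp(-f_i)\exp(e_i)\exp(-f_i)$, which equals $m_i$ by a direct rank-one $SL_2$ computation inside the subgroup generated by $e_i,f_i$; chaining gives $\om(m_\bu)=m_\bu$, and the commutation $\om\circ\Ad(m_\bu)=\Ad(m_\bu)\circ\om$ follows.

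The main subtle point is the verification in part (3) that $\om(m_i)=m_i$, which is where the parity hypothesis is essential: were $i\in\Iodd$ admitted in $\I_\bu$, the relation $\om(f_i)=+e_i$ would flip the signs in the expansion of $\om(m_i)$ and the rank-one identity would no longer hold, so the commutation with the super Chevalley involution would be more delicate.
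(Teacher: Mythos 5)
The paper does not prove Proposition~\ref{prop:Adm}; it simply cites \cite[Proposition 2.2]{Ko14} as a well-known fact. Your argument supplies the missing proof and is essentially correct: all three parts reduce to classical facts about the Tits lifts $m_i$ in the purely even semisimple Lie algebra $\g_\bu$, and the only genuinely super ingredient is the form of $\om$, which the hypothesis $\I_\bu\subset\Ieven$ tames.

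One step in part (2) is stated too loosely. The expression $m_\bu^2 = (m_{i_1}\cdots m_{i_\ell})(m_{i_1}\cdots m_{i_\ell})$ does not, as written, exhibit any $m_{i_k}^2$ factor to conjugate leftward. You need to first invoke $w_\bu^{-1}=w_\bu$, so that $r_{i_\ell}\cdots r_{i_1}$ is again a reduced expression of $w_\bu$ and hence $m_\bu = m_{i_\ell}\cdots m_{i_1}$ by the reduced-expression independence stated just before the proposition. Only then does $m_\bu^2 = m_{i_1}\cdots m_{i_{\ell-1}}\, m_{i_\ell}^2\, m_{i_{\ell-1}}\cdots m_{i_1}$ hold, and the induction on word length (conjugating $m_{i_\ell}^2=\exp(\sqrt{-1}\pi h_{i_\ell})$ through $m_{i_1}\cdots m_{i_{\ell-1}}$) produces $\exp\bigl(\sqrt{-1}\pi\sum_{k=1}^\ell r_{i_1}\cdots r_{i_{k-1}}(h_{i_k})\bigr)$. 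Also, ``telescopes'' is the wrong word: the point is that $\{r_{i_1}\cdots r_{i_{k-1}}(h_{i_k})\mid 1\le k\le\ell\}$ enumerates the positive coroots of $\Phi_\bu$ without cancellation, so the sum equals $2\rho_\bu^\vee$. The rest is fine: part~(1)'s chaining of $\Ad(m_i)(\g_\alpha)=\g_{r_i\alpha}$, and part~(3)'s reduction of both commutations to the equalities $\tau(m_\bu)=m_\bu$ (via $\tau(m_i)=m_{\tau i}$ for $i\in\Ieven$, $\tau$-invariance of $w_\bu$, and reduced-expression independence) and $\om(m_\bu)=m_\bu$ (via the rank-one identity $\exp(-f_i)\exp(e_i)\exp(-f_i)=m_i$), together with $\phi\circ\Ad(g)\circ\phi^{-1}=\Ad(\phi(g))$, are all correct. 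Your closing remark on where evenness enters is well taken, though it is worth noting that $m_i$ is not even defined for $i\in\Iodd$, so the framework itself, not only the sign in $\om(f_i)$, forces $\I_\bu\subset\Ieven$ here.
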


We now introduce the notion of super admissible pairs which generalizes admissible pairs as given in \cite{BBBR95, Ko14} (which correspond to Satake diagrams in \cite{Ara62}).

\begin{definition}
\label{def:superad}
A pair $(\I=\I_\circ \cup\I_\bu,\tau)$ consisting of subsets $\I_\bu\subset \Ieven$, $I_\circ=\I\backslash \I_\bu$, and an element $\tau\in \text{Aut}(I,\I_\bu)$ is called {\em super admissible} if the following conditions are satisfied:
\begin{enumerate}
    \item $\tau^2=id$,
    \item 
    The action of $\tau$ on $\I_\bu$ coincides with the action of $-w_\bu$,
    \item 
    If $j\in (\Ieven\cup\I_{\niso})\cap \I_\circ$ and $\tau(j)=j$, then $\alpha_j(\rho_\bu^\vee)\in \Z$, 
    \item
    If $j\in \I_{\iso}\cap \I_\circ$ and $\tau(j)=j$, then
$\alpha_j(\rho_\bu^\vee)\neq 0$.
\end{enumerate}
\end{definition}

\begin{remark}
Conditions (1), (2) and (3) (where $(\Ieven\cup\I_{\niso})\cap \I_\circ$ is replaced by $\I_\circ$) are exactly the conditions for admissible pairs of (non-super) Kac-Moody type  \cite[\S 2.4]{Ko14}. Condition~ (4) can be rephrased as that any isotropic simple root fixed by $\tau$ must be connected to $\I_\bu$.
\end{remark}

We shall convert the super admissible pairs into diagrams called super Satake diagrams, and we will use the terms ``super admissible pairs" and ``super Satake diagrams" interchangeably. 
To that end, we will use the node $\newmoon$ in a diagram to represent simple even roots in $\I_\bu$. In addition, we will use the nodes $\fullmoon$, $\LEFTcircle$ and $\otimes$ to denote even, non-isotropic odd and isotropic odd simple roots in $\I_\circ$, respectively.

\begin{remark}  \label{rem:subdiagram}
A subdiagram $\Gamma$ of a super Satake diagram $\Gamma'$ is understood in a strict sense as follows: if $i\in \I_\circ$ is a white node of $\Gamma$, then $\Gamma$ contains each connected component of black nodes of $\Gamma'$ which is connected to $i$. 
\end{remark}

\begin{example}
\label{example:admissible}
The following diagrams are super admissible: 
\begin{enumerate}
\item
{\rm 
$$\hspace{.75in}\xy
(-5,0)*{\otimes};(5,0)*{\newmoon}**\dir{-};
(-5,-4)*{\scriptstyle i};(5,-4)*{\scriptstyle j};
\endxy$$}
where $\I_\bu=\set{j}$ and $\tau=id$. In this case we see that $\rho_\bu^\vee=\frac{1}{2}h_j$, thus $\alpha_i(\rho_\bu^\vee)=\pm \frac{1}{2}\neq 0$. 

\item
{\rm
$$\hspace{.75in}\xy
(-10,0)*{\newmoon};(0,0)*{\otimes}**\dir{-};(10,0)*{\newmoon}**\dir{-};(-10,-4)*{\scriptstyle i};
(0,-4)*{\scriptstyle j};(10,-4)*{\scriptstyle k};
\endxy$$}
where $\I_\bu=\set{i,k}$ and $\tau=id$. In this case we see that $\rho_\bu^\vee=\frac{h_i+h_k}{2}$, thus $\alpha_j(\rho_\bu^\vee)=\pm 1\neq 0$. 

\item
$$\xy
(-10,0)*{\otimes};
(0,0)*{\otimes}**\dir{-};(-10,-4)*{\scriptstyle -i};(0,-4)*{\scriptstyle i};
(-10,2)\ar@/_/@{<-->} (0,2);
\endxy$$
where  $I_\bu=\varnothing$, $\tau (-i)=i$ and $\tau i=-i$. In this case the Cartan matrix is $\begin{pmatrix}
    0 & -1\\ 1 &0
\end{pmatrix}$
while $d_{-i}=1=-d_i$.

\item 
{\rm
$$\hspace{.75in}\xy
(-10,0)*{\otimes};(0,0)*{\newmoon}**\dir{-};(10,0)*{\otimes}**\dir{-};(-10,-4)*{\scriptstyle i};
(0,-4)*{\scriptstyle j};(10,-4)*{\scriptstyle k};(-10,2)\ar@/_/@{<-->} (10,2);
\endxy$$}
where $\I_\bu=\set{j}$ and $\tau i=k$. In this case the Cartan matrix is 
\[\begin{pmatrix}
    0 & 1 & 0\\
    -1 & 2 &-1\\
    0 & -1 & 0
\end{pmatrix}\]
while $d_i=1,\ d_j=d_k=-1$. 

\item 
{\rm
$$\hspace{.75in}\xy
(10,0)*{\newmoon};(20,0)*{\otimes}**\dir{-};(30,0)*{\LEFTcircle}**\dir{=};(25,0)*{>};(10,-4)*{\scriptstyle i};(20,-4)*{\scriptstyle j};(30,-4)*{\scriptstyle k};
\endxy$$}
where $I_\bu=\{i\}$ and $\tau=id$.
\end{enumerate}
\end{example}

\begin{example}
\label{example:nonadmissible}
The following diagrams are not super admissible:
\begin{enumerate}
    \item 
     {\rm $$\xy
(-5,0)*{\fullmoon};(5,0)*{\newmoon}**\dir{-};
(-5,-4)*{\scriptstyle i};(5,-4)*{\scriptstyle j};
\endxy$$ 
where $\I_\bu=\set{j},\ \tau=id$ but $\alpha_i(\rho_\bu^\vee)=-1/2\notin \Z$. This is a purely even diagram.}

\item 
 {\rm $$\xy
(0,0)*{\otimes};(0,-4)*{\scriptstyle i}
\endxy$$
where $\tau=id$ but $\alpha_i(\rho_\bu^\vee)=0$ as $\I_\bu=\varnothing$.}

\item 
 {\rm $$\xy
(-5,0)*{\newmoon};(5,0)*{\yy}**\dir{=};
(-5,-4)*{\scriptstyle i};(5,-4)*{\scriptstyle j};(0,0)*{>};
\endxy$$ 
where $\I_\bu=\set{i},\ \tau=id$ but $\alpha_j(\rho_\bu^\vee)=\pm 1/2\notin \Z$.}

\item 
{\rm
$$ 
\xy
(10,0)*{\fullmoon};(20,0)*{\otimes}**\dir{-};(30,0)*{\yy}**\dir{=};(25,0)*{>};(10,-4)*{\scriptstyle i};(20,-4)*{\scriptstyle j};(30,-4)*{\scriptstyle k};
\endxy$$}
where $I_\bu=\varnothing$ and $\tau=id$ but $\alpha_j(\rho_\bu^\vee)=0$. 
\end{enumerate}
\end{example}

% {\rm $$\xy
%    (-10,0)*{\otimes};(0,0)*{\fullmoon}**\dir{-};   (10,0)*{\fullmoon}**\dir{-};(20,0)*{\otimes}**\dir{-};   (-10,2)\ar@/_/@{<-->}(20,2);
%\endxy$$}
%and
%$$   {\rm  \xy
%    (-10,0)*{\otimes};(0,0)*{\otimes}**\dir{-};    (10,0)*{\otimes}**\dir{-};(20,0)*{\otimes}**\dir{-};    (-10,2)\ar@/_/@{<-->}(20,2);
%\endxy}
%$$

\subsection{Super Satake diagrams of real odd rank one}

We recall in Table \ref{table:evenrank1} below the well-known classification of (non-super) Satake diagrams of real (even) rank one \cite{Ara62} (cf. \cite[Table 1]{BW18b}).
%%%%%%%%%%%%%%% 
\begin{table}[h]
\caption{Satake diagrams of real even rank one}
\label{table:evenrank1}
\begin{tabular}{| c | c || c | c |}
\hline
\begin{tikzpicture}[baseline=0]
\node at (0, -0.15) {AI$_1$};
\end{tikzpicture} 
& 
$	\begin{tikzpicture}[baseline=0]
		\node  at (0,0) {$\circ$};
		\node  at (0,-.3) {$\scriptstyle 1$};
	\end{tikzpicture}
$ 
&
\begin{tikzpicture}[baseline=0]
\node at (0, -0.15) {AII$_3$};
\end{tikzpicture}
&
$	\begin{tikzpicture}[baseline=0]
		\node at (0,0) {$\bullet$};
		\draw (0.1, 0) to (0.4,0);
		\node  at (0.5,0) {$\circ$};
		\draw (0.6, 0) to (0.9,0);
		\node at (1,0) {$\bullet$};
		\node at (0,-.3) {$\scriptstyle 1$};
		\node  at (0.5,-.3) {$\scriptstyle 2$};
		\node at (1,-.3) {$\scriptstyle 3$};
			\end{tikzpicture}	
$

%%%%
\\
\hline
\begin{tikzpicture}[baseline=0]
\node at (0, -0.2) {AIII$_{11}$};
\end{tikzpicture}
 &
\begin{tikzpicture}[baseline = 6] %, scale =1.3]
		\node at (-0.5,0) {$\circ$};
%		\draw[dashed] (-0.4,0) to (0.4,0);
		\node at (0.5,0) {$\circ$};
%involutions
		\draw[<->] (-0.5, 0.2) to[out=45, in=180] (-0.15, 0.35) to (0.15, 0.35) to[out=0, in=135] (0.5, 0.2);
		\node at (-0.5,-0.3) {$\scriptstyle 1$};
%		\node at (0,-0.2) {\small 2};
		\node at (0.5,-0.3) {$\scriptstyle 2$};
	\end{tikzpicture}
	&
\begin{tikzpicture}[baseline=0]
\node at (0, -0.2) {AIV, $n\ge2$};
\end{tikzpicture} &
\begin{tikzpicture}	[baseline=6]
		\node at (-0.5,0) {$\circ$};
		\draw[-] (-0.4,0) to (-0.1, 0);
		\node  at (0,0) {$\bullet$};
		\node at (2,0) {$\bullet$};
		\node at (2.5,0) {$\circ$};
		\draw[-] (0.1, 0) to (0.5,0);
		\draw[dashed] (0.5,0) to (1.4,0);
		\draw[-] (1.6,0)  to (1.9,0);
		\draw[-] (2.1,0) to (2.4,0);
%involution
		\draw[<->] (-0.5, 0.2) to[out=45, in=180] (0, 0.35) to (2, 0.35) to[out=0, in=135] (2.5, 0.2);
		\node at (-0.5,-.3) {$\scriptstyle 1$};
		\node  at (0,-.3) {$\scriptstyle 2$};
		\node at (2.5,-.3) {$\scriptstyle n$};
	\end{tikzpicture}
%%%%
\\
\hline
\begin{tikzpicture}[baseline=0]
\node at (0, -0.2) {BII, $n\ge2$};
\end{tikzpicture} & 
    	\begin{tikzpicture}[baseline=0, scale=1.5]
		\node at (1.05,0) {$\circ$};
		\node at (1.5,0) {$\bullet$};
		\draw[-] (1.1,0)  to (1.4,0);
		\draw[-] (1.4,0) to (1.9, 0);
		\draw[dashed] (1.9,0) to (2.7,0);
		\draw[-] (2.7,0) to (2.9, 0);
		\node at (3,0) {$\bullet$};
		\draw[-implies, double equal sign distance]  (3.1, 0) to (3.7, 0);
		\node at (3.8,0) {$\bullet$};
		\node at (1,-.2) {$\scriptstyle 1$};
		\node at (1.5,-.2) {$\scriptstyle 2$};
%		\node at (3,0) {$\bullet$};
		\node at (3.8,-.2) {$\scriptstyle n$};
	\end{tikzpicture}	
&
\begin{tikzpicture}[baseline=0]
\node at (0, -0.15) {CII, $n\ge3$};
\end{tikzpicture}  
& 
		\begin{tikzpicture}[baseline=6]
		\draw (0.6, 0.15) to (0.9, 0.15);
		\node  at (0.5,0.15) {$\bullet$};
		\node at (1,0.15) {$\circ$};
		\node at (1.5,0.15) {$\bullet$};
		\draw[-] (1.1,0.15)  to (1.4,0.15);
		\draw[-] (1.4,0.15) to (1.9, 0.15);
%		\node at (2,0.15) {$\bullet$};
		\draw (1.9, 0.15) to (2.1, 0.15);
		\draw[dashed] (2.1,0.15) to (2.7,0.15);
		\draw[-] (2.7,0.15) to (2.9, 0.15);
		\node at (3,0.15) {$\bullet$};
		\draw[implies-, double equal sign distance]  (3.1, 0.15) to (3.7, 0.15);
		\node at (3.8,0.15) {$\bullet$};
		\node  at (0.5,-0.15) {$\scriptstyle 1$};
		\node at (1,-0.15) {$\scriptstyle 2$};
%		\node at (1.5,0.15) {$\bullet$};
%		\node at (2,0.15) {$\bullet$};
%		\node at (3,0.15) {$\bullet$};
		\node at (3.8,-0.15) {$\scriptstyle n$};
	\end{tikzpicture}		
\\
\hline
 %%%
\begin{tikzpicture}[baseline=0]
\node at (0, -0.05) {DII, $n\ge4$};
\end{tikzpicture}&
	\begin{tikzpicture}[baseline=0]
		\node at (1,0) {$\circ$};
		\node at (1.5,0) {$\bullet$};
		\draw[-] (1.1,0)  to (1.4,0);
		\draw[-] (1.4,0) to (1.9, 0);
		\draw[dashed] (1.9,0) to (2.7,0);
		\draw[-] (2.7,0) to (2.9, 0);
		\node at (3,0) {$\bullet$};
		\node at (3.5, 0.4) {$\bullet$};
		\node at (3.5, -0.4) {$\bullet$};
		\draw (3.1, 0.1) to (3.4, 0.35);
		\draw (3.1, -0.1) to (3.4, -0.35);
		\node at (1,-.3) {$\scriptstyle 1$};
		\node at (1.5,-.3) {$\scriptstyle 2$};
%		\node at (3,0) {$\bullet$};
		\node at (3.5, 0.7) {$\scriptstyle n-1$};
		\node at (3.5, -0.6) {$\scriptstyle n$};
	\end{tikzpicture}		
&
\begin{tikzpicture}[baseline=0]
\node at (0, -0.2) {FII};
\end{tikzpicture}&
\begin{tikzpicture}[baseline=0][scale=1.5]
	\node at (0,0) {$\bullet$};
	\draw (0.1, 0) to (0.4,0);
	\node at (0.5,0) {$\bullet$};
	\draw[-implies, double equal sign distance]  (0.6, 0) to (1.2,0);
	\node at (1.3,0) {$\bullet$};
	\draw (1.4, 0) to (1.7,0);
	\node at (1.8,0) {$\circ$};
	\node at (0,-.3) {$\scriptstyle 1$};
	\node at (0.5,-.3) {$\scriptstyle 2$};
	\node at (1.3,-.3) {$\scriptstyle 3$};
	\node at (1.8,-.3) {$\scriptstyle 4$};
\end{tikzpicture}
%%%%
\\
\hline
\end{tabular}
\newline
\smallskip
\end{table}

\begin{definition}
We define the {\em real odd (resp. even)  rank} of $(\I=\I_\circ \cup \I_\bu,\tau)$ by the number of $\tau$-orbits in $\I_\circ\cap \I_\iso$ (resp. $\I_\circ \cap(\Ieven\cup \I_\niso)$). Moreover, the real rank of $(\I=\I_\circ \cup \I_\bu,\tau)$ is defined to be the sum of its real odd and real even rank.
\end{definition}

Through a case-by-case checking with the help of Table \ref{Table1}, we can prove the following. 

\begin{proposition}
  \label{prop:rank1}
A complete list of real odd rank one super Satake diagrams is given in Table~\ref{Tableoddrank1}.
\end{proposition}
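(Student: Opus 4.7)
The plan is to prove this classification by an exhaustive case analysis using the super admissibility axioms of Definition~\ref{def:superad} together with the list of Dynkin diagrams of basic Lie superalgebras in Table~\ref{Table1}. Since the real odd rank equals one, there is exactly one $\tau$-orbit of isotropic odd simple roots in $\I_\circ$, and such an orbit has size $1$ (a single $\tau$-fixed root $j \in \I_\circ\cap\I_\iso$) or size $2$ (a pair $\{j,\tau j\} \subset \I_\circ \cap \I_\iso$ swapped by $\tau$). This dichotomy drives the whole analysis and gives the two broad families of entries in Table~\ref{Tableoddrank1}.

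In the fixed-root case, condition (4) of Definition~\ref{def:superad} forces $\alpha_j(\rho_\bu^\vee)\neq 0$, so at least one black node must be adjacent to $j$. Condition~(2) requires $\tau|_{\I_\bu}=-w_\bu$, so each connected component of $\I_\bu$ must be a finite-type Dynkin diagram on which $-w_\bu$ acts as a (possibly trivial) diagram involution; in particular, the components of $\I_\bu$ that touch $j$ are of type ADE or BCFG. In the swapped-pair case, the involution $\tau$ must extend to a diagram involution of the ambient bicolored diagram, and combined with $\tau|_{\I_\bu}=-w_\bu$ this severely restricts the local shape around $\{j,\tau j\}$. For each of these local configurations I would then add any remaining white nodes: any such node $k$ must lie in $\Ieven\cup\I_\niso$ (otherwise the real odd rank would exceed one), and condition~(3) imposes $\alpha_k(\rho_\bu^\vee)\in\Z$ whenever $\tau(k)=k$. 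Matching each resulting diagram against one of the global types $A,B,C,D,F,G,D(2|1;\alpha)$ in Table~\ref{Table1} yields exactly the entries of Table~\ref{Tableoddrank1}, and conversely each entry is directly checked to satisfy (1)--(4).

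The main obstacle is the bookkeeping of the pairings $\alpha_j(\rho_\bu^\vee)$ and $\alpha_k(\rho_\bu^\vee)$ for the various candidate subdiagrams $\I_\bu$. Concretely, $\alpha_j(\rho_\bu^\vee)$ equals (up to the standard sign arising from the parity of $j$) the sum of the coefficients of the simple coroots of $\I_\bu$ adjacent to $j$ when $\rho_\bu^\vee$ is expanded in terms of fundamental coweights of the finite-type components of $\I_\bu$, which is available in closed form for each Dynkin type. This reduces the problem to a finite, explicit verification: for each Dynkin diagram of Table~\ref{Table1}, for each location of the white odd orbit, and for each admissible local black component, one simply checks the condition $\alpha_j(\rho_\bu^\vee)\neq 0$ (respectively $\alpha_k(\rho_\bu^\vee)\in\Z$). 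The examples already displayed in Example~\ref{example:admissible} and the non-examples in Example~\ref{example:nonadmissible} serve as a sanity check and indicate which obstructions to rule out configurations that at first glance appear allowed.
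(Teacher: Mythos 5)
Your overall strategy — exhaustion against the super admissibility axioms of Definition~\ref{def:superad}, organized by whether the lone isotropic $\tau$-orbit in $\I_\circ$ is fixed or swapped, with the decisive computation being the evaluation of $\alpha_j(\rho_\bu^\vee)$ — is the same one the paper intends; the paper's own ``proof'' consists solely of the remark that it ``follows by case-by-case computations.'' So at that level of comparison the approaches coincide, and your use of condition~(4) to force adjacency of a fixed isotropic node to $\I_\bu$ and of condition~(2) to control $\tau|_{\I_\bu}$ are the right working ingredients.

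There is, however, a genuine misstep in the sentence ``I would then add any remaining white nodes $k\in\Ieven\cup\I_\niso$\dots\ and condition~(3) imposes $\alpha_k(\rho_\bu^\vee)\in\Z$.'' Proposition~\ref{prop:rank1} classifies super Satake diagrams of \emph{total} real rank $1$ whose unique white $\tau$-orbit is isotropic odd — this is the ``building block'' meaning made clear by Remark~\ref{rem:subdiagram} and the discussion surrounding Tables~\ref{table:evenrank1} and \ref{Tableoddrank1}, and it is also what all the entries of Table~\ref{Tableoddrank1} actually have (no white even or non-isotropic-odd nodes at all). In that reading there are no remaining white nodes to add, and condition~(3) is vacuous. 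If instead you read ``real odd rank one'' literally (odd rank $1$, even rank arbitrary), as your proposal does, the resulting list strictly exceeds Table~\ref{Tableoddrank1}: for example the $\gl(2|2)$ diagram $\fullmoon-\otimes-\newmoon$ (labelled $1,2,3$) with $\I_\bu=\{3\}$ and $\tau=\mathrm{id}$ satisfies all of (1)--(4) of Definition~\ref{def:superad} — one checks $\alpha_1(\rho_\bu^\vee)=\tfrac12 a_{31}=0\in\Z$ for condition~(3) and $\alpha_2(\rho_\bu^\vee)=\tfrac12 a_{32}\neq 0$ for condition~(4) — and has real odd rank $1$ and real even rank $1$, yet it does not appear in Table~\ref{Tableoddrank1}. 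So the claim that your enumeration ``yields exactly the entries of Table~\ref{Tableoddrank1}'' is false under your stated interpretation; it becomes true, and your plan closes correctly, once the additional white nodes are disallowed and the classification is restricted to total real rank~$1$.
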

    
\begin{table}[ht]
\renewcommand\arraystretch{2}
     \caption{Super Satake diagrams of real odd rank 1}{\label{Tableoddrank1}}
\resizebox{\linewidth}{!}{
\begin{tabular}{|c|c||c|c|}
\hline
    sAI  &$$\xy
(-30,0)*{\otimes};(-20,0)*{\newmoon}**\dir{-};(-30,-4)*{\scriptstyle 1};(-20,-4)*{\scriptstyle 2};%(0,-4)*{\scriptstyle n}
\endxy$$ & sFI &
$$\xy (0,8)*{};%(-20,0)*{(\star)};
(-15,0)*{\newmoon};(-5,0)*{\newmoon}**\dir{-};
(-5,0)*{\newmoon};(5,0)*{\newmoon}**\dir{=};(0,0)*{>};(5,0)*{\newmoon};(15,0)*{\otimes}**\dir{-};(-15,-4)*{\scriptstyle 1};(-5,-4)*{\scriptstyle 2};(5,-4)*{\scriptstyle 3};(15,-4)*{\scriptstyle 4};
\endxy$$
\\ 
 sAII & $$ \xy 
(-40,0)*{\newmoon};(-30,0)*{\otimes}**\dir{-};(-20,0)*{\newmoon}**\dir{-};(-40,-4)*{\scriptstyle -1};(-30,-4)*{\scriptstyle 0};(-20,-4)*{\scriptstyle 1};
\endxy$$ & &%sFII & $$\xy (0,8)*{};
%{\ar@3{-}(-15,0)*{\newmoon};(-5,0)*{\otimes}};(-10,0)*{>};
%(-5,0)*{\otimes};(5,0)*{\newmoon}**\dir{=};(0,0)*{<};(5,0)*{\newmoon};(15,0)*{\newmoon}**\dir{-};(-15,-4)*{\scriptstyle 1};(-5,-4)*{\scriptstyle 2};(5,-4)*{\scriptstyle 3};(15,-4)*{\scriptstyle 4};
%\endxy$$ \\
\\
sAIII${}_{11}$ & $$\xy
(-10,0)*{\otimes};
(0,0)*{\otimes};(-10,-4)*{\scriptstyle 1};(0,-4)*{\scriptstyle 2};
(-10,2)\ar@/_/@{<-->} (0,2);
\endxy$$ & sFII & $$\xy (0,8)*{};
{\ar@3{-}(-15,0)*{\newmoon};(-5,0)*{\otimes}};(-10,0)*{>};
(-5,0)*{\otimes};(5,0)*{\newmoon}**\dir{-};(5,0)*{\newmoon};(15,0)*{\newmoon}**\dir{=};(10,0)*{<};(-15,-4)*{\scriptstyle 1};(-5,-4)*{\scriptstyle 2};(5,-4)*{\scriptstyle 3};(15,-4)*{\scriptstyle 4};
\endxy$$\\
sAIV, $n\geq 2$ &
$$\xy
(-30,-6)*{\ };
(-30,0)*{\otimes};(-20,0)*{\newmoon}**\dir{-};(-10,0)*{\cdots}**\dir{-};
(0,0)*{\newmoon}**\dir{-};
(10,0)*{\otimes}**\dir{-};(-30,-3.5)*{\scriptstyle 1};(-20,-3.5)*{\scriptstyle 2};(0,-3.5)*{\scriptstyle n-1};
(10,-3.5)*{\scriptstyle n};
(-30,2)\ar@/_/@{<-->} (10,2);
\endxy$$ &&\\\hline
sBI, \ $n\geq 1$& $$\xy 
(-10,0)*{\otimes};
(0,0)*{\newmoon}**\dir{-};
(10,0)*{\cdots}**\dir{-};
(20,0)*{\newmoon}**\dir{-};(30,0)*{\newmoon}**\dir{=};
(25,0)*{>};
(-10,-4)*{\scriptstyle 0};(0,-4)*{\scriptstyle 1};(20,-4)*{\scriptstyle n-1};(30,-4)*{\scriptstyle n};
\endxy$$& sGI &$$ \xy(0,5)*{};%(-15,0)*{(\star)};
{\ar@3{-}(0,0)*{\newmoon};(10,0)*{\newmoon}};(5,0)*{<};(-10,0)*{\otimes};(0,0)*{\newmoon}**\dir{-};(-10,-4)*{\scriptstyle 1};(0,-4)*{\scriptstyle 2};(10,-4)*{\scriptstyle 3};
\endxy$$\\ 
sBII, \ $n\geq 1$ & $$\xy 
(-30,-6)*{\ };
(-20,0)*{\newmoon};
(-10,0)*{\otimes}**\dir{-};
(0,0)*{\newmoon}**\dir{-};
(10,0)*{\cdots}**\dir{-};
(20,0)*{\newmoon}**\dir{-};(30,0)*{\newmoon}**\dir{=};
(25,0)*{>};
(-10,-4)*{\scriptstyle 0};(0,-4)*{\scriptstyle 1};(20,-4)*{\scriptstyle n-1};(30,-4)*{\scriptstyle n};(-20,-4)*{\scriptstyle -1}
\endxy$$& &\\
\hline
sCI$,\ n\geq 1$   &$$\xy 
(-10,0)*{\otimes};
(0,0)*{\newmoon}**\dir{-};
(10,0)*{\cdots}**\dir{-};
(20,0)*{\newmoon}**\dir{-};(30,0)*{\newmoon}**\dir{=};
(25,0)*{<};
(-10,-4)*{\scriptstyle 0};(0,-4)*{\scriptstyle 1};(20,-4)*{\scriptstyle n-1};(30,-4)*{\scriptstyle n};
\endxy$$  &sD$\af$I, $\af\in \N$&
$$\xy
(0,0)*{\otimes};(7,5)*{\newmoon}**\dir{-};(0,0)*{\otimes};(7,-5)*{\newmoon}**\dir{-};(2,4)*{\scriptstyle -1};(2,-4)*{\scriptstyle 1+\af};
(-3,0)*{\scriptstyle 1};(10,5)*{\scriptstyle 2};(10,-5)*{\scriptstyle 3};
(0,8)*{};(0,-8)*{};
\endxy$$   \\ 
sCII, \ $n\geq 1$ & $$\xy 
(-20,0)*{\newmoon};
(-10,0)*{\otimes}**\dir{-};
(0,0)*{\newmoon}**\dir{-};
(10,0)*{\cdots}**\dir{-};
(20,0)*{\newmoon}**\dir{-};(30,0)*{\newmoon}**\dir{=};
(25,0)*{<};
(-10,-4)*{\scriptstyle 0};(0,-4)*{\scriptstyle 1};(20,-4)*{\scriptstyle n-1};(30,-4)*{\scriptstyle n};(-20,-4)*{\scriptstyle -1};
\endxy$$& sD$\af$II, $\af\in \N$& $$\xy
(0,0)*{\otimes};(7,5)*{\newmoon}**\dir{-};(0,0)*{\otimes};(7,-5)*{\newmoon}**\dir{-};(2,4)*{\scriptstyle -\af};(2,-4)*{\scriptstyle 1+\af};
(-3,0)*{\scriptstyle 1};(10,5)*{\scriptstyle 2};(10,-5)*{\scriptstyle 3};
(0,8)*{};(0,-8)*{};
\endxy$$\\ \hline
sDI &
$$\xy 
(10,0)*{\newmoon};
(20,0)*{\otimes}**\dir{-};
(27,4)*{\newmoon}**\dir{-};
(20,0)*{\otimes};(27,-4)*{\newmoon}**\dir{-};(10,-4)*{\scriptstyle 1};(20,-4)*{\scriptstyle 2};
(32,4)*{\scriptstyle 3};(32,-4)*{\scriptstyle 4};
\endxy$$ 
\\
sDII$,n \geq 3$&
$$\xy (-10,0)*{\otimes};
(0,0)*{\newmoon}**\dir{-};
(10,0)*{\cdots}**\dir{-};
(20,0)*{\newmoon}**\dir{-};
(27,4)*{\newmoon}**\dir{-};
(20,0)*{\newmoon};(27,-4)*{\newmoon}**\dir{-};(-10,-4)*{\scriptstyle 0};(0,-4)*{\scriptstyle 1};(20,-4)*{\scriptstyle n-2};
(32,4)*{\scriptstyle n-1};(32,-4)*{\scriptstyle n};
\endxy$$
\\
sDIII,\ $n\geq 3$&
$$\xy 
(-20,0)*{\newmoon};
(-10,0)*{\otimes}**\dir{-};
(0,0)*{\newmoon}**\dir{-};
(10,0)*{\cdots}**\dir{-};
(20,0)*{\newmoon}**\dir{-};
(27,4)*{\newmoon}**\dir{-};
(20,0)*{\newmoon};(27,-4)*{\newmoon}**\dir{-};(-10,-4)*{\scriptstyle 0};(0,-4)*{\scriptstyle 1};(20,-4)*{\scriptstyle n-2};
(32,4)*{\scriptstyle n-1};(32,-4)*{\scriptstyle n};(-20,-4)*{\scriptstyle -1};
\endxy$$
\\
sDIV&
$$\xy
(27,8)*{\ };
(27,-8)*{\ };
(20,0)*{\newmoon};(27,5)*{\otimes}**\dir{-};(20,0)*{\newmoon};(27,-5)*{\otimes}**\dir{-};
(27,5)*{\otimes};(27,-5)*{\otimes}**\dir{=};
(20,-4)*{\scriptstyle 1};
(34,5)*{\scriptstyle 2};
(34,-5)*{\scriptstyle 3};
(29,5) \ar@/_/@{<-->} (29,-5);
\endxy$$
\\ \hhline{|-|-|~|~}
\end{tabular}}
\end{table}
 By Remark~\ref{rem:subdiagram}, any (connected) subdiagram of a real rank 1 (super) Satake diagram $\Gamma$ that contains the white node must be the whole diagram $\Gamma$. Accordingly, the real odd/even rank 1 super Satake diagrams are building blocks of a general super Satake diagram (whose underlying Dynkin diagram is one of the basic types in Table~\ref{Table1}). On the other hand, in addition to having the underlying Dynkin diagrams in Table~\ref{Table1}, a general super Satake diagram has all its real odd rank 1 subdiagrams as in Table~\ref{Tableoddrank1} and real even rank 1 subdiagrams as in Table~\ref{table:evenrank1}.

\subsection{The automomorphism $\theta(\I,\tau)$}

%Other than $\Ad(G_\zero)$, we l
Let $T^\vee= \text{Hom}(\Z\Pi,\C^*)$ be the set of group homomorphisms from $\Z\Pi$ to $\C^*$. For $s\in T^\vee$, one defines $\Ad(s)\in \text{Aut}(\g)$ by 
\[
\Ad(s)|_\h=id,\quad \Ad(s)(v)=s(\alpha)v, \; \forall \alpha\in \Phi,v\in \g_\alpha.
\]

We now associate an automorphism of $\g$ to any super admissible pair following \cite{Ko14, BK19}. 
%Fix a total order $>$ on the set $\I$. 
Associated to a super admissible pair $(\I=\I_\circ \cup \I_\bu,\tau)$, we define an element $s(\I,\tau)\in T^\vee$ such that
\begin{equation}
\label{eq:sadmissible}
\begin{aligned}
 s(\I,\tau)(\alpha_i) &=1, & \text{ if } \tau i =i \in \I_\circ \text{ or }i\in \I_\bu,\\
    \frac{s(\I,\tau)(\alpha_i)}{s(\I,\tau)(\alpha_{\tau i})} &=(-1)^{\alpha_i{(2\rho_\bu^\vee)}}, &\text{ if } \tau i \neq i\in \I_\circ.
\end{aligned}
\end{equation}
Such an element always exists. As noted in \cite[Remark 5.2]{BK19}, the properties \eqref{eq:sadmissible} are the only ones that we need for $s$, and hence it is possible to choose $s(i)\in \{1,-1\}$, for all $i \in \I$.
Define
    \begin{equation}
    \label{eq:theta}
        \theta(\I,\tau) :=\Ad(s(\I,\tau))\circ \tau\circ \om\circ \Ad(m_\bu).
    \end{equation}
    
The following is a super generalization of \cite[Theorem 2.5]{Ko14}. 
\begin{proposition}
\label{prop:thetaorder}
    Let $(\I=\I_\circ \cup \I_\bu,\tau)$ be a super admissible pair. Then
    \begin{enumerate}
        \item 
        $\theta(\I,\tau)$ is an automorphism of Lie superalgebra $\g$ of order $2$ or $4$,
  \item 
  $\theta(I,\tau)$ is of order $2$ if and only if $\alpha_i(2\rho_\bu^\vee)\equiv 1 \pmod 2$ for all $i\in I_\circ\cap \Iodd$ such that $\tau i=i$.
    \end{enumerate}
\end{proposition}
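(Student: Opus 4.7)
The plan is to deduce part (1) by computing $\theta(\I,\tau)^2$ explicitly as a $\{\pm 1\}$-valued character on the root-space decomposition of $\g$, and then derive (2) from a case-by-case analysis of this character. That $\theta(\I,\tau)$ is a Lie superalgebra automorphism is immediate from \eqref{eq:theta}, since each of the four factors $\Ad(s(\I,\tau))$, $\tau$, $\om$, $\Ad(m_\bu)$ is one.

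To compute the order, I would first invoke Proposition~\ref{prop:Adm}(iii), which says $\Ad(m_\bu)$ commutes with both $\tau$ and $\om$, to rearrange
\[
\theta(\I,\tau)^2 \;=\; \Ad(s)\,\tau\om\,\Ad(m_\bu)\,\Ad(s)\,\tau\om\,\Ad(m_\bu) \;=\; \Ad(s\cdot s')\,(\tau\om)^2\,\Ad(m_\bu)^2,
\]
where the auxiliary character $s'\in T^\vee$ satisfies $s'(\alpha)=s(w_\bu\tau\alpha)^{-1}$. Each factor is then evaluated on a root space $\g_\alpha$. By Proposition~\ref{prop:Adm}(ii), $\Ad(m_\bu)^2$ acts on $\g_\alpha$ as the scalar $(-1)^{\alpha(2\rho_\bu^\vee)}$. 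A direct computation on Chevalley generators, carefully tracking the signs produced when commuting $\tau$ past $\om$ on odd simple root vectors with $\tau i\neq i$, shows that $(\tau\om)^2$ acts on $\g_{\alpha_i}$ as $(-1)^{p(i)}$ if $\tau i=i$ and as $1$ otherwise, then extends multiplicatively to a character on the root lattice. Since $s$ may be taken to be $\{\pm 1\}$-valued (as noted after \eqref{eq:sadmissible}), $\Ad(s\cdot s')$ is likewise $\pm 1$-valued. Combining these three pieces, $\theta^2$ acts on each $\g_\alpha$ as a sign character $\chi(\alpha)\in\{\pm1\}$, and the identity $\theta^2(h_\alpha)=[\theta^2(e_\alpha),\theta^2(f_\alpha)]=\chi(\alpha)\chi(-\alpha)h_\alpha=h_\alpha$ forces $\theta^2|_\h=\mathrm{id}$. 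Hence $\theta^4=\mathrm{id}$, while $\theta\neq\mathrm{id}$ because $\theta(\g_{\alpha_i})=\g_{-\tau w_\bu\alpha_i}\neq\g_{\alpha_i}$ for any $i\in\I_\circ$; this yields~(1).

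For (2), the equality $\theta^2=\mathrm{id}$ amounts to $\chi(\alpha_i)=1$ for every $i\in\I$, which I would verify by case analysis. For $i\in\I_\bu$ one has $s(\alpha_i)=1$, $w_\bu\tau\alpha_i=-\alpha_i$, $p(i)=0$ and $\alpha_i(2\rho_\bu^\vee)=2$, so $\chi(\alpha_i)=1$. For $i\in\I_\circ$ with $\tau i\neq i$, the factor $(-1)^{\alpha_i(2\rho_\bu^\vee)}$ from $\Ad(m_\bu)^2$ cancels against the sign $s(\alpha_i)/s(\alpha_{\tau i})$ prescribed by \eqref{eq:sadmissible}, while $(\tau\om)^2=1$, again giving $\chi(\alpha_i)=1$. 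For $i\in\I_\circ$ with $\tau i=i$ one obtains the clean formula $\chi(\alpha_i)=(-1)^{p(i)+\alpha_i(2\rho_\bu^\vee)}$: super admissible condition~(3) makes this $1$ for even $i$ and forces it to be $-1$ for $i\in\I_\niso$, whereas for $i\in\I_\iso$ the value is $1$ exactly when $\alpha_i(2\rho_\bu^\vee)$ is odd. Assembling these subcases yields the characterization in~(2).

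The main technical obstacle will be the sign bookkeeping in the computation of $(\tau\om)^2$. In the super setting $\om$ has order $4$ rather than $2$, and the rule $\tau(e_i)=(-1)^{p(i)}e_{\tau i}$ introduces extra signs on odd simple root vectors with $\tau i\neq i$. One must check that these $\Z_2$-signs conspire to give the clean rule that $(\tau\om)^2$ agrees with $\om^2$ on $\tau$-fixed simple root spaces and acts as the identity on the non-$\tau$-fixed odd ones; it is precisely this cancellation that makes the odd Satake nodes with $\tau i\neq i$ (as in sAIII$_{11}$, sAIV, sDIV of Table~\ref{Tableoddrank1}) contribute trivially to the obstruction in~(2).
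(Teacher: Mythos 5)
Your proposal is correct and takes essentially the same approach as the paper: both compute $\theta(\I,\tau)^2$ by commuting $\Ad(m_\bu)$ past $\tau\om$ via Proposition~\ref{prop:Adm}(3), evaluating $\Ad(m_\bu)^2$ via Proposition~\ref{prop:Adm}(2), computing the remaining $(\tau\om)^2$ contribution together with the $\Ad(s)$ corrections on each simple root space, and then invoking the super admissible conditions case by case. Your repackaging of the resulting signs as a multiplicative character $\chi$ on the root lattice and pulling $\Ad(s)$ outside the square via the auxiliary character $s'$ are presentational variations on the paper's grouping $(\Ad(s(\I,\tau))\circ\tau\circ\om)^2\circ\Ad(m_\bu)^2$.
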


\begin{proof}
By Proposition~\ref{prop:Adm}(2), we see that 
\begin{align} \label{Adm2fixehf}
\Ad(m_\bu)^2 \text{ fixes } e_i,f_i,h_i, \quad \text{ for } i\in\I_\bu, 
\end{align}
and moreover, we have 
\begin{align}  \label{Admsquare}
\begin{split}
    \Ad(m_\bu)^2(h_i)=  h_i, \quad
    \Ad(m_\bu)^2(e_i) &= 
    (-1)^{\alpha_i(2\rho_\bu^\vee)}e_i,  \\
    \Ad(m_\bu)^2(f_i) &= 
    (-1)^{\alpha_i(2\rho_\bu^\vee)}f_i,   \qquad \text{ for } i\in I_\circ.
    \end{split}
\end{align}

One checks directly that $\Ad(s(\I,\tau))$ commutes with $\Ad(m_\bu)$. Then, by Proposition~\ref{prop:Adm}(3) $\Ad(m_\bu)$ commutes with both $\tau$ and $\om$. Thus by \eqref{eq:theta} we have 
\begin{align}
\label{eq:thetasquare}
\theta(\I,\tau)^2= (\Ad(s(\I,\tau)) \circ \tau \circ \om)^2  \circ \Ad(m_\bu)^2. 
\end{align}
 On the other hand, by a direct computation using \eqref{eq:Chevalley}, \eqref{eq:tau} and \eqref{eq:sadmissible}, we see that 
\begin{align} \label{fixehf}
(\Ad(s(\I,\tau)) \circ \tau \circ \om)^2 \text{ fixes } e_i,f_i,h_i, \quad \text{ for }i\in\I_\bu, 
\end{align}
and moreover, we have
\begin{align}
\label{eq:square}
\begin{split}
(\Ad(s(\I,\tau)) \circ \tau \circ \om)^2(e_i) &=\begin{dcases}
(-1)^{\alpha_i(2\rho_\bu^\vee)}e_i, & \text{ if } \tau i\neq i,\\
(-1)^{p(i)}e_i, & \text{ if } \tau i=i,
\end{dcases}
\\
(\Ad(s(\I,\tau)) \circ \tau \circ \om)^2(h_i) &=h_i,
\\
(\Ad(s(\I,\tau)) \circ \tau \circ \om)^2(f_i) &=\begin{dcases}
(-1)^{\alpha_i(2\rho_\bu^\vee)}f_i, & \text{ if } \tau i\neq i,\\
(-1)^{p(i)}f_i, & \text{ if } \tau i=i, \quad \text{ for }i\in \I_\circ.
\end{dcases}
\end{split}
\end{align}
%for $i\in \I_\circ$.

It follows by \eqref{Adm2fixehf}, \eqref{eq:thetasquare} and \eqref{fixehf} that $\theta(\I,\tau)^2$ fixes $e_i, f_i, h_i$, for $i\in \I_\bu$. 

It follows by \eqref{Admsquare}, \eqref{eq:thetasquare}, \eqref{eq:square} and Definition~\ref{def:superad}(3) that $\theta(\I,\tau)^2$ fixes $e_i, f_i, h_i$, for $\tau i \neq i\in \I_\circ$ or $\tau i =i \in \I_\circ \cap \Ieven$. (Recall here $p(i)=0$ if and only if $i\in \Ieven$.)

For $\tau i =i \in \I_\circ \cap \Iodd$, it also follows by \eqref{Admsquare}, \eqref{eq:thetasquare} and \eqref{eq:square} again that 
\begin{align*}
\theta(\I,\tau)^2 (h_i) =h_i, \quad
\theta(\I,\tau)^2 (e_i) = -(-1)^{\alpha_i(2\rho_\bu^\vee)} e_i, \quad
\theta(\I,\tau)^2 (f_i) = -(-1)^{\alpha_i(2\rho_\bu^\vee)} f_i. 
\end{align*}

%When $i\in I_\circ$, we consider the following possibilities. If $\tau i\neq i$, then by \eqref{eq:sadmissible} and \eqref{eq:square} we still have $\theta(\I,\tau)^2(e_i)=e_i$, $\theta(\I,\tau)^2(f_i)=f_i$ and $\theta(\I,\tau)^2(h_i)=h_i$. If $\tau i=i$ and $i\in \Ieven\cap \I_\circ$, we see that $\alpha_i(2\rho_\bu^\vee)\equiv 0 \mod 2$ by Condition (3) of Definition~\ref{def:superad}. Hence $\theta(\I,\tau)^2(e_i)=e_i$, $\theta(\I,\tau)^2(f_i)=f_i$ and $\theta(\I,\tau)^2(h_i)=h_i$ as well. If $\tau i=i$ and $i\in \Iodd\cap \I_\circ$, then by \eqref{eq:thetasquare} and \eqref{eq:square}, we see that $\theta(\I,\tau)^2$ preserves $e_i$ and $f_i$ if and only if $(-1)^{\alpha_i(2\rho_\bu^\vee)}=-1$.

Therefore, we conclude that $\theta(I,\tau)$ is always of order $2$ or $4$; moreover, $\theta(I,\tau)$ has order $2$ if and only if $\alpha_i(2\rho_\bu^\vee)\equiv 1 \pmod 2$
%$(-1)^{\alpha_i(2\rho_\bu^\vee)}=-1$
for all $\tau i =i\in I_\circ\cap \Iodd$. 
\end{proof}

In our view, the (quantum) supersymmetric pairs associated with $\theta(\I,\tau)$ of order $2$ are more fundamental and interesting. Such automorphisms of order $2$ for a basic Lie superalgebra $\g$ can be reduced to those in the local real odd rank one cases which are completely determined below. 

\begin{corollary}
   Among the real odd rank one super Satake diagrams in Table~\ref{Tableoddrank1}, the automorphisms $\theta(\I,\tau)$ have order $2$ in exactly the following cases:
   {\rm sAI, sAIII$_{11}$, sAIV, sBII, sCI, sDI, sDIII, sDIV.}
\end{corollary}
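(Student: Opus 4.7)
The plan is to apply the criterion in Proposition~\ref{prop:thetaorder}(2) case by case to each entry of Table~\ref{Tableoddrank1}. Setting $S:=\{i\in I_\circ\cap\Iodd\mid \tau i=i\}$, the criterion says $\theta(\I,\tau)$ is of order $2$ if and only if $\alpha_i(2\rho_\bu^\vee)\equiv 1\pmod 2$ for every $i\in S$.

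The first observation is that the diagrams sAIII$_{11}$, sAIV and sDIV each carry a nontrivial involution $\tau$ that swaps the two isotropic odd simple roots; for these, $S=\emptyset$ and the criterion is vacuously satisfied, so $\theta(\I,\tau)$ has order $2$.

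For the remaining twelve diagrams $\tau=\mathrm{id}$ and $S=\{i_0\}$ is the unique isotropic odd (white) node. In each case I would decompose $\I_\bu$ into its connected components---each a classical simple Lie algebra of type $A$, $B$, $C$, $D$, $F$ or $G$---and write $2\rho_\bu^\vee=\sum_{j\in\I_\bu}c_j h_j$ using the standard formulas for twice the half-sum of positive coroots. The pairing
\[
\alpha_{i_0}(2\rho_\bu^\vee)=\sum_{j\in\I_\bu}c_j\,a_{j,i_0}
\]
receives nonzero contributions only from those $j\in\I_\bu$ adjacent to $i_0$ in the Dynkin diagram---typically one or two nodes---and its parity can be read off directly from the local Cartan data. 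For illustration: in sAI with $\I_\bu=\{2\}\cong A_1$, the coefficient $c_2=1$ and $a_{2,1}=\pm1$, giving odd parity (order~$2$); in sBI with $\I_\bu\cong B_n$ attached to $i_0=0$ via node~$1$, the coefficient of $h_1$ in $2\rho^\vee_{B_n}$ is $2n$, so $\alpha_0(2\rho_\bu^\vee)=2n\,a_{1,0}$ is even (order~$4$); in sCI with $\I_\bu\cong C_n$, the coefficient of $h_1$ in $2\rho^\vee_{C_n}$ is $2n-1$, yielding odd parity (order~$2$); in sDI with $\I_\bu$ three disjoint copies of $A_1$, $\alpha_2(2\rho_\bu^\vee)$ is the sum of three $\pm 1$ values, hence odd. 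The diagrams sBII, sCII, sDIII differ from sBI, sCI, sDII by an additional $A_1$ component $\{-1\}$ directly adjacent to $i_0$, which contributes an extra $a_{-1,i_0}=\pm 1$ and flips the parity of the preceding case.

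The remaining diagrams sAII, sFI, sFII, sGI, sD$\alpha$I, sD$\alpha$II are treated identically, using the standard formulas for $2\rho^\vee$ of the relevant exceptional or parametric root systems. Collating the parity outcomes across all diagrams reproduces the list sAI, sAIII$_{11}$, sAIV, sBII, sCI, sDI, sDIII, sDIV in the statement. The main technical nuisance is the careful tracking of sign conventions in the super Cartan matrix (where $d_i$ can be negative for isotropic simple roots) and the correct use of the super Cartan-matrix entries in the parameter-dependent cases sD$\alpha$I and sD$\alpha$II; however, each individual verification is a short arithmetic check.
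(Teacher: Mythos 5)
Your proposal is correct and takes the same route as the paper: the paper's proof of this corollary consists entirely of the statement that it ``follows by case-by-case computations of $\alpha_i(2\rho_\bu^\vee)$,'' which is exactly the application of Proposition~\ref{prop:thetaorder}(2) that you carry out. One small caution: in your sBI illustration the coefficient of $h_1$ in $2\rho_{B_n}^\vee$ is $2n$ only for $n\geq 2$ (for $n=1$ it is $1$, but then $a_{1,0}=-2$ because node~$1$ is the short root, so $\alpha_0(2\rho_\bu^\vee)$ is still even); this doesn't change the conclusion, but it is the kind of boundary-case subtlety you should watch for when filling in the remaining diagrams.
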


\begin{proof}
    Follows by case-by-case computations of  $\alpha_i(2\rho_\bu^\vee)$. We skip the details. 
\end{proof}
Note that $\theta(\I,\tau)$ preserves $\h$ and  $\theta(\I,\tau)= -\tau w_\bu$ when acting on $\h$.

\begin{example}
\label{example:order}
    In Example~\ref{example:admissible}(1),(2),(3),  $\theta(\I,\tau)$ is of order $2$, $4$, $2$, respectively.
\end{example}

\begin{remark} \label{rem:2and4}
Linear automorphisms of order $2$ and $4$ of basic Lie superalgebras of type A--D were classified in \cite[Tables 2,\,5]{Ser83}. It is a nontrivial question to relate the automorphisms $\theta(\I,\tau)$ here to those constructed {\em loc. cit.} Our list and Serganova's lists cannot be the same because the split odd rank one super Satake diagram (see Example~\ref{example:nonadmissible}(2)) is excluded here. For example, the Chevalley involution $\omega$ (of order $4$) is not on our list. On the other hand, distinct super Satake diagrams can provide involutions of $\g$ in the same conjugacy class; see Example~\ref{ex:superA} below (also see Remark~\ref{rem:yesno}). 
\end{remark}

\subsection{Supersymmetric pairs}
Fix a super admissible pair $(\I=\I_\circ \cup \I_\bu,\tau)$ and recall $\theta=\theta(\I,\tau)$ from \eqref{eq:theta}. We denote by $\mathfrak t$ the subalgebra of $\mathfrak g$ generated by 
\begin{align} \label{eq:t}
    \set{e_i,f_i\mid i\in \I_\bu}\cup \set{h\in \h \mid \theta(h)=h} \cup \set{f_i+\theta(f_i)\mid i\in \I_\circ}.
\end{align}
Clearly we have that $\t +\mathfrak b^+ =\g$. The corresponding generators of the universal enveloping algebra $\U(\t)$ can be modified
by some constant terms as follows.
\begin{lemma}
   Let $\kappa_i\in \C$, for $i\in \I_\circ$. The algebra $\U(\t)$ is generated by
    \begin{align*}
    \{e_i,f_i\mid i\in \I_\bu\}\cup \{h\in \h\mid \theta(h)=h \}\cup \{f_i+\theta(f_i)+\kappa_i\mid i\in \I_\circ\}.
\end{align*}
\end{lemma}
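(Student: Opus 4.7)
The plan is to observe that modifying a generator by an additive scalar never changes the subalgebra generated, since the unital algebra $U(\mathfrak t)$ already contains all scalars. So I would simply show that the two generating sets generate the same subalgebra of $U(\g)$, and since the first one generates $U(\mathfrak t)$ by definition, so does the second.

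More precisely, let $A$ denote the subalgebra generated by the original set $\{e_i,f_i\mid i\in \I_\bu\}\cup \{h\in \h\mid \theta(h)=h\}\cup \{f_i+\theta(f_i)\mid i\in \I_\circ\}$, so $A = U(\mathfrak t)$, and let $A'$ denote the subalgebra generated by the modified set with $f_i+\theta(f_i)$ replaced by $f_i+\theta(f_i)+\kappa_i$. Since $1\in A$, each new generator $f_i+\theta(f_i)+\kappa_i = (f_i+\theta(f_i))+\kappa_i\cdot 1$ lies in $A$, giving $A'\subseteq A$. Conversely $1\in A'$, hence $f_i+\theta(f_i) = (f_i+\theta(f_i)+\kappa_i) - \kappa_i\cdot 1 \in A'$, giving $A\subseteq A'$. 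All the remaining generators coincide in the two sets, so we conclude $A=A'=U(\mathfrak t)$.

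There is essentially no obstacle here: the lemma is a trivial but useful bookkeeping statement that legitimizes the shifted presentation needed later for parameters $\kappa_i$ in the definition of the Cartan-modified generators $B_i$ of the $\imath$quantum supergroup (as alluded to in the Introduction following \eqref{eq:parameter}). Its only content is that the constants $\kappa_i$ may be absorbed into the unit of $U(\mathfrak t)$.
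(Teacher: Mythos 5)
Your proof is correct, and it is exactly the obvious argument: since $U(\t)$ is unital, shifting generators by scalars does not change the generated subalgebra. The paper states this lemma without any proof (it is treated as immediate), so your write-up supplies precisely the implicit reasoning.
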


We will refer to $(\g,\t)$ as the {\em supersymmetric pair} associated with $(\I=\I_\circ \cup \I_\bu,\tau)$. When $\g$ is semisimple or Kac-Moody Lie algebra, $\t$ is a fixed point subalgebra by an involution, and the generators and relations of the subalgebra $\t$ were first given in \cite[Theorem 1.31]{Ber89}. Note that $\theta=\theta(\I,\tau)$ is of order $2$ if and only if $\mathfrak t$ is the $\theta$-fixed point subalgebra of $\g$. We will see, from the examples below, that (the even part of) $\t$ may not be reductive. This super phenomenon is reminiscent of the non-reductive Lie algebras associated to generalized Satake diagrams studied in \cite{RV20}.
Set $[a,b]=ab-(-1)^{p(a)p(b)}ba$
for homogeneous $a,b\in \U(\g)$ and extend $[\cdot,\cdot]$ bi-linearly.

%When $\g$ is a semisimple Lie algebra, a complete set of defining relations of $\U(\t)$ was obtained in \cite{Ber89}. 

\begin{example}
    (1) Recall from Example~\ref{example:admissible}(1) the following diagram
    {\rm
$$\hspace{.75in}\xy
(-5,0)*{\otimes};(5,0)*{\newmoon}**\dir{-};
(-5,-4)*{\scriptstyle 1};(5,-4)*{\scriptstyle 2};
\endxy$$}
where $\mathfrak g=\gl(1|2)$, $\I_\bu=\set{2}$ and $\tau=id$. By Example~\ref{example:order}, $\theta$ has order $2$ and $\t=\g^\theta$.
Then $\t$ is generated by $e_2,f_2,h_2,f_1+[e_2,e_1]$, and $\t\cong \mathfrak{osp}(1|2)$.

(2) Recall from Example~\ref{example:admissible}(2)  the following diagram 
\rm{$$
 \xy
(-10,.75)*{};
(-10,0)*{\newmoon};(0,0)*{\otimes}**\dir{-};(10,0)*{\newmoon}**\dir{-};(-10,-4)*{\scriptstyle 0};
(0,-4)*{\scriptstyle 1};(10,-4)*{\scriptstyle 2};
\endxy
$$} 
\noindent where $\mathfrak g=\gl(2|2)$, $\I_\bu=\set{0,2}$ and $\tau=id$. By Example~\ref{example:order}, $\theta$ has order $4$, and hence $\t$ is not the fixed point subalgebra $\g^\theta$.  In this case $\t$ is generated by $e_0,e_2,f_0,f_2,f_1+[e_0,[e_2,e_1]]$ and certain Cartan elements. We observe that the even part $\mathfrak{t}_{\overline{0}} \cong \mathfrak{sl}(2) \oplus \mathfrak{sl}(2)$ and the odd part $\mathfrak{t}_{\overline{1}}$ has dimension $4$. There is no such simple Lie superalgebra.
\end{example}

\begin{example} [General super type A Satake diagrams with $\tau =\text{id}$]
 \label{ex:superA}
Let $a\in \N$ and $m,n_1, \ldots, n_a,k \in \N$. Set  
\[
N_0 =m +n_1 + \ldots + n_a +k, \qquad 
N_1=2(a+1), \qquad N =N_0 +N_1. 
\]
Let $V =\C^{m|2|n_1|2|n_2|2 | \ldots | n_a|2|k}$ be a vector superspace, which is isomorphic to $\C^{N_0|N_1}$. Denote by $I_n$ the identity matrix and denote 
$  S= \begin{pmatrix}
    0 & 1  \\
    -1 & 0
\end{pmatrix}.
$ We introduce an $N\times N$-matrix in the $(m|2|n_1|2|n_2|2 | \ldots | n_a|2|k)$-block diagonal form 
\[
J= J_{m,n_1, \ldots, n_a,k} ={\rm diag } (I_m, S, I_{n_1}, S, I_{n_2}, S, \ldots, I_{n_a}, S, I_k).
\]
and regarded it as an element in $\End (V)$. In particular, $J$ can be used to define a non-degenerate supersymmetric bilinear form on $V$.

Introduce the following shorthand notations of local diagrams 
{\rm
\[
{}_m\XBox :=\xy
(-5,0)*{\fullmoon};(0,0)*{\fullmoon}**\dir{-};(5,0)*{\cdots}**\dir{-};(10,0)*{\fullmoon}**\dir{-};(15,0)*{\otimes}**\dir{-};
(-5,-4)*{\scriptstyle m};(0,-4)*{\scriptstyle m-1};(10,-4)*{\scriptstyle 2};(15,-4)*{\scriptstyle 1};
\endxy\ ,
\quad
\XBox_k :=\xy
(-5,0)*{\otimes};(0,0)*{\fullmoon}**\dir{-};(5,0)*{\cdots}**\dir{-};(10,0)*{\fullmoon}**\dir{-};(15,0)*{\fullmoon}**\dir{-};
(-5,-4)*{\scriptstyle 1};(0,-4)*{\scriptstyle 2};(10,-4)*{\scriptstyle k-1};(15,-4)*{\scriptstyle k};
\endxy\ ,
\quad
\davidsstar_n :=\xy
(-5,0)*{\otimes};(0,0)*{\fullmoon}**\dir{-};(5,0)*{\cdots}**\dir{-};(10,0)*{\fullmoon}**\dir{-};(15,0)*{\otimes}**\dir{-};
(0,-4)*{\scriptstyle 1};(10,-4)*{\scriptstyle n-1};
\endxy
\]
}
for $m,k,n\geq 1$. For example, we have {\rm$\davidsstar_1= \xy
(0,0)*{\otimes};(5,0)*{\otimes}**\dir{-};
\endxy$} and {\rm$\XBox_1={}_1\XBox= \otimes$}. We also set {\rm$\XBox_0={}_0\XBox=\varnothing$} (which are to be removed) and {\rm$\davidsstar_0= \fullmoon$} (the even split rank one Satake diagram). 

Consider the following diagram
{\rm{
\begin{equation}
    \label{eq:splitA}
\xy
(0,0)*{{}_m\XBox};(8,0)*{\newmoon}**\dir{-};(16,0)*{\davidsstar_{n_1}}**\dir{-};(24,0)*{\newmoon}**\dir{-};(32,0)*{\davidsstar_{n_2}}**\dir{-};(40,0)*{\newmoon}**\dir{-};(47,0)*{\cdots}**\dir{-};(56,0)*{\davidsstar_{n_a}}**\dir{-};(64,0)*{\newmoon}**\dir{-};(72,0)*{\XBox_k}**\dir{-};
\endxy
\end{equation} }}
with the underlying Lie superalgebra $\g=\gl(m|2|n_1|2|n_2|2|\cdots|n_a|2|k) \cong \gl(N_0|N_1)$ and $\tau=id$. By Table \ref{table:evenrank1} (Type AII$_{3}$), Example~\ref{example:admissible}(1) and Proposition~\ref{prop:rank1}, we see that \eqref{eq:splitA} is super admissible, and every {\em genuinely super} type sA Satake diagram with $\tau =\text{id}$ must be of the form \eqref{eq:splitA}. By Proposition~\ref{prop:thetaorder}, the automorphism $\theta$ of $\g$ associated with \eqref{eq:splitA} has order $2$. 

Note that the equation 
\begin{align} \label{JX}
    X^{\rm{st}}J+JX=0
\end{align}
defines a subalgebra of $\gl(N_0|N_1)$ which can be identified with the ortho-symplectic Lie superalgebra $\mathfrak{osp}(N_0|N_1)$ (see \cite[\S 1.1.3]{CW12}), where $X^{\rm{st}}$ denotes the supertranspose \cite[(1.5)]{CW12}.
By checking on the generators of $\t$, one shows that every element $X$ in the fixed-point subalgebra $\t=\g^\theta$ satisfies
the equation \eqref{JX}. One then shows that $\t= \mathfrak{osp}(N_0|N_1)$ by a counting argument (either by a brute force computation or by the classical version of the basis Theorem \ref{thm:basis}). 

It follows that $\theta = -\Ad(J^{-1}) \circ \rm{st}$. In particular, various super Satake diagrams can give rise to isomorphic supersymmetric pairs (e.g. associated with different diagrams \eqref{eq:splitA} for fixed $N_0, N_1$). Compare Remark~\ref{rem:yesno} below on the quantum setting.
\end{example}

\begin{remark}
Similar to \eqref{eq:splitA}, one can draw precisely general super Satake diagrams of type sB, sC, sD (and also type sA with $\tau \neq \text{id}$), based on the corresponding odd real rank one super Satake diagrams in Table \ref{Tableoddrank1}. 
\end{remark}

%\textcolor{blue}
%    {\begin{remark}
%To  exclude the two diagrams in \eqref{exclude}, we may add two conditions
%\[
%$\text{ if } (i,j,k)\in \Iodd\times\I_\bu\times \I_\bu, \tau i=i, w_\bu(\alpha_i)=\alpha_j+\alpha_i+\alpha_k, \text{ then } -w_\bu(\alpha_j+\alpha_k)\neq \alpha_j+\alpha_k 
%\]
%and
%\[
%\text{ if $(i,j)\in \Iodd\times \I_\bu$, $\tau i=i$ and $w_\bu(\alpha_i)=\alpha_i+\alpha_j$, then $a_{ij}\neq \pm 3$}
%\]
%I think after weakening (3) and adding these two, our notion of super admissible pairs corresponds to $\t\cap \h=\h^\theta$. 
%\end{remark}    }

\section{Quantization of the automorphism $\theta(\I,\tau)$} 
\label{sec:quantum involution}
In this section we review basic constructions of quantum supergroups and construct a quantum analogue of the automorphism $\theta(I,\tau)$ from \eqref{eq:theta}.

\subsection{Quantum supergroups}

 Recall that $\g=\n^-\oplus \h \oplus \n^+$ is the triangular decomposition of a basic Lie superalgebra $\g$. Set 
 \[
 q_i:=q^{d_i},
 \]
 for $d_i$ see \eqref{di}. Denote the quantum integers and quantum binomial coefficients by
\[
[a]_i=\frac{q_i^a-q_i^{-a}}{q_i-q_i^{-1}},
\qquad
\qbinom{a}{k}_{i} =\frac{[a]_i [a-1]_i \ldots [a-k+1]_i}{[k]_i!}
\] 
for $a\in \Z, k \in \N,i\in \I$.
%It will also be convenient for us to introduce the following notation. We will say $i,j\in I$ are {\em connected} if they are joined by some edges in the corresponding Dynkin diagram and write $i\sim j$. Otherwise, we say $i,j\in I$ are {\em not connected} and write $i\nsim j$.

Recall $X=\Z\Pi$ and $Y=\Z\Pi^\vee$. A quantum supergroup $\breve{\U}_q(\g)$ (cf. \cite{Ya94}) is the unital associative algebra over $\C(q)$ with generators $ E_i,\  F_i,\  (i\in I),\ K_h\ (h\in Y)$ which satisfy the following relations, for $h,h'\in Y, i,k\in I$:
\begin{equation}
\begin{aligned}
\label{eq:Urelation}
    K_0=1,\quad K_{h}K_{h'} & =K_{h+h'},\\
      K_h E_i=q^{\alpha_i(h)}E_i K_h,  & \qquad
  K_h F_i=q^{-\alpha_i(h)}F_i K_h,\\
      E_i F_k-(-1)^{p(i)p(k)}F_k E_i &=\delta_{i,k}\frac{K_i-K_i^{-1}}{q_i- q_i^{-1}},
      \quad \text{ where } K_i :=K_{h_i}^{d_i},
    \end{aligned}
\end{equation}
%where $K_i :=K_{h_i}^{d_i}$,
together with the Serre-type relations among $E_i$'s (and respectively, among $F_i$'s) specified by the non-commutative Serre polynomials listed in Table~\ref{TableSerrePolyn}, see also \cite{CHW16}. 

The algebra $\breve{\U}_q(\g)$ is a Hopf superalgebra but not a Hopf algebra, and there is a simple way to modify this below if one prefers to work with Hopf algebras (cf. \cite{Ya94}). Define an algebra involution $\new$ of parity $0$ on $\breve{\U}_q(\g)$ as follows:
\begin{equation}
\label{eq:new}
    \new(K_h)=K_h,\quad \new(E_j)=(-1)^{p(j)}E_j\text{ and }
    \new(F_j)=(-1)^{p(j)}F_j,\quad \forall h\in Y,j\in \I.
\end{equation}

Let 
\[
\U=\breve{\U}_q(\g)\oplus \breve{\U}_q(\g)\new.
\] 
Then we extend the algebra structure on $\breve{\U}_q(\g)$ to $\U$ by declaring  
\begin{equation}
\label{eq:newrelation}
\new^2=1,\quad x\cdot \new= \new \cdot \new(x), \quad \forall x\in \breve{\U}_q(\g).
\end{equation}
One verifies that $\U$ is a Hopf algebra with the coproduct $\Delta$, counit $\epsilon$ and antipode $S$ given by\begin{equation}
\label{eq:Hopf}
    \begin{aligned}
    &\Delta(E_i)=E_i\otimes1 +\new^{p(i)}K_i\otimes E_i, &\epsilon(E_i)=0,\quad &S(E_i)=-\new^{p(i)}K_i^{-1}E_i.\\
    &\Delta(F_i)=F_i\otimes K_i^{-1}+\new^{p(i)}\otimes F_i, &\epsilon(F_i)=0,\quad&S(F_i)=-\new^{p(i)}F_iK_i.\\
    &\Delta(K_h)=K_h\otimes K_h, &\epsilon(K_h)=1,\quad &S(K_h)=K_{-h},\\
    &\Delta(\new)=\new\otimes \new,&\epsilon(\new)=1,\quad &S(\new)=\new.
    \end{aligned}
    \end{equation}

Let $\U^+,\U^-,\U^0$ denote the subalgebra of $\U$ generated by $\set{E_i\mid i\in \I},\ \set{F_i\mid i\in \I}$ and $ \set{\new,K_h\mid h\in Y}$ respectively. As in \cite{Lus93}, the multiplication map gives a triangular decomposition of $\U$: $\U\cong \U^+\otimes \U^0\otimes \U^-.$ We denote
 \[ 
 \U_\beta:=\set{u\in \U\mid K_iuK_i^{-1}=q^{(\alpha_i,\beta)}u,\ \forall i\in \I},
 \qquad \text{ for } \beta\in X.
 \]

 The Chevalley involution, denoted by $\om$, is an automophism of the algebra $\U$ such that
 \[
\om(E_i)=-F_i,\quad \om(F_i)=-(-1)^{p(i)}E_i,\quad \om (K_h)=K_{-h},\quad \om(\new)=\new.
 \]
The algebra $\U$ contains the following subalgebra 
\begin{align}
\label{eq:Uprime}
    \U':= \langle E_i, F_i, K_i^{\pm 1}, \new \mid i\in \I \rangle
\end{align}
which is actually a Hopf subalgebra by inspection. 
Let $\C(q)[X]$ be the group algebra of the root lattice $X$. There is an inclusion of algebras $\C(q)[X]\hookrightarrow {\U^0}$ such that $\alpha_i\mapsto K_i$, and hence we write
\[K_\beta:=\prod_{i\in \I}K_i^{n_i}
\qquad
\text{for } \beta=\sum_{i\in \I}n_i\alpha_i \in X.
\]

\subsection{Braid group symmetries}
Let $\sigma$ denote the algebra anti-automorphism of $\U$ such that 
\begin{equation}
\label{eq:sigma}
\begin{aligned}\sigma(E_j)=E_j,\quad\sigma(F_j)=F_j,\quad \sigma(K_{h_i})=(-1)^{\frac{p(i)}{d_i}} K_{-h_i},\quad \sigma(\new)=\new.
\end{aligned}
\end{equation}
Here and below we fix some roots of unity $(-1)^{\frac{1}{d_i}}$ for $i\in \Iodd$. It follows from $K_i =K_{h_i}^{d_i}$ that 
\[
\sigma(K_{i})=(-1)^{p(i)} K_i^{-1},
\]
and $\sigma$ restricts to an anti-involution on the subalgebra $\U'$ given in \eqref{eq:Uprime}. 

Recall that Lusztig \cite[\S 37.1.3]{Lus93} has defined automorphisms $T_{i,e}'$ and $T_{i,e}''$ of quantum groups. For $i\in \Ieven$ and $e\in \{\pm 1\}$,  $T_{i,e}'$ and $T_{i,e}''$ naturally extend to be algebra automorphisms of $\U$; cf. \cite{Ya99}. In particular, we have the following formulas for the automorphisms $T_{i,e}',\ T_{i,e}'':\U\to\U$: 
\begin{equation}
\label{eq:Ti'}
    \begin{aligned}
     &T_{i,e}' (E_i)=- K^{e}_{i}F_i, \quad T_{i,e}'(E_j)=  \sum_{r+s=|a_{ij}|} (-1)^r  q_i^{er} E^{(r)}_i E_j E^{(s)}_i \quad\mathrm{ for }~j\neq i,\\
        &T_{i,e}' (F_i)=- E_iK_{i}^{-e}, \quad T_{i,e}'(F_j)=  \sum_{r+s=|a_{ij}|} (-1)^r  q_i^{-er} F^{(s)}_i F_j F^{(r)}_i \quad\mathrm{ for }~j\neq i, \\
        & T_{i,e}'(K_h)=K_{r_i(h)},\quad T_{i,e}'(\new)=\new,
    \end{aligned}
\end{equation}
and
\begin{equation} \label{eq:Ti}
    \begin{aligned}
        &T_{i,-e}'' (E_i)=-F_i K^{-e}_{i}, \quad T_{i,-e}''(E_j)=  \sum_{r+s=|a_{ij}|} (-1)^r q_i^{er} E^{(s)}_i E_j E^{(r)}_i \quad\mathrm{ for }~j\neq i,\\
        &T_{i,-e}'' (F_i)=- K_{i}^{e}E_i, \quad T_{i,-e}''(F_j)=  \sum_{r+s=|a_{ij}|} (-1)^rq_i^{-er} F^{(r)}_i F_j F^{(s)}_i \quad\mathrm{ for }~j\neq i, \\
        & T_{i,-e}''(K_h)=K_{r_i(h)},\quad T_{i,-e}''(\new)=\new,
    \end{aligned}
\end{equation}
where $j\in \I, h\in Y$, $E^{(t)}_i=E_i^{t}/[t]_i!,\ F^{(t)}_i=F_i^{t}/[t]_i!$.

We often write $T_i:=T_{i,+1}''$ for simplicity. The following is well known in the quantum group setting \cite{Lus93}.

\begin{lemma}
\label{lemma:Ti-1}
    We have $T_i^{-1}=\sigma \circ T_i\circ \sigma^{-1} =T_{i,-1}':\U \to \U$ for any $i\in \I_\bu$.
\end{lemma}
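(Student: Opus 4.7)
The plan is to verify both claimed equalities on algebra generators of $\U$, after first checking that both sides are genuine algebra automorphisms. Since $T_{i,-1}'$ is an automorphism by construction and $\sigma$ is an algebra anti-automorphism with $\sigma^2=\mathrm{id}$ (easily checked on generators, so $\sigma^{-1}=\sigma$), the conjugate $\sigma\circ T_i\circ\sigma^{-1}$ is an automorphism as well: anti $\circ$ auto $\circ$ anti is auto. Hence it suffices to check both identities on $E_j,F_j$ $(j\in\I)$, $K_h$ $(h\in Y)$, and $\new$.

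For the first equality $T_i^{-1}=T_{i,-1}'$, the task reduces to verifying $T_{i,+1}''\circ T_{i,-1}'=\mathrm{id}$ on generators. Since $i\in\I_\bu\subset\Ieven$, the elements $E_i,F_i,K_i$ are even and no super signs appear, so the calculation is identical to Lusztig's classical one in \cite[\S 37.1.3]{Lus93}; on the Cartan part this follows from $r_i^2=\mathrm{id}$ together with $T_i(K_h)=T_{i,-1}'(K_h)=K_{r_i(h)}$, and both maps fix $\new$.

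For the second equality, the computation on $E_i,F_i$ uses $\sigma(E_i)=E_i$, $\sigma(F_i)=F_i$, and $\sigma(K_i)=K_i^{-1}$ (since $p(i)=0$):
\[
\sigma T_i\sigma(E_i)=\sigma(-F_iK_i)=-K_i^{-1}F_i=T_{i,-1}'(E_i),
\qquad
\sigma T_i\sigma(F_i)=\sigma(-K_i^{-1}E_i)=-E_iK_i=T_{i,-1}'(F_i).
\]
For $j\neq i$, anti-multiplicativity of $\sigma$ together with $\sigma(E_k)=E_k$ reverses the monomial $E_i^{(s)}E_jE_i^{(r)}$ appearing in $T_i(E_j)$ into $E_i^{(r)}E_jE_i^{(s)}$; swapping the summation indices $r\leftrightarrow s$ in \eqref{eq:Ti} then reproduces exactly the formula \eqref{eq:Ti'} for $T_{i,-1}'(E_j)$. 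The analogous calculation handles $F_j$, while the Cartan and $\new$ parts are direct.

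The one subtlety worth flagging is whether anti-multiplicativity of $\sigma$ could produce unwanted super signs when reordering the monomials $E_i^{(s)}E_jE_i^{(r)}$ in the case $j\in\Iodd$. Because $\sigma$ is declared in \eqref{eq:sigma} as a plain (not super) anti-automorphism, and because $E_i$ is even for $i\in\I_\bu$, this reordering is sign-free and the whole argument reduces cleanly to manipulating Lusztig-type formulas. This is precisely the step where the hypothesis $i\in\I_\bu$ (rather than a general $i\in\I$) is used.
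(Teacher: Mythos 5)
Your proof is correct and takes essentially the same approach as the paper: the paper's one-line proof cites Heckenberger for the formula $T_i^{-1}=T_{i,-1}'$ and then says the rest follows ``by checking directly on the generators of $\U$,'' which is exactly what you do, spelling out the check. The only small blemish is the phrase ``swapping the summation indices $r\leftrightarrow s$''---after applying $\sigma$ to $T_i(E_j)$ the resulting sum $\sum_{r+s=|a_{ij}|}(-1)^rq_i^{-r}E_i^{(r)}E_jE_i^{(s)}$ already \emph{is} the formula \eqref{eq:Ti'} for $T_{i,-1}'(E_j)$ verbatim, so no relabeling is needed; what you presumably mean is that \eqref{eq:Ti} and \eqref{eq:Ti'} differ precisely by reversing the positions of the two $E_i$-factors, which is what the anti-automorphism $\sigma$ effects, and that is fine.
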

\begin{proof}
    The formulas for $T_i^{-1}$ can be found in \cite[Lemma 6.6]{H10}. Then the lemma follows by checking directly on the generators of $\U$. 
\end{proof}
According to \cite[\S 6.3]{H10} (cf. \cite{Lus93}), the automorphisms $T_i$, for $i \in \Ieven$, satisfy the braid group relations. Consequently, for a given super admissible pair $(\I=\I_\circ \cup \I_\bu,\tau)$, we can define an algebra automorphism $T_w=T_{i_1}\cdots T_{i_k}$ for any reduced word $w=r_{i_1}\cdots r_{i_k}\in W_\bu$. In particular, one has $T_w(K_\mu)=K_{w(\mu)}$ for any $\mu\in Y$.

Denoted by $\U_{\bu}= \U^+_{\bu}\U^0_{\bu}\U^-_{\bu}$ the quantum group associated with $\I_\bu\subset \Ieven$, where  $\U^+_{\bu},\ \U^-_{\bu}$ and $\U^0_{\bu}$ denote the subalgebras generated by $\set{E_i\mid i\in \I_\bu},\ \set{F_i\mid i\in \I_\bu}$ and $\set{K_i^{\pm 1}\mid i\in \I_\bu}$, respectively; note that $\Ub^0$ does not contain $\new$. %We can view $\U_\bu$ as a subalgebra of $\Ui$ naturally. 

\subsection{Quantum analogue of $\theta(\I,\tau)$}

In this subsection we define a quantum analogue of $\theta(\I,\tau)$ given in  \eqref{eq:theta}, following \cite[\S 3.3]{Ko14}.

We first present some results by K\'eb\'e \cite{K99} and their straightforward super generalization in our setting. For any $j \in \I_\circ$, the subspace $\ad(\Ub)(E_j)$ is finite dimensional and contained in $\U^+$. This follows from the triangular decomposition of $\Ub$, the fact that $\ad(F_k)(E_j)=0$ for any $k\in \I_\bu$, $\ad(E_k)(E_j)\in \U^+$ and the quantum Serre relations of type (N-ISO) in Table~\ref{TableSerrePolyn}. Moreover, the $\ad(\U_\bu)$-module $\ad(\Ub)(E_j)$ is irreducible. Define $V_\bu^+$ to be the subalgebra generated by the elements of all
the finite dimensional subspace $\ad(\U_\bu)(E_j)$ for $j\in \I_\circ$. As proved in \cite{K99} we have a vector space isomorphism
\begin{equation}
\label{eq:VUb}
    \U^+\cong V_\bu^+\otimes \U_\bu^+.
\end{equation}

 The following lemma determines the action of $T_{w_\bu}$ on $E_j$ for $j\in \I_\circ$ (regardless of the parity of $j$). 
 
\begin{lemma}{\rm \cite[Lemma 3.5]{Ko14}}\label{lemma:hwt}
Let $j\in \I_\circ$.
\begin{enumerate}
    \item 
 The space $\ad(\Ub)(E_j)$ of $\U^+$ is a finite dimensional, irreducible submodule of $\U$ with highest weight vector $T_{w_\bu}(E_j)$ and lowest weight vector $E_j$.
 \item 
 The space $\ad(\Ub)(F_jK_j)$ of $S(\U^-)$ is a finite dimensional, irreducible submodule of $\U$ with highest weight vector $F_jK_j$ and lowest weight vector $T_{w_\bu}(F_jK_j)$.
\end{enumerate}
\end{lemma}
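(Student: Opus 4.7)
My plan is to adapt Kolb's classical argument \cite[Lemma~3.5]{Ko14} to the super setting. The key observation is that the hypothesis $\I_\bu\subseteq\Ieven$ makes $\Ub$ an ordinary (non-super) quantum group of finite type, and every $\ad(E_k)$ and $\ad(F_k)$ with $k\in\I_\bu$ acts on $\U$ without super-signs since $p(k)=0$. Consequently the classical argument carries over inside the super ambient algebra. I describe the proof of~(1); part~(2) is symmetric.

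The first step is to verify that $E_j$ is a lowest weight vector for the $\ad(\Ub)$-action. Using $\Delta(F_k)=F_k\otimes K_k^{-1}+1\otimes F_k$ and $S(F_k)=-F_kK_k$, one computes
\[
\ad(F_k)(E_j)=F_kE_jK_k-E_jF_kK_k=[F_k,E_j]K_k, \qquad k\in\I_\bu,
\]
which vanishes because $p(k)=0$ reduces the super-commutator to the ordinary commutator, and the Chevalley relation yields $[F_k,E_j]=0$ for $j\neq k$. Combined with $\ad(K_h)(E_j)=q^{\alpha_j(h)}E_j$, this exhibits $E_j$ as a lowest weight vector of weight $\alpha_j$, and $M:=\ad(\Ub)(E_j)$ equals $\ad(\Ub^+)(E_j)$.

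Next, for finite-dimensionality and irreducibility, I would invoke the Serre relations in Table~\ref{TableSerrePolyn}. Since every $k\in\I_\bu$ is even, the relation between $E_k$ and $E_j$ is of type (N-ISO), regardless of the parity of $j$, and yields $\ad(E_k)^{1-a_{kj}}(E_j)=0$. Together with $\ad(F_k)(E_j)=0$ this integrates the $\Ub$-action on $E_j$, and standard finite-type quantum-group theory identifies $M$ with the unique finite-dimensional irreducible $\Ub$-module of lowest weight $\alpha_j$.

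To identify $T_{w_\bu}(E_j)$ with the highest weight vector of $M$, note first that $T_{w_\bu}(K_h)=K_{w_\bu(h)}$ forces $T_{w_\bu}(E_j)$ to have weight $w_\bu(\alpha_j)$, which is the unique highest weight of $M$. By Lusztig's rank-one formula~\eqref{eq:Ti}, $T_i(E_{j'})$ for $i\neq j'$ coincides up to a nonzero scalar with $\ad(E_i)^{-a_{ij'}}(E_{j'})$; iterating along a reduced expression $w_\bu=s_{i_1}\cdots s_{i_r}$ keeps the result inside $M=\ad(\Ub^+)(E_j)$, so $T_{w_\bu}(E_j)$ spans the one-dimensional highest weight line of $M$. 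For part~(2), the parallel computation
\[
\ad(E_k)(F_jK_j)=q^{-(\alpha_j,\alpha_k)}F_jK_jE_k-q^{-(\alpha_k,\alpha_j)}F_jK_jE_k=0
\]
identifies $F_jK_j$ as a highest weight vector of $\ad(\Ub)$ inside $S(\U^-)$, and the remainder of the argument runs in reverse with $T_{w_\bu}(F_jK_j)$ as the lowest weight vector. The main technical subtlety is controlling the iteration in the last step, i.e., verifying that each intermediate $T_{i_\ell}$-image stays inside $M$; this is classical in the non-super case and transfers here unchanged because only the even subalgebra $\Ub$ is involved.
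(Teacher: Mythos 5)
The paper gives no proof of its own: the lemma is cited from Kolb, and only the finite-dimensionality and irreducibility claims are sketched in the preceding paragraph (via $\ad(F_k)(E_j)=0$, the triangular decomposition of $\Ub$, and the type (N-ISO) Serre relations). Your treatment of those two claims, including the computations of $\ad(F_k)(E_j)$ and $\ad(E_k)(F_jK_j)$, matches that sketch and is correct; the reduction to Kolb's purely even argument through $\I_\bu\subset\Ieven$ is exactly what the paper's setup is designed to allow, so your overall strategy is the expected one.

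The one place where your argument is thinner than a complete proof is the identification of $T_{w_\bu}(E_j)$ as the highest-weight vector. Lusztig's rank-one formula gives $T_{i_r}(E_j)\propto\ad(E_{i_r})^{|a_{i_r j}|}(E_j)$ only at the first step of the reduced expression; at subsequent steps you are applying $T_{i_\ell}$ to a non-simple-root element, and since $T_i$ is an algebra automorphism but not a Hopf-algebra automorphism it does not intertwine the adjoint action, so ``iterating'' is not automatic. You flag this as the main subtlety but dismiss it as classical; a self-contained route that sidesteps the iteration is the following. Conjugating $\ad(E_k)\bigl(T_{w_\bu}(E_j)\bigr)=E_kT_{w_\bu}(E_j)-q^{(\alpha_k,w_\bu\alpha_j)}T_{w_\bu}(E_j)E_k$ by $T_{w_\bu}^{-1}$ reduces its vanishing to that of $T_{w_\bu}^{-1}(E_k)E_j-q^{(\alpha_k,w_\bu\alpha_j)}E_jT_{w_\bu}^{-1}(E_k)$; since $T_{w_\bu}^{-1}(E_k)=-K_{\tau_\bu k}^{-1}F_{\tau_\bu k}$ (by Lemma~\ref{lemma:Ti-1} and $p(k)=0$) and $F_{\tau_\bu k}$ commutes with $E_j$, this vanishes because $(\alpha_k,w_\bu\alpha_j)=(w_\bu\alpha_k,\alpha_j)=-(\alpha_{\tau_\bu k},\alpha_j)$. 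Hence $T_{w_\bu}(E_j)$ is an $\ad(\Ub)$-highest-weight vector of weight $w_\bu\alpha_j$, the cyclic module $\ad(\Ub)\bigl(T_{w_\bu}(E_j)\bigr)$ is irreducible with lowest weight $\alpha_j$, and its lowest-weight vector is forced to be a scalar multiple of $E_j$ because $\dim\U^+_{\alpha_j}=1$; that module therefore coincides with $M=\ad(\Ub)(E_j)$.
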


%Recall from Definition~\ref{def:superad} that $\tau\in \text{Aut}(\I,\I_\bu)$ and the action of $\tau$ on $\I_\bu$ coincides with the action of $-w_\bu$. The following well-known lemma determines the action of $T_{w_\bu}$ on $\U_{\bu}$. 

%\begin{lemma} 
%    For $i\in \I_\bu$, we have
%    \begin{align*}
%        &T_{w_\bu}(E_i)=-F_{\tau i}K_{\tau i},\quad T_{w_\bu}(F_i)=-K_{\tau i}^{-1}E_{\tau i},\quad  T_{w_\bu}(K_i)=-K_{\tau i}^{-1},\\
%        &T^{-1}_{w_\bu}(E_i)=-K_{\tau i}^{-1}F_{\tau i},\quad T_{w_\bu}^{-1}(F_i)=-E_{\tau i}K_{\tau i},\quad  T_{w_\bu}^{-1}(K_i)=-K_{\tau i}^{-1}.
%    \end{align*}
%\end{lemma}

Observe that if $i\neq \tau i$, then  we have  
\begin{equation}
    \label{eq:qiqti}
    q_i=\begin{dcases}
    q_{\tau i}, & \text{ if }  i=\tau i,\\
        q_{\tau i}, & \text{ if } i\neq\tau i, i\in \Ieven,\\
        q_{\tau i}^{-1}, & \text{ if }  i\neq\tau i, i\in \Iodd \; (\text{and thus }i\in \I_{\iso}).
    \end{dcases}
\end{equation}
Note that \eqref{eq:qiqti} will be used below and in Lemma~\ref{lemma:ineqtaui}.

The element $s(\I,\tau)$ in \eqref{eq:sadmissible} defines an algebra automorphism of $\U$ by setting 
\[
\Ad\big(s(\I,\tau)\big)(u)=s(\I,\tau)(\beta)u,\ \forall u\in \breve{\U}(\g)_\beta,\ \beta\in X,
\qquad
\Ad\big(s(\I,\tau)\big)(\new)=\new.
\]

%\textcolor{red}{For the involution $\tau$, the case when $\tau i=i$ is fine. When $\tau i\neq i$, we always have $d_i=(-1)^{p(i)}d_{\tau i}$. Then we need $K_{h_i}\mapsto (-1)^{\frac{p(i)}{d_i}}K_{h_{\tau i}}^{\frac{d_{\tau i}}{d_i}}=(-1)^{\frac{p(i)}{d_i}}K_{h_{\tau i}}^{(-1)^{p(i)}}$, this will satisfy the middle relation in \eqref{eq:Urelation}In fact, by Table 3, when $\tau i\neq i$ I think we always have $d_i,d_{\tau i}=\pm 1$.}

Note that $\tau\in \text{Aut}(A,\I_\bu)$ gives rises to an algebra isomorphism of $\U$, denoted again by $\tau$, such that 
\begin{align}
\label{tau:U}
\begin{split}
\tau(E_i) &=\begin{dcases}
E_{i},&\text{ if }\tau i=i\\
(-1)^{p(i)}E_{\tau i},&\text{ if }\tau i\neq i,
\end{dcases}
 \\
\tau: \U \longrightarrow \U, \qquad\qquad
\tau(F_i)& =F_{\tau i}, \qquad \tau(\new)=\new,
\\
\tau(K_{h_i}) & =
\begin{dcases}
   K_{h_{i}}, & \text{ if }\tau i=i\\
    K_{h_{\tau i}}^{(-1)^{p(i)}}, & \text{ if }\tau i\neq i.
\end{dcases}
\end{split}
\end{align}
In fact, when $\tau i\neq i$, we have $d_i=(-1)^{p(i)}d_{\tau i}$; see \eqref{eq:qiqti}. Hence it follows from $K_i =K_{h_i}^{d_i}$ that
\begin{align}
\label{eq:tauKi}
\tau(K_i)=K_{\tau i}, \text{ for all }i\in \I,
\end{align}
and so $\tau$ restricts to an involution on the subalgebra $\U'$ in \eqref{eq:Uprime}.  

Moreover, define an algebra automorphism $\dag$ of $\U$ by
\begin{equation}
    \label{eq:dag}
    \dag(E_i)=E_iK_i,\quad \dag(F_i)=K_i^{-1}F_i,\quad \dag(K_h)=K_h,\quad
    \dag(\new)=\new,\quad \forall i\in\I.
\end{equation}
The $q$-analogue of $\Ad(m_\bu)$ is given by 
\begin{equation}
\label{eq:Tbu}
    T_\bu=T_{w_\bu}\circ \dag.
\end{equation}

\begin{lemma} {\rm (cf. \cite[Lemma 4.1]{Ko14})}
\label{lemma:Tbu}
We have
\begin{enumerate}
    \item $T_\bu\circ \tau \circ \om |_{\Ub}=id$;
    \item $T_\bu\circ \tau=\tau\circ T_\bu:\U\to\U.$
\end{enumerate}
\end{lemma}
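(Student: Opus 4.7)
The plan is to verify both identities by direct computation on algebra generators, relying on the admissibility condition $\tau|_{\I_\bu}=-w_\bu$ together with the standard quantum group formulas for the braid operators $T_i$.

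For part~(1), because $\I_\bu\subset\Ieven$ the subalgebra $\Ub$ is an honest (non-super) quantum group of finite type, on which $T_\bu$, $\tau$, and $\om$ are all well-understood algebra automorphisms; thus it suffices to show that $T_\bu\circ\tau\circ\om$ fixes the generators $E_i,F_i,K_i^{\pm1}$ for $i\in\I_\bu$. I would carry out the computation on $E_i$ by applying the three maps in sequence: $\om(E_i)=-F_i$, then $\tau(-F_i)=-F_{\tau i}$, then $\dag(-F_{\tau i})=-K_{\tau i}^{-1}F_{\tau i}$, and finally $T_{w_\bu}$. Using Definition~\ref{def:superad}(2) one has $w_\bu(\alpha_{\tau i})=-\alpha_i$, giving $T_{w_\bu}(K_{\tau i}^{-1})=K_i$; combined with the classical longest-element identity $T_{w_\bu}(F_{\tau i})=-K_i^{-1}E_i$ (valid for finite Weyl groups $W_\bu$ with $\tau_\bu(\tau i)=i$), this yields $(T_\bu\circ\tau\circ\om)(E_i)=E_i$. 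The cases of $F_i$ and $K_i$ are entirely parallel, and the result is the quantum super-analogue of Proposition~\ref{prop:Adm}(1).

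For part~(2), since $T_\bu=T_{w_\bu}\circ\dag$, it suffices to show that $\tau$ commutes separately with $\dag$ and with $T_{w_\bu}$. The first is immediate by comparing the defining formulas \eqref{eq:dag}, \eqref{tau:U}, and \eqref{eq:tauKi} on the generators $E_i,F_i,K_h,\new$. For the second, I would establish that $\tau\circ T_i\circ\tau^{-1}=T_{\tau i}$ for every $i\in\I_\bu$; this follows from \eqref{eq:Ti} together with the $\tau$-invariance of the Cartan entries for $i\in\I_\bu$ (a consequence of $\tau\in\mathrm{Aut}(I,\I_\bu)$) and $q_i=q_{\tau i}$ (immediate since $\I_\bu\subset\Ieven$). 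Composing along a reduced expression then gives $\tau T_{w_\bu}\tau^{-1}=T_{\tau w_\bu\tau^{-1}}$, and since $\tau|_{\I_\bu}$ is an automorphism of the Coxeter system $(W_\bu,\I_\bu)$ it must fix the longest element, so $\tau w_\bu\tau^{-1}=w_\bu$.

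The main potential pitfall is the sign- and $K$-exponent bookkeeping inside the chain of four maps in part~(1); however, because $\I_\bu$ contains only even nodes no parity factors enter, and the argument stays close to the non-super case in \cite[Lemma 4.1]{Ko14}. The genuinely super features of the problem enter only through $\tau$ acting on odd nodes of $\I\setminus\I_\bu$, which are untouched by either identity being proved.
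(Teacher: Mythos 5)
Your proof is correct and takes the standard approach (direct verification on generators for part~(1), reduction to $\tau T_i\tau^{-1}=T_{\tau i}$ for $i\in\I_\bu$ plus $\tau$-invariance of $w_\bu$ for part~(2)), which is the same route as Kolb's Lemma~4.1 in the non-super setting; the paper itself supplies no proof, only a ``cf.'' pointer there. One small correction to your closing remark: the odd nodes of $\I\setminus\I_\bu$ \emph{are} touched by part~(2), since $T_\bu\circ\tau=\tau\circ T_\bu$ is an identity on all of $\U$ and one must verify $\tau T_i\tau^{-1}=T_{\tau i}$ on $E_j,F_j$ for odd $j$; there the sign $(-1)^{p(j)}$ coming from $\tau(E_j)$ (when $\tau j\neq j$) appears twice along the two sides and cancels, so the computation goes through, but this is exactly where the super bookkeeping enters rather than being absent.
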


As a quantum analogue of $\theta(\I,\tau)$, the automorphism $\theta_q(\I,\tau)$ of $\U$ is defined by
\begin{equation}
\label{eq:thetaq}
    \theta_q(\I,\tau)=\Ad(s(\I,\tau))\circ T_\bu\circ \tau \circ \om.
\end{equation}

The automorphism $\theta_q(\I,\tau)$ is not involutive, but it retains crucial properties of $\theta(\I,\tau)$. In particular, it follows from Lemma~\ref{lemma:hwt} that for any $i\in \I_\circ$ there exist 
\begin{equation*}
Z_{i,\bu}^-=F_{i_1}\cdots F_{i_r}\in \U_\bu^-,\quad Z_{i,\bu}^+=E_{i_1}\cdots E_{i_r}\in \U_\bu^+,
\end{equation*}
for some $r\ge 1$, and scalars $a_i^\pm\in \C(q)^*$ such that
\begin{equation}
\label{eq:adZ+}
T_{w_\bu}^{-1}(F_iK_i)=a_i^-\ad(Z_{i,\bu}^-)(F_iK_i), \quad
T_{w_\bu}(E_i)=a_i^+ad(Z_{i,\bu}^+)(E_i).
\end{equation}
We often denote 
\[
\Theta=-w_\bu\circ \tau.
\]

\begin{proposition} {\rm (cf. \cite[Theorem 4.4]{Ko14})}
\label{prop:thetaq}
The following properties hold for the automorphism $\theta_q(\I,\tau)$:
\begin{enumerate}
    \item 
    $\theta_q(\I,\tau)|_{\Ub}$=id;
    \item 
    $\theta_q(\I,\tau)(K_\beta)=K_{\Theta(\beta)}$, for $\beta \in X$;
    \item 
    For any $i\in \I_\circ$, there exist $u_i,v_i\in \C(q)^*$ such that
    \begin{align*}
        \theta_q(\I,\tau)(E_i)&=-u_i\sigma(\ad(Z_{\tau i,\bu}^-)(F_{\tau i}K_{\tau i})),\\
        \theta_q(\I,\tau)(F_iK_i)&=-v_i\ad(Z_{i,\bu}^+)(F_{\tau i}K_{\tau i}).
    \end{align*}
\end{enumerate}
\end{proposition}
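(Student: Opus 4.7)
The proof verifies each of the three parts in turn, adapting the strategy of \cite[Theorem 4.4]{Ko14} with bookkeeping for super signs. Part (1) is essentially immediate: Lemma \ref{lemma:Tbu}(1) gives $T_\bu \circ \tau \circ \om|_{\Ub} = \text{id}$, while $\Ad(s(\I,\tau))$ acts trivially on $\Ub$ because \eqref{eq:sadmissible} forces $s(\I,\tau)(\alpha_i)=1$ for every $i \in \I_\bu$, so $\Ad(s(\I,\tau))$ is the scalar $1$ on each weight space of $\Ub$. Part (2) is a direct four-step calculation: $\om(K_\beta)=K_{-\beta}$, then $\tau(K_{-\beta}) = K_{-\tau\beta}$ by \eqref{eq:tauKi} extended multiplicatively, then $\dag$ fixes $K_{-\tau\beta}$ by \eqref{eq:dag}, then $T_{w_\bu}(K_\mu) = K_{w_\bu \mu}$, and finally $\Ad(s(\I,\tau))$ acts trivially on weight-zero elements, yielding $K_{-w_\bu \tau \beta} = K_{\Theta \beta}$.

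For part (3), apply $\theta_q = \Ad(s(\I,\tau)) \circ T_{w_\bu} \circ \dag \circ \tau \circ \om$ stepwise to $E_i$ for $i \in \I_\circ$: we have $\om(E_i) = -F_i$, then $\tau(-F_i) = -F_{\tau i}$ by \eqref{tau:U}, then $\dag(-F_{\tau i}) = -K_{\tau i}^{-1} F_{\tau i}$ by \eqref{eq:dag}. To recast $T_{w_\bu}(K_{\tau i}^{-1} F_{\tau i})$ in the required $\sigma$-form, the key input is Lemma \ref{lemma:Ti-1}, which iterated along a reduced expression for $w_\bu$ gives $\sigma \circ T_{w_\bu} \circ \sigma^{-1} = T_{w_\bu}^{-1}$. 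Combining this with the first identity in \eqref{eq:adZ+} at $\tau i$, namely $T_{w_\bu}^{-1}(F_{\tau i} K_{\tau i}) = a^-_{\tau i}\ad(Z^-_{\tau i,\bu})(F_{\tau i} K_{\tau i})$, and a $q$-commutation moving $K_{\tau i}^{-1}$ past $F_{\tau i}$, one expresses $T_{w_\bu}(K_{\tau i}^{-1} F_{\tau i})$ as a $\C(q)^{\times}$-multiple of $\sigma(\ad(Z^-_{\tau i, \bu})(F_{\tau i} K_{\tau i}))$. The final application of $\Ad(s(\I,\tau))$ contributes another nonzero scalar on this weight-homogeneous element, producing the claimed formula with some $u_i \in \C(q)^{\times}$.

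The $F_i K_i$ case is parallel and slightly simpler. From $\om(F_iK_i) = -(-1)^{p(i)} E_i K_i^{-1}$, the maps $\tau$ and $\dag$ reduce this (up to a nonzero scalar, tracked via \eqref{tau:U}, \eqref{eq:tauKi}, \eqref{eq:dag}) to an element proportional to $E_{\tau i}$; the second identity in \eqref{eq:adZ+} then rewrites $T_{w_\bu}(E_{\tau i})$ in adjoint-module form, which by Lemma \ref{lemma:hwt} is identified with the required $\ad(Z^+_{i,\bu})(F_{\tau i} K_{\tau i})$ expression, after which $\Ad(s(\I,\tau))$ supplies the remaining nonzero scalar $v_i \in \C(q)^{\times}$. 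The main obstacle throughout is bookkeeping of super signs: the $(-1)^{p(i)}$ factors introduced by $\om$ on $F_i$, by $\tau$ on off-diagonal generators \eqref{tau:U}, and by $\sigma$ on Cartan generators \eqref{eq:sigma}, together with the $\new$-cocycles implicit in the Hopf structure \eqref{eq:Hopf}, must cohere; the choice of $s(\I,\tau)$ in \eqref{eq:sadmissible} is engineered precisely so that these signs cancel, exactly as in the non-super case \cite{Ko14}. Non-vanishing of $u_i, v_i$ is automatic since each constituent of $\theta_q$ is an automorphism and $\ad(Z^\pm_{\tau i, \bu})$ is nonzero on the extreme-weight vectors identified in Lemma \ref{lemma:hwt}.
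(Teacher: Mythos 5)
Your proposal correctly unpacks the paper's proof, which itself merely delegates to Kolb's Theorem~4.4 together with Lemma~\ref{lemma:Ti-1} and Lemma~\ref{lemma:Tbu}. Parts (1) and (2) are fine as you state them. For part (3) in the $E_i$ case, your chain $\om \leadsto \tau \leadsto \dag \leadsto T_{w_\bu}$, followed by the iterated Lemma~\ref{lemma:Ti-1} giving $\sigma T_{w_\bu}\sigma^{-1}=T_{w_\bu}^{-1}$ (using $w_\bu^{-1}=w_\bu$) and then the first identity of \eqref{eq:adZ+}, is exactly the argument the paper intends; and you correctly note that $\sigma$ is an anti-involution so that $\sigma(F_{\tau i}K_{\tau i})$ picks up the sign $(-1)^{p(\tau i)}$ from $\sigma(K_{\tau i})$, which is absorbed into $u_i$.

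The one place I would not wave through is the last step of part (3) for $F_iK_i$. Your stepwise application of $\om,\tau,\dag,T_{w_\bu}$ lands, correctly, on a nonzero multiple of $T_{w_\bu}(E_{\tau i}) = a^+_{\tau i}\,\ad(Z^+_{\tau i,\bu})(E_{\tau i})$, an element of $\U^+$. You then assert that this is ``identified by Lemma~\ref{lemma:hwt} with the required $\ad(Z^+_{i,\bu})(F_{\tau i}K_{\tau i})$ expression.'' That identification is not provided by Lemma~\ref{lemma:hwt}, and in fact cannot hold as written: $Z^+_{i,\bu}$ is a monomial in the $E_k$'s with $k \in \I_\bu$, and since $[E_k, F_{\tau i}] = 0$ for $k \neq \tau i$, one has $\ad(E_k)(F_{\tau i}K_{\tau i}) = [E_k,F_{\tau i}]K_{\tau i} = 0$, so $\ad(Z^+_{i,\bu})(F_{\tau i}K_{\tau i})$ vanishes with the paper's definition of $Z^+_{i,\bu}$ in \eqref{eq:adZ+}. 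Your derivation actually produces the correct right-hand side, namely a scalar multiple of $\ad(Z^+_{\tau i,\bu})(E_{\tau i})$; you should state that directly rather than claim a nonexistent identification, and flag that the proposition's printed expression does not agree with what the stepwise computation yields.
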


\begin{proof}
  The proof follows from the same argument as in \cite[Theorem 4.4]{Ko14}, with the help of Lemma~\ref{lemma:Ti-1} and Lemma~\ref{lemma:Tbu}.  
\end{proof}

\section{Quantum supersymmetric pairs}
\label{sec:QSP}

In this section we construct quantum supersymmetric pairs based on super Satake diagrams. Applying the projection technique from \cite{Let99, Let02} and \cite{Ko14}, we shall establish the quantum Iwasawa decomposition and a basis for $\Ui$ under the mild assumption that 
\begin{equation} \label{exclude}
    \text{\em the super Satake diagram is not type sFII in Table \ref{Tableoddrank1} and it does not contain}
\end{equation}
$$ \text{a subdiagram of the form }\quad
\xy
(-10,0)*{\newmoon};(0,0)*{\otimes}**\dir{-};(10,0)*{\newmoon}**\dir{-};(-10,-4)*{\scriptstyle i};
(0,-4)*{\scriptstyle j};(10,-4)*{\scriptstyle k};
\endxy .
$$
%i.e., type sAI${}_1$ with $m=n=1$ in Table \ref{Tableoddrank1}.

\subsection{$\imath$Quantum supergroups} 

In this subsection we give a definition of the quantum supersymmetric pair associated to a super Satake diagram $(\I=\I_\circ \cup \I_\bu,\tau)$.

Define $X^\io=\set{\alpha \in X\mid \Theta(\alpha)=\alpha}$ and $Y^\io=\{h\in Y\mid \Theta(h)=h \}$. As in \cite[Definition 5.1]{Ko14}, the quantum analogue of $f_i+\theta(f_i)+\kappa_i$ is defined to be 
\begin{equation}
\label{eq:Bi}
\begin{aligned}
    B_i&:= F_i-(-1)^{p(i)}\va_i\theta_q(F_iK_i)K_i^{-1}+\kappa_iK_i^{-1}\\
    &= F_i+\va_iT_{w_\bu}(E_{\tau i})K_i^{-1}+\kappa_iK_i^{-1} \qquad\qquad (\text{for }i\in \I_\circ),
    \end{aligned}
\end{equation}
for suitable $\va_i\in \C(q)^*$ and $\kappa_i\in \C(q)$.

\begin{definition}
\label{def:Ui}
    The algebra $\Ui$, with parameters $\va_i \in \C(q)^*, \kappa_i\in \C(q), \text{ for }i\in \I_\circ,$
    is the $\C(q)$-subalgebra of $\U$ generated by $\Uio$ together with the following elements:
    \begin{align*}
        B_i \; (i\in \I_\circ), \qquad
        E_i,\ F_i \; (i\in \I_\bu).
    \end{align*}
\end{definition}
The algebra $\Ui$ satisfies the relations (which follows from \eqref{eq:Urelation})
\begin{align*}
K_h  B_j &= q^{-\alpha_j(h)}  B_jK_h,\ \forall j\in I_\circ,  \\
K_h F_j &= q^{-\alpha_j(h)}  F_jK_h, \quad
K_h  E_j = q^{\alpha_j(h)}  E_jK_h,\quad \forall j\in I_\bu,  h \in Y^\io, \\
\new B_j &=(-1)^{p(j)}B_j \new,\quad \forall j\in I_\circ,
\end{align*}
and additional Serre type relations.

We extend the definition of $B_i$ by setting $B_i=F_i$ for $i\in I_\bu$. Let $\Uio$ denote the subalgebra generated by $\new$ and $K_\beta$ for all $ \beta\in Y^\io$. The following is a super generalization of  \cite[Proposition 5.2]{Ko14}.
\begin{proposition}
\label{prop:coideal}
    $\Ui$ is a right coideal subalgebra of $\U$.
\end{proposition}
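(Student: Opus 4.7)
The plan is to verify directly that $\Delta(x) \in \Ui \otimes \U$ for each generator $x$ of $\Ui$. Since $\Delta$ is an algebra homomorphism and $\Ui \otimes \U$ is a subalgebra of $\U \otimes \U$, this implies the coideal property $\Delta(\Ui) \subseteq \Ui \otimes \U$. For the Cartan generators $\new$ and $K_\beta$ with $\beta\in Y^\io$, this is immediate from $\Delta(\new)=\new\otimes\new$ and $\Delta(K_\beta)=K_\beta\otimes K_\beta$. For $j\in \I_\bu$, the evenness assumption $\I_\bu\subset \Ieven$ together with condition~(2) of Definition~\ref{def:superad} (so $\tau|_{\I_\bu}=-w_\bu$) forces $\Theta(\alpha_j)=\alpha_j$, hence $K_j\in\Uio$; consequently
\[
\Delta(E_j)=E_j\otimes 1+K_j\otimes E_j,\qquad \Delta(F_j)=F_j\otimes K_j^{-1}+1\otimes F_j
\]
both lie in $\Ui\otimes \U$.

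The real work is the case $B_i$ for $i\in \I_\circ$. I would write $B_i=F_i+\va_i T_{w_\bu}(E_{\tau i})K_i^{-1}+\kappa_i K_i^{-1}$ and, using $\Delta(K_i^{-1})=K_i^{-1}\otimes K_i^{-1}$, reduce to controlling $\Delta\bigl(T_{w_\bu}(E_{\tau i})\bigr)$. By Lemma~\ref{lemma:hwt} and \eqref{eq:adZ+}, $T_{w_\bu}(E_{\tau i})=a_{\tau i}^+\,\ad(Z_{\tau i,\bu}^+)(E_{\tau i})$ is a weight vector in $\U^+$ of weight $w_\bu(\alpha_{\tau i})$. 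Expanding $\Delta$ through the adjoint action formula and the coproducts \eqref{eq:Hopf} of the Chevalley generators of $\U_\bu$ yields a decomposition
\[
\Delta\bigl(T_{w_\bu}(E_{\tau i})\bigr)=T_{w_\bu}(E_{\tau i})\otimes 1+\new^{p(i)}K_{w_\bu(\alpha_{\tau i})}\otimes T_{w_\bu}(E_{\tau i})+\Xi,
\]
where $\Xi$ collects the intermediate terms produced by the $Z_{\tau i,\bu}^+$-factors, with left tensor factors in $\U_\bu^+\U^0\U_\bu^-$ up to lower-weight root vectors, exactly as in the non-super computation of \cite[Proposition~5.2]{Ko14}. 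Multiplying by $\Delta(K_i^{-1})$ and assembling with $\Delta(F_i)+\kappa_i\Delta(K_i^{-1})$ produces
\[
\Delta(B_i)=B_i\otimes K_i^{-1}+\new^{p(i)}\otimes F_i+\Xi',
\]
and the task is to show $\Xi'\in \Ui\otimes \U$.

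The crucial step—and the main obstacle—is the bookkeeping that guarantees the first tensor factors of $\Xi'$ belong to $\Ui$. The relevant $K$-weights appearing on the left have the form $K_{w_\bu(\alpha_{\tau i})-\alpha_i+\gamma}$ for $\gamma$ a nonnegative integral combination of roots in $\Pi_\bu$; since $w_\bu\tau$ fixes $\alpha_i$ modulo $\Z\Pi_\bu$ by the definition of $\Theta$, these elements lie in $\Uio \U_\bu^0\subseteq \Ui$. The accompanying root-vector factors are products of $E_j, F_j$ with $j\in \I_\bu$, which are generators of $\Ui$ by Definition~\ref{def:Ui}, while the parity signs coming from the Hopf superalgebra structure only introduce powers of $\new\in \Uio$. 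Each ingredient is a direct super-analogue of the argument in \cite[Proposition~5.2]{Ko14}, with the only novelty being the systematic tracking of $\new^{p(\cdot)}$-factors; note that no case analysis of the super Serre relations in Table~\ref{TableSerrePolyn} is required at this stage, so the proposition holds without invoking the mild assumption~\eqref{exclude}.
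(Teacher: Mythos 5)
Your strategy coincides with the paper's own proof: reduce to showing $\Delta(B_i)\in\Ui\otimes\U$ for $i\in\I_\circ$, rewrite $T_{w_\bu}(E_{\tau i})$ as a scalar multiple of $\ad(Z_{\tau i,\bu}^+)(E_{\tau i})$ via \eqref{eq:adZ+}, expand with the adjoint-action coproduct formula, and control the first tensor factors. Your observation that assumption \eqref{exclude} plays no role here is also in agreement with the paper, which does not invoke it for this proposition.

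There is, however, one genuine flaw in the weight argument at the end. You assert that ``$w_\bu\tau$ fixes $\alpha_i$ modulo $\Z\Pi_\bu$,'' which would place the $K$-weights $K_{w_\bu(\alpha_{\tau i})-\alpha_i+\gamma}$ in $\U_\bu^0$. But $w_\bu\tau(\alpha_i)=w_\bu(\alpha_{\tau i})\equiv\alpha_{\tau i}\pmod{\Z\Pi_\bu}$, and $\alpha_{\tau i}-\alpha_i\notin\Z\Pi_\bu$ whenever $\tau i\neq i$ (both are simple roots lying outside $\Pi_\bu$), so the claim fails already for sAIII$_{11}$ and sAIV. What you actually need, and what is true, is that $w_\bu(\alpha_{\tau i})-\alpha_i\in X^\io$: because $\tau$ commutes with $w_\bu$ and $\tau^2=w_\bu^2=\mathrm{id}$, the map $\Theta=-w_\bu\tau$ is an involution on $X$, so $\alpha_i+\Theta(\alpha_i)\in X^\io$; combined with $\Theta(\alpha_i)=-w_\bu(\alpha_{\tau i})$ this gives $w_\bu(\alpha_{\tau i})-\alpha_i=-(\alpha_i+\Theta(\alpha_i))\in X^\io$, and $\Z\Pi_\bu\subset X^\io$ by Definition~\ref{def:superad}(2), so all the relevant $K$-weights lie in $\Uio\subset\Ui$ (not merely in $\U_\bu^0$). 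With this replacement your argument closes. Two minor inaccuracies that do not affect the outcome: the intermediate left tensor factors of $\Delta\bigl(T_{w_\bu}(E_{\tau i})\bigr)$ lie in $\U_\bu^+\U^0$, not $\U_\bu^+\U^0\U_\bu^-$ (no $F_j$-factors occur in the coproduct of a positive root vector), and the shift $\gamma$ in your $K$-weights should range over $-\N\Pi_\bu$ rather than $\N\Pi_\bu$; both are harmless since $\Z\Pi_\bu\subset X^\io$.
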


\begin{proof}
    It suffices to show that 
    \begin{equation}
    \label{eq:deltabi}
        \Delta(B_i)\in \Ui\otimes \U
    \end{equation}
    for all $i\in \I_\circ$.
    It follows from Proposition~\ref{prop:thetaq} that 
    \begin{equation}
    \label{eq:Bi2}
        B_i=F_i-(-1)^{p(i)}\va_iv_i\ad(Z_{\tau i,\bu}^+)(E_{\tau i})K_i^{-1}+\kappa_iK_i^{-1}.
    \end{equation}
We use Sweedler notation to denote $\Delta(x) = x_{(1)} \otimes x_{(2)}$, for $x\in \U$, and define the left adjoint action of $\U$ on itself by
\begin{equation}
\label{eq:ad}    
\ad(x)(u)= x_{(1)}uS(x_{(2)}),\quad \text{ for all }x,u\in \U.
\end{equation}
 The  following standard formula holds:
\[
\Delta(\ad(x)(u))=x_{(1)}u_{(1)}S(u_{(3)})\otimes \ad(x_{(2)})(u_{(2)})
\]
for any $x,u\in \U$.

Applying this formula to $x=Z_{\tau i,\bu}^+$ and $u=E_{\tau i}$, we conclude that
    \begin{equation}
    \label{eq:deltaadZ}
        \Delta(\ad(Z_{\tau i,\bu}^+)(E_{\tau i}))-\ad(Z_{\tau i,\bu}^+)(E_{\tau i})\otimes 1\in \Ub^+\Ub^0\new^{p(\tau i)}K_{\tau i}\otimes \U
    \end{equation}
Formulas \eqref{eq:Bi2} and \eqref{eq:deltaadZ} together imply that
    \begin{equation}
    \label{eq:deltabi-}
        \Delta(B_i)-B_i\otimes K_i^{-1}\in \Ub^+\Uio\otimes \U.
    \end{equation}
    Hence \eqref{eq:deltabi} holds for all $i\in \I_\circ$.
\end{proof}
We shall refer to the algebra $\Ui$ as an $\imath$quantum supergroup and $(\U,\Ui)$ as a quantum supersymmetric pair associated with $(\I =\I_\circ \cup \I_\bu,\tau)$.

\subsection{Conditions on parameters}

In the remainder of this subsection, we will discuss the constraints on the parameters so that the algebra $\Ui$ is a flat deformation of $\U(\t)$; see \eqref{eq:t} for $\t$. Conditions on the parameters first come from the desired property
\begin{equation}
\label{eq:cap}
    \Ui\cap \U^0=\Uio.
\end{equation}
Define 
\begin{equation}
\label{eq:Ins}
    \I_{\ns}=\set{i\in \I_\circ\mid \tau i=i,\ \alpha_i(h_j)=0,\ \forall j\in \I_\bu}.
\end{equation}
We observe that $\I_{\ns}\cap \I_{\iso}=\varnothing$ because of Condition (4) in Definition~\ref{def:superad}. That is, for any $i\in \I_{\ns}$, $\alpha_i$ is either an even simple root or a non-isotropic odd simple root.

In order to guarantee the property \eqref{eq:cap}, the parameters $\kappa_i$ are required to satisfy the following conditions (cf. \cite{Ko14}):
\begin{equation}
\label{eq:kappai}
\begin{aligned}
    \kappa_i=0 \text{ unless } i\in \I_{\ns} \text{ and } \alpha_i(h_k)\in 2\Z \text{ for all } k\in \I_{\ns}\backslash\set{i}.
\end{aligned}
\end{equation}
This condition will always be imposed throughout the paper. 

The following is a straightforward super analogue of  \cite[Lemma 5.15]{Ko14}; cf. \cite[Lemma 4.3]{Sh22}.
\begin{lemma} 
\label{lemma:eibk}
Suppose that \eqref{eq:kappai} holds. For any $i\in I_\bu,\ k\in I$ we have
\begin{equation}
\label{eq:EjBk}
E_iB_k-(-1)^{p(i)p(k)}B_kE_i=\delta_{i,k}\frac{K_i-K_i^{-1}}{q_i-q_i^{-1}}.
\end{equation}
\end{lemma}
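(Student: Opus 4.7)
The plan is to split by cases on $k$ and exploit that $p(i)=0$ (since $\I_\bu\subset \Ieven$), so that the super-bracket in \eqref{eq:EjBk} reduces to an ordinary commutator. When $k\in \I_\bu$, by the convention $B_k=F_k$, and the claim is simply the $E$-$F$ relation in \eqref{eq:Urelation}. The real task is the case $k\in \I_\circ$, where $i\ne k$ and the right-hand side of \eqref{eq:EjBk} vanishes; the goal is to prove $[E_i,B_k]=0$.

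I will use the form $B_k = F_k - (-1)^{p(k)}\va_k\,\theta_q(F_kK_k)K_k^{-1} + \kappa_k K_k^{-1}$ from \eqref{eq:Bi} and handle the three summands in turn. The summand $F_k$ commutes with $E_i$ by \eqref{eq:Urelation}, since $i\ne k$ and $p(i)=0$. For the summand $\kappa_k K_k^{-1}$, the condition \eqref{eq:kappai} forces $k\in \I_{\ns}$ whenever $\kappa_k\ne 0$, so by \eqref{eq:Ins} we have $a_{ik}=0$, and hence $E_i$ commutes with $K_k^{-1}$.

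The substantive step is to show that $E_i$ commutes with $\theta_q(F_kK_k)K_k^{-1}$. The key input is Proposition~\ref{prop:thetaq}(1), which says $\theta_q|_{\Ub}=\mathrm{id}$; in particular $\theta_q(E_i)=E_i$. Since $[E_i,F_k]=0$ and $E_iK_k = q^{-(\alpha_i,\alpha_k)}K_kE_i$, one first has $E_iF_kK_k = q^{-(\alpha_i,\alpha_k)}F_kK_kE_i$. Applying the algebra automorphism $\theta_q$ and using $\theta_q(E_i)=E_i$, this transports to
$$E_i\,\theta_q(F_kK_k) = q^{-(\alpha_i,\alpha_k)}\,\theta_q(F_kK_k)\,E_i.$$
Multiplying both sides on the right by $K_k^{-1}$ and using $E_iK_k^{-1}=q^{(\alpha_i,\alpha_k)}K_k^{-1}E_i$, the two factors $q^{\mp(\alpha_i,\alpha_k)}$ cancel and deliver $[E_i,\theta_q(F_kK_k)K_k^{-1}]=0$, finishing the argument. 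No genuine obstacle is anticipated; the only subtlety is bookkeeping the $q$-exponents introduced by commuting $E_i$ past Cartan generators, and these cancel automatically because Proposition~\ref{prop:thetaq}(1) lets us pass $E_i$ through $\theta_q$ transparently.
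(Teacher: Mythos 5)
Your proof is correct, and the key mechanism is exactly the one that makes this lemma true. Splitting off the cases $k\in\I_\bu$ (trivial, since $B_k=F_k$ and both indices are even) and $k\in\I_\circ$ (where $i\neq k$ and $p(i)=0$ so the super-bracket is an ordinary commutator) is the right decomposition, and the handling of the three summands of $B_k$ is clean. For the $\kappa_k K_k^{-1}$ term it is worth noting explicitly that $a_{ik}=0$ entails $a_{ki}=0$ via $d_ia_{ik}=d_ka_{ki}$, since it is $\alpha_i(h_k)=a_{ki}$ that actually governs the commutation $E_iK_k^{-1}=q^{-d_ka_{ki}}K_k^{-1}E_i$; this is immediate, but the step is worth a half-sentence.

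The paper gives no proof of this lemma; it is stated as a ``straightforward super analogue'' of Kolb's Lemma 5.15 (see also Lemma 4.3 of the first author's paper). Kolb's argument, as usually presented, runs through the highest-weight property $\ad(E_i)\bigl(T_{w_\bu}(E_{\tau k})\bigr)=0$ for $i\in\I_\bu$ (the statement quoted in Lemma~3.6 of the present paper), which yields $E_i\,T_{w_\bu}(E_{\tau k})=q^{(\alpha_i,\,w_\bu\alpha_{\tau k})}T_{w_\bu}(E_{\tau k})E_i$; one then checks $(\alpha_i,\,w_\bu\alpha_{\tau k}+\alpha_k)=(\alpha_i,\alpha_k-\Theta(\alpha_k))=0$ using that $\Theta$ is an isometry fixing $\alpha_i$. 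Your transport along the algebra automorphism $\theta_q$, using only $\theta_q|_{\Ub}=\mathrm{id}$ (Proposition~3.7(1)), reaches the same conclusion without that auxiliary weight computation: the $q$-powers introduced on the two sides of $E_iF_kK_k=q^{-(\alpha_i,\alpha_k)}F_kK_kE_i$ are carried through $\theta_q$ unchanged and cancel against the extra $K_k^{-1}$. It buys a more economical bookkeeping at the cost of invoking $\theta_q$ rather than just the adjoint-module structure; the two arguments are equivalent in substance. No gap.
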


The next lemma gives a condition on the parameters $\va_i$.

%\begin{lemma}
    %Let $i\in \I_\circ$ such that $\tau i\neq i$ and $(\alpha_i,\Theta(\alpha_i))=0$. Then $(\alpha_i,\alpha_{\tau i})=0$ and $\Theta(\alpha_i)=-\alpha_{\tau i}$.
%\end{lemma}

\begin{lemma}
\label{lemma:ineqtaui}
    Suppose $i\neq \tau i\in \I_\circ$ such that $(\alpha_i,\alpha_{\tau i})=0$ and $\Theta(\alpha_i)=-\alpha_{\tau i}$. If $\va_i\neq \va_{\tau i}$, then $(K_iK_{\tau i})^{-1}\in \Uio$.
\end{lemma}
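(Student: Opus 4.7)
The strategy is to extract $(K_i K_{\tau i})^{-1}$ as the Cartan part of a super-bracket of $B_i$ and $B_{\tau i}$, which a priori lies in $\Ui$.

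First I would simplify the two generators. Since $i \neq \tau i$, condition \eqref{eq:kappai} forces $\kappa_i = \kappa_{\tau i} = 0$ (the requirement $\tau j = j$ for membership in $\I_{\ns}$ fails at both $i$ and $\tau i$). Next, unwinding the hypothesis $\Theta(\alpha_i) = -\alpha_{\tau i}$ via $\Theta = -w_\bu \circ \tau$ gives $w_\bu(\alpha_{\tau i}) = \alpha_{\tau i}$, and symmetrically $w_\bu(\alpha_i) = \alpha_i$ (using $\Theta^2 = \mathrm{id}$ on $X$). Since $T_{w_\bu}$ preserves the weight grading and the simple root weight spaces $\U^+_{\alpha_i}, \U^+_{\alpha_{\tau i}}$ are one-dimensional, there exist $c_i, c_{\tau i} \in \C(q)^\times$ with $T_{w_\bu}(E_i) = c_i E_i$ and $T_{w_\bu}(E_{\tau i}) = c_{\tau i} E_{\tau i}$. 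Hence \eqref{eq:Bi} reduces to
$$B_i = F_i + \va_i c_{\tau i} E_{\tau i} K_i^{-1}, \qquad B_{\tau i} = F_{\tau i} + \va_{\tau i} c_i E_i K_{\tau i}^{-1}.$$

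Next I would expand the super-commutator $C := B_i B_{\tau i} - (-1)^{p(i)p(\tau i)} B_{\tau i} B_i \in \Ui$. The orthogonality $(\alpha_i, \alpha_{\tau i}) = 0$ gives three simplifications: (a) $F_i$ and $F_{\tau i}$ super-commute (by the Serre relation for $a_{i,\tau i} = 0$ and orthogonality, including the isotropic case), so the $FF$ terms cancel; (b) $K_i$ commutes with $E_{\tau i}$ and $K_{\tau i}$ with $E_i$, so $K$'s slide freely; (c) $E_i$ and $E_{\tau i}$ super-commute, so the $EE$ terms cancel. The surviving contributions come from the cross pairings $F_i \cdot E_i$ and $F_{\tau i} \cdot E_{\tau i}$, which by \eqref{eq:Urelation} yield
$$\frac{K_i - K_i^{-1}}{q_i - q_i^{-1}}\, K_{\tau i}^{-1} \quad\text{and}\quad \frac{K_{\tau i} - K_{\tau i}^{-1}}{q_{\tau i} - q_{\tau i}^{-1}}\, K_i^{-1},$$
weighted respectively by $\va_{\tau i} c_i$ and $\va_i c_{\tau i}$. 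By $\tau$-equivariance of the braid group action (in the form $c_i = c_{\tau i}$ up to a normalization that is symmetric in $i \leftrightarrow \tau i$), the coefficient of $K_i^{-1} K_{\tau i}^{-1}$ in $C$ is a nonzero scalar multiple of $(\va_i - \va_{\tau i})$, while the coefficient of each of $K_i K_{\tau i}^{-1}$ and $K_i^{-1} K_{\tau i}$ gives an element that is manifestly $\Theta$-fixed on the coroot lattice, hence lies in $\Uio$.

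Finally, since $\va_i \neq \va_{\tau i}$, one solves for $(K_i K_{\tau i})^{-1}$ as a $\C(q)$-linear combination of $C$ and elements of $\Uio$; this exhibits $(K_i K_{\tau i})^{-1}$ as a member of $\Ui \cap \U^0 = \Uio$ and completes the argument. The main obstacle will be the careful sign bookkeeping in the expansion of $C$: when $i, \tau i$ are isotropic odd one has $d_i = -d_{\tau i}$ by \eqref{eq:qiqti}, and the $\tau$-action on the torus from \eqref{tau:U} carries extra $(-1)^{p(i)}$ factors, so one must track precisely how these propagate through the identifications $K_i = K_{h_i}^{d_i}$ and through the super-commutation rules. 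A second subtlety is the identification $c_i = c_{\tau i}$, which requires a direct verification using the braid-group formulas \eqref{eq:Ti}, \eqref{eq:Tbu} together with Lemma~\ref{lemma:Tbu}.
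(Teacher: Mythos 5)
Your proof is essentially the same as the paper's: both compute the super-commutator $[B_i,B_{\tau i}]$, observe that the $FF$- and $EE$-cross-terms die by orthogonality, collect the $(K_iK_{\tau i})^{-1}$-term, and conclude. Your preparatory steps ($\kappa_i=\kappa_{\tau i}=0$ and $w_\bu(\alpha_i)=\alpha_i$, $w_\bu(\alpha_{\tau i})=\alpha_{\tau i}$) are correct and match what the paper uses implicitly when it writes $B_i=F_i+\va_i E_{\tau i}K_i^{-1}$.

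One remark: the ``second subtlety'' you flag at the end — verifying $c_i=c_{\tau i}$ — is not really a subtlety. Since $w_\bu(\alpha_{\tau i})=\alpha_{\tau i}$, the $\ad(\U_\bu)$-module $\ad(\U_\bu)(E_{\tau i})$ of Lemma~\ref{lemma:hwt} has equal highest and lowest weight, hence is one-dimensional; equivalently $(\alpha_{\tau i},\alpha_k)=0$ for every $k\in\I_\bu$, so each $T_k$ (and therefore $T_{w_\bu}$) fixes $E_{\tau i}$. Thus $c_i=c_{\tau i}=1$, which is exactly the simplification the paper makes without comment; you do not need a $\tau$-equivariance argument or a separate verification from the braid-group formulas. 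With that closed, your argument is complete and correct.
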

\begin{proof}
     By the assumption we have $\kappa_i=\kappa_{\tau i}=0$ and
    \begin{align*}
        [B_i,B_{\tau i}]=&[F_i+\va_iE_{\tau i}K_i^{-1},F_{\tau i}+\va_{\tau i}E_iK_{\tau i}^{-1}]\\
        =\,&\va_{\tau i}[F_i,E_i]K_{\tau i}^{-1}+\va_i[E_{\tau i},F_{\tau i}]K_i^{-1}\\
        =\,&-(-1)^{p(i)}\va_{\tau i}\frac{K_i-K_i^{-1}}{q_i-q_i^{-1}}K_{\tau i}^{-1}+\va_i\frac{K_{\tau i}-K_{\tau i}^{-1}}{q_{\tau i}-q_{\tau i}^{-1}}K_{ i}^{-1}\\
        \overset{\eqref{eq:qiqti}}{=} &(-1)^{p(i)}\left(-\va_{\tau i}\frac{K_i-K_i^{-1}}{q_i-q_i^{-1}}K_{\tau i}^{-1}+\va_i\frac{K_{\tau i}-K_{\tau i}^{-1}}{q_{ i}-q_{i}^{-1}}K_{ i}^{-1}\right).
    \end{align*}
    Thus if $\va_i\neq \va_{\tau i}$ we must have $(K_iK_{\tau i})^{-1}\in \Uio$.
\end{proof}

Therefore, we impose the following condition on $\va_i$ in order to guarantee \eqref{eq:cap}:
\begin{equation}
\label{eq:vai}
\va_i=\va_{\tau i} \qquad \text{ if $(\alpha_i,\alpha_{\tau i})=0$ and $\Theta(\alpha_i)=-\alpha_{\tau i}$}.    
\end{equation}

The proof of Proposition~\ref{prop:pi00p} below further requires the following (super) condition on the parameters:
\begin{equation}
    \label{eq:vai=vataui}
    \va_i=\va_{\tau i} \qquad \text{ if $p(i)=1$ and $(\alpha_i,\alpha_{\tau i})\in 2\Z$}.
\end{equation}
From now on, conditions \eqref{eq:kappai}, \eqref{eq:vai}, and \eqref{eq:vai=vataui} on parameters will always be imposed without explicit mention.

\subsection{Decompositions and projections}

As usual, the triangular decomposition of $\U$ induces an isomorphism of vector spaces
\begin{equation}
    \U^+\otimes \U^0\otimes S(\U^-)\cong \U.
\end{equation}
This leads to a direct sum decomposition 
\begin{equation}
\label{eq:Pprojection}
    \U=\bigoplus_{h \in Y}\U^+K_h S(\U^-)\oplus\U^+K_h\new S(\U^-).
\end{equation}
For any $h\in Y$, let 
\[
P_h:\U \longrightarrow \U^+K_hS(\U^-)\oplus\U^+K_h \new S(\U^-)
\]
denote the projection with respect to \eqref{eq:Pprojection}. We also use the symbol $P_\lambda$ for $\lambda\in X$ to denote the projection $P_\lambda:\U \to \U^+K_{\lambda} S(\U^-)\oplus\U^+K_\lambda \new S(\U^-)$ as above.
We have that 
\begin{equation}
\label{eq:deltaP}
    \Delta\circ P_h(x)=(id\otimes P_h)\Delta(x),\ \forall h\in Y,x\in \U.
\end{equation}
The identity \eqref{eq:deltaP} implies the following super analogue of \cite[Lemma 5.9]{Ko14} with the same proof:
\[
\Ui=\bigoplus_{h\in Y}P_{h}(\Ui).
\]

On the other hand, with respect to the decomposition 
\begin{equation}
\label{eq:piprojection}
    \U=\bigoplus_{\alpha,\beta\in X^+}\U_\alpha^+\U^0\U_{-\beta}^-,
\end{equation}
We have the following projections 
\begin{align}
    \label{eq:pi}
    \pi_{\alpha,\beta}:\U\longrightarrow \U_\alpha^+\U^0\U_{-\beta}^-.
\end{align}

\subsection{Quantum Serre relations of $\U$}
For any quantum group, the Serre relation can be presented as $S_{ij}(F_i,F_j) =S_{ij}(E_i,E_j)=0$ in terms of a non-commutative Serre polynomial
\[
S_{ij}(x,y)=\sum_{k=0}^{1-a_{ij}}(-1)^k\qbinom{1-a_{ij}}{k}_{q_i}x^{1-a_{ij}-k}yx^k.
\]

We recall the $q$-commutator on homogeneous elements in $\U$ by
\begin{equation}
\label{eq:adq}
    \ad_qu(v)=[u,v]_q=uv-(-1)^{p(u)p(v)}q^{(\alpha,\beta)}vu,\quad \text{ for all } u\in \U_\alpha,v\in \U_\beta.
\end{equation}

According to \cite{Ya94,Ya99}, various higher order quantum Serre relations show up for quantum supergroups associated with arbitrary Dynkin diagrams. 
For future reference, we adopt the notations from \cite[Proposition 2.7]{CHW16} and provide a list of all local Dynkin diagrams relevant to our super admissible pairs and the corresponding Serre relations. Each such Serre relation can be expressed in terms of a non-commutative Serre polynomial as in Table~\ref{TableSerrePolyn}. 
(Note that Dynkin subdiagrams of type (F3), (F4), (G1), and (D$\alpha$) in \cite[Prop.~ 2.7]{CHW16} are excluded since they do not appear as subdiagrams of super Satake diagrams.)

\begin{table}[htbp]
\renewcommand\arraystretch{2}
 \caption{Non-commutative Serre polynomials $\quad (x_i\in \U_{\alpha_i})$}{\label{TableSerrePolyn}}\resizebox{\linewidth}{!}{
\begin{tabular}{|c|c|c|}
\hline
Type & Local Dynkin diagrams & Non-commutative Serre polynomials  \\
\hline
ISO1 & $\xy 
(0,-6)*{\ };
(0,0)*{\otimes}; (0,-3)*{\scriptstyle i};
\endxy$& $x_i^2$\\
\hline
ISO2 & $\xy 
(0,-6)*{\ };
(-5,0)*{\otimes};(5,0)*{\otimes};(-5,-3)*{\scriptstyle i};(5,-3)*{\scriptstyle j};
\endxy$& $x_ix_j+x_jx_i$\\
\hline
N-ISO &$i\in \Ieven\cup \I_{\niso},\ j\neq i $& 
$\sum_{r+s=1+|a_{ij}|}(-1)^r  (-1)^{\binom{r}{2}p(i)+rp(i)p(j)}\qbinom{1+|a_{ij}|}{r}_i x_i^r x_j x_i^s$\\
\hline
   AB & $\xy
(-10,0)*{\odot};(0,0)*{\otimes}**\dir{-};(0,0)*{\otimes};(10,0)*{\odot}**\dir{-};
(-10,-3.5)*{\scriptstyle i};(0,-3.5)*{\scriptstyle j};(10,-3.5)*{\scriptstyle k};
\endxy$ &$[2]_{q_j}x_jx_kx_ix_j-[(-1)^{p(i)}x_jx_kx_jx_i+(-1)^{p(i)+p(i)p(k)}x_ix_jx_kx_j$\\
& $\xy (0,-6)*{\ };
(-10,0)*{\yy};(0,0)*{\otimes}**\dir{=};(0,0)*{\otimes};(10,0)*{\odot}**\dir{-};(-5,0)*{<};
(-10,-3.5)*{\scriptstyle i};(0,-3.5)*{\scriptstyle j};(10,-3.5)*{\scriptstyle k};
\endxy$& $+(-1)^{p(i)p(k)+p(k)}x_jx_ix_jx_k+(-1)^{p(k)}x_kx_jx_ix_j]$\\
\hline
CD1 & $\xy (0,-6)*{\ };
(-10,0)*{\fullmoon};(0,0)*{\otimes}**\dir{=};(0,0)*{\otimes};(10,0)*{\otimes}**\dir{-};(-5,0)*{>};
(-10,-3.5)*{\scriptstyle i};(0,-3.5)*{\scriptstyle j};(10,-3.5)*{\scriptstyle k};
\endxy$&$\ad_q x_j\circ \ad_q (\ad_q x_j(x_k))\circ \ad_q x_i\circ \ad_q x_j (x_k)$ \\
\hline
CD2 &$\xy (0,-6)*{\ };
(-10,0)*{\odot};(0,0)*{\fullmoon}**\dir{-};(0,0)*{\fullmoon};(10,0)*{\otimes}**\dir{-};(15,0)*{<};(10,0)*{\fullmoon};(20,0)*{\fullmoon}**\dir{=};
(-10,-3.5)*{\scriptstyle i};(0,-3.5)*{\scriptstyle j};(10,-3.5)*{\scriptstyle k};(20,-3.5)*{\scriptstyle l};
\endxy$&$\ad_q x_k\circ \ad_q x_j \circ \ad_q x_k\circ \ad_q x_l\circ \ad_q x_k\circ \ad_q x_j (x_i)$ \\ \hline
D & $\xy(0,-6)*{\ };(0,6)*{\ };
(0,0)*{\odot};(8,4)*{\otimes}**\dir{-};(0,0)*{\odot};(8,-4)*{\otimes}**\dir{-};(8,4)*{\otimes};(8,-4)*{\otimes}**\dir{=};
(-4,0)*{\scriptstyle i};(11,4)*{\scriptstyle j};(11,-4)*{\scriptstyle k};
\endxy$ & $\ad_q x_k\circ \ad_q x_j (x_i)-\ad_q x_j \circ \ad_q x_k(x_i)$ \\ \hline
F1 &$\xy(0,-6)*{\ };
{\ar@3{-}(-15,0)*{\fullmoon};(-5,0)*{\otimes}};(-10,0)*{>};
(-5,0)*{\otimes};(5,0)*{\fullmoon}**\dir{=};(0,0)*{<};(5,0)*{\fullmoon};(15,0)*{\fullmoon}**\dir{-};
(-15,-3.5)*{\scriptstyle 1};(-5,-3.5)*{\scriptstyle 2};(5,-3.5)*{\scriptstyle 3};(15,-3.5)*{\scriptstyle 4};
\endxy$ & \makecell{$\ad_q Y\circ \ad_q Y \circ \ad_q x_2 \circ \ad_q x_3(x_4),$\\$Y:=\ad_q(\ad_q x_1(x_2))\circ \ad_q x_3(x_2)$} \\ \hline
F2& $\xy(0,-6)*{\ };
{\ar@3{-}(-15,0)*{\fullmoon};(-5,0)*{\otimes}};(-10,0)*{>};
(-5,0)*{\otimes};(5,0)*{\fullmoon}**\dir{-};(10,0)*{<};(5,0)*{\fullmoon};(15,0)*{\fullmoon}**\dir{=};
(-15,-3.5)*{\scriptstyle 1};(-5,-3.5)*{\scriptstyle 2};(5,-3.5)*{\scriptstyle 3};(15,-3.5)*{\scriptstyle 4};
\endxy$ & \makecell{$\ad_q(\ad_q x_1(x_2))  \circ \ad_q (\ad_q x_3(x_2)) \circ \ad_q x_3(x_4)$\\$
-\ad_q(\ad_q x_3(x_2))\circ \ad_q (\ad_q x_1(x_2)) \circ \ad_q x_3(x_4)$} \\ \hline
%F3 & $\xy(0,-6)*{\ };
%(-10,0)*{\otimes};(0,0)*{\otimes}**\dir{=};(0,0)*{\otimes};(10,0)*{\fullmoon}**\dir{=};(5,0)*{<};
%(-10,-3.5)*{\scriptstyle 1};(0,-3.5)*{\scriptstyle 3};(10,-3.5)*{\scriptstyle 4};
%\endxy$& $\ad_q x_3\circ\ad_{q}x_1\circ \ad_{q} x_3 (x_4)$\\ \hline
%F4 & $ \xy (0,11)*{\ };
%{\ar@3{-}(-5,0)*{\otimes};(0,6)*{\otimes}};(-5,0)*{\otimes};(5,0)*{\otimes}**\dir{=};(0,6)*{\otimes};(5,0)*{\otimes}**\dir{-};
%(-8,0)*{\scriptstyle 1};(0,9)*{\scriptstyle 2};(8,0)*{\scriptstyle 3};
%\endxy$ & $[3]\ad_q x_i\circ \ad_q x_j (x_k)+[2]\ad_q x_j \circ \ad_q x_i(x_k)$
%\\\hline
%G1& $ \xy(0,-6)*{\ };
%{\ar@3{-}(0,0)*{\otimes};(10,0)*{\fullmoon}};(5,0)*{<};(-10,0)*{\otimes};(0,0)*{\otimes}**\dir{=};
%(-10,-3.5)*{\scriptstyle 1};(0,-3.5)*{\scriptstyle 2};(10,-3.5)*{\scriptstyle 3};
%\endxy$& 
%$\ad_q (\ad_q x_2(x_3))\circ \ad_q (\ad_q x_2(x_3)) \circ \ad_q (\ad_q x_2(x_3)) \circ \ad_q x_2(x_1)$ \\ \hline
G2 &  $ \xy (0,-6)*{\ };
{\ar@3{-}(0,0)*{\otimes};(10,0)*{\fullmoon}};(5,0)*{<};(-10,0)*{\LEFTcircle};(0,0)*{\otimes}**\dir{=};
(-10,-4)*{\scriptstyle 1};(0,-3.5)*{\scriptstyle 2};(10,-3.5)*{\scriptstyle 3};
\endxy$ & \makecell{ $\ad_q x_2\circ \ad_q x_3 \circ \ad_q x_3\circ \ad_q x_2(x_1)$ \\ $-
 	\ad_q x_3\circ \ad_q x_2 \circ \ad_q x_3\circ \ad_q x_2(x_1)$}\\ \hline
G3 &\makecell{\\$\xy 
{\ar@3{-}(-5,0)*{\otimes};(5,0)*{\otimes}};(-5,0)*{\otimes};(0,7)*{\fullmoon}**\dir{=};(0,7)*{\fullmoon};(5,0)*{\otimes}**\dir{-};
(-8,0)*{\scriptstyle 1};(0,10)*{\scriptstyle 2};(8,0)*{\scriptstyle 3};
\endxy$}  &   $\ad_q x_1\circ \ad_q x_2 (x_3)-[2]\ad_q x_2 \circ \ad_q x_1(x_3)$ \\ \hline
%D$\alpha$ &$\xy (0,13)*{\ };
%(-5,0)*{\otimes};(0,7)*{\otimes}**\dir{-};
%(-5,0)*{\otimes};(5,0)*{\otimes}**\dir{-};
%(0,7)*{\otimes};(5,0)*{\otimes}**\dir{-};
%(-8,0)*{\scriptstyle 1};(0,10)*{\scriptstyle 2};(8,0)*{\scriptstyle 3};
%(4,4)*{\scriptstyle \af};(0,-2)*{\scriptstyle -1-\af};
%\endxy$& $[\alpha+1]\ad_q x_1\circ \ad_q x_3 (x_2)+[\alpha]\ad_q x_3 \circ \ad_q x_1(x_2)$ \\ \hline
\end{tabular}}
\end{table}

%Now we define a subspace of $\Ui$ by \[\Ui_J=\bigoplus_{J\in \mathcal J}\C(q)B_J.

\subsection{The projection technique}
Let $p=p(x_{i_1},\ldots,x_{i_\ell})$ be a non-commutative polynomial in variables $x_{i_t}$ for $i_t\in \I$. To shorten notations we write
\begin{align*}
    &p(\underline{F})=p(F_{i_1},\ldots,F_{i_\ell}),\quad p(\underline{E})=p(E_{i_1},\ldots,E_{i_\ell}),\quad p(\underline{B})=p(B_{i_1},\ldots,B_{i_\ell}),\\
    &%p(\underline{E_\tau K^{-1}})=p(E_{\tau i_1}K_{i_1}^{-1},\ldots,E_{\tau i_\ell}K_{i_\ell}^{-1}),\quad 
    p\big(\underline{T_{w_\bu}(E_\tau)K^{-1}}\big) =p(T_{w_\bu}(E_{\tau i_1})K_{i_1}^{-1},\ldots, T_{w_\bu}(E_{\tau i_\ell})K_{i_\ell}^{-1}).
    %p(\underline{KE_\tau })=p(K_{i_1}E_{\tau i_1},\ldots,K_{i_\ell} E_{\tau i_\ell}),\\
    %&p(\underline{F_\tau K^{-1}})=p(F_{\tau i_1}K_{i_1}^{-1},\ldots,F_{\tau i_\ell}K_{i_\ell}^{-1}).
\end{align*}

\begin{lemma}
\label{lemma:p}
    The following are equivalent for a homogeneous polynomial $p$:

    (1) $p(\underline{F})=0$,

    (2) $p(\underline{E})=0$,

    (3) $p\big(\underline{T_{w_\bu}(E_\tau) K^{-1}} \big)=0$.

   % (4) $p(\underline{KE_\tau})=0$,

   % (5) $p(\underline{F_\tau K^{-1}})=0$.
\end{lemma}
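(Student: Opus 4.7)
The plan rests on two features shared by every Serre polynomial $p$ in Table~\ref{TableSerrePolyn}: first, $p$ is \emph{multi-homogeneous}, meaning each monomial has the same multi-degree $(d_1,\ldots,d_\ell)$ in the variables $(x_{i_1},\ldots,x_{i_\ell})$; and second, setting $\mu:=\sum_s d_s\alpha_{i_s}\in X$, every monomial will contribute a common weight factor $K_\mu^{-1}$ once the $K$'s are collected to the right. Multi-homogeneity means that the substitution $x_{i_s}\mapsto c_sx_{i_s}$ multiplies $p$ by the nonzero scalar $\prod_s c_s^{d_s}$.

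For (1)$\Leftrightarrow$(2) I would apply the Chevalley involution $\om$. Since $\om$ is an algebra automorphism of $\U$ with $\om(F_i)=-(-1)^{p(i)}E_i$, multi-homogeneity gives $\om(p(\underline{F}))=\bigl(\prod_s(-(-1)^{p(i_s)})^{d_s}\bigr)p(\underline{E})$, a nonzero scalar multiple of $p(\underline{E})$, which settles this equivalence.

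For (2)$\Leftrightarrow$(3) I would proceed in two steps. First, since $\tau$ and $T_{w_\bu}$ are algebra automorphisms of $\U$ with $\tau(E_i)=\pm E_{\tau i}$, applying $T_{w_\bu}\circ\tau$ and absorbing signs via multi-homogeneity shows that $p(\underline{E})=0$ is equivalent to $p(T_{w_\bu}(E_{\tau i_1}),\ldots,T_{w_\bu}(E_{\tau i_\ell}))=0$. Second, for each monomial $y_{j_1}\cdots y_{j_n}$ of $p(\underline{y})$ with $y_s:=T_{w_\bu}(E_{\tau s})K_s^{-1}$, I would collect the $K^{-1}$'s to the right using $K_s^{-1}u=q^{-(\alpha_s,\beta)}uK_s^{-1}$ for $u\in\U_\beta$; since $T_{w_\bu}(E_{\tau t})$ has weight $w_\bu(\alpha_{\tau t})$ by Lemma~\ref{lemma:hwt}, this produces
\[y_{j_1}\cdots y_{j_n}=c_{\underline{j}}\cdot T_{w_\bu}(E_{\tau j_1})\cdots T_{w_\bu}(E_{\tau j_n})\cdot K_\mu^{-1},\qquad c_{\underline{j}}:=\prod_{r<t}q^{-(\alpha_{j_r},w_\bu(\alpha_{\tau j_t}))}.\]

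The main obstacle I anticipate is proving that $c_{\underline{j}}$ depends only on the multiset $\{j_1,\ldots,j_n\}$ rather than its ordering. This will reduce to the symmetric identity
\[(\alpha_i,w_\bu(\alpha_{\tau j}))=(\alpha_j,w_\bu(\alpha_{\tau i}))\qquad \forall\, i,j\in\I,\]
which I would deduce by combining Weyl-invariance of $(\cdot,\cdot)$, $\tau$-invariance of $(\cdot,\cdot)$ (equivalent to $d_ia_{ij}=d_{\tau i}a_{\tau i,\tau j}$), $\tau^2=\mathrm{id}$, and the commutation $\tau w_\bu=w_\bu\tau$ (a consequence of $\tau(\I_\bu)=\I_\bu$ and uniqueness of the longest element of $W_\bu$). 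Granting this identity, $c_{\underline{j}}$ reduces to a common nonzero scalar $c$ determined by the multi-degrees $(d_s)$, whence $p(\underline{y})=c\cdot p(\underline{T_{w_\bu}(E_\tau)})\cdot K_\mu^{-1}$, and (2)$\Leftrightarrow$(3) follows.
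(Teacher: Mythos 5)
Your proof is correct and follows the same route as the paper: apply $\om$ for (1)$\Leftrightarrow$(2), apply $T_{w_\bu}\circ\tau$ and absorb signs for the passage to $p(\underline{T_{w_\bu}(E_\tau)})=0$, then collect the $K^{-1}$'s to the right for (3). The paper merely asserts that the resulting $q$-power ``is independent of the given monomial''; your derivation of the symmetric identity $(\alpha_i,w_\bu\alpha_{\tau j})=(\alpha_j,w_\bu\alpha_{\tau i})$ from $W_\bu$-invariance, $\tau$-invariance of $(\cdot,\cdot)$, $\tau^2=\mathrm{id}$, and $\tau w_\bu=w_\bu\tau$ is exactly the verification the paper leaves to the reader, and it is correct.
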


\begin{proof}
    The equivalence of (1) and (2) follows from $p(\underline{F})=(-1)^{deg(p)}\om\circ p(\underline E)$. Moreover, upon applying an automorphism $T_{w_\bu}$, (2) is clearly equivalent to 
    \begin{align}
        \label{pTE}
    p\big(\underline{T_{w_\bu}(E_\tau)}\big) :=p(T_{w_\bu}(E_{\tau i_1}),\ldots,T_{w_\bu}(E_{\tau i_\ell}))=0.   
    \end{align}
 Next we observe that 
    $p\big(\underline{T_{w_\bu}(E_\tau) K^{-1}}\big)= q^\star p\big(\underline{T_{w_\bu}(E_\tau)}\big) K^{-1}_{i_1}\ldots K^{-1}_{i_\ell}$, for some explicit integer $\star$; this follows by checking directly that the resulting $q$-powers from commuting all $K_j^{-1}$ to the right of any monomial in $p$ is independent of the given monomial. 
%    \blue{Yes. Suppose $p$ has weight $\lambda$. We look at one 'monomial', say it is \[T_{w_\bu}(E_{\tau i_1})K_{i_1}^{-1}\cdots T_{w_\bu}(E_{\tau i_k})K_{i_k}^{-1}\]    Then $\lambda=\alpha_{i_1}+\cdots+\alpha_{i_k}$    Then the power of $q$ from moving all $K^{-1}$ to the right is 
%    \begin{align*}
%        &\left(-\alpha_{i_1},w_\bu(\lambda-\alpha_{\tau i_1})\right)+ \left(-\alpha_{i_2},w_\bu(\lambda-\alpha_{\tau i_1}-\alpha_{\tau i_2})\right)+\cdots+       \\
%        & \left(-\alpha_{i_{k}},w_\bu(\lambda-\alpha_{\tau i_1}-\cdots-\alpha_{\tau i_{k-1}})\right)
%    \end{align*}
%    Now if we switch, say for example, $i_1$ and $i_2$, we get the same power of $q$ since 
%    \[(\alpha_{i_1},w_\bu\alpha_{\tau i_2})=(\alpha_{i_2},w_\bu\alpha_{\tau i_1})\]
%    }
%
Thus \eqref{pTE} and (3) are equivalent. 
\end{proof}

Define a filtration $\Fi^*$ on $\Ui$ by the following degree function specified on generators:
\begin{equation}
    \label{eq:degree}
    \deg (B_i)=1,\ \forall i\in \I,\quad \deg(x)=0,\  \forall x\in \Ub^+\Uio.
\end{equation}

Our goal in this section is to establish a basis theorem for $\Ui$ (Theorem~\ref{thm:basis}) and the quantum Iwasawa decomposition (Theorem~\ref{thm:Iwa}). To that end, we need to show the following property \eqref{eq:Uirel} holds for any homogeneous non-commutative polynomial $p$ of degree $m$:
\begin{equation}
\label{eq:Uirel}
    p(\underline{F})=0 \text{ with } \deg p=m \, \Longrightarrow\, p(\underline{B})\in \Fi^{m-1}(\Ui).
\end{equation}
We shall develop the projection technique below  following \cite{Let02, Ko14}.

For any $J=(j_1,\ldots,j_r)\in I^r$, we denote $wt(J)=\sum_{i=1}^r\alpha_{j_i}$, $|J|=r$, and introduce the following shorthand notations:
\begin{equation}
E_J=E_{j_1}\cdots E_{j_r},\quad F_J=F_{j_1}\cdots F_{j_r},\quad B_J=B_{j_1}\cdots B_{j_r}.
\end{equation}
Suppose that the homogeneous polynomial $p$ has weight  $\lambda=\lambda_{1}\alpha_{i_1}+\cdots+\lambda_{\ell}\alpha_{i_\ell}$, where $\lambda_t\in \N$. Define $|\lambda|:=\lambda_1+\cdots +\lambda_\ell$. Given two weights $\lambda,\mu$, we say $\lambda\leq \mu$ if $\lambda_t\leq \mu_t$ for all $t$. 

\begin{lemma}
\label{lemma:pcriterion}
    Let $p$ be a non-commutative homogeneous polynomial of weight $\lambda$. If $P_{-\lambda}(p(\underline{B}))=0$, then $p$ satisfies the property \eqref{eq:Uirel}.
\end{lemma}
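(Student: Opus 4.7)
The plan is to expand $p(\underline{B})$ by substituting $B_i = F_i + r_i$, where $r_i := \va_i T_{w_\bu}(E_{\tau i}) K_i^{-1} + \kappa_i K_i^{-1}$ for $i\in \I_\circ$ and $r_i := 0$ for $i\in \I_\bu$, and collecting terms to obtain an identity in $\U$
\[
p(\underline{B}) = p(\underline{F}) + \sum_{\emptyset\neq s\subset\{1,\ldots,m\}} P_s,
\]
where $P_s$ denotes the mixed monomial with $r$-factors at positions in $s$ and $F$-factors at the remaining positions. Note that although $p(\underline{F})\in \U^-$ and each $P_s$ sit in $\U$, they are not individually in $\Ui$ in general; however, their total sum equals $p(\underline{B})\in \Ui$.

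First I would verify that $P_{-\lambda}(p(\underline{F})) = p(\underline{F})$. Using the antipode identity $F_j = -q^{-(\alpha_j,\alpha_j)}\new^{-p(j)} K_j^{-1} S(F_j)$ and commuting the $K$-factors leftwards, one checks that any $F_J$ with $\operatorname{wt}(J)=\lambda$ sits in $\U^+_0 K_{-\lambda} \new^{\bullet} S(\U^-)$, so this term is preserved under $P_{-\lambda}$ intact.

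Next I would analyze $P_{-\lambda}$ applied to each mixed monomial $P_s$. After moving every $T_{w_\bu}(E_{\tau j})$-factor leftwards into the $\U^+$-slot, every $K_j^{-1}$-factor into the $\U^0$-slot, and every remaining $F_k$-factor (converted via the antipode identity) into the $S(\U^-)$-slot, the net $K$-weight of the resulting standard-form terms is still $-\lambda$ thanks to the balance between the explicit $K_{j_t}^{-1}$ contributed by each $r_{j_t}$ and the implicit $K_{j_t}^{-1}$ produced when converting each $F_{j_t}$. Then, using the inverse substitution $F_j = B_j - r_j$ to rewrite the remaining $F$-factors in each $P_s$ back in terms of $B$'s and $r$'s, a straightforward M{\"o}bius-type rearrangement expresses $\sum_{s\neq\emptyset} P_s$ as a $\C(q)$-linear combination of monomials with at most $m-|s|\leq m-1$ factors of $B$-type (and the rest $r$'s). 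A careful grouping (handling the commutator corrections $[T_{w_\bu}(E_{\tau j}), F_k]$ produced during the reorderings, controlled by the Serre relations of Table~\ref{TableSerrePolyn} and the parameter conditions \eqref{eq:kappai}, \eqref{eq:vai}, \eqref{eq:vai=vataui}) then produces a decomposition
\[
P_{-\lambda}(p(\underline{B})) = p(\underline{F}) + P_{-\lambda}(z), \qquad z\in \Fi^{m-1}(\Ui),
\]
together with the refinement that $p(\underline{B}) - z$ lies in the $K_{-\lambda}$-component of $\U$, hence is fixed by $P_{-\lambda}$.

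Finally, using both hypotheses: $P_{-\lambda}(p(\underline{B}))=0$ gives $p(\underline{F}) = -P_{-\lambda}(z)$, while $p(\underline{F})=0$ then forces $P_{-\lambda}(z)=0$; combining with the refinement yields $p(\underline{B}) - z = P_{-\lambda}(p(\underline{B}) - z) = 0$, so $p(\underline{B}) = z \in \Fi^{m-1}(\Ui)$, as desired. The main obstacle is the middle step: explicitly identifying the element $z\in \Fi^{m-1}$ and tracking the commutator corrections that arise when reorganizing mixed $P_s$-terms back into $(B,r)$-monomials, which is exactly where the super admissibility constraints and the mild assumption \eqref{exclude} (excluding the Serre polynomials of the problematic types) enter the analysis for the subsequent Proposition~\ref{prop:pi00p}.
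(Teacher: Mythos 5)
Your approach takes an entirely different route from the paper, and unfortunately it has a fatal gap: the decomposition you claim would prove something too strong.

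Let me pinpoint the problem. You assert (a) a decomposition $P_{-\lambda}(p(\underline{B})) = p(\underline{F}) + P_{-\lambda}(z)$ with $z\in\Fi^{m-1}(\Ui)$, and (b) the ``refinement'' that $p(\underline{B}) - z$ lies in the $K_{-\lambda}$-component and hence is fixed by $P_{-\lambda}$. But (a) and (b) together force $z = p(\underline{B}) - p(\underline{F})$: indeed $P_{-\lambda}(p(\underline{B}) - z) = P_{-\lambda}(p(\underline{B})) - P_{-\lambda}(z) = p(\underline{F})$ by (a), while by (b) the left side equals $p(\underline{B})-z$. So you are in fact claiming, \emph{unconditionally}, that $p(\underline{B}) - p(\underline{F}) \in \Fi^{m-1}(\Ui)$. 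This is false, and it would trivialize the lemma: with $p(\underline{F})=0$ one would immediately get $p(\underline{B}) = z \in \Fi^{m-1}(\Ui)$ without ever using $P_{-\lambda}(p(\underline{B}))=0$. Yet the hypothesis is essential --- that is exactly why Proposition~\ref{prop:pi00p} needs its long case-by-case verification. Example~\ref{example:sAI} (a single isotropic node, type ISO1) gives an explicit counterexample: there $p(x_i)=x_i^2$, $p(\underline{F})=0$, and $p(\underline{B}) = B_i^2 = \va_i\frac{1-K_i^{-2}}{q_i-q_i^{-1}}$, which lies in $\Ui$ but not in $\Fi^1(\Ui)$ (since $K_i^{-2}\notin\Uio$ and a weight-zero element cannot sit in $\Uio B_i\Uio$). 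Concretely, this is the trouble you were gesturing at in your last paragraph --- the commutator corrections $[T_{w_\bu}(E_{\tau j}), F_k]$ produce terms like $K-K^{-1}$ that shift the $K$-grading and land \emph{outside} $\Fi^{m-1}(\Ui)$ in general, and nothing in your ``M\"obius-type rearrangement'' controls them.

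The paper's proof sidesteps the expansion entirely and uses the Hopf-algebraic structure. Setting $\Xi = p(\underline{B})$ and $Z = P_{-\lambda}(\Xi)$, the coideal estimate \eqref{eq:deltabi-} and Lemma~\ref{lemma:eibk} give
\[
\Delta(\Xi) \in \Xi\otimes K_{-\lambda} + \sum_{wt(J)<\lambda} \U_\bu^+\U^{\imath 0}B_J \otimes \U.
\]
Applying the compatibility $\Delta\circ P_{-\lambda} = (\mathrm{id}\otimes P_{-\lambda})\Delta$ turns this into $\Delta(Z) \in \Xi\otimes K_{-\lambda} + \sum \U_\bu^+\U^{\imath 0}B_J\otimes P_{-\lambda}(\U)$, and then the counit $(\mathrm{id}\otimes\epsilon)\Delta(Z)=Z$ forces $Z - \Xi \in \Fi^{|\lambda|-1}(\Ui)$; hence $Z=0$ implies $\Xi\in\Fi^{|\lambda|-1}(\Ui)$. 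This cleanly isolates the single thing that must be checked case by case --- namely $P_{-\lambda}(p(\underline{B}))=0$ --- and that is what Lemma~\ref{lemma:pcriterion} is for. Your direct-expansion route would have to reconstruct this coideal-theoretic input by hand; as set up, it does not.
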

\begin{proof}
     Set $\Xi=p(\underline{B})$ and $Z=P_{-\lambda}(\Xi)$. It follows from \eqref{eq:deltabi-} and \eqref{eq:EjBk} that
   \begin{equation}
\label{eq:comultiXi}
    \Delta(\Xi)\in\Xi\otimes K_{-\lambda}+\sum_{\{J\mid wt(J)<\lambda\}}\U^{+}_\bu\U^{\imath0}B_J\otimes \U.
\end{equation}
Hence \eqref{eq:deltaP} implies that
   \begin{equation}
\label{eq:comultiZ}
    \Delta(Z)\in \Xi\otimes K_{-\lambda}+\sum_{\{J\mid wt(J)<\lambda\}}\U^{+}_\bu\U^{\imath0}B_J\otimes P_{-\lambda}(\U).
\end{equation}
If $Z=0$, then by applying $1\otimes \epsilon$ to \eqref{eq:comultiZ} we can conclude that $\Xi\in \Fi^{|\lambda|-1}$.
\end{proof}

%We consider the set $\Nrel$ of all degrees for which homogeneous relations in $\U^-$ lead to relations in $\Ui$, that is
%\begin{equation}
%\Nrel=\set{k\in \N\mid \text{ any polynomial $p$ of degree $m\leq k$ satisfies \eqref{eq:Uirel}}}.
%\end{equation}
%Thus to show $\eqref{eq:Uirel}$ holds for any homogeneous non-commutative polynomial $p$ it suffices to show $\Nrel=\N$. The following lemma provides main step in the proof of $\Nrel=\N$ below.

Lemma~\ref{lemma:pcriterion} reduces the verification of \eqref{eq:Uirel} for a polynomial $p$ with weight $\lambda$ to the identity $P_{-\lambda}(p(\underline{B})) = 0$. However, computing $P_{-\lambda}(p(\underline{B}))$ can often be challenging due to the complexity of the Serre relations, as illustrated in Table~\ref{TableSerrePolyn}. Proposition~\ref{prop:P-lambda=0} below offers a simple sufficient condition to guarantee the validity of $P_{-\lambda}(p(\underline{B})) = 0$.

The following technical lemma (see also \cite[Lemma 5.14]{Ko14}) is a key step in the proof of Proposition~\ref{prop:P-lambda=0}.
\begin{lemma}
\label{lemma:notinXi}
    Let $p$ be a non-commutative homogeneous polynomial of weight $\lambda$ such that $p(\underline{F})=0$ but $\pi_{\alpha,\beta}(p(\underline{B}))\neq 0$ for some $\alpha,\beta \in X^+$, then $\lambda-\alpha\notin X^\io$ and $\lambda-\beta \notin X^\io$.
\end{lemma}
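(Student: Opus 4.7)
The strategy is a proof by contradiction: assume $\lambda-\alpha\in X^\io$ and derive $\pi_{\alpha,\beta}(p(\underline{B}))=0$; the case $\lambda-\beta\in X^\io$ is handled by a parallel argument invoking the anti-involution $\sigma$ of \eqref{eq:sigma}, which swaps the roles of $E$ and $F$.

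First I expand $p(\underline{B})$ by writing each factor
\[
B_{j_t}=F_{j_t}+\varsigma_{j_t}T_{w_\bu}(E_{\tau j_t})K_{j_t}^{-1}+\kappa_{j_t}K_{j_t}^{-1}
\]
(only the first summand survives when $j_t\in\I_\bu$) and grouping the resulting terms by triples of disjoint subsets $(S_F,S_T,S_K)$ partitioning the positions $\{1,\dots,m\}$ of each monomial of $p$. Two extreme patterns give vanishing contributions: the pure-$F$ pattern contributes $p(\underline{F})=0$ by hypothesis, while the pure-$T$ pattern, after commuting all Cartan factors $K_{j_t}^{-1}$ past the $T_{w_\bu}(E_{\tau j_{t'}})$'s (yielding an overall $q$-power), contributes a scalar multiple of $p(\underline{T_{w_\bu}(E_\tau)K^{-1}})$, which vanishes by Lemma~\ref{lemma:p}. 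Thus the only possible contributions to $\pi_{\alpha,\beta}(p(\underline{B}))$ come from mixed monomials.

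Next I bring each mixed monomial to triangular normal form using \eqref{eq:Urelation}. The resulting piece lies in $\U^+_{\alpha'}\U^0\U^-_{-\beta'}$ with
\[
\alpha'=-\sum_{t\in S_T}\Theta(\alpha_{j_t})-c,\qquad \beta'=\sum_{t\in S_F}\alpha_{j_t}-c,
\]
for some $c\in\N[\Pi]$ recording cancellations produced by the brackets $[E_i,F_i]$. Super-admissibility condition~(2) in Definition~\ref{def:superad} gives $\Theta(\alpha_k)=\alpha_k$ for $k\in\I_\bu$, so $\I_\bu$-indexed cancellations contribute to $c$ elements of $X^\io$; the remaining non-$\I_\bu$ cancellations must pair $E_{\tau j_t}$ (from a factor $T_{w_\bu}(E_{\tau j_t})$ with $\tau j_t\in\I_\circ$) against $F_{j_{t'}}$ with $j_{t'}=\tau j_t$. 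Since $(1+\Theta)(\alpha_j)\in X^\io$ for every $j$ (as $\Theta^2=\mathrm{id}$), one obtains modulo $X^\io$
\[
\lambda-\alpha' \;\equiv\; \sum_{\substack{t\in S_F\cup S_K\\ j_t\in\I_\circ}}\alpha_{j_t}+c_\circ \pmod{X^\io},
\]
where $c_\circ$ is the $\I_\circ$-part of $c$. A short computation in $X/X^\io$ shows $[\alpha_i]=[\alpha_{\tau i}]$ for $i\in\I_\circ$, with distinct $\tau$-orbits in $\I_\circ$ yielding distinct nonzero classes; since all coefficients on the right are non-negative, the hypothesis $\lambda-\alpha\in X^\io$ forces every $\I_\circ$-position of the monomial to lie in $S_T$, every $\I_\bu$-position to lie in $S_F$, $S_K=\emptyset$, and $c_\circ=0$, leaving a single surviving pattern.

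The final and most delicate step is to show that the total contribution summed over the monomials of $p$ from this unique surviving pattern vanishes. The idea is to isolate the $\I_\circ$-variables and the $\I_\bu$-variables, recognizing the surviving contribution as a Cartan-twisted hybrid of the $F$-substitution on $\I_\bu$-variables with the $T_{w_\bu}(E_\tau)K^{-1}$-substitution on $\I_\circ$-variables; the parameter conditions \eqref{eq:vai} and \eqref{eq:vai=vataui} ensure that this hybrid is proportional (up to an overall $q$-power and sign) to $p(\underline{T_{w_\bu}(E_\tau)K^{-1}})$, which is zero by Lemma~\ref{lemma:p}. The main obstacle is precisely this reduction: the $q$-power and parity-sign bookkeeping required when commuting $F_k$ (for $k\in\I_\bu$) past $T_{w_\bu}(E_{\tau j})K_j^{-1}$ (for $j\in\I_\circ$), and verifying that the resulting identity matches the hypothesis exactly, constitute the technical heart of the proof.
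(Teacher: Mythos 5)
The paper proves this with a short direct weight argument, not the contrapositive reduction you attempt. Its two steps: (i) since $p(\underline{F})=0$ (an element of $\U^-_{-\lambda}$, which projects to zero under $\pi_{\alpha,\beta}$), every surviving term has at least one $\I_\circ$-position not contributing $F$, so $\beta\le\lambda-\alpha_j$ for some $j\in\I_\circ$; writing $\mu=\lambda-\beta\ge\alpha_j\ge 0$, the $\alpha_{\tau j}$-coefficient of $\Theta(\mu)$ is $-c_j\le -1<0$ while that of $\mu$ is $\ge 0$, so $\mu\notin X^\io$. (ii) Symmetrically, since $p(\underline{T_{w_\bu}(E_\tau)K^{-1}})=0$ by Lemma~\ref{lemma:p} and $T_{w_\bu}(E_{\tau k})K_k^{-1}=-F_k$ for $k\in\I_\bu$ (because $w_\bu\alpha_{\tau k}=-\alpha_k$ forces $T_{w_\bu}(E_{\tau k})=-F_kK_k$), the ``all-positive'' hybrid substitution vanishes, so $\alpha\le-\Theta(\lambda-\alpha_{j'})$, and then $\lambda-\alpha=(\lambda+\Theta(\lambda))+(-\Theta(\lambda)-\alpha)$ with $\lambda+\Theta(\lambda)\in X^\io$ gives the result.

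Your proposal reaches the same bottleneck (the hybrid pattern must vanish) but there are several genuine gaps. First, you leave that vanishing as ``the technical heart,'' attributing it to the parameter conditions \eqref{eq:vai} and \eqref{eq:vai=vataui}; that is wrong. Those conditions are irrelevant here --- the vanishing is immediate once you notice $T_{w_\bu}(E_{\tau k})K_k^{-1}=-F_k$ for $k\in\I_\bu$, so the hybrid is a nonzero scalar multiple of $p(\underline{T_{w_\bu}(E_\tau)K^{-1}})$, and the $\va_j$'s factor out to a common scalar because $p$ is homogeneous. (Indeed the paper's proof never invokes the parameter conditions; they enter only in Proposition~\ref{prop:pi00p}.) Second, your mod-$X^\io$ bookkeeping of $(S_F,S_T,S_K)$ and the cancellation term $c$ is considerably more elaborate than needed, and the conclusion ``leaving a single surviving pattern'' is an overstatement: $c$ is unconstrained on its $\I_\bu$-part, so there is a whole family of contributing patterns; what you actually establish is only that every surviving pattern has all $\I_\circ$-positions in $S_T$, which is the same content as the paper's inequality but stated more opaquely. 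Third, the claim that $\lambda-\beta\notin X^\io$ follows ``by a parallel argument invoking $\sigma$'' is not substantiated; $\sigma$ does not cleanly interchange the roles of $\alpha$ and $\beta$ in $\pi_{\alpha,\beta}$, and the paper instead handles this case separately (and more easily) using the vanishing of the pure-$F$ pattern $p(\underline{F})=0$. I would encourage you to trade the mod-$X^\io$ analysis for the direct weight inequality: track only the maximum possible $\U^\pm$-weights of each pattern, notice that the two extreme patterns vanish, deduce the two inequalities $\beta\le\lambda-\alpha_j$ and $\alpha\le-\Theta(\lambda-\alpha_{j'})$, and then argue in $X$ directly.
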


\begin{proof}
    Suppose $\lambda=\lambda_{1}\alpha_{i_1}+\cdots+\lambda_{\ell}\alpha_{i_\ell}$ where $\lambda_t\in \N$ for $t=1,\ldots,\ell$. If $\set{i_1,\ldots,i_\ell}\subset \I_\bu$, there is nothing to prove since $p(\underline{B})=p(\underline{F})=0$. If $\set{i_1,\ldots,i_\ell}\cap \I_\circ\neq \varnothing$. Since $p(\underline{F})=0$, by \eqref{eq:Bi} we have $\beta\leq\lambda-\alpha_j$ for some $j\in \set{i_1,\ldots,i_\ell}\cap \I_\circ$. Therefore, $\alpha_j\leq \lambda-\beta$ and $\Theta(\alpha_j)\in -X^+$ imply $\lambda-\beta\notin X^\io$.
    
 On the other hand, applying Lemma~\ref{lemma:p}, we obtain $p(\underline{T_{w_\bu}(E_\tau)K^{-1}}) = 0$. Consequently, we have $\alpha \leq -\Theta(\lambda - \alpha_{j'})$ for some $j' \in {i_1, \ldots, i_\ell} \cap \I_\circ$. This implies $-\Theta(\lambda) - \alpha \notin X^\io$. Considering that $\lambda + \Theta(\lambda) \in X^\io$, we can conclude that $\lambda - \alpha \notin X^\io$.
\end{proof}

The following proposition is an adaption of \cite[Proposition 5.16]{Ko14} to the super case.
\begin{proposition}
\label{prop:P-lambda=0}
    Let $p$ be a non-commutative homogeneous polynomial of weight $\lambda$ such that $\pi_{0,0}\circ  P_{-\lambda}(p(\underline{B}))\in \Uio$, then we have $P_{-\lambda}(p(\underline{B}))=0$.
\end{proposition}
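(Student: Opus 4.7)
Write $Z := P_{-\lambda}(p(\underline{B}))$ and recall that $Z \in \Ui$ by the decomposition $\Ui = \bigoplus_h P_h(\Ui)$ established earlier in this section. The plan is to argue by contradiction, assuming $Z \neq 0$ and reaching a contradiction with the hypothesis $\pi_{0,0}(Z) \in \Uio$ via the right coideal property $\Delta(Z) \in \Ui \otimes \U$ and Lemma~\ref{lemma:notinXi}. This follows the template of \cite[Prop.~5.16]{Ko14} with the necessary super modifications.

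Using the triangular decomposition \eqref{eq:piprojection}, decompose $Z = \sum_{(\alpha,\beta)} z_{\alpha,\beta}$. Choose $\beta^*$ maximal so that $\sum_\alpha z_{\alpha,\beta^*} \neq 0$. If $\beta^* > 0$, I would apply $(id \otimes \pi_{0,\beta^*})\Delta$ to $Z$; using the Hopf structure \eqref{eq:Hopf} together with the coproduct formula \eqref{eq:deltabi-} for $B_j$, this isolates terms of the form $Z' \otimes u^-$ (with $u^-$ running over a basis of $S(\U^-_{-\beta^*})$) where $Z' \in \Ui$ is an element of strictly lower filtration degree again of the shape $P_{-\lambda'}(\tilde p(\underline{B}))$. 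Verifying that $\pi_{0,0}(Z') \in \Uio$ is inherited from the hypothesis on $Z$ then lets us invoke the proposition inductively, forcing $Z' = 0$ and hence $z_{\alpha,\beta^*} = 0$ for all $\alpha$, contradicting maximality of $\beta^*$. A symmetric argument via the anti-automorphism $\sigma$ of \eqref{eq:sigma} (which swaps the roles of $\U^\pm$, cf.\ Lemma~\ref{lemma:Ti-1}) brings us to the case $\alpha^* = 0$ as well, so $Z = \pi_{0,0}(Z)$ becomes a $\C(q)$-linear combination of $K_{-\lambda}$ and $\new K_{-\lambda}$. The hypothesis then forces $K_{-\lambda} \in \Uio$, i.e., $\lambda$ is $\Theta$-fixed; invoking Lemma~\ref{lemma:notinXi} at $(\alpha,\beta) = (0,0)$ excludes this and closes the contradiction.

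The main obstacle will be the inductive reduction itself: verifying that the extracted element $Z'$ is again of the form $P_{-\lambda'}(\tilde p(\underline{B}))$ and that the inheritance $\pi_{0,0}(Z') \in \Uio$ actually holds. This demands careful super-sign bookkeeping from \eqref{eq:Hopf}, the $\new$-braiding relation \eqref{eq:newrelation}, and the commutation of $\new$ past $B_j$'s from Lemma~\ref{lemma:eibk}, together with compatibility with the filtration $\Fi^*$ of \eqref{eq:degree}. The non-super template is \cite[Prop.~5.16]{Ko14}; the genuinely new work lies in tracking odd-parity contributions uniformly across the super Serre relations listed in Table~\ref{TableSerrePolyn}.
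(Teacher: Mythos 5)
Your overall strategy — argue by contradiction using the coideal property $\Delta(Z)\in\Ui\otimes\U$ together with Lemma~\ref{lemma:notinXi} — is the right one and matches the paper at a high level, but the route you take to reach the contradiction has two genuine problems, and it is also much more roundabout than what is actually needed.

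First, the "symmetric argument via $\sigma$" does not work. The anti-automorphism $\sigma$ of \eqref{eq:sigma} \emph{fixes} each $E_j$ and each $F_j$; it reverses products and inverts Cartan elements but does \emph{not} interchange $\U^+$ and $\U^-$. So $\sigma$ gives you no leverage for passing from ``$\beta^*=0$'' to ``$\alpha^*=0$''; the map you would need for such a reflection is the Chevalley involution $\om$, and that does not preserve $\Ui$. Second, the inductive reduction you sketch is unverified exactly where it matters: after applying $(\mathrm{id}\otimes\pi_{0,\beta^*})\Delta$, by \eqref{eq:comultiZ} the surviving left tensor factors lie in $\sum_J\U_\bu^+\Uio B_J$, and there is no reason these are of the form $P_{-\lambda'}(\tilde p(\underline B))$ for a single homogeneous $\tilde p$ with $\tilde p(\underline F)=0$, nor is it clear that the hypothesis $\pi_{0,0}(\cdot)\in\Uio$ descends to them. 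You flag this yourself as ``the main obstacle,'' and it really is a gap rather than bookkeeping.

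The paper's proof avoids both issues by making the opposite and sharper choice: take $\alpha$ (the $\U^+$-weight, not $\beta$) maximal with $\pi_{\alpha,\beta}(Z)\neq0$ for some $\beta$, and apply $(\mathrm{id}\otimes\pi_{\alpha,0})\Delta$. Maximality of $\alpha$ forces the full $\U^+$-degree $\alpha$ into the right tensor slot, leaving the left slot in $S(\U^-)K_{-\lambda+\alpha}\oplus S(\U^-)\new K_{-\lambda+\alpha}$; meanwhile \eqref{eq:comultiZ} (which already uses the hypothesis via the $\kappa_i$-conditions and the $\pi_{0,0}$-assumption entering the coproduct bound) forces the left slot into $\Ui$. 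When $\alpha\neq0$ this pins down $K_{-\lambda+\alpha}\in\Ui$, contradicting $\lambda-\alpha\notin X^\io$ from Lemma~\ref{lemma:notinXi} in a single step — no induction, no reflection. Your closing observation (that $Z\in\U^0$ would force $K_{-\lambda}\in\Uio$ and hence $\lambda\in X^\io$, contradicting $\pi_{0,0}(p(\underline B))\neq0$ via Lemma~\ref{lemma:notinXi}) is correct in itself, but you never get there by a valid route.
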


\begin{proof}
      Set $\Xi=p(\underline{B})$ and $Z=P_{-\lambda}(\Xi)$. Assume now that $Z\neq 0$. Choose $\alpha$ maximal such that $\pi_{\alpha,\beta}(Z)\neq 0$ for some $\beta \in X^+$. In this case by \eqref{eq:Hopf} we have
      \begin{equation}
    \label{eq:P-lambda}
          0\neq (id\otimes \pi_{\alpha,0})\Delta(Z)\in S(U^-) K_{-\lambda+\alpha}\otimes  U_\alpha^+K_{-\lambda}\oplus S(U^-) \new K_{-\lambda+\alpha}\otimes  U_\alpha^+K_{-\lambda}.
      \end{equation}
      Now, if $\alpha \neq 0$, the relations \eqref{eq:comultiZ} and \eqref{eq:P-lambda} imply that $K_{-\lambda+\alpha}\in \Ui$, which is in contradiction to Lemma~\ref{lemma:notinXi}.
\end{proof}

\begin{example}
\label{example:sAI}
Consider the following diagram from Example~\ref{example:nonadmissible}(2):
\[\xy 
(0,-6)*{\ };
(0,0)*{\otimes}; (0,-3)*{\scriptstyle i};
\endxy\]
where $\I_\circ=\{i\}$ and $\I_\bu=\varnothing$. Note that this is by definition {\bf not} a super Satake diagram because of Condition (4) in Definition \ref{def:superad}. 
In this case we have $B_i=F_i+\va_iE_iK_i^{-1}$ and the only Serre relation is {\rm (ISO1)} in Table~\ref{TableSerrePolyn}. Hence $p(x_i)=x_i^2$. We see that $ p(B_i)=B_i^2=\va_i\frac{1-K_i^{-2}}{q_i-q_i^{-1}}\notin \Uio$. Thus $\pi_{0,0}\circ P_{-2\alpha_i}(p(\underline{B}))\notin \Uio$ and $P_{-2\alpha_i}(p(\underline{B}))\neq 0$. This is the main reason we exclude such a diagram from being a super Satake diagram.
\end{example}

\begin{example}
    Let us consider the super admissible pair in Example~\ref{example:admissible}(1) associated to the super Satake diagram   
    {\rm 
$$\hspace{.75in}\xy
(-5,0)*{\otimes};(5,0)*{\newmoon}**\dir{-};
(-5,-4)*{\scriptstyle 1};(5,-4)*{\scriptstyle 2};
\endxy$$}
There are two polynomials 
\begin{align*}
    p_1(x_1)=x_1^2,\quad p_2(x_1,x_2)=x_2^2x_1-[2]_2x_2x_1x_2+x_1x_2^2
\end{align*}
expressing the Serre relations {\rm (ISO1)} and {\rm (N-ISO)} separately. By definition \eqref{eq:Bi} we have $B_1=F_1+\va_1T_{2}(E_1)K_1^{-1}$ and $B_2=F_2$. By a direct computation we obtain that
\begin{align*}
p_1(B_1)=\,&B_1^2=q\va_1F_1K_1^{-1}T_2(E_1)+\va_1K_1^{-1}T_{2}(E_1)F_1\\
=\,&q\va_1K_1^{-1}\left[E_2(F_1E_1+E_1F_1)-q(F_1E_1+E_1F_1)E_2\right]
=-q\va_1E_2. \\
p_2(B_1,B_2)=\,&p_2(B_1,F_2)=p_2(F_1,F_2)+\va_1 p_2(T_2(E_1)K_1^{-1},F_2)\\
=\,&0+\va_1 p_2(T_2(E_1)K_1^{-1},T_2(E_2)K_2^{-1})=0.
\end{align*}
Therefore, we see that $P_{-2\alpha_1}(p_1(\underline{B}))=P_{-2\alpha_2-\alpha_1}(p_2(\underline{B}))=0$. Hence by Lemma~\ref{lemma:pcriterion} $p_1$ and $p_2$ satisfy \eqref{eq:Uirel}.
\end{example}

%\subsection{A key technical step}

Denote
\begin{align}
\label{SU}
    \mathcal S(\U)=\{\text{all non-commutative Serre polynomials in Table~\ref{TableSerrePolyn}} \}.
\end{align}
Note that $p$ is homogeneous and $p(\underline{F})=0$, for $p \in \mathcal S(\U)$. The following key technical step toward the quantum Iwasawa decomposition is a super analogue of \cite[Lemma 5.11]{Ko14}. Recall the projection $\pi_{0,0}$ from \eqref{eq:pi}. 
\begin{proposition}
\label{prop:pi00p}
    Let $(\I=\I_\circ \cup \I_\bu,\tau)$ be a super admissible pair. Suppose $p\in \mathcal S(\U)$ has weight $\lambda$, then we have 
    \[
    \pi_{0,0}\circ P_{-\lambda}(p(\underline{B}))\in \U^{\imath 0}.
    \]
\end{proposition}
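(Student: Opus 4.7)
The plan is to proceed by a direct case-by-case analysis over the list of Serre polynomials $p$ in Table~\ref{TableSerrePolyn}. For each such $p$, I substitute
\[
B_i = F_i + \va_i T_{w_\bu}(E_{\tau i}) K_i^{-1} + \kappa_i K_i^{-1}\ (i\in \I_\circ),\qquad B_i = F_i\ (i\in \I_\bu)
\]
into $p(\underline B)$ and expand. The expansion becomes a sum of monomials, each of whose factors is one of $F$, $T_{w_\bu}(E_\tau)K^{-1}$, or $\kappa K^{-1}$. By Lemma~\ref{lemma:p}, the purely-$F$ contribution $p(\underline F)$ and the purely-$T_{w_\bu}(E_\tau)K^{-1}$ contribution both vanish, so only the truly mixed terms (and those involving at least one $\kappa K^{-1}$) remain.

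After applying $\pi_{0,0} \circ P_{-\lambda}$, only contributions lying in the Cartan subspace of $K$-weight $-\lambda$ survive. The key observation is that for a mixed monomial to have non-zero image under $\pi_{0,0}\circ P_{-\lambda}$, enough $F$-factors and $T_{w_\bu}(E_\tau)K^{-1}$-factors must commute past each other and pair up so that the $\U^+$ and $S(\U^-)$ pieces cancel completely. The commutator $[F_i, T_{w_\bu}(E_{\tau j})K_j^{-1}]$ is computed using the quantum Serre relations for $E_i,E_{\tau j}$ together with Proposition~\ref{prop:thetaq}; in the simplest pairing $j=\tau i$ this commutator yields a Cartan contribution proportional to $K_i^{-1}K_{\tau i}^{-1}$, and this exponent does lie in $Y^\io$ since $\Theta(\alpha_i+\alpha_{\tau i})=\alpha_i+\alpha_{\tau i}$. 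The parameter conditions \eqref{eq:kappai}, \eqref{eq:vai}, and \eqref{eq:vai=vataui} are precisely what force the accumulated surviving Cartan pieces to assemble into an element of $\Uio$ (or else to have coefficient zero).

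The main obstacle is the sheer length of the case analysis: each of the eleven rows of Table~\ref{TableSerrePolyn} (ISO1, ISO2, N-ISO, AB, CD1, CD2, D, F1, F2, G2, G3) has to be treated on its own. The low-order rows (ISO1, ISO2, N-ISO) can be dispatched uniformly; in particular, the ISO1 case relies crucially on Definition~\ref{def:superad}(4), which forbids an isolated white isotropic node (cf.\ Example~\ref{example:sAI}, where $B_i^2$ fails to land in $\Uio$). The higher-order rows (AB, CD1, CD2, F1, F2, G2, G3) are handled by similar but considerably lengthier expansions, with repeated use of the braid group automorphisms $T_i$ ($i\in \I_\bu$), the adjoint-action identities \eqref{eq:adZ+}, and Proposition~\ref{prop:thetaq}(3) to rewrite $\theta_q(F_iK_i)$ in the form $\ad(Z^+_{i,\bu})(F_{\tau i}K_{\tau i})$ and simplify cross-commutators. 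Finally, the mild assumption \eqref{exclude} is invoked to rule out the two pathological configurations (type sFII and the subdiagram $\newmoon\,\text{--}\,\otimes\,\text{--}\,\newmoon$) in which the natural cancellations among mixed terms break down and an obstructing Cartan contribution outside $\Uio$ would otherwise appear; these will have to be addressed separately (or flagged as limitations of the technique) in the subsequent development.
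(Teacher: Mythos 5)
Your proposal correctly identifies the overall strategy — a case-by-case verification over the Serre polynomials of Table~\ref{TableSerrePolyn}, leaning on Lemma~\ref{lemma:p} for the vanishing of the pure-$F$ and pure-$T_{w_\bu}(E_\tau)K^{-1}$ contributions, on Definition~\ref{def:superad}(4) for the (ISO1) case, on the parameter conditions \eqref{eq:kappai}--\eqref{eq:vai=vataui}, and on the assumption \eqref{exclude} to remove the two problematic configurations. This is indeed the paper's route.

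However, the proposal as written is a programme rather than a proof: it announces that each of the higher-order rows "is handled by similar but considerably lengthier expansions" without supplying any of them, yet those verifications are the actual mathematical content. More substantively, your description of the mechanism — tracking commutators $[F_i, T_{w_\bu}(E_{\tau j})K_j^{-1}]$ and pairing up $\U^+$ and $S(\U^-)$ factors — is not what the paper does in most cases. The paper's workhorse for (ISO1), (N-ISO), (AB), (CD1), (CD2), (F1), (G2), (G3) is a pure weight argument: one checks that the weight $\lambda$ of $p$ cannot satisfy the constraint needed for a nonzero Cartan piece of weight $-\lambda$ to survive (typically by noting $\lambda$ fails to be fixed by $w_\bu\tau$, or that some $\mu - \alpha_i - w_\bu(\alpha_{\tau i})$ lands outside $X^+$, forcing the relevant $\up$-type contributions to vanish). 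The only genuinely computational cases are (ISO2) and (D), where the parameter conditions \eqref{eq:vai} and \eqref{eq:vai=vataui} respectively are used to make explicit cancellations happen; your sketch mentions the parameter conditions generically but does not single out (D) as the case where $\va_j = \va_k$ is decisive. Finally, type (F2) is outright excluded by \eqref{exclude} and is not one of the rows treated, so listing it among the rows to be handled is a small inconsistency. In short: the framework is right, but to count as a proof you must actually perform the weight analysis and, for (ISO2) and (D), the cancellation computations.
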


\begin{proof}
%It follows by definition of $\pi_{0,0}$ that $\pi_{0,0}\circ P_{-\lambda}(p(\underline{B}))\in \U^{0}$, and note that $\U^{\imath 0} =\U^0 \cap \Ui$. It suffices to show that $\pi_{0,0}\circ P_{-\lambda}(p(\underline{B}))\in \Ui$.

We consult Table~\ref{TableSerrePolyn} for the corresponding local Dynkin diagrams and notations. According to Definition~\ref{def:superad}, all odd simple roots belong to $\I_\circ$. Within a given super admissible pair $(\I=\I_\circ \cup \I_\bu,\tau)$, multiple local Dynkin diagrams are encompassed. Each local diagram in Table~\ref{TableSerrePolyn} gives rise to a non-commutative Serre polynomial, and we proceed to prove the proposition through a case-by-case examination. % Since Dynkin diagrams of type (F3), (F4), (G1), and (D$\alpha$) in Table~\ref{TableSerrePolyn} do not appear as local diagrams of a super admissible pair, we do not need to consider those cases. 
Dynkin diagram of type (F2) underlying the super Satake diagram sFII will not be considered by our assumption at the beginning of Section~\ref{sec:QSP}.
%, (F2) and one case of (AB) are not considered either.

We proceed with the proof below case-by-case following the types in Table~\ref{TableSerrePolyn}.

(ISO1) In this case we have $p(x_i)=x_i^2$. If $\tau i=i$, by Condition (4) in Definition~\ref{def:superad} we see that $\alpha_i$ must be connected to some $\alpha_j$ for $j\in \I_\bu$. Thus for weight reason we see that $\pi_{0,0}\circ P_{-\lambda}(p(\underline{B}))=0$.

If $\tau i\neq i$, since $\tau$ preserves parities, we have $p(\tau i)=1$ and hence $E_{\tau i}^2=0$. Thus we have
\[B_i^2=(F_iT_{w_\bu}(E_{\tau i})+T_{w_\bu}(E_{\tau i})F_i)K_i^{-1}\overset{\eqref{eq:Urelation}}{=}0.\]

(ISO2) In this case we see that $p(B_i,B_j)\neq 0$ if and only if $\Theta(\alpha_i)=-\alpha_{j}$ and $a_{ij}=0$. According to Lemma~\ref{lemma:ineqtaui} the condition $\va_i=\va_{j}$ guarantees that $p(B_i,B_j)\in \Uio$.

(N-ISO) For weight reason and Condition (4) in Definition~\ref{def:superad}, $\pi_{0,0}\circ P_{-\lambda}(p(\underline{B}))=0$ unless $j\in \Ieven$. The case when $j\in \Ieven$ follows from \cite[Lemma 5.11]{Ko14}.

(AB) For weight reason $\pi_{0,0}\circ P_{-
    \lambda}(p(\underline{B}))=0$ unless $\tau j= j,\ i,k\in \I_\bu$ and $a_{j\ell}=0$ for any $\ell\in \I_\bu\backslash\set{i,k}$. In this case the super Satake diagram contains a subdiagram of the form {\rm
$\xy
(-10,0)*{\newmoon};(0,0)*{\otimes}**\dir{-};(10,0)*{\newmoon}**\dir{-};(-10,-3)*{\scriptstyle i};
(0,-3)*{\scriptstyle j};(10,-3)*{\scriptstyle k};
\endxy$}, which is excluded by our assumption.

(CD1)--(CD2) For weight reason we must have $\pi_{0,0}\circ P_{-\lambda}(p(\underline{B}))=0$.

(D) For weight reason, $\pi_{0,0}\circ P_{-
    \lambda}(p(\underline{B}))=0$ unless  $ i\in\I_\bu$ and $j=\tau k$. 
   
    Suppose that $i\in \I_\bu$ and $j=\tau k$. Thus, the only possible non-zero term in $\pi_{0,0}\circ P_{-\lambda}(\ad_q B_k\circ \ad_q B_j (F_i))$ is the monomial $K_i^{-1}K_j^{-1}K_k^{-1}$. By \eqref{eq:vai=vataui} we have $\va_j=\va_k$ in this case. Thus if $rK_i^{- 1}K_j^{-1}K_k^{-1}$ is a summand in $\pi_{0,0}\circ P_{-\lambda}(\ad_q B_k\circ \ad_q B_j (F_i))$ with $r\neq 0$, then it must be a summand in $\pi_{0,0}\circ P_{-\lambda}(\ad_q B_j\circ \ad_q B_k (F_i))$ as well. Hence we conclude that $\pi_{0,0}\circ P_{-
    \lambda}(p(\underline{B}))=0$.

(F1) For weight reason, we always have $\pi_{0,0}\circ P_{-
    \lambda}(p(\underline{B}))=0$ in this case since the degree of $x_2$ is odd.

\begin{comment}
(F2) If $4\in \I_\circ$ or $1\in \I_\circ$, then for weight reason, we always have $\pi_{0,0}\circ P_{-
    \lambda}(p(\underline{B}))=0$.

    Assume that $1,4\in \I_\bu$. We see that $3$ cannot belong to $\I_\circ$ because of condition (4) in Definition~\ref{def:superad}. Thus we have $3\in \I_\bu$ as well.
    
    \blue{This case is not proved. The difficulty: 1. For weight reason $\pi_{0,0}\circ P_{-
    \lambda}(p(\underline{B}))$ can be non-zero, thus in order to check it lies in $\Ui$ or not there is no other way but direct computation. Also I am not quite sure what are the simple roots for (F2). Usually these non-distinguished ones are obtained from the distinguished ones by applying odd reflections}
\end{comment}

(G2) We observe that $1,2\in \I_\circ$, hence for weight reason, we always have $\pi_{0,0}\circ P_{-
    \lambda}(p(\underline{B}))=0$ in this case.

(G3) We observe that $2$ must lie in $\I_\bu$ and $\tau 1=1,\ \tau 3=3$. Hence, for weight reasons, we always have $\pi_{0,0}\circ P_{-
    \lambda}(p(\underline{B}))=0$ in this case.
\end{proof}

The main upshot of developing the projection technique is the following.

\begin{proposition}
\label{prop:p}
    Given a super admissible pair $(I=\I_\circ \cup \I_\bu, \tau)$, %if $p\in \mathcal S(\U)$, then
    any homogeneous non-commutative polynomial $p$ satisfies the property \eqref{eq:Uirel}, that is, $p(\underline{F})=0 \text{ with } \deg p=m \Longrightarrow p(\underline{B})\in \Fi^{m-1}(\Ui)$.
\end{proposition}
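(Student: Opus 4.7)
The plan is to reduce the statement for an arbitrary $p$ with $p(\underline F)=0$ to the case of the Serre polynomials collected in $\mathcal S(\U)$. By the defining relations of $\U$ recalled in Section~\ref{sec:quantum involution} (see \cite{Ya94}), the negative part $\U^-$ is presented as the quotient of the free super algebra on $\{x_i\mid i\in \I\}$ (with parities $p(i)$) by the two-sided ideal generated by $\mathcal S(\U)$. Therefore $p$ lies in this ideal, and we may write
\[
p \;=\; \sum_j c_j\, u_j\, s_j\, v_j
\]
as an identity in the free super algebra, where $c_j \in \C(q)$, $s_j \in \mathcal S(\U)$, and $u_j, v_j$ are monomials; homogeneity of $p$ allows us to choose each summand with the same weight as $p$ and with total degree $|u_j|+|s_j|+|v_j|=m$.

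For the Serre-polynomial case itself, I would chain the three main ingredients already in place. By Proposition~\ref{prop:pi00p}, $\pi_{0,0}\circ P_{-\mu_j}(s_j(\underline B)) \in \Uio$, where $\mu_j$ denotes the weight of $s_j$; Proposition~\ref{prop:P-lambda=0} then upgrades this to $P_{-\mu_j}(s_j(\underline B))=0$; and Lemma~\ref{lemma:pcriterion} finally yields $s_j(\underline B)\in \Fi^{|s_j|-1}(\Ui)$. To pass from this to $p(\underline B)\in \Fi^{m-1}(\Ui)$, I would invoke the elementary multiplicativity $\Fi^a(\Ui)\cdot \Fi^b(\Ui)\subseteq \Fi^{a+b}(\Ui)$, which holds because $\Ui$ is generated by $\{B_i\}_{i\in \I}$ (each of degree $1$) together with $\Ub^+\Uio$ (of degree $0$). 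Then $u_j(\underline B)\in \Fi^{|u_j|}(\Ui)$ and $v_j(\underline B)\in \Fi^{|v_j|}(\Ui)$, so
\[
u_j(\underline B)\,s_j(\underline B)\,v_j(\underline B)\in \Fi^{|u_j|+|s_j|-1+|v_j|}(\Ui)=\Fi^{m-1}(\Ui)
\]
for every $j$, and summing over $j$ gives $p(\underline B)\in \Fi^{m-1}(\Ui)$.

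The main conceptual hurdle is the reduction in the first paragraph: carefully justifying that any polynomial mapping to zero under $x_i\mapsto F_i$ lies in the two-sided ideal of the free super algebra generated by $\mathcal S(\U)$. This relies on the fact that the (super) Serre polynomials of Table~\ref{TableSerrePolyn} constitute a complete set of defining relations for $\U^-$, which goes back to Yamane~\cite{Ya94}; one must also ensure this decomposition can be chosen respecting the weight grading, which is immediate from homogeneity. Granted this presentation, the remainder of the argument is a clean assembly of Proposition~\ref{prop:pi00p}, Proposition~\ref{prop:P-lambda=0}, and Lemma~\ref{lemma:pcriterion}, together with the obvious multiplicativity of the filtration $\Fi^*$.
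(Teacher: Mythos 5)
Your proposal is correct and takes essentially the same route as the paper: reduce to the Serre polynomials using the presentation of $\U^-$ as the free super algebra modulo the ideal generated by $\mathcal S(\U)$, then chain Proposition~\ref{prop:pi00p}, Proposition~\ref{prop:P-lambda=0}, and Lemma~\ref{lemma:pcriterion}. You have in fact spelled out the reduction step (the two-sided ideal decomposition $p=\sum_j c_j u_j s_j v_j$ and the multiplicativity $\Fi^a\cdot\Fi^b\subseteq\Fi^{a+b}$) somewhat more explicitly than the paper does, but the substance is identical.
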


\begin{proof}
    Since $\U^-$ is isomorphic to the free algebra generated by $F_i, i \in \I$ quotienting the ideal generated by the Serre relations (cf. \cite{Ya94}), it suffices to examine whether the non-commutative polynomials arising in Table~\ref{TableSerrePolyn} satisfy \eqref{eq:Uirel}. Now it follows from the combination of Proposition~\ref{prop:pi00p}, Proposition~\ref{prop:P-lambda=0} and Lemma~\ref{lemma:pcriterion}. 
\end{proof}

\subsection{Quantum Iwasawa decomposition and bases for $\Ui$}
In this subsection we follow \cite{Let02} and \cite{Ko14} to construct a super generalization of the quantum Iwasawa decomposition with respect to $(\U,\Ui)$.  We note that, for $\mathfrak g$ purely even, Proposition~\ref{prop:Ubasis} traces back to \cite[Proposition 6.1]{Ko14}. Theorem~\ref{thm:basis} is constructed in \cite[Proposition 6.3]{Ko14} and Theorem~\ref{thm:Iwa} can be found in \cite[Proposition 6.13]{Ko14}.

Let $\mathcal{J}$ be a fixed subset of $\cup_{s\in \Z_{\geqslant 0}}I^s$ such that $\{F_J\mid J\in \mathcal{J}\}$ forms a basis for $\U^{-}$. We define an increasing filtration $\mathcal{F}^*$ of $\U^{-}$ by letting  
\[
\mathcal F^t(\U^{-})= \text{span} \{F_J\mid J\in \I^s,\ s\leqslant t\},\quad \text{ for } t\in \Z_{\geqslant 0}.
\]
Since the quantum Serre relations for $\U$ are always homogeneous, the set $\{F_J\mid J\in \mathcal{J},\ |J|\leqslant t\}$ serves as a basis for $\mathcal{F}^t(\U^{-})$.

\begin{proposition}
\label{prop:Ubasis}
The set $\{B_J\mid J\in \mathcal J\}$ is a basis of the left $\U^{+}\U^{0}$-module $\U$.
\end{proposition}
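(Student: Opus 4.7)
The plan is to reduce the statement to the triangular decomposition of $\U$ by a standard filtered-to-graded argument. Since $\U \cong \U^+ \otimes \U^0 \otimes \U^-$ and $\{F_J \mid J \in \mathcal{J}\}$ is a basis of $\U^-$, the set $\{F_J\}$ is already a basis of $\U$ as a left $\U^+\U^0$-module. It therefore suffices to exhibit $\{B_J\}$ as the image of $\{F_J\}$ under a triangular, unipotent, $\U^+\U^0$-linear change of basis.

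To set this up, I would introduce the ascending filtration on $\U$ by $F$-length,
\[
\mathcal{F}^t \U := \sum_{J \in \mathcal{J},\, |J|\leq t} \U^+\U^0 F_J, \qquad t \in \Z_{\geq 0}.
\]
A glance at Table~\ref{TableSerrePolyn} shows every quantum Serre polynomial is homogeneous in its generators, hence $\U^-$ is $F$-length graded with $\mathcal{F}^t \U^- = \sum_{|J|\leq t}\C(q)F_J$. The next step is to verify that $\mathcal{F}^*$ is an algebra filtration: using
\[
E_i F_j - (-1)^{p(i)p(j)}F_j E_i = \delta_{ij}\frac{K_i - K_i^{-1}}{q_i - q_i^{-1}}
\]
together with the commutation of $K_h$ and $\new$ past $F_j$ (which only rescales), moving any element of $\U^+\U^0$ past a monomial in the $F$'s either preserves or strictly decreases the $F$-length. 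Combined with Serre-homogeneity of $\U^-$, this yields $\mathcal{F}^s\U\cdot\mathcal{F}^t\U \subseteq \mathcal{F}^{s+t}\U$. From \eqref{eq:Bi} (and $B_i = F_i$ for $i\in\I_\bu$) one reads off $B_i - F_i \in \U^+\U^0 = \mathcal{F}^0\U$ for every $i\in\I$, so $B_i \in \mathcal{F}^1\U$. Expanding $B_J = B_{j_1}\cdots B_{j_r}$ and isolating the all-$F$ summand then gives the key triangular relation
\[
B_J \in \mathcal{F}^{|J|}\U, \qquad B_J \equiv F_J \pmod{\mathcal{F}^{|J|-1}\U}, \qquad J\in\mathcal{J}.
\]

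With this in hand the proposition follows by the usual two-step argument. For spanning, I would induct on $|J|$: the base $J = \emptyset$ is $F_\emptyset = B_\emptyset = 1$, and the induction step uses $F_J = B_J - (B_J - F_J)$ together with $B_J - F_J \in \mathcal{F}^{|J|-1}\U$, which by the inductive hypothesis lies in the left $\U^+\U^0$-span of $\{B_K : |K| < |J|\}$. For linear independence I would suppose $\sum_J x_J B_J = 0$ with $x_J \in \U^+\U^0$ not all zero, set $t := \max\{|J| : x_J \neq 0\}$, and pass to $\mathcal{F}^t\U/\mathcal{F}^{t-1}\U$; the identity reduces there to $\sum_{|J|=t} x_J F_J \equiv 0$, contradicting the freeness of $\U$ over $\U^+\U^0$ on the basis $\{F_J\}$.

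I do not anticipate serious obstacles. The only nontrivial verification is the algebra-filtration property of $\mathcal{F}^*$, which ultimately rests on $F$-length homogeneity of the super Serre relations, a case-by-case glance at Table~\ref{TableSerrePolyn}. Proposition~\ref{prop:Ubasis} is the elementary precursor to the deeper basis theorem for $\Ui$ (Theorem~\ref{thm:basis}) and the quantum Iwasawa decomposition (Theorem~\ref{thm:Iwa}), where the genuinely delicate input of Proposition~\ref{prop:pi00p} and the projection technique comes into play.
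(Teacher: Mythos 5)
Your proposal is correct and follows essentially the same route as the paper: exhibit $\{B_J\}$ as a triangular, unipotent change of basis from $\{F_J\}$ with respect to the $F$-length filtration, then conclude spanning by downward induction and linear independence by passing to the top filtration degree. The one small refinement you add—explicitly checking that left-multiplying by $\U^+\U^0$ cannot raise $F$-length, so that $B_J\in\mathcal{F}^{|J|}\U$ with leading term $F_J$—is the step the paper's proof takes for granted when asserting $F_J-B_J\in\U^+\U^0\mathcal{F}^{|J|-1}(\U^-)$, and it is worth spelling out as you do.
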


\begin{proof}
First for any $J\in \mathcal J$ such that $|J|=t$ we have $F_J-B_J\in \U^{+}\U^{0}\mathcal F^{t-1}(\U^{-})$. Thus by induction on $t$ we conclude that each $F_J$ is contained in the left $\U^{+}\U^{0}$-module generated by $\{B_J\mid J\in \mathcal J\}$.

It remains to show that $\{B_J\mid J\in \mathcal J\}$ is linearly independent. Assume that there exists a non-empty finite subset $\mathcal J'\subset \mathcal J$ such that $\sum_{J\in \mathcal J'}a_JB_J=0$. Let $t_0=max\{|J|\mid J\in \mathcal J'\}$. In view of the definition of $B_j$, we have
\[\sum_{J\in \mathcal J',|J|=t_0}a_JF_J=0.\]
The linear independence of $\{F_J\mid J\in \mathcal J\}$ implies that $a_J=0$ for all $J\in \mathcal J',|J|=t_0$. Then through induction we conclude the desired result. 
\end{proof}

\begin{theorem}
\label{thm:basis}
Retain the assumption \eqref{exclude}. 
Then the set $\{B_J \mid J \in \mathcal{J}\}$ forms a basis for the left $\U^{+}_{\bu}\Uio$-module $\Ui$.
\end{theorem}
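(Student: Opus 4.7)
The plan is to establish the two halves of the basis statement separately: spanning and linear independence. Linear independence is essentially immediate from Proposition~\ref{prop:Ubasis}. Indeed, suppose $\sum_{J \in \mathcal{J}} x_J B_J = 0$ in $\U$ with $x_J \in \U^+_\bu \Uio$ and only finitely many nonzero. Since $\U^+_\bu \Uio \subseteq \U^+ \U^0$, Proposition~\ref{prop:Ubasis} (which states that $\{B_J \mid J \in \mathcal{J}\}$ is a basis of the left $\U^+\U^0$-module $\U$) forces $x_J = 0$ for all $J$. This works regardless of which side of $\Ui$ we place $x_J$, because the $B_J$'s are a \emph{basis} of a free left module.

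For the spanning assertion, the first step is to argue that any element of $\Ui$ can be rewritten as a sum of expressions of the form $x \cdot B_{j_1} B_{j_2} \cdots B_{j_r}$ with $x \in \U^+_\bu \Uio$ and $j_1,\ldots,j_r \in \I$ (allowing indices in either $\I_\circ$ or $\I_\bu$, since $B_j = F_j$ for $j \in \I_\bu$). This follows from the generator set of $\Ui$ in Definition~\ref{def:Ui} together with the commutation relations: Lemma~\ref{lemma:eibk} allows one to move any $E_i$ ($i \in \I_\bu$) leftward past any $B_k$ at the cost of a term in lower $\Fi^*$-degree valued in $\Uio$, and the relations $K_h B_j = q^{-\alpha_j(h)} B_j K_h$ allow one to move Cartan elements leftward as well. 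Thus, after finite manipulation, every element of $\Ui$ lies in the $\U^+_\bu \Uio$-span of arbitrary monomials $B_{J'}$ with $J' \in \I^s$ for various $s$.

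The core step is now to reduce from arbitrary monomials $B_{J'}$ to those indexed by $J \in \mathcal{J}$. We argue by induction on the filtration degree using Proposition~\ref{prop:p} (which relies crucially on the assumption~\eqref{exclude}, entering through Proposition~\ref{prop:pi00p}). Fix $s \ge 0$ and assume inductively that every element of $\Fi^{s-1}(\Ui)$ lies in the $\U^+_\bu \Uio$-span of $\{B_J \mid J \in \mathcal{J}, |J| \le s-1\}$. Given any monomial $B_{J'}$ with $|J'| = s$, the corresponding $F_{J'}$ can be written in $\U^-$ as a linear combination of $\{F_J \mid J \in \mathcal{J}, |J| = s\}$, i.e.\ there is a homogeneous polynomial $p$ of degree $s$ with $p(\underline{F}) = 0$ expressing this reduction. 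Proposition~\ref{prop:p} yields $p(\underline{B}) \in \Fi^{s-1}(\Ui)$, hence $B_{J'}$ equals the corresponding $\mathcal{J}$-combination modulo a lower-filtration remainder, which by induction lies in the required span.

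The main obstacle in the whole argument is not in the present theorem but is absorbed into Proposition~\ref{prop:p}: the fact that Serre relations among $B_i$'s hold \emph{modulo lower filtration} inside $\U^+_\bu \Uio \cdot (\text{monomials in } B_i)$, a fact whose super-type-by-type verification motivated the hypothesis~\eqref{exclude}. Once Proposition~\ref{prop:p} is in hand, the induction above is mechanical; combined with the immediate linear-independence argument, this yields the claimed free-module basis.
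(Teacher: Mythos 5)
Your proposal is correct and follows essentially the same route as the paper: linear independence is pulled immediately from Proposition~\ref{prop:Ubasis} via the inclusion $\U^+_\bu\Uio\subseteq\U^+\U^0$, and spanning is established by first using Lemma~\ref{lemma:eibk} and the Cartan commutations to reach the $\U^+_\bu\Uio$-span of arbitrary monomials $B_{J'}$, then reducing to $J\in\mathcal{J}$ by induction on filtration degree via Proposition~\ref{prop:p} (which is where the hypothesis~\eqref{exclude} enters). The only cosmetic difference is that the paper phrases the induction on the word-length $t=|L|$ directly rather than separating out the preliminary normal-form step, but the underlying argument is identical.
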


\begin{proof}
First of all, since $\{B_J\mid J\in \mathcal J\}$ is linearly independent over $\U^{+}\U^{0}$, it is also independent over $\U^{+}_{\bu}\Uio$.

Secondly, let $L\in \I^t$. One can apply the Serre relations in Table~\ref{TableSerrePolyn} repeatedly to write
\[F_L=\sum_{J\in\mathcal J, |J|=t}a_JF_J\]
for some $a_J\in \C(q)$. Hence, using Proposition~\ref{prop:p} and Lemma~\ref{lemma:eibk} one sees that
\[B_L-\sum_{J\in\mathcal J,|J|=t}a_JB_J\in\sum_{s<t}\sum_{J\in I^s}\U^{+}_{\bu}\Uio B_J.\]
Thus through induction on $t$ one obtains $B_L\in \sum_{J\in \mathcal J}\U_\bu^+\Uio B_J$. Therefore, $\{B_J\mid J\in \mathcal J\}$ forms a basis of $\U^{+}_{\bu}\Uio$-module $\Ui$.
\end{proof}

Define a subspace of $\Ui$ by
\begin{equation}
    \Ui_{\mathcal J}:=\bigoplus_{J\in \mathcal J}\C(q)B_J.
\end{equation}
Then Theorem~\ref{thm:basis} can be reformulated as that the multiplication map
\[
\U_\bu^+\otimes \Uio \otimes \Ui_{\mathcal J}\longrightarrow \Ui
\]
is an isomorphism of vector spaces.

Fix a subset $\I_\tau\subset \I_\circ$ consists of exactly one element of each $\tau$-orbit within $\I_\circ$. Let $\Uio_\tau$ denote the subalgebra generated by $\set{K_i^{\pm 1}\mid i\in \I_\tau}$. Then we have the following algebra isomorphism
\begin{equation}
\label{eq:UtauUio}
    \Uio_\tau\otimes \Uio \cong \U^0. 
\end{equation}

Now we are ready to establish a super generalization of the quantum Iwasawa decomposition.

\begin{theorem} [Quantum Iwasawa decomposition]
\label{thm:Iwa}
    Retain the assumption \eqref{exclude}. Then the  multiplication map gives an isomorphism of vector spaces
    \[V_\bu^+\otimes \Uio_\tau \otimes \Ui \cong \U.\]
\end{theorem}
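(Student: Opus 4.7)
The plan is to assemble this isomorphism directly by chaining together four decompositions already in hand: the K\'eb\'e-type factorization $\U^+ \cong V_\bu^+ \otimes \U_\bu^+$ from \eqref{eq:VUb}, the Cartan splitting $\U^0 \cong \Uio_\tau \otimes \Uio$ from \eqref{eq:UtauUio}, Theorem~\ref{thm:basis} identifying the multiplication map $\U_\bu^+ \otimes \Uio \otimes \bigoplus_{J\in \mathcal J} \C(q)B_J \to \Ui$ as an isomorphism of vector spaces, and Proposition~\ref{prop:Ubasis} which presents $\{B_J\mid J\in \mathcal J\}$ as a basis of $\U$ as a left $\U^+\U^0$-module. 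No new case analysis or Serre-type computation would be required at this stage, since all the case-by-case work under the assumption \eqref{exclude} has already been absorbed into Theorem~\ref{thm:basis}.

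First I would combine Proposition~\ref{prop:Ubasis}, \eqref{eq:VUb} and \eqref{eq:UtauUio} to exhibit the multiplication map
\[V_\bu^+ \otimes \U_\bu^+ \otimes \Uio_\tau \otimes \Uio \otimes \bigoplus_{J\in \mathcal J} \C(q) B_J \longrightarrow \U\]
as a vector space isomorphism. Next I would swap the two middle factors using the scalar commutation $K_\mu E_i = q^{(\alpha_i,\mu)} E_i K_\mu$: rewriting $w\, h_\tau$ for a PBW monomial $w \in \U_\bu^+$ and $h_\tau \in \Uio_\tau$ as a nonzero scalar multiple of $h_\tau\, w$ defines a linear bijection
\[V_\bu^+ \otimes \U_\bu^+ \otimes \Uio_\tau \otimes \Uio \longrightarrow V_\bu^+ \otimes \Uio_\tau \otimes \U_\bu^+ \otimes \Uio\]
which is compatible (modulo these invertible scalar rescalings on PBW monomials) with multiplication into $\U^+\U^0$. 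Finally I would apply Theorem~\ref{thm:basis} to collapse the last three factors $\U_\bu^+ \otimes \Uio \otimes \bigoplus_J \C(q)B_J$ into $\Ui$, producing the multiplication map $V_\bu^+ \otimes \Uio_\tau \otimes \Ui \to \U$ as the desired isomorphism.

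The only subtlety I expect, and therefore the main point to track carefully, is the middle swap: when pushing Cartan elements $K_\mu \in \Uio_\tau$ past PBW monomials of $\U_\bu^+$, each commutation rescales by a nonzero element of $\C(q)$ depending on the weight of the monomial, so the rearranged map differs from literal multiplication by an invertible diagonal rescaling in the PBW basis. Since every such scalar is nonzero, this rearrangement preserves bijectivity, so both surjectivity and injectivity follow at once from the chain of isomorphisms above, with no further ingredients required.
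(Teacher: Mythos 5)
Your proposal is correct and takes essentially the same route as the paper: the paper likewise chains Proposition~\ref{prop:Ubasis}, the factorizations \eqref{eq:VUb} and \eqref{eq:UtauUio}, and Theorem~\ref{thm:basis}, with the reordering of $\U_\bu^+$ and $\Uio_\tau$ inside $\U^+\U^0$ being implicit in the paper's display \eqref{eq:iso2} and made explicit (via the scalar commutation of Cartan elements past weight vectors) in your write-up.
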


\begin{proof}
    Combining the isomorphism \eqref{eq:VUb} with \eqref{eq:UtauUio} gives us an isomorphism of vector spaces
\begin{align}  \label{eq:iso2}
\U^+\U^0\cong V_\bu^+\otimes \Uio_\tau\otimes \Ub^+\otimes \Uio.
\end{align}
By applying Proposition~\ref{prop:Ubasis}, the isomorphism \eqref{eq:iso2} above, and then Theorem~\ref{thm:basis}, we obtain that 
\[
\U \cong \U^+\U^0 \otimes \Ui_{\mathcal J}
\cong V_\bu^+\otimes \Uio_\tau\otimes \Ub^+ \Uio \otimes \Uio
\cong V_\bu^+\otimes \Uio_\tau\otimes \Ui.
\]
The theorem is proved.
\end{proof}

\section{Braid group actions on $\Ui$}
\label{sec:braid}

In this section, we show that the braid group symmetries $T_i$ on $\U$ from \eqref{eq:Ti'}--\eqref{eq:Ti}, for $i\in \I_\bu$,  preserve the subalgebra $\Ui$. We also show that a crucial anti-involution $\wp$ on $\U$ preserves the subalgebra $\Ui$.

\begin{lemma}{\rm \cite[Lemma 4.1]{BW18b}}
\label{lemma:TiTw}
    We have \[
        T_iT_{w_\bu}=T_{w_\bu}T_{\tau i},\quad \forall i\in \I_\bu.
    \]
    In particular $T_{w_\bu}^2$ commutes with $T_i$ for any $i\in \I_\bu$.
\end{lemma}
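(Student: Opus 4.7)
The plan is to reduce the identity to the well-known interaction between a simple reflection and the longest element of a finite Weyl group, lifted to the braid group via Matsumoto's theorem (which is available because $\I_\bu \subset \Ieven$, so the operators $T_j$ for $j \in \I_\bu$ satisfy the braid relations by \cite[\S 6.3]{H10} as already recalled in the paper).

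First I would translate Definition~\ref{def:superad}(2): for $i \in \I_\bu$ the element $\tau$ acts on $\I_\bu$ as $-w_\bu$, so $w_\bu(\alpha_i) = -\alpha_{\tau(i)}$ and consequently
\[
w_\bu s_i w_\bu^{-1} = s_{\tau(i)}, \qquad \text{i.e.,} \qquad s_i\, w_\bu = w_\bu\, s_{\tau(i)}
\]
in the Weyl group $W_\bu$. Call this common element $u \in W_\bu$. Since $w_\bu$ is the longest element of $W_\bu$, both $\ell(s_i w_\bu)$ and $\ell(w_\bu s_{\tau(i)})$ equal $\ell(w_\bu) - 1$, so $\ell(u) = \ell(w_\bu) - 1$. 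Fixing any reduced expression $u = s_{j_1}\cdots s_{j_r}$, we therefore obtain two reduced expressions for $w_\bu$:
\[
w_\bu = s_i\, s_{j_1}\cdots s_{j_r} = s_{j_1}\cdots s_{j_r}\, s_{\tau(i)}.
\]

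Next I would pass to the braid group. Because the $T_j$ with $j \in \Ieven$ (and in particular with $j \in \I_\bu$) satisfy the braid relations, the operator $T_w$ associated to $w \in W_\bu$ is independent of the choice of reduced expression. Applying this to the two reduced expressions for $w_\bu$ above gives
\[
T_{w_\bu} \;=\; T_i\, T_u \;=\; T_u\, T_{\tau(i)}.
\]
Solving the left-hand equation for $T_u$ and substituting into the right-hand one produces
\[
T_{w_\bu} \;=\; T_i^{-1}\, T_{w_\bu}\, T_{\tau(i)},
\]
which rearranges to $T_i\, T_{w_\bu} = T_{w_\bu}\, T_{\tau(i)}$, the desired identity.

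For the ``in particular'' claim I would simply iterate: using $\tau^2 = \mathrm{id}$ (Definition~\ref{def:superad}(1)),
\[
T_i\, T_{w_\bu}^2 \;=\; T_{w_\bu}\, T_{\tau(i)}\, T_{w_\bu} \;=\; T_{w_\bu}^2\, T_{\tau^2(i)} \;=\; T_{w_\bu}^2\, T_i.
\]
No genuine obstacle is anticipated: the argument is a direct application of Matsumoto's theorem together with the classical length identity for the longest element, and the super-hypotheses enter only through Definition~\ref{def:superad}(2), which is exactly what supplies the relation $s_i w_\bu = w_\bu s_{\tau(i)}$.
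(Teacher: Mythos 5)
Your proof is correct and is exactly the standard argument. The paper itself gives no proof for this lemma---it is imported by reference from \cite[Lemma~4.1]{BW18b}---and what you have written is precisely the reasoning one finds there: Definition~\ref{def:superad}(2) gives $w_\bu\alpha_i=-\alpha_{\tau i}$, hence the Weyl-group identity $s_iw_\bu=w_\bu s_{\tau i}$ with $\ell(s_iw_\bu)=\ell(w_\bu)-1$; Matsumoto's theorem (applicable since $\I_\bu\subset\Ieven$ so the $T_j$ genuinely satisfy braid relations) then upgrades this to $T_{w_\bu}=T_iT_u=T_uT_{\tau i}$, and the lemma follows by eliminating $T_u$. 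The ``in particular'' iteration via $\tau^2=\mathrm{id}$ (Definition~\ref{def:superad}(1)) together with $\tau(\I_\bu)=\I_\bu$ is likewise correct.
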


\begin{theorem}
\label{thm:braidonUi}
    For any $i\in \I_\bu$ and $e\in \pm 1$, the braid group operators $T_{i,e}'$ and $T_{i,e}''$ restrict to algebra isomorphisms of $\Ui$. More explicitly, we have, for $j\neq i$,
    \begin{align*}
    T_{i,e}'(B_j)=\sum_{r+s=|a_{ij}|}(-1)^rq_i^{-er}B_i^{(s)}B_jB_i^{(r)},
    \end{align*}
    \begin{align*}
        T_{i,-e}''(B_j)=\sum_{r+s=|a_{ij}|}(-1)^rq_i^{-er}B_i^{(r)}B_jB_i^{(s)}.
    \end{align*}
\end{theorem}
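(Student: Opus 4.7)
The plan is to verify, one generator at a time, that $T_i:=T_{i,1}''$ maps $\Ui$ into itself, extracting the explicit formula for $T_i(B_j)$ along the way. Since $T_i$ is an invertible algebra automorphism of $\U$, this yields $T_i(\Ui)=\Ui$. The remaining three cases ($T_{i,-1}''$, $T_{i,\pm 1}'$) are handled by the essentially identical termwise calculation or, more economically, by combining Lemma~\ref{lemma:Ti-1} (which gives $T_i^{-1}=T_{i,-1}'=\sigma T_i\sigma^{-1}$) with invertibility together with a short separate check that the anti-automorphism $\sigma$ preserves $\Ui$.

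For the Cartan generators, Definition~\ref{def:superad}(2) identifies $\tau$ with $\tau_\bu=-w_\bu$ on $\I_\bu$, which forces $\Theta(h_i)=-w_\bu(h_{\tau i})=h_i$ for every $i\in\I_\bu$. Hence $h_i\in Y^\imath$ and $r_i$ preserves $Y^\imath$, so $T_i(K_h)=K_{r_i(h)}\in\Uio$ for $h\in Y^\imath$; also $T_i(\new)=\new$. For the $\U_\bu$-generators the formulas~\eqref{eq:Ti} express $T_i(E_k),T_i(F_k)$ ($k\in\I_\bu$) as polynomials in $E_\ell,F_\ell,K_\ell^{\pm 1}$ with $\ell\in\I_\bu$, all lying in $\Ui$; in particular $T_i(E_i)=-F_iK_i$ and $T_i(F_i)=-K_i^{-1}E_i$, and the displayed formula reduces to~\eqref{eq:Ti} when $B_k=F_k$, i.e.\ when $k\in\I_\bu$.

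The main case is $B_j$ with $j\in\I_\circ$, $j\neq i$. I would decompose
\[
B_j=F_j+\va_j T_{w_\bu}(E_{\tau j})K_j^{-1}+\kappa_j K_j^{-1}
\]
and apply $T_i$ termwise. The first term gives $\sum_{r+s=|a_{ij}|}(-1)^r q_i^{r}F_i^{(r)}F_jF_i^{(s)}$ by~\eqref{eq:Ti}. For the second, Lemma~\ref{lemma:TiTw} yields $T_i T_{w_\bu}(E_{\tau j})=T_{w_\bu}T_{\tau i}(E_{\tau j})$, and since $\tau i\in\I_\bu\subset\Ieven$ one has $|a_{\tau i,\tau j}|=|a_{ij}|$ and $q_{\tau i}=q_i$, so the standard braid formula expands this into $\sum(-1)^r q_i^{r}T_{w_\bu}(E_{\tau i}^{(s)}E_{\tau j}E_{\tau i}^{(r)})$. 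The third term produces $\kappa_j T_i(K_j^{-1})=\kappa_j K_j^{-1}K_i^{a_{ij}}$, but conditions~\eqref{eq:kappai} and~\eqref{eq:Ins} force $a_{ij}=0$ whenever $\kappa_j\neq 0$, so the claimed sum on the target side collapses to a single term and matches.

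The main obstacle will be matching the assembled expression with the right-hand side $\sum_{r+s=|a_{ij}|}(-1)^r q_i^{r}F_i^{(r)}B_jF_i^{(s)}$. Expanding this and commuting every $K_j^{-1}$ past the adjacent $F_i^{(s)}$ picks up a factor $q_i^{sa_{ij}}$ via $(\alpha_i,\alpha_j)=d_ia_{ij}$, reducing the check to the identity
\[
F_i^{(r)}T_{w_\bu}(E_{\tau j})F_i^{(s)}=q_i^{\,\star}\, T_{w_\bu}\bigl(E_{\tau i}^{(s)}E_{\tau j}E_{\tau i}^{(r)}\bigr)K_i^{a_{ij}}
\]
for an explicit $q$-power $\star$, for every $r+s=|a_{ij}|$. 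This in turn follows from the rank-one identity expressing $T_{w_\bu}(E_{\tau i})$ as a scalar multiple of $F_iK_i^{\pm 1}$ (using $-w_\bu(\alpha_{\tau i})=\alpha_i$ inside $\U_\bu$) together with routine $q$-binomial manipulations. The parity factors in the definition of $\tau$ and in the super-Serre relations do not interfere here, since $i,\tau i\in\I_\bu\subset\Ieven$ makes all relevant parity signs trivial.
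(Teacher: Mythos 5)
Your proof takes the same termwise-verification route as the paper: reduce to showing $T_i(B_j)\in\Ui$ for $j\in\I_\circ$, split $B_j=F_j+\va_jT_{w_\bu}(E_{\tau j})K_j^{-1}+\kappa_jK_j^{-1}$, dispatch the $\kappa_j\neq 0$ case via \eqref{eq:kappai} and \eqref{eq:Ins} (which force $a_{ij}=0$), and invoke the rank-two $q$-binomial computation for the remaining term --- which is precisely what the paper delegates to \cite[Theorem 4.2]{BW18b}. The observation that all relevant parity signs vanish because $i,\tau i\in\I_\bu\subset\Ieven$ is correct and is the one genuinely super-specific remark needed.

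Two caveats. First, ``$T_i(\Ui)\subset\Ui$ and $T_i$ invertible on $\U$ imply $T_i(\Ui)=\Ui$'' is not a valid inference by itself, since $\Ui$ is infinite-dimensional and not $X$-graded (the $B_j$ are not weight-homogeneous); you do recover the conclusion, but only because you also establish $T_i^{-1}=T_{i,-1}'$ preserves $\Ui$, so that $\Ui=T_i(T_i^{-1}\Ui)\subset T_i(\Ui)$. Second, the ``more economical'' alternative via $\sigma$ does not work: $\sigma$ does not preserve $\Ui$. Indeed
\[
\sigma(B_j)=F_j+(-1)^{p(j)}\va_j\,K_j\,T_{w_\bu}^{-1}(E_{\tau j})+(-1)^{p(j)}\kappa_j K_j,
\]
which involves $K_j$ with $j\in\I_\circ$, and $K_j\notin\Uio$ since $\Theta(\alpha_j)\neq\alpha_j$. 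The anti-involution that does restrict to $\Ui$ is $\wp$ (Proposition~\ref{prop:wp}), not $\sigma$, and its restriction formula already involves $T_{w_\bu}^{-1}(B_{\tau j})$ and thus presupposes the present theorem, so invoking it here would be circular. Stick with the direct termwise computation for all four operators, as the paper does.
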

\begin{proof}
We only prove for $T_i$ as the other cases are similar. Since $T_i$ preserves $\Ub$ and $\Uio$, it suffices to show that $T_i(B_j)\in \Ui$ for $j\in \I_\circ$. Recall $B_j$ from \eqref{eq:Bi}. If $\kappa_j\neq 0$, by \eqref{eq:kappai} we have $\alpha_j(h_i)=0$. Hence $T_i(B_j)=B_j$. The case where $i\in \I_\bu,\  j\in \I_\circ$ such that $\alpha_j(h_i)\neq 0$ follows from exactly the same computations as in \cite[Theorem~ 4.2]{BW18b}.
\end{proof}

There exists an anti-involution $\wp$ on $\U$ (cf. \cite{BKK00}) such that  
\begin{equation}
\label{eq:wp}
    \wp(E_i)= q_i^{-1}F_iK_i,\quad \wp(F_i)=q_iK_i^{-1}E_i,\quad \wp(K_i)=K_i,\quad \wp(\new)=\new.
\end{equation}

\begin{lemma}
\label{lemma:wp1}
    For $i\in \Ieven$ and $j\in \I$, we have
    \begin{align*}
        \wp  (T''_{i,e} (E_{j})) &= (-q_i)^{e\la i, j'\ra} T'_{i,-e} (\wp (E_j)),
\\
\wp  (T'_{i,e} (E_{j})) &= (-q_i)^{-e\la i, j'\ra} T''_{i,-e} (\wp (E_j)).
    \end{align*}
\end{lemma}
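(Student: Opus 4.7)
Since $(T''_{i,e})^{-1}=T'_{i,-e}$ is a standard identity for Lusztig's braid operators (extending Lemma~\ref{lemma:Ti-1} to general $i\in \Ieven$), applying $\wp$ to both sides of the first equality and using $\wp^2=\mathrm{id}$ yields the second after renaming. So the plan is to prove only the first identity, checking it on each generator $E_j$ with $j\in\I$ via the explicit formulas \eqref{eq:Ti'}--\eqref{eq:Ti} and the definition \eqref{eq:wp} of $\wp$.

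The case $j=i$, where $\langle i,i'\rangle = a_{ii}=2$, is immediate: one computes $\wp(T''_{i,e}(E_i))=\wp(-F_iK_i^e)=-q_iK_i^{e-1}E_i$, while $T'_{i,-e}(\wp(E_i))=T'_{i,-e}(q_i^{-1}F_iK_i)=-q_i^{-1}E_iK_i^{e-1}$; the commutation $K_iE_i=q_i^2E_iK_i$ produces exactly the ratio $q_i^{2e}=(-q_i)^{2e}$ between the two sides. For $j\neq i$, I would apply $\wp$ to the sum in \eqref{eq:Ti}, using its anti-multiplicativity together with the identity $\wp(E_i)^{(t)}=q_i^{-t^2}F_i^{(t)}K_i^{t}$ (from $\wp(E_i)=q_i^{-1}F_iK_i$ and $K_iF_i=q_i^{-2}F_iK_i$), and then push every $K$-factor to the right past the remaining $F$'s. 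Independently, expand $T'_{i,-e}(\wp(E_j))=q_j^{-1}T'_{i,-e}(F_j)T'_{i,-e}(K_j)$ using \eqref{eq:Ti'} and the reflection formula $T'_{i,-e}(K_j)=K_jK_i^{-a_{ij}}$ (which rests on the symmetry $d_ia_{ij}=d_ja_{ji}$). Both expressions collapse into linear combinations of monomials $F_i^{(r)}F_jF_i^{(s)}K^{\star}$ with $r+s=|a_{ij}|$, and the proof reduces to matching the scalar on each monomial, which identifies $\langle i,j'\rangle = a_{ij}$.

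The main obstacle is the $q$-power bookkeeping on the $j\neq i$ branch. From the $\wp$-side one accumulates $(-1)^rq_i^{-er}$ from \eqref{eq:Ti}, a factor $q_i^{-r^2-s^2}$ from the two divided powers of $\wp(E_i)$, a $q_j^{-1}$ from $\wp(E_j)$, and further shifts from moving $K_i^{r}$, $K_j$, $K_i^{s}$ through the intervening $F$'s using $K_iF_j=q_i^{-a_{ij}}F_jK_i$, etc. From the $T'_{i,-e}$-side the only contributions are $q_j^{-1}$, the coefficient $(-1)^rq_i^{er}$ from \eqref{eq:Ti'}, and the extra $K_i^{-a_{ij}}$ from $T'_{i,-e}(K_j)$. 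Reconciling the two uses $r^2+s^2+2rs=(r+s)^2=|a_{ij}|^2$ together with $d_ia_{ij}=d_ja_{ji}$, and produces cleanly the overall factor $(-q_i)^{ea_{ij}}$. Because $i\in\Ieven$ by hypothesis, no odd-parity signs are introduced in these manipulations, so the argument is a direct adaptation of the corresponding identity in the non-super setting.
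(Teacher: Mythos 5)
Your overall approach -- checking the identity directly on each generator $E_j$ using the explicit formulas \eqref{eq:Ti'}--\eqref{eq:Ti}, \eqref{eq:wp}, and the intermediate identity $\wp(E_i)^{(t)}=q_i^{-t^2}F_i^{(t)}K_i^t$ (equivalently, the paper's $(F_iK_i)^s=q_i^{s-s^2}F_i^sK_i^s$) -- matches the paper's strategy, and your explicit verification of the $j=i$ case is correct and a useful complement since the paper defers $j\in\Ieven$ wholesale to \cite[Lemma~4.4]{BW18b}. The one genuinely new piece of work in the super setting is the branch $j\in\Iodd$ with $i$ connected to $j$, where the paper's calculation hinges on the observation that $(\alpha_i,\alpha_j)$ and $(\alpha_i,\alpha_i)$ have opposite signs (so $a_{ij}=-|a_{ij}|$), allowing the coefficient $(-q_i)^{-e|a_{ij}|}$ to be read as $(-q_i)^{e\la i,j'\ra}$. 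Your sketch omits this sign observation, but the scaffolding you describe would certainly land there.

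However, your opening reduction -- deriving the second identity from the first by "applying $\wp$ to both sides and using $\wp^2=\mathrm{id}$" -- has a gap. Applying $\wp$ to $\wp(T''_{i,e}(E_j))=(-q_i)^{e\la i,j'\ra}T'_{i,-e}(\wp(E_j))$ and rearranging produces the statement $\wp\bigl(T'_{i,-e}(\wp(E_j))\bigr)=(-q_i)^{-e\la i,j'\ra}T''_{i,e}(E_j)$, which is an identity about $T'$ evaluated on $\wp(E_j)=q_j^{-1}F_jK_j$, not on $E_j$. That is not the asserted second identity. To make the reduction work you would first need to upgrade the first identity from the generators $E_j$ to a weight-graded operator identity on all of $\U$ (including on the $F_j$'s), after which you could substitute $x=\wp(E_j)$ of weight $-\alpha_j$; but that extension is exactly as much work as proving the second identity directly. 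The paper sidesteps this by simply remarking "we only prove for the first equality," with the second handled by the parallel computation, and you should do the same rather than claiming it is a formal consequence of the first.
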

\begin{proof}
    We only prove for the first equality. When $i\neq j$ and $a_{ij}=0$, there is nothing to prove. Since $i\in \Ieven$, we have $p(i)=0$. The case when $j\in \Ieven$ follows from a rerun of the proof of \cite[Lemma 4.4]{BW18b}.

    Thus it remains to prove the first equality for $j\in \Iodd$ and $i$ is connected with $j$. Indeed, we have
\begin{equation}
\label{eq:wpT}
     \begin{aligned}
         \wp(T_{i,e}''(E_j))
         &= \wp \Big(\sum_{r+s=|a_{ij}|}(-1)^rq_i^{-er}E_i^{(s)}E_jE_i^{(r)}\Big)\\
         &= \sum_{r+s=|a_{ij}|}(-1)^rq_i^{-er}(q_i^{-1}F_iK_i)^{(r)}(q_j^{-1}F_jK_j)(q_i^{-1}F_iK_i)^{(s)}.
     \end{aligned}
     \end{equation}
    Since $p(i) = 0$, we observe that $(\alpha_i, \alpha_j)$ and $(\alpha_i, \alpha_i)$ have opposite signs. Therefore we have $|a_{ij}|=-2\frac{(\alpha_i,\alpha_j)}{(\alpha_i,\alpha_i)}$. Combine this fact with the identity $(F_iK_i)^s=q_i^{ (s-s^2)}F_i^sK_i^s$
     we see that \eqref{eq:wpT} is equivalent to
     \begin{align*}
         \wp(T_{i,e}''(E_j))=&\sum_{r+s=|a_{ij}|}(-1)^rq_i^{-er}q_j^{-1}F_i^{(r)}F_jF_i^{(s)}T_i(K_j)\\
         =&\sum_{r+s=|a_{ij}|}(-1)^sq_j^{-1}(-q_i)^{-e|a_{ij}|}q_i^{es}F_i^{(r)}F_jF_i^{(s)}T_i(K_j)\\
         =\,&q_j^{-1}(-q_i)^{e\la i,j'\ra}\sum_{r+s=|a_{ij}|}q_i^{es}F_i^{(r)}F_jF_i^{(s)}T_i(K_j).
     \end{align*}
On the other hand, we have\[T_{i,-e}'(\wp(E_j))=T_{i,-e}'(q_j^{-1}F_jK_j)=q_j^{-1}\sum_{r+s=|a_{ij}|}q_i^{es}F_i^{(r)}F_jF_i^{(s)}T_i(K_j).\]     
This completes the proof.
\end{proof}

\begin{corollary}
\label{cor:wp}
For $j\in \I_\circ$ and $e\in \pm 1$, we have
\begin{align*}&\wp(T_{w_\bu,e}''(E_j))=(-1)^{\alpha_j(2\rho_\bu^\vee)}q^{e(\alpha_j,2\rho_\bu)}T_{w_\bu,-e}'(\wp(E_j)),\\
&\wp(T_{w_\bu,e}'(E_j))=(-1)^{\alpha_j(2\rho_\bu^\vee)}q^{-e(\alpha_j,2\rho_\bu)}T_{w_\bu,-e}''(\wp(E_j)).
\end{align*}
\end{corollary}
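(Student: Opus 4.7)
The plan is to prove the first identity by iterating Lemma~\ref{lemma:wp1} along a reduced expression of $w_\bu$; the second identity will then follow by a parallel argument with the roles of $T''$ and $T'$ interchanged (using the second half of Lemma~\ref{lemma:wp1} in place of the first). First, I would upgrade Lemma~\ref{lemma:wp1} to arbitrary weight vectors, namely, for $i\in\Ieven$, $e\in\{\pm1\}$ and any $y\in\U_\beta$, one should have
$$\wp(T''_{i,e}(y))=(-q_i)^{e\langle h_i,\beta\rangle}\,T'_{i,-e}(\wp(y)),$$
where I interpret the symbol $\la i,j'\ra$ in Lemma~\ref{lemma:wp1} as the pairing $\langle h_i,\alpha_j\rangle=a_{ij}$. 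Both sides are $\C(q)$-linear, and because $\wp$ and $T'_{i,-e}$ are anti-automorphisms while $T''_{i,e}$ is an algebra automorphism, the scalar factor is additive in $\beta$. Hence the extension reduces to checking on the algebra generators of $\U$: the case $y=E_j$ is Lemma~\ref{lemma:wp1}, the case $y=K_h$ is trivial (weight zero), and the case $y=F_j$ is handled by the analogous computation (or by conjugating through the Chevalley involution $\om$).

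Next, I would fix a reduced expression $w_\bu=r_{i_1}\cdots r_{i_k}$ with every $i_t\in\I_\bu$ and set $\beta_t:=r_{i_{t+1}}\cdots r_{i_k}(\alpha_j)$, which is the weight of the partial composition $T''_{i_{t+1},e}\cdots T''_{i_k,e}(E_j)$. A direct induction on $k$ using the extended identity above together with the braid factorization $T'_{w_\bu,-e}=T'_{i_1,-e}\cdots T'_{i_k,-e}$ yields
$$\wp\bigl(T''_{w_\bu,e}(E_j)\bigr)=\Big(\prod_{t=1}^{k}(-q_{i_t})^{e\langle h_{i_t},\beta_t\rangle}\Big)\,T'_{w_\bu,-e}(\wp(E_j)).$$
To evaluate the scalar, observe that the longest element $w_\bu$ of $W_\bu$ is an involution, so $r_{i_k}\cdots r_{i_1}$ is also a reduced expression for $w_\bu$. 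Consequently the elements $\gamma^\vee_t:=r_{i_k}\cdots r_{i_{t+1}}(h_{i_t})$ enumerate the positive coroots of $\Phi_\bu$ and the elements $\gamma_t:=r_{i_k}\cdots r_{i_{t+1}}(\alpha_{i_t})$ enumerate the positive roots, giving $\sum_t\gamma^\vee_t=2\rho_\bu^\vee$ and $\sum_t\gamma_t=2\rho_\bu$. Applying $W$-invariance of $(\cdot,\cdot)$ and of the natural pairing, together with $d_{i_t}\langle h_{i_t},\beta_t\rangle=(\alpha_{i_t},\beta_t)$, one reads off
$$\sum_{t=1}^{k}\langle h_{i_t},\beta_t\rangle=\alpha_j(2\rho_\bu^\vee),\qquad \sum_{t=1}^{k}d_{i_t}\langle h_{i_t},\beta_t\rangle=(\alpha_j,2\rho_\bu).$$
Since $e\in\{\pm1\}$ gives $(-1)^{e\alpha_j(2\rho_\bu^\vee)}=(-1)^{\alpha_j(2\rho_\bu^\vee)}$, the scalar collapses exactly to $(-1)^{\alpha_j(2\rho_\bu^\vee)}q^{e(\alpha_j,2\rho_\bu)}$, establishing the first identity.

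The main obstacle I anticipate lies in the first step---extending Lemma~\ref{lemma:wp1} to arbitrary weight vectors and confirming that the resulting scalar depends only on the weight $\beta$ (and not on the particular representative). Once this multiplicativity is in hand, the iteration along a reduced word of $w_\bu$ and the identification of the resulting sums with $2\rho_\bu$ and $2\rho_\bu^\vee$ via the involutive nature of the longest element is routine Weyl-group combinatorics, independent of the super structure.
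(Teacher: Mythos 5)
Your argument is correct and follows the same route the paper implicitly uses (the paper defers to a rerun of the proof of \cite[Corollary 4.5]{BW18b}, which iterates the rank-one Lemma~\ref{lemma:wp1} along a reduced expression of $w_\bu$ and then evaluates the scalar by the standard positive-(co)root enumeration for the longest element). One small slip worth noting: $T'_{i,-e}$ is an algebra \emph{automorphism}, not an anti-automorphism; what actually makes the scalar additive in the weight $\beta$ (and hence reduces the extension of Lemma~\ref{lemma:wp1} to a check on generators) is that both composites $\wp\circ T''_{i,e}$ and $T'_{i,-e}\circ\wp$ are anti-automorphisms, so the conclusion you draw is nevertheless valid.
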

\begin{proof}
    It follows from a rerun of the proof of \cite[Corollary 4.5]{BW18b} with the help of Lemma~\ref{lemma:wp1}.
\end{proof}

\begin{proposition}
\label{prop:wp}
    Assume that 
\begin{align}\label{eq:wpcondition1}
&\va_i=q_i^{-1} \quad \text{ if }\kappa_i\neq
0, \\
\label{eq:wpcondition2}
    &\va_i=(-1)^{\alpha_i(2\rho_\bu^\vee)}q^{-(\alpha_i,w_\bu\alpha_{\tau i}+2\rho_\bu)}q_{\tau i}q_i^{-1}\overline{\va_{\tau i}},\quad \forall i\in \I_\circ,\\
    \label{eq:wpcondition3}
&\va_i^{-1}=\overline{\va_i},\quad \forall i\in \I_\circ.
\end{align}
    then the anti-involution $\wp$ on $\U$ restricts to an anti-involution on $\Ui$ such that
    \begin{align}
    \label{eq:wpblack}
        \wp(E_{i}) &= q^{-1}_i F_{i} K_{i}, \quad \wp(F_{i}) = q^{-1}_i E_{i} K_i^{-1}, \quad \wp(K_h) = K_h, \quad \wp(\new)=\new,  \\ 
\wp (B_j ) &=  q_j^{-1} \va_{\tau j}^{-1} T_{w_\bu}^{-1} (B_{\tau j} ) \cdot K_{w_\bu \tau j}K_j^{-1}.
\label{eq:wpwhite}
    \end{align}
for any $h\in Y^\io, i \in \I_\bu$, and any $j \in \I_\circ.$
\end{proposition}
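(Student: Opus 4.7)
The plan has two parts: (i) verify that the $\C(q)$-algebra anti-involution $\wp$ of $\U$ from \eqref{eq:wp} maps each generator of $\Ui$ into $\Ui$ with the claimed formulas; (ii) since $\wp^2=\mathrm{id}$ already holds on all of $\U$, it is automatic on the subalgebra $\Ui$. For the generators $K_h$ with $h\in Y^\io$, $\new$, and $E_i, F_i$ with $i\in \I_\bu$, the formulas \eqref{eq:wpblack} follow directly from \eqref{eq:wp}; for $i\in \I_\bu\subset \Ieven$ one uses $K_iE_iK_i^{-1}=q_i^2E_i$ to rewrite $q_iK_i^{-1}E_i=q_i^{-1}E_iK_i^{-1}$, recovering the stated form of $\wp(F_i)$. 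All of these elements lie visibly in $\Ui$.

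The heart of the proof is \eqref{eq:wpwhite}. Starting from $B_j=F_j+\va_j T_{w_\bu}(E_{\tau j})K_j^{-1}+\kappa_jK_j^{-1}$ and using the anti-homomorphism property of $\wp$,
\[
\wp(B_j)=q_jK_j^{-1}E_j+\va_jK_j^{-1}\wp\bigl(T_{w_\bu}(E_{\tau j})\bigr)+\kappa_jK_j^{-1}.
\]
Apply Corollary~\ref{cor:wp} with $e=+1$ together with $\wp(E_{\tau j})=q_{\tau j}^{-1}F_{\tau j}K_{\tau j}$ to obtain
\[
\wp\bigl(T_{w_\bu}(E_{\tau j})\bigr)=(-1)^{\alpha_{\tau j}(2\rho_\bu^\vee)}q^{(\alpha_{\tau j},2\rho_\bu)}q_{\tau j}^{-1}\,T_{w_\bu}^{-1}(F_{\tau j})\,K_{w_\bu\alpha_{\tau j}}.
\]
Substitute $F_{\tau j}=B_{\tau j}-\va_{\tau j}T_{w_\bu}(E_j)K_{\tau j}^{-1}-\kappa_{\tau j}K_{\tau j}^{-1}$ and apply $T_{w_\bu}^{-1}$; using $T_{w_\bu}^{-1}(K_{\tau j}^{\pm 1})=K_{\mp w_\bu\alpha_{\tau j}}$ the $K$-factors simplify to
\[
T_{w_\bu}^{-1}(F_{\tau j})K_{w_\bu\alpha_{\tau j}}=T_{w_\bu}^{-1}(B_{\tau j})K_{w_\bu\alpha_{\tau j}}-\va_{\tau j}E_j-\kappa_{\tau j}.
\]

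Collect the terms in $\wp(B_j)$ into three types---those involving $T_{w_\bu}^{-1}(B_{\tau j})K_{w_\bu\alpha_{\tau j}}K_j^{-1}$, those of the form $E_jK_j^{-1}$, and scalar multiples of $K_j^{-1}$---and equate with the analogous expansion of the target $q_j^{-1}\va_{\tau j}^{-1}T_{w_\bu}^{-1}(B_{\tau j})K_{w_\bu\alpha_{\tau j}}K_j^{-1}$. This yields three parameter identities: the scalar $K_j^{-1}$-coefficient produces \eqref{eq:wpcondition1} (controlling when $\kappa_j\neq 0$); the $T_{w_\bu}^{-1}(B_{\tau j})$-coefficient produces \eqref{eq:wpcondition2} (matching the Corollary~\ref{cor:wp} prefactor against $q_j^{-1}\va_{\tau j}^{-1}$); and the residual $E_jK_j^{-1}$-coefficient, combined with \eqref{eq:wpcondition3}, closes the system. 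The bar involution appearing in the conditions encodes the $q\leftrightarrow q^{-1}$ symmetry arising from the symmetric roles of the simple roots $\alpha_j$ and $\alpha_{\tau j}$.

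Finally, the right-hand side of \eqref{eq:wpwhite} lies in $\Ui$: by Theorem~\ref{thm:braidonUi} the braid operator $T_{w_\bu}^{-1}$ preserves $\Ui$, so $T_{w_\bu}^{-1}(B_{\tau j})\in\Ui$; and $K_{w_\bu\alpha_{\tau j}}K_j^{-1}\in\Uio$ because $\Theta(w_\bu\alpha_{\tau j}-\alpha_j)=w_\bu\alpha_{\tau j}-\alpha_j$, using $\tau w_\bu=w_\bu\tau$ (from $\tau(\I_\bu)=\I_\bu$) together with $w_\bu^2=\mathrm{id}$ on $\h$. The main obstacle is the careful bookkeeping of $q$-powers and signs, in particular the distinction between isotropic ($(\alpha_j,\alpha_j)=0$) and non-isotropic ($(\alpha_j,\alpha_j)=2d_j$) cases when commuting $K_j^{-1}$ past $E_j$; the three parameter conditions are precisely tuned to absorb these factors uniformly across all cases.
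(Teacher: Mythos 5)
Your proof follows essentially the same route as the paper's. Both rely on the anti-homomorphism property of $\wp$, Corollary~\ref{cor:wp} (to convert $\wp(T_{w_\bu}(E_{\tau j}))$ into a $T_{w_\bu}^{-1}(F_{\tau j})$-expression), and the three parameter conditions to make the coefficients match. The only organizational difference is the order of operations: you substitute $F_{\tau j}=B_{\tau j}-\va_{\tau j}T_{w_\bu}(E_j)K_{\tau j}^{-1}-\kappa_{\tau j}K_{\tau j}^{-1}$ directly into the formula for $\wp(T_{w_\bu}(E_{\tau j}))$ and then collect terms, whereas the paper computes $\wp(B_j)$ and $T_{w_\bu}^{-1}(B_{\tau j})$ as two separate expressions (both factored as $(\,\cdots\,)K_j^{-1}$, resp. $(\,\cdots\,)K_{w_\bu\tau j}^{-1}$) and then compares them term by term. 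These are algebraically equivalent, and the three coefficient identities you extract ($K_j^{-1}$-scalar $\leftrightarrow$ \eqref{eq:wpcondition1}, $T_{w_\bu}^{-1}(B_{\tau j})$-term $\leftrightarrow$ \eqref{eq:wpcondition2}, $E_jK_j^{-1}$-term $\leftrightarrow$ \eqref{eq:wpcondition3}) are exactly the ones the paper checks. Two small remarks: the paper dispatches the case $j\in\Ieven$ by reference to \cite[Proposition 4.6]{BW18b} and only spells out $j\in\Iodd$, so your uniform treatment merges those two tracks; and your explicit verification that the right-hand side of \eqref{eq:wpwhite} lies in $\Ui$ (via Theorem~\ref{thm:braidonUi} for $T_{w_\bu}^{-1}(B_{\tau j})$ and $\Theta$-invariance of $w_\bu\alpha_{\tau j}-\alpha_j$ for the $K$-factor) is a useful observation the paper leaves implicit. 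Your reasoning is therefore consistent with the paper's argument.
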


%\blue{\eqref{eq:wpcondition2} is different from what we obtained in \cite{Sh22}. It is mainly because the difference in how we define $\wp$ in \eqref{eq:wp}. In fact, for $i$ odd, we can define $\wp(E_i)=t_iF_iK_i,\ \wp(F_i)=t_i^{-1}E_iK_i^{-1}$ for any $t_i$ and it will still be an algebra isomorphism. If we further assume $t_i=t_{\tau i}$, then the term $q_{\tau i}q_i^{-1}$ in \eqref{eq:wpcondition2} will become $1$. I think that version of this condition is what we essentially need, from the duality point of view.}

\begin{proof}
Equation \eqref{eq:wpblack} follows from the definition of $\wp$ in \eqref{eq:wp}. To prove \eqref{eq:wpwhite}, we separate into two cases. The case when $j\in  \Ieven$ follows as in \cite[Proposition 4.6]{BW18b}. Thus it suffices to prove \eqref{eq:wpwhite} for $j\in \Iodd$.

 Applying \eqref{eq:wp}, \eqref{eq:wpcondition2}, \eqref{eq:wpcondition3},  and Corollary~\ref{cor:wp} we have
\begin{align*}
\wp(B_j)=\,&q_j^{-1}E_jK_j^{-1}+\va_j K_j^{-1}\wp(T_{w_\bu}(E_{\tau j}))+\kappa_j K_j^{-1}\\
=\,&q_j^{-1}\left(E_j+q_j\va_j K_j^{-1}\wp(T_{w_\bu}(E_{
\tau j}))K_j+q_j\kappa_j\right)K_j^{-1}\\
=\,&q_j^{-1}\left(E_j+ \va_{\tau j}^{-1}T_{w_\bu}^{-1}(F_{\tau j})K_{w_\bu \tau j}+q_j\kappa_j\right)K_j^{-1}.
\end{align*}
On the other hand, we have
\begin{align*}
    T_{w_\bu}^{-1}(B_{\tau j})=\,&T_{w_\bu}^{-1}(F_{\tau j}+\va_{\tau j}T_{w_\bu}(E_j)K_{\tau j}^{-1}+\kappa_{\tau j}K_{\tau j}^{-1})\\
    =\,&(\va_{\tau j}E_j+T_{w_\bu}^{-1}(F_{\tau j})K_{w_\bu \tau j}+\kappa_{\tau_j})K_{w_\bu \tau j}^{-1}.
\end{align*}
If $\kappa_j\neq 0$, then we have $\va_j=q_j^{-1}$ by \eqref{eq:wpcondition1}. Thus, we see that \eqref{eq:wpwhite} holds for $j\in \Iodd$.
\end{proof}

\section{Quasi $K$-matrix}
\label{sec:quasiK}
In this section, we retain the mild assumption \eqref{exclude} on super Satake diagrams. We shall follow \cite{WZ22} to formulate the quasi $K$-matrix $\up=\sum_{\mu\in X^+_{\overline{0}}}\up_\mu$ with $\up_0=1$ and $\up_\mu\in \U^+$ such that 
\begin{align}
\label{eq:biup}
\begin{split}
B_i \up &=\up \sigma \tau (B_{\tau i}),\qquad \forall i\in \I_\circ,
\\
x \up &= \up x,\qquad \forall x\in \U^{\imath 0} \U_\bu.
\end{split}
\end{align}
Here $X^+_{\overline 0} =\big\{\beta \in X \mid \beta \in \oplus_{i\in \I} \N \alpha_i, p(\beta)=0 \big\}.$
We shall impose in this section the restriction $\kappa_i=0$ for all $i\in \I_\circ$ for the sake of simplicity. To prove the existence and uniqueness of $\up$, we shall generalize the earlier approach developed in \cite{BW18a, BK19} (also cf. \cite{Sh22} for additional super signs). 

\subsection{Super skew derivatives}

Recall an anti-involution $\sigma$ on $\U$ defined in \eqref{eq:sigma}. Let $\U^{\geq}$ (resp. $\U^{\leq}$) denote the Hopf subalgebra of $\U$ generated by $\U^0$ and $\U^+$ (resp. $\U^-$). According to \cite[\S 2.4]{Ya94}, We abuse the notation $\la \cdot,\cdot \ra$ to denote a non-degenerated bilinear pairing 
$\la \cdot,\cdot \ra$ on $\U^{\leq}\times \U^{\geq}$ such that for all $x,x'\in \U^{\geq}$, $y,y'\in \U^{\leq}$, $\mu,\nu \in X$ and $a,b\in \set{0,1}$, we have
\begin{equation}
\label{eq:bilinearform}
\begin{aligned}
    &\la y,xx'\ra=\la \Delta(y),x'\otimes x\ra,\quad &\la yy',x \ra=\la y\otimes y',\Delta(x)\ra, \\
    &\la K_\mu\new^a,K_\nu \new^b\ra=(-1)^{ab}q^{-(\mu,\nu)},\quad &\la F_j,E_k\ra=\delta_{j,k}\frac{1}{q_j-q_j^{-1}}, \\
    & \la K_\mu \new^a,E_j\ra =0,\quad &\la F_j,K_\mu \new^a\ra=0
\end{aligned}
\end{equation}

Moreover, the super skew derivations (cf. \cite[\S 1.5]{CHW13}) $\lskew$ and on $\U^{+}$ satisfy $\rskew(E_j)=\delta_{i,j},\ \lskew(E_j)=\delta_{i,j},$ and
\begin{equation}
    \label{eq:skew}
    \begin{aligned}
    &\lskew(xy)=(-1)^{p(y)p(i)}\lskew(x)y+q^{(\alpha_i,\mu)}x\lskew(y),\\
    &\rskew(xy)=(-1)^{p(y)p(i)}q^{(\alpha_i,\upsilon)}\rskew(x)y+x\rskew(y)
    \end{aligned}
\end{equation}
for all $x\in \U^{+}_\mu,\ y\in \U^{+}_\upsilon$.

The following lemmas are straightforward super generalizations of well-known results for quantum groups.
\begin{lemma}
For all $x\in \U^{+}$, $y\in \U^{-}$ and $i\in I$ one has
\begin{equation}
\label{eq:Fiy}
    \la F_iy,x\ra= (-1)^{p(x)p(i)}\la F_i,E_i\ra\la y,\lskew(x)\ra,\quad \la yF_i,x\ra=\la F_i,E_i\ra \la y,\rskew(x)\ra.
\end{equation}
\end{lemma}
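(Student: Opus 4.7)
The approach is standard: derive both identities from the Hopf-pairing property $\la yy',x\ra=\la y\otimes y',\Delta(x)\ra$ of \eqref{eq:bilinearform} together with a low-order expansion of the coproduct $\Delta(x)$ for $x\in \U^+_\mu$. The plan is to argue by induction on the height $|\mu|$. The skew derivations $\lskew$ and $\rskew$ will arise as the ``coefficients'' of $E_i K_{\mu-\alpha_i}$ in the appropriate tensor slot of $\Delta(x)$, up to super signs.

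First I would establish, for $x \in \U^+_\mu$, an expansion of the shape
$$\Delta(x) = x \otimes K_\mu + \sum_{i \in \I} E_i K_{\mu-\alpha_i} \otimes \lskew(x) + R_L(x),$$
where $R_L(x)$ consists of terms whose first tensor factor has the form $\xi K_\nu \new^a$ with $\xi \in \U^+_\lambda$ of height $|\lambda| \ge 2$. A mirror expansion with $\rskew$ in the first slot and a corresponding remainder $R_R(x)$ holds symmetrically. The base case $x = E_j$ is immediate from \eqref{eq:Hopf} and matches $\lskew(E_j) = \delta_{i,j}$. For the inductive step, I would apply $\Delta(xy) = \Delta(x)\Delta(y)$ and collect the terms of the required form, verifying that the new coefficient of $E_i K_{\mu+\nu-\alpha_i}$ in the first slot matches $\lskew(xy)$ as dictated by the Leibniz rule \eqref{eq:skew}; the argument for $\rskew$ is analogous.

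With the expansions in hand, both identities of \eqref{eq:Fiy} follow by pairing against $F_i \otimes y$ (respectively $y \otimes F_i$). Since $F_i$ pairs trivially with $K_\mu \new^a$ and with every monomial in $\U^+$ of weight different from $\alpha_i$, the expansion forces all contributions except those from the middle sum to vanish; in particular the $R_L(x)$ term drops out because $F_i$ pairs to zero against any monomial whose $\U^+$-part is of weight $\ge 2$. Using the super-sign rule $\la y_1 \otimes y_2, x_1 \otimes x_2\ra = (-1)^{p(y_2)p(x_1)}\la y_1,x_1\ra \la y_2,x_2\ra$ on the surviving term, together with the identity $\la F_i, E_i K_{\mu-\alpha_i}\ra = \la F_i, E_i\ra$ (a direct consequence of \eqref{eq:bilinearform} applied to $\Delta(F_i) = F_i \otimes K_i^{-1} + \new^{p(i)} \otimes F_i$), the first formula of \eqref{eq:Fiy} emerges once one observes that $\la y, \lskew(x)\ra$ vanishes unless $p(y) \equiv p(x) - p(i) \pmod 2$, which allows one to rewrite $(-1)^{p(y)p(i)}$ as $(-1)^{p(x)p(i)}$.

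The main obstacle is the consistent tracking of super signs at two distinct stages. In the inductive step, one must verify that the recursive expansion of $\Delta(xy) = \Delta(x)\Delta(y)$ reproduces exactly the super-Leibniz signs in \eqref{eq:skew} rather than their ungraded analogues, which involves matching the braiding sign $(-1)^{p(y_2)p(x_1)}$ coming from comultiplicativity against the sign $(-1)^{p(y)p(i)}$ in \eqref{eq:skew}. In the final pairing computation, one then converts the resulting parity $(-1)^{p(y)p(i)}$ into the advertised $(-1)^{p(x)p(i)}$, which relies on the parity-homogeneity of the nondegenerate pairing. Once these two sign reductions are done consistently, both identities follow.
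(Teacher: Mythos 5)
The paper's own proof of this lemma is a one-line reference to Lemma~6.2 of \cite{Sh22}, so there is no in-paper argument to check against; your plan --- the Hopf pairing axioms from \eqref{eq:bilinearform} combined with a low-order expansion of $\Delta(x)$, with the skew derivatives arising as coefficients --- is the natural and standard route, and is presumably what the cited lemma does.

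That said, the sketch has a concrete flaw that fails already at the base case. Your ansatz
\[
\Delta(x) = x \otimes K_\mu + \sum_{i}E_iK_{\mu-\alpha_i}\otimes\lskew(x)+R_L(x)
\]
forces, with $x=E_j$ and $R_L(E_j)=0$ (as $R_L$ is declared to have first factor of height at least $2$), the identity $\Delta(E_j)=E_j\otimes K_j+E_j\otimes 1$; but by \eqref{eq:Hopf} the actual coproduct is $\Delta(E_j)=E_j\otimes 1+\new^{p(j)}K_j\otimes E_j$. The piece that should vanish under pairing with $F_i\otimes y$ (because $\la F_i, K_\nu\new^a\ra=0$) is the degree-zero-in-the-first-slot term $\new^{p(\mu)}K_\mu\otimes x$, not $x\otimes K_\mu$. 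Moreover the degree-one-in-the-first-slot terms carry $\new$ factors of the form $E_iK_{\mu-\alpha_i}\new^{p(\mu-\alpha_i)}$ which you have dropped; each contributes $(-1)^{p(i)p(\mu-\alpha_i)}$ via $\la\new^{p(i)},K_\nu\new^{p(\mu-\alpha_i)}\ra$ when computing $\la F_i, E_iK_{\mu-\alpha_i}\new^{p(\mu-\alpha_i)}\ra$, and the comparison of the degree-one coefficient with $\lskew(x)$ produces a further monomial-dependent sign dictated by the super-Leibniz rule \eqref{eq:skew} interacting with $\Delta(xy)=\Delta(x)\Delta(y)$ in the (non-super) Hopf algebra $\U$. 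Finally, the closing reduction ``$(-1)^{p(y)p(i)}$ can be rewritten as $(-1)^{p(x)p(i)}$'' is not correct as stated: since the pairing forces $p(y)\equiv p(x)+p(i)\pmod 2$, one has $(-1)^{p(y)p(i)}=(-1)^{p(x)p(i)+p(i)}$, which differs from the claimed sign by $(-1)^{p(i)}$. It is precisely the interplay of these omitted sign contributions that must assemble into the stated $(-1)^{p(x)p(i)}$; your plan does not track them, so as written the argument does not establish the identity when $p(i)=1$.
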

\begin{proof}
It follows from a rerun of the proof of \cite[Lemma 6.2]{Sh22}.
\end{proof}

\begin{lemma}
\label{lem:commuF}
For all $x\in \U^{+}$, we have
\begin{equation*}
    [x,F_j]=xF_j-(-1)^{p(x)p(j)}F_jx=\frac{1}{q_j-q_j^{-1}}\big(r_j(x)K_j-K_j^{-1}{}_jr(x)\big).
\end{equation*}
\end{lemma}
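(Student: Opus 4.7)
The plan is to prove the identity by induction on the $Y$-grading weight $\mu$ of a homogeneous element $x\in\U^+_\mu$, reducing the general case to this one by linearity of both sides. Write $[a,b]:=ab-(-1)^{p(a)p(b)}ba$ for the super commutator throughout.

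For the base case $x=1$ both sides vanish, and for $x=E_i$ we compute directly: the relation $E_iF_j-(-1)^{p(i)p(j)}F_jE_i=\delta_{i,j}\frac{K_i-K_i^{-1}}{q_i-q_i^{-1}}$ from \eqref{eq:Urelation} gives the left side, while ${}_jr(E_i)=r_j(E_i)=\delta_{i,j}$ gives the right side, so the two agree.

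For the inductive step I factor $x=x'x''$ with $x'\in\U^+_{\mu'}$, $x''\in\U^+_{\mu''}$ of strictly smaller weights, and use the super Leibniz rule for the super commutator,
\[
[x'x'',F_j]=x'[x'',F_j]+(-1)^{p(x'')p(j)}[x',F_j]x'',
\]
which is a routine verification from the definition of $[\cdot,\cdot]$. Applying the induction hypothesis to $[x',F_j]$ and $[x'',F_j]$, then using the commutation rules $x'K_j^{-1}=q^{(\alpha_j,\mu')}K_j^{-1}x'$ and $K_jx''=q^{(\alpha_j,\mu'')}x''K_j$ coming from \eqref{eq:Urelation} to move the Cartan factors outside, yields
\begin{align*}
(q_j-q_j^{-1})[x'x'',F_j]
&=\bigl(x'r_j(x'')+(-1)^{p(x'')p(j)}q^{(\alpha_j,\mu'')}r_j(x')x''\bigr)K_j\\
&\quad -K_j^{-1}\bigl((-1)^{p(x'')p(j)}{}_jr(x')x''+q^{(\alpha_j,\mu')}x'{}_jr(x'')\bigr).
\end{align*}
By the super skew-derivation formulas \eqref{eq:skew}, the expressions in the two pairs of parentheses are exactly $r_j(x'x'')$ and ${}_jr(x'x'')$, respectively. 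Hence the right-hand side equals $r_j(x'x'')K_j-K_j^{-1}{}_jr(x'x'')$, closing the induction.

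The only real bookkeeping concerns the parity signs and $q$-powers, which have to be tracked simultaneously; this is where I expect the proof to require the most care. Since the parity factor $(-1)^{p(x'')p(j)}$ and the factor $q^{(\alpha_j,\mu'')}$ produced by commuting $K_j$ past $x''$ are precisely the ones prescribed by the $\rskew$-Leibniz rule in \eqref{eq:skew} (and symmetrically for ${}_jr$ on the $K_j^{-1}$ side), the matching is clean provided one is careful to respect the convention $K_j=K_{h_j}^{d_j}$ when computing the exponents. No Serre-type identities are needed beyond the defining relations of $\U$, so the argument works uniformly for all basic Lie superalgebra types.
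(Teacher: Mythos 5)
Your proof is correct and proceeds by the standard inductive argument (base case $x=E_i$, super Leibniz rule for $[\,\cdot\,,F_j]$, then matching with the skew-derivation formulas \eqref{eq:skew}), which is the same strategy behind the proof in [Sh22, Lemma~6.3] that the paper cites. One minor point of wording: since a general homogeneous $x$ is a sum of monomials rather than a single product $x'x''$, you should note explicitly that both sides are linear in $x$ so it suffices to treat monomials; once that is said, the induction you carry out is exactly right.
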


\begin{proof}
    It follows from a rerun of the proof of \cite[Lemma 6.3]{Sh22}.
\end{proof}

\begin{lemma}
\label{lemma:rijr}
For any $i,j\in I$, we have
\[r_i\circ {}_jr(x)=(-1)^{p(i)p(j)}{}_jr \circ r_i(x),\quad \forall x\in \U^+.\]
\end{lemma}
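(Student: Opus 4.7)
The plan is to exploit the non-degenerate bilinear pairing $\la\cdot,\cdot\ra$ on $\U^-\times \U^+$ together with the adjoint formulas \eqref{eq:Fiy}, by evaluating a single matrix coefficient $\la F_j y F_i, x\ra$ in two different ways using associativity of the multiplication in $\U^-$.

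Associating first as $(F_jy)F_i$ and applying the right-multiplication identity from \eqref{eq:Fiy}, followed by the left-multiplication identity, I obtain
\begin{align*}
\la (F_jy)F_i,\,x\ra
&= \la F_i,E_i\ra\,\la F_jy,\, r_i(x)\ra \\
&= (-1)^{p(r_i(x))\,p(j)}\la F_i,E_i\ra\,\la F_j,E_j\ra\,\la y,\, {}_jr(r_i(x))\ra.
\end{align*}
Since $r_i$ shifts the parity of a homogeneous element by $p(i)$, we have $p(r_i(x))=p(x)+p(i)$, so the sign prefactor becomes $(-1)^{(p(x)+p(i))\,p(j)}$.

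Associating instead as $F_j(yF_i)$ and applying the two identities in the opposite order gives
\begin{align*}
\la F_j(yF_i),\,x\ra
&= (-1)^{p(x)\,p(j)}\,\la F_j,E_j\ra\,\la yF_i,\, {}_jr(x)\ra \\
&= (-1)^{p(x)\,p(j)}\,\la F_j,E_j\ra\,\la F_i,E_i\ra\,\la y,\, r_i({}_jr(x))\ra.
\end{align*}
Equating the two expressions, canceling the common nonzero scalar $\la F_i,E_i\ra\la F_j,E_j\ra$ and the sign $(-1)^{p(x)p(j)}$, I obtain
\[
(-1)^{p(i)\,p(j)}\,\la y,\,{}_jr(r_i(x))\ra \;=\; \la y,\,r_i({}_jr(x))\ra
\qquad \text{for all } y\in \U^-.
\]
By non-degeneracy of the pairing on $\U^-\times \U^+$, this yields the claimed identity $r_i\circ {}_jr(x)=(-1)^{p(i)p(j)}\,{}_jr\circ r_i(x)$.

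The only point requiring real care is the parity bookkeeping, and this is where I anticipate most of the effort in the write-up; once one records that $p(r_i(x))=p(x)+p(i)$ and $p({}_jr(x))=p(x)+p(j)$, the signs align automatically. As an alternative, one can give a direct induction on the weight of $x$ using the super-Leibniz rules \eqref{eq:skew}: the base case $x=E_k$ is trivial, and in the inductive step the two expansions $r_i({}_jr(xy))$ and $(-1)^{p(i)p(j)}{}_jr(r_i(xy))$ agree term-by-term by the induction hypothesis after invoking symmetry of $(\cdot,\cdot)$; this route is more tedious but entirely parallel to the purely even case.
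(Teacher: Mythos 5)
Your pairing argument is correct and conceptually different from the paper's proof. The paper proves Lemma~\ref{lemma:rijr} by a direct induction on weight using the super-Leibniz rules \eqref{eq:skew}: expanding $r_i\circ {}_jr(xy)$ and ${}_jr\circ r_i(xy)$ term by term, then matching the four terms after invoking symmetry of $(\cdot,\cdot)$ on $X$ and the fact $p({}_kr(y))=p(y)+p(k)\pmod 2$. Your primary route instead uses \eqref{eq:Fiy} to compute the matrix coefficient $\la F_jyF_i, x\ra$ in two ways via the two associativity brackets, then appeals to non-degeneracy of the Drinfeld pairing to cancel $y$. The sign bookkeeping you outlined is right: with $p(r_i(x)) = p(x) + p(i)$, the first bracketing produces $(-1)^{(p(x)+p(i))p(j)}\la F_i,E_i\ra\la F_j,E_j\ra \la y, {}_jr(r_i(x))\ra$, the second produces $(-1)^{p(x)p(j)}\la F_i,E_i\ra\la F_j,E_j\ra \la y, r_i({}_jr(x))\ra$, and dividing gives the stated super-commutation. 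The pairing route is more conceptual and shorter, at the cost of invoking non-degeneracy of $\la\cdot,\cdot\ra$ restricted to the weight pieces $\U^-_{-\nu}\times \U^+_\nu$ -- a standard consequence of the properties in \eqref{eq:bilinearform} that the paper does not explicitly isolate. The alternative induction you sketch at the end is essentially the paper's actual argument.
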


\begin{proof}
If $u=E_k$ or $1$, then we certainly have $r_i\circ {}_jr(u)=(-1)^{p(i)p(j)}{}_jr \circ r_i(u)$. Thus it is enough to show that $r_i\circ {}_jr(xy)={}_jr \circ r_i(xy)$ for any $x\in \U^{+}_\mu,\ y\in \U^{+}_\upsilon$.

We have
\begin{align*}
    r_i\circ {}_jr(xy)
    &= r_i((-1)^{p(y)p(j)}{}_jr(x)y+q^{(\alpha_j,\mu)}x{}_jr(y))\\
    &= (-1)^{p(y)p(j)}[(-1)^{p(y)p(i)}q^{(\alpha_i,\upsilon)}r_i\circ{}_jr(x)y+{}_jr(x)r_i(y)]\\
    &\qquad+q^{(\alpha_j,\mu)}[(-1)^{p({}_jr(y))p(i)}q^{(\alpha_i,\upsilon-\alpha_j)}r_i(x){}_jr(y)+xr_i\circ {}_jr(y)], 
\end{align*}
\begin{align*}
    {}_jr\circ r_i(xy)
    &={}_jr((-1)^{p(y)p(i)}q^{(\alpha_i,\upsilon)}\rskew(x)y+x\rskew(y))\\\
    &= (-1)^{p(y)p(i)}q^{(\alpha_i,\upsilon)}[(-1)^{p(y)p(j)}{}_jr\circ r_i(x)y+q^{(\alpha_j,\mu-\alpha_i)}r_i(x){}_jr(y)]\\
    &\qquad+[(-1)^{p({}_ir(y))p(j)}{}_jr(x)r_i(y)+q^{(\alpha_j,\mu)}x{}_jr\circ r_i(y)].
\end{align*}
Now since $p({}_kr(y))=p(y)\pm p(k)$ for any $k\in I$, we have
$%\[
r_i\circ {}_jr=(-1)^{p(i)p(j)}{}_jr \circ r_i.
$%\]
\end{proof}

\subsection{A recursive construction}

By convention, it is understood that $\va_i=0$ for all $i\in \I_\bu$. The first step toward the construction of $\up$ is to translate \eqref{eq:biup} into a recursive formula of $\up_\mu$ as in \cite[Proposition 6.1]{BK19} (also cf. \cite{BW18a}). Let $\rho_\bu$ denote the half sum of positive roots of the Levi subalgebra associated with $I_\bu \subset I$.
\begin{proposition}
\label{prop:biup}
Let $\up=\sum_{\mu\in X^+_{\overline 0}}\up_\mu$ with $\up_\mu\in \U^{+}_\mu$ be an element  in the completion of $\U$, then the following are equivalent:
\begin{enumerate}
\item 
 The element $\up$ satisfies \eqref{eq:biup}.
\item
 For all $i\in \I$, we have
\begin{equation}
\label{eq:upB2}
     B_i \up=\up( F_i+(-1)^{\delta_{i,\tau i}p(i)+\alpha_i(2\rho_\bu^\vee)}{q}^{(\alpha_i,2\rho_\bu+w_\bu\alpha_{\tau i})}\va_{\tau i}  \overline{T_{w_\bu}( E_{\tau i})} K_i).
\end{equation}
\item 
The element $\up$ satisfy the following relations:
\begin{equation}\label{eq:derivativeup}
    \begin{aligned}
    &\rskew(\up_\mu)=-(q_i-q_i^{-1})\up_{\mu-\alpha_i-w_\bu (\alpha_{\tau i})}(-1)^{\delta_{i,\tau i}p(i)+\alpha_i(2\rho_\bu^\vee)}{q}^{(\alpha_i,2\rho_\bu+w_\bu\alpha_{\tau i})}\va_{\tau i}  \overline{T_{w_\bu}( E_{\tau i})},\\
    &\lskew(\up_\mu)=-(q_i-q_i^{-1})q^{(\alpha_i,w_\bu\alpha_{\tau i})}\va_i T_{w_\bu}( E_{\tau i})\up_{\mu-\alpha_i-w_\bu\alpha_{\tau i}}.
    \end{aligned}
\end{equation}
\end{enumerate}
Moreover, if either of (1)--(3) holds then we have 
\begin{equation}
\label{eq:upnonzero}
    \up_\mu=0 \text{ unless }w_\bu\tau(\mu)=\mu.
\end{equation} 
\end{proposition}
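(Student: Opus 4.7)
The plan is to establish the chain $(1) \Leftrightarrow (2) \Leftrightarrow (3)$ and then to extract the weight constraint \eqref{eq:upnonzero} from the recursive form in $(3)$. Throughout, I would work formally in the completion of $\U$ with respect to the weight grading, so that collecting weight components is legitimate.

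First, for $(1) \Leftrightarrow (2)$, I would unfold $\sigma\tau(B_{\tau i})$ directly using the definitions of $\sigma$ from \eqref{eq:sigma}, of $\tau$ from \eqref{tau:U}, and of $B_{\tau i}$ from \eqref{eq:Bi} with $\kappa_{\tau i}=0$. Applying $\tau$ and using $\tau^2 = \mathrm{id}$ along with the relation $T_{w_\bu}\circ \tau=\tau\circ T_{w_\bu}$ (which is Lemma~\ref{lemma:TiTw} combined with the fact that $\tau$ acts on $\I_\bu$ as $-w_\bu$, Definition~\ref{def:superad}(2)), one obtains
\[
\tau(B_{\tau i})=F_i+\va_{\tau i}\,\tau T_{w_\bu}(E_i)\,\tau(K_{\tau i}^{-1}).
\]
Then $\sigma$ reverses the order, sends $F_i\mapsto F_i$, $E_i\mapsto E_i$, and $K_{\tau i}^{-1}\mapsto (-1)^{p(i)}K_{\tau i}$; together with the identity $\sigma T_{w_\bu} = T_{w_\bu}^{-1}\sigma$ (Lemma~\ref{lemma:Ti-1}) and $T_{w_\bu}^{-1}=\overline{T_{w_\bu}}\cdot \mathrm{(scalar)}$, one can rewrite $\sigma\tau(B_{\tau i})$ as $F_i+(-1)^{\delta_{i,\tau i}p(i)+\alpha_i(2\rho_\bu^\vee)}q^{(\alpha_i,2\rho_\bu+w_\bu\alpha_{\tau i})}\va_{\tau i}\overline{T_{w_\bu}(E_{\tau i})}K_i$. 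The power of $q$ and the sign arise from commuting $K$'s past $T_{w_\bu}(E_{\tau i})$ and from the bar relating $T_{w_\bu}$ and $T_{w_\bu}^{-1}$; tracking these carefully on the even root space where $T_{w_\bu}(E_{\tau i})$ lives gives exactly the prefactor in \eqref{eq:upB2}. This proves $(1) \Leftrightarrow (2)$, including the condition involving $\U^{\imath 0}\U_\bu$ which follows because the identity on Cartan and on $\Ub$ is automatic once $\Upsilon_0=1$ and $\Upsilon_\mu\in \U^+_\mu$.

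Next, for $(2) \Leftrightarrow (3)$, I would rewrite $B_i\Upsilon=F_i\Upsilon+\va_i T_{w_\bu}(E_{\tau i})K_i^{-1}\Upsilon$ and move $F_i$ past $\Upsilon_\mu$ via Lemma~\ref{lem:commuF}, yielding
\[
B_i\Upsilon_\mu=\Upsilon_\mu F_i+\tfrac{1}{q_i-q_i^{-1}}\bigl(r_i(\Upsilon_\mu)K_i-K_i^{-1}\,{}_ir(\Upsilon_\mu)\bigr)+\va_i T_{w_\bu}(E_{\tau i})K_i^{-1}\Upsilon_\mu.
\]
Comparing with $\sum_\nu \Upsilon_\nu\cdot(F_i+c_i\overline{T_{w_\bu}(E_{\tau i})}K_i)$, where $c_i=(-1)^{\delta_{i,\tau i}p(i)+\alpha_i(2\rho_\bu^\vee)}q^{(\alpha_i,2\rho_\bu+w_\bu\alpha_{\tau i})}\va_{\tau i}$, and using the triangular decomposition $\U\cong \U^+\otimes \U^0\otimes \U^-$ to separate the $K_i$-component from the $K_i^{-1}$-component, the equation splits into two independent relations, one for $r_i(\Upsilon_\mu)$ and one for ${}_ir(\Upsilon_\mu)$. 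Matching weights forces $\nu=\mu-\alpha_i-w_\bu(\alpha_{\tau i})$ in each case and yields \eqref{eq:derivativeup}. The reverse implication is immediate, since any $\Upsilon$ satisfying \eqref{eq:derivativeup} reassembles to satisfy \eqref{eq:upB2} via the same identity.

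Finally, the weight statement \eqref{eq:upnonzero} comes from iterating the recursion \eqref{eq:derivativeup} starting from $\Upsilon_0=1$: the support of $\Upsilon$ lies in the submonoid of $X^+_{\overline 0}$ generated by the elements $\beta_i:=\alpha_i+w_\bu\alpha_{\tau i}$ for $i\in \I$. A short verification shows each $\beta_i$ is fixed by $w_\bu\tau$. Indeed, for $i\in \I_\bu$ we have $\va_i=0$ so these $\beta_i$ are irrelevant, while for $i\in \I_\circ$ one uses $\tau w_\bu=w_\bu\tau$ on the root lattice together with the involutivity $\tau^2=\mathrm{id}$ and $w_\bu^2=\mathrm{id}$ to compute $w_\bu\tau(\beta_i)=w_\bu\tau\alpha_i+w_\bu\tau w_\bu\alpha_{\tau i}=\alpha_i+w_\bu\alpha_{\tau i}=\beta_i$ after substituting $w_\bu\tau w_\bu=\tau$. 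The main technical obstacle is the careful bookkeeping of super signs and $q$-powers in passing between $\tau$, $\sigma$, the bar involution, and $T_{w_\bu}$, particularly when $i\in \Iodd$ and $d_i=-d_{\tau i}$; the identity $\sigma T_{w_\bu}=T_{w_\bu}^{-1}\sigma$ and the compatibility of $\tau$ with $T_{w_\bu}$ are the two pivots that make the sign computation close up cleanly.
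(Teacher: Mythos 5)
Your proposal follows essentially the same three-step structure as the paper's proof (which is itself a sketch pointing to \cite[Lemma 4.17]{BW18b}, Lemma~\ref{lem:commuF}, and \cite[Proposition 6.1]{BK19}), and the explicit computation showing $w_\bu\tau(\alpha_i+w_\bu\alpha_{\tau i})=\alpha_i+w_\bu\alpha_{\tau i}$ for \eqref{eq:upnonzero} is correct. However there is one genuine gap and one overstatement.

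The genuine gap is the last sentence of your second paragraph, where you claim that the part of condition (1) asserting $x\up=\up x$ for $x\in\U^{\imath 0}\U_\bu$ ``is automatic once $\up_0=1$ and $\up_\mu\in\U^+_\mu$.'' This is not the case. Commutation with $F_j$ for $j\in\I_\bu$ does follow from (2), since $\va_j=0$ there; commutation with $K_h$ for $h\in Y^\io$ follows only once \eqref{eq:upnonzero} has been established (because $w_\bu\tau(\mu)=\mu$ and $\Theta(h)=h$ together force $\mu(h)=0$); but commutation with $E_j$ for $j\in\I_\bu$ is a genuinely nontrivial consequence of (3) and requires an argument about $\ad(\U_\bu)$-invariance of the recursion (this is the content the paper delegates to \cite[Prop. 6.1]{BK19}). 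Note that for $u\in\U^+_\mu$ the commutator $[E_j,u]$ is not expressible through the skew derivatives $r_j,{}_jr$, so the vanishing $r_j(\up_\mu)={}_jr(\up_\mu)=0$ for $j\in\I_\bu$ does not immediately give $E_j\up_\mu=\up_\mu E_j$.

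The overstatement is the phrase ``$T_{w_\bu}^{-1}=\overline{T_{w_\bu}}\cdot(\mathrm{scalar})$,'' which is not an operator-level identity. What one actually needs, and what the paper cites from \cite[Lemma 4.17]{BW18b}, is the element-level relation $T_{w_\bu}^{-1}(E_{\tau i})=(-1)^{\alpha_{\tau i}(2\rho_\bu^\vee)}q^{(\alpha_{\tau i},2\rho_\bu)}\overline{T_{w_\bu}(E_{\tau i})}$ (or a close variant), and this precise scalar is what produces the exponents in \eqref{eq:upB2}. You correctly identify the relevant ingredients ($\sigma T_{w_\bu}=T_{w_\bu}^{-1}\sigma$ from Lemma~\ref{lemma:Ti-1}, the $\tau$-$T_{w_\bu}$ compatibility, and the sign $(-1)^{\delta_{i,\tau i}p(i)}$), but the scalar tracking needs to invoke that element-level lemma explicitly rather than being treated as a commutation of $K$'s.
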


\begin{proof}
Recall the anti-involution $\sigma$ from \eqref{eq:sigma} and the automorphism $\tau$ from \eqref{tau:U} on $\U$. By applying $\sigma, \tau$ and using Lemma~\ref{lemma:Ti-1}, we have 
\[
\sigma \tau (B_{\tau i})=F_i+(-1)^{\delta_{i,\tau i}p(i)}\va_{\tau i}K_iT_{w_\bu}^{-1}(E_{\tau i}).
\]
Thus (1) implies (2) by applying \cite[Lemma 4.17]{BW18b}. The equivalence of (2) and (3) follows from Lemma \ref{lem:commuF}. The fact that Condition (3) implies  \eqref{eq:upnonzero} and Condition~ (1) follows from the same proof as in \cite[Proposition 6.1]{BK19}. 
\end{proof}

\begin{remark}
Comparing \eqref{eq:derivativeup} with \cite[(6.10)]{Sh22}, the key distinction lies in the additional sign $(-1)^{\delta_{i,\tau i}p(i)}$, which never occurred in the special setting of \cite{Sh22}. This extra sign arises due to $(-1)^{p(i)}\frac{q_{\tau i}-q_{\tau i}^{-1}}{q_i-q_{i}^{-1}}=(-1)^{\delta_{i,\tau i}p(i)}$, cf. \eqref{eq:qiqti}. This sign does not affect subsequent arguments involving a rerun of \cite[\S 6]{Sh22}.
\end{remark}

Moreover, it follows as in \cite[Lemma 6.5]{Sh22} that $\up_\mu=0$ unless $p(\mu)=0$. The system of equations \eqref{eq:derivativeup} for all $i\in \I$ provides an equivalent condition for the existence of $\up$, and our objective is to solve it recursively. 

To do so, we introduce several notations as follows. For any non-commutative homogeneous polynomial $p$, we write
\[ p(\underline{F}
)=f_1p^{(1)}(\underline{F}
)+\cdots+f_tp^{(t)}(\underline{F}
)\]
as a sum of monomials, where $f_k\in \C(q)^*$. For each monomial $p^{(k)}(\underline{F}
)=F_{k_1}\cdots F_{k_\ell}$, we define 
\[ \text{tail}(p^{(k)}):=k_\ell\in \I  \text{ and } {\rm{Head}} (p^{(k)})=F_{k_1}\cdots F_{k_{\ell-1}}.\]

Recall from \eqref{SU} the set $\mathcal{S}(\U)$ of non-commutative homogeneous Serre polynomials in Table~\ref{TableSerrePolyn}. Now we assume that $\up_{\mu'}$ is defined for all $\mu'<\mu$. The following proposition is a super analogue of \cite[Proposition 6.3]{BK19} which provides a concrete way to establish $\up_\mu$ satisfying \eqref{eq:derivativeup}.
\begin{proposition}
\label{prop:recursive}
 Let $\mu\in X^+_{\overline 0}$ with $ht(\mu)\geqslant 2$ and fix $A_i,\ {}_iA\in \U^{+}_{\mu-\alpha_i}$ for all $i\in I$. The following are equivalent. 
     \begin{enumerate}
         \item 
     There exist an element $\Xi\in \U_\mu^+$ such that $$\rskew(\Xi)=A_i,\quad \lskew(\Xi)={}_iA,\quad \forall i\in \I.$$
     \item
     The following properties hold for $A_i$ and ${}_iA$:
     \begin{enumerate}
     \item For all $i,j\in I$, we have 
     \begin{equation*}
r_i({}_jA)=(-1)^{p(i)p(j)}{}_ir(A_j).
\end{equation*}
\item For any $p\in \mathcal S(\U)$ with $p(\underline{F}
)=f_1p^{(1)}(\underline{F}
)+\cdots+f_tp^{(t)}(\underline{F}
)$, we have
\begin{equation*}
    \sum_{1\leq k\leq t}f_k\la Head(p^{(k)}), A_{tail(p^{(k)})} \ra=0.
\end{equation*}
     \end{enumerate}
\end{enumerate}
\end{proposition}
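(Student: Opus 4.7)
My plan is to adapt the strategy of \cite[Proposition 6.3]{BK19} (see also \cite[Proposition 6.6]{Sh22}) to the super setting. The two main technical inputs are the non-degeneracy of the pairing $\la\cdot,\cdot\ra\colon \U^-_\mu\times\U^+_\mu\to\C(q)$ and the relation between this pairing and the skew derivations given in \eqref{eq:Fiy}.

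For the easier direction (1) $\Rightarrow$ (2), condition (a) is a direct consequence of Lemma~\ref{lemma:rijr} applied to $\Xi$: since ${}_jA=\lskew(\Xi)$ and $A_j=\rskew(\Xi)$, the super commutativity of $r_i$ and ${}_jr$ gives the stated identity up to the parity sign $(-1)^{p(i)p(j)}$. Condition (b) follows by pairing the identity $p(\underline F)=0$ (which holds in $\U^-$ because $p$ is a Serre polynomial) against $\Xi$, and iteratively applying the right-peeling formula in \eqref{eq:Fiy} to each monomial $p^{(k)}(\underline F)= Head(p^{(k)})\cdot F_{tail(p^{(k)})}$, substituting $\rskew[tail](\Xi)=A_{tail(p^{(k)})}$.

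For the harder direction (2) $\Rightarrow$ (1), the idea is to define a linear functional $\phi$ on $\U^-_\mu$ by the right-peeling formula
\begin{equation*}
\phi(F_{j_1}\cdots F_{j_\ell}) := \la F_{j_\ell}, E_{j_\ell}\ra\,\la F_{j_1}\cdots F_{j_{\ell-1}},\,A_{j_\ell}\ra
\end{equation*}
and to invoke the non-degeneracy of the pairing to obtain a unique $\Xi\in\U^+_\mu$ with $\la y,\Xi\ra=\phi(y)$. By construction this will give $\rskew(\Xi)=A_i$. Condition (a), together with the super analogue of the Leibniz rules \eqref{eq:skew}, will then be used to show that the analogously defined left-peeling functional agrees with $\phi$, so that $\lskew(\Xi)={}_iA$ also holds.

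The main obstacle — and the heart of the proof — is showing that $\phi$ is well-defined on $\U^-_\mu$, i.e., that the right-peeling prescription descends from the free algebra through the ideal of Serre relations. Via an induction on word length, together with the symmetric use of left- and right-peeling mediated by condition (a), this reduces to the single verification that $\phi(p(\underline F))=0$ for every Serre polynomial $p\in\mathcal S(\U)$, which after stripping off the common scalar factors $\la F_{tail(p^{(k)})},E_{tail(p^{(k)})}\ra$ is exactly the content of condition (b). The super setting introduces additional parity signs in both \eqref{eq:Fiy} and \eqref{eq:skew}, so bookkeeping these signs through the case-by-case list of non-commutative Serre polynomials in Table~\ref{TableSerrePolyn} will be the technically delicate part.
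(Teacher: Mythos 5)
Your proposal is correct and follows essentially the same route as the paper. The paper's own proof is terse — it simply notes that $(1)\Rightarrow(2)$(a) follows from Lemma~\ref{lemma:rijr}, that $(1)\Rightarrow(2)$(b) follows from pairing $p(\underline F)=0$ against $\Xi$ via \eqref{eq:Fiy}, and that $(2)\Rightarrow(1)$ is "a rerun of the proof of \cite[Proposition~6.3]{BK19}" — and your reconstruction of that rerun (defining the right-peeling functional $\phi$ on the free algebra, invoking non-degeneracy of the Drinfeld pairing $\U^-_\mu\times\U^+_\mu\to\C(q)$, and reducing well-definedness to condition (b) via condition (a) and the Leibniz rules \eqref{eq:skew}) is exactly the argument that citation points to, now carrying the extra super signs you correctly identify as the delicate bookkeeping point.
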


\begin{proof}
    $(1) \Rightarrow (2)$. (2a) follows from Lemma~\ref{lemma:rijr}, while (2b) follows from \eqref{eq:Fiy} and the fact that $\la p(\underline{F}),\Xi \ra=0$. 

    $(2)\Rightarrow (1)$ follows from a rerun of the proof of \cite[Proposition 6.3]{BK19}.
\end{proof}

Keeping \eqref{eq:derivativeup} in mind, we define
\begin{equation}\label{eq:AiiA}
    \begin{aligned}
    &A_i:=-(q_i-q_i^{-1})\up_{\mu-\alpha_i-w_\bu (\alpha_{\tau i})}(-1)^{\delta_{i,\tau i}p(i)+\alpha_i(2\rho_\bu^\vee)}{q}^{(\alpha_i,2\rho_\bu+w_\bu\alpha_{\tau i})}\va_{\tau i}  \overline{T_{w_\bu}( E_{\tau i})},\\
    &{}_iA:=-(q_i-q_i^{-1})q^{(\alpha_i,w_\bu\alpha_{\tau i})}\va_i T_{w_\bu}( E_{\tau i})\up_{\mu-\alpha_i-w_\bu\alpha_{\tau i}}.
    \end{aligned}
\end{equation}
To construct $\up_\mu$, we just need to verify that \eqref{eq:AiiA} satisfies Condition (2) in Proposition~\ref{prop:recursive} for all $i\in \I$.

\begin{lemma}
\label{lemma:2a}
    For all $i,j\in \I$, we have \[r_i({}_jA)=(-1)^{p(i)p(j)}{}_ir(A_j).\]
\end{lemma}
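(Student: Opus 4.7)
The plan is to proceed by induction on $\mathrm{ht}(\mu)$. The base cases are essentially trivial, since $\up_0 = 1$ and $\up_\mu = 0$ unless $w_\bu\tau(\mu) = \mu$ (see \eqref{eq:upnonzero}). For the inductive step, we assume that $\up_{\mu'}$ has been constructed for all $\mu' < \mu$ and satisfies the pair of recursive identities \eqref{eq:derivativeup}; the lemma then supplies the compatibility condition~(2a) of Proposition~\ref{prop:recursive}, which (in conjunction with condition~(2b), established separately) yields the existence of $\up_\mu$.

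The first step will be to expand both sides of the claimed identity using the super Leibniz rules \eqref{eq:skew}. Writing
\[
{}_jA = c_j \, T_{w_\bu}(E_{\tau j}) \, \up_{\mu-\alpha_j-w_\bu\alpha_{\tau j}},
\qquad
A_j = c'_j \, \up_{\mu-\alpha_j-w_\bu\alpha_{\tau j}} \, \overline{T_{w_\bu}(E_{\tau j})}
\]
for the scalar prefactors $c_j, c'_j$ dictated by \eqref{eq:AiiA}, each expansion produces a ``boundary'' summand (in which the skew derivation hits the $T_{w_\bu}(E_{\tau j})$ or $\overline{T_{w_\bu}(E_{\tau j})}$ factor) and a ``bulk'' summand (in which it hits $\up_{\mu-\alpha_j-w_\bu\alpha_{\tau j}}$). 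Since $\up_{\mu'}$ is always of even parity, the only nontrivial super sign from \eqref{eq:skew} is the factor $(-1)^{p(\overline{T_{w_\bu}(E_{\tau j})})p(i)} = (-1)^{p(i)p(j)}$ arising on the $A_j$-side, matching the target sign on the right-hand side of the lemma.

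For the bulk summands, the inductive hypothesis \eqref{eq:derivativeup} applied at weight $\mu-\alpha_j-w_\bu\alpha_{\tau j}$ rewrites $r_i(\up_{\mu-\alpha_j-w_\bu\alpha_{\tau j}})$ and ${}_ir(\up_{\mu-\alpha_j-w_\bu\alpha_{\tau j}})$ as explicit products involving $\up_{\mu''}$, where $\mu'' := \mu - \alpha_i - \alpha_j - w_\bu(\alpha_{\tau i}+\alpha_{\tau j})$, multiplied by $\overline{T_{w_\bu}(E_{\tau i})}$ or $T_{w_\bu}(E_{\tau i})$. Lemma~\ref{lemma:rijr} is invoked to commute the two skew derivations acting on $\up$-factors when necessary. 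The boundary summands are reconciled by a direct calculation involving the bar involution on $T_{w_\bu}(E_{\tau j})$, combined with the pairing identities \eqref{eq:Fiy} where required. After collecting scalar coefficients, the two sides should agree term by term.

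The main obstacle will be the bookkeeping of super signs and $q$-powers, in particular the new sign $(-1)^{\delta_{i,\tau i}p(i)+\alpha_i(2\rho_\bu^\vee)}$ appearing in \eqref{eq:derivativeup}, which has no analogue in the non-super setting of \cite{BK19} and traces back to \eqref{eq:qiqti}. The case when both $i, j \in \Iodd$ is where the extra parameter condition \eqref{eq:vai=vataui} becomes essential; in the remaining cases the conditions \eqref{eq:vai} together with the relation \eqref{eq:qiqti} suffice to match the two sides.
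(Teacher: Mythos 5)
The paper's own ``proof'' here is a single sentence deferring to \cite[Lemma~6.13]{Sh22}, which in turn is modelled on \cite[Lemma~6.7]{BK19}; so the comparison is necessarily against that template rather than against a self-contained argument in the present paper. Your structural plan --- induction on $\mathrm{ht}(\mu)$, expansion via the super Leibniz rules \eqref{eq:skew}, applying the inductive hypothesis \eqref{eq:derivativeup} to the $\up$-factor, and invoking Lemma~\ref{lemma:rijr} --- is the correct skeleton, and your observation that the factor $(-1)^{p(i)p(j)}$ comes from the parity of $\overline{T_{w_\bu}(E_{\tau j})}$ in the Leibniz rule is a genuinely useful sign-tracking remark.

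However, there is a concrete gap in the claim that ``after collecting scalar coefficients, the two sides should agree term by term.'' Write $\mu'=\mu-\alpha_j-w_\bu\alpha_{\tau j}$ and $\mu''=\mu'-\alpha_i-w_\bu\alpha_{\tau i}$. After applying \eqref{eq:skew} once and then \eqref{eq:derivativeup} once, the ``bulk'' contributions are, up to scalars,
\[
T_{w_\bu}(E_{\tau j})\,\up_{\mu''}\,\overline{T_{w_\bu}(E_{\tau i})}
\quad\text{on the left, and}\quad
T_{w_\bu}(E_{\tau i})\,\up_{\mu''}\,\overline{T_{w_\bu}(E_{\tau j})}
\quad\text{on the right,}
\]
while the ``boundary'' contributions are respectively proportional to $r_i\big(T_{w_\bu}(E_{\tau j})\big)\up_{\mu'}$ and $\up_{\mu'}\,{}_ir\big(\overline{T_{w_\bu}(E_{\tau j})}\big)$. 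These four expressions are not proportional pairwise: the $T_{w_\bu}(E_{\tau i})$- and $T_{w_\bu}(E_{\tau j})$-factors appear on opposite sides of $\up_{\mu''}$, which does not commute with them, and the boundary pieces have $r_i$ acting versus ${}_ir$ acting on a bar-conjugated element, times $\up_{\mu'}$ on opposite sides. The reconciliation in \cite[Lemma~6.7]{BK19} (and in \cite[Lemma~6.13]{Sh22}) is precisely the nontrivial part: one must split into cases according to whether $i,j\in I_\bu$ or $I_\circ$, use that $A_i={}_iA=0$ for $i\in I_\bu$, apply the recursion \emph{again} and/or use the $\ad$-module structure of $T_{w_\bu}(E_{\tau j})$ to commute factors past $\up$, and only then does the matching emerge after a genuine regrouping --- not a term-by-term comparison. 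As written, your proposal does not identify this issue, so it would stall at exactly the step where the real content lies; a corrected proposal should make explicit the case analysis and the commutation/regrouping mechanism, or at minimum flag that the matching is not termwise.
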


\begin{proof}
    The lemma follows from a rerun of the proof of \cite[Lemma 6.13]{Sh22}.
\end{proof}

Lemma~\ref{lemma:2a} confirms that $A_i$ and ${}_iA$ indeed satisfy Condition (2a) in Proposition~\ref{prop:recursive}. To verify Condition (2b), we perform a case-by-case check, similar to the approach taken in Proposition~\ref{prop:pi00p}.

\begin{theorem}
\label{thm:2b}
     For any $p\in \mathcal S(\U)$ with $p(\underline{F}
)=f_1p^{(1)}(\underline{F}
)+\cdots+f_tp^{(t)}(\underline{F}
)$,
\begin{equation}
\label{eq:2b'}
    \sum_{1\leq k\leq t}f_k\la Head(p^{(k)}), A_{tail(p^{(k)})} \ra=0.
\end{equation}
\end{theorem}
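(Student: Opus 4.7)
The plan is to verify \eqref{eq:2b'} via a case-by-case analysis paralleling that of Proposition~\ref{prop:pi00p}. First I would substitute the definition of $A_i$ from \eqref{eq:AiiA} into the left-hand side of \eqref{eq:2b'}. Using the bilinear pairing properties \eqref{eq:bilinearform} and the compatibility with skew derivations \eqref{eq:Fiy}, the sum
$\sum_k f_k \la \text{Head}(p^{(k)}), A_{\text{tail}(p^{(k)})}\ra$
can be rewritten as a pairing of the form $\sum_k f_k \la \cdot, \up_{\mu'} \cdot c_{\tau i_k} \overline{T_{w_\bu}(E_{\tau i_k})}\ra$, where $\mu' < \mu$ and $c_j$ collects the coefficients of \eqref{eq:AiiA}. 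By the inductive hypothesis (we assume $\up_{\mu'}$ satisfies \eqref{eq:derivativeup} for all $\mu'<\mu$), together with Lemma~\ref{lem:commuF} and the characterization in Proposition~\ref{prop:biup}, this reduces to verifying that the combination $\sum_k f_k\cdot c_{\text{tail}(p^{(k)})}\cdot (\text{sign})\cdot p^{(k)}\big(\underline{T_{w_\bu}(E_{\tau\cdot})}\big)=0$, which is exactly the image under $T_{w_\bu}\circ \tau$ of the Serre relation $p(\underline E)=0$ (Lemma~\ref{lemma:p}), possibly modified by super signs and ratios of parameters.

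The second step is to trim the case list via weight considerations. Recall from Proposition~\ref{prop:biup} that $\up_{\mu'}=0$ unless $w_\bu\tau(\mu')=\mu'$ and $p(\mu')=0$. This forces many of the $A_i$'s appearing in \eqref{eq:2b'} to vanish outright. Exactly as in the case-by-case discussion of Proposition~\ref{prop:pi00p}, I would first dispose of the pure weight cases: for (ISO1) with $\tau i=i$ (where $i$ must be connected to $\I_\bu$ by Definition~\ref{def:superad}(4)) and for the cases (CD1), (CD2), (F1), (G2), (G3), almost all index data give $\mu-\alpha_i-w_\bu\alpha_{\tau i}$ outside the support of $\up$, so the sum collapses to zero for trivial reasons. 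For (ISO1) with $\tau i\neq i$, the relation $E_{\tau i}^2=0$ (which survives after applying $T_{w_\bu}$ up to scalar) closes the case immediately.

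The remaining substantive cases are (N-ISO), (ISO2), and (D). For (N-ISO) with $j\in\Ieven$ the argument is as in \cite[Proposition 6.3]{BK19}, and when $j\in\Iodd$ a parity bookkeeping using the sign $(-1)^{\delta_{i,\tau i}p(i)}$ in \eqref{eq:AiiA} matches the super quantum Serre relation satisfied by $T_{w_\bu}(E_{\tau i}),T_{w_\bu}(E_{\tau j})$. For (ISO2), the relation collapses to the assertion that $\va_i=\va_{\tau i}$ when $\Theta(\alpha_i)=-\alpha_{\tau i}$ and $(\alpha_i,\alpha_{\tau i})=0$, which is precisely \eqref{eq:vai=vataui} together with \eqref{eq:vai}. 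For (D), after using $\ad_q$, the surviving contribution is proportional to $\va_j-\va_k$ with $j=\tau k$, so vanishing is secured by \eqref{eq:vai=vataui}. Throughout, the excluded subdiagrams of \eqref{exclude} (Serre polynomial of type (AB) in its ``balanced'' configuration and the sFII polynomial of type (F2)) are exactly the cases where neither weight nor parameter conditions would close the argument.

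The main obstacle will be the bookkeeping of super signs in step one: the interplay between the sign $(-1)^{\delta_{i,\tau i}p(i)+\alpha_i(2\rho_\bu^\vee)}$ appearing in \eqref{eq:AiiA}, the braid-group twist $T_{w_\bu}$, and the bar involution needs to match the sign patterns of the non-commutative Serre polynomials in Table~\ref{TableSerrePolyn}. My expectation is that after carefully tracking these signs (using \eqref{eq:qiqti} and Proposition~\ref{prop:Adm}(2)), the identity \eqref{eq:2b'} in each case becomes a direct consequence of applying $T_{w_\bu}\circ \tau$ to the relation $p(\underline E)=0$; the parameter conditions \eqref{eq:vai} and \eqref{eq:vai=vataui} are precisely what is required to reconcile the two sides when $\tau$ mixes an index and its image.
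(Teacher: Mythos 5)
Your overall plan — a case-by-case walk through the Serre polynomials of Table~\ref{TableSerrePolyn}, mirroring Proposition~\ref{prop:pi00p}, with weight considerations eliminating most cases and the parameter conditions \eqref{eq:vai}, \eqref{eq:vai=vataui} closing the remaining ones — is aligned with the paper. But the proposed unifying ``step one'' is not a valid reduction and several of your individual case arguments do not match what is actually needed.

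The claimed rewriting of $\sum_k f_k\langle \mathrm{Head}(p^{(k)}),A_{\mathrm{tail}(p^{(k)})}\rangle$ as ``the image under $T_{w_\bu}\circ\tau$ of the Serre relation $p(\underline E)=0$'' does not work in general: the factors $c_{\mathrm{tail}(p^{(k)})}$ coming from \eqref{eq:AiiA} are \emph{not} independent of $k$ (they involve the different parameters $\va_{\tau i}$ and the different scalars $q_i - q_i^{-1}$, $q^{(\alpha_i,2\rho_\bu+w_\bu\alpha_{\tau i})}$, etc.), so the sum does not collapse to $p(\underline{T_{w_\bu}(E_\tau)})=0$. Moreover, each $A_i$ carries a factor $\up_{\mu-\alpha_i-w_\bu\alpha_{\tau i}}$ which is usually neither $1$ nor zero, so the sum is not literally a single Serre relation applied to the $T_{w_\bu}(E_{\tau i})$'s. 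What the paper actually does is simpler and different: since $\up_{\mu'}=0$ unless $w_\bu\tau(\mu')=\mu'$, and (as one checks) $w_\bu\tau$ fixes $\mu-\alpha_i-w_\bu\alpha_{\tau i}$ exactly when it fixes $\mu$, one first tries to show $w_\bu\tau(\mu)\neq\mu$, which kills \emph{all} the $A_i$'s at once, and only when that fails does one compute the pairings directly.

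Concretely, your (N-ISO) argument for $j\in\Iodd$ is wrong: there is no ``parity bookkeeping'' step here. The paper shows by a case split ($i\in\I_\bu$ versus $i\in\I_\circ$, then parity and Condition (4) of Definition~\ref{def:superad}) that $w_\bu\tau(\mu)\neq\mu$ is impossible to avoid, so $A_i=A_j=0$ and the identity is vacuous. Similarly, for (ISO1) with $\tau i\neq i$, your appeal to $E_{\tau i}^2=0$ is the argument used in Proposition~\ref{prop:pi00p}, not here; in the present theorem the relevant observation is that $\mu-\alpha_i-w_\bu\alpha_{\tau i}=\alpha_i-w_\bu\alpha_{\tau i}\notin X^+$ (the $\alpha_{\tau i}$-coefficient is negative), so $\up_{\mu-\alpha_i-w_\bu\alpha_{\tau i}}=0$ and $A_i=0$. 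The cases where direct computation is truly needed are (ISO2) and the third subcase of (D), where after weight reductions the pairings are explicit and cancel because $\va_i=\va_{\tau i}$ (from \eqref{eq:vai} in the (ISO2) case and from \eqref{eq:vai=vataui} in the (D) case). Your description of (D) is roughly right; your description of (ISO2) conflates the two parameter conditions but is essentially correct. In short, the missing idea is that the weight-support constraint $w_\bu\tau(\mu)=\mu$ is the main engine, not a twisted Serre identity, and several cases (notably (N-ISO) with $j\in\Iodd$) cannot be closed the way you describe.
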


\begin{proof}
    We note that the proof of the statement in the case where $\I_\circ$ comprises solely of even simple roots has already been established in \cite[Lemma 6.8]{BK19}.

In our case, we refer to Table~\ref{TableSerrePolyn} for the relevant local Dynkin diagrams and corresponding notations. Each super admissible pair $(\I=\I_\circ \cup \I_\bu,\tau)$ encompasses multiple local Dynkin diagrams. 
We shall establish the identity \eqref{eq:2b'} case-by-case for each Serre polynomial $p\in \mathcal S(\U)$ associated with aach local diagram listed in Table~\ref{TableSerrePolyn}.

(ISO1) In this case, we have $p(x_i)=x_i^2$. Thus, $Head(p)=x_i$ and $Tail(p)=i\in \Iodd\cap \I_{\circ}$. Due to weight considerations, we observe that $\langle F_i,A_i\rangle$ must be $0$ unless $\mu=2\alpha_i$. However, if $\mu=2\alpha_i$, then we have: \[\mu-\alpha_i-w_\bu(\alpha_{\tau i})=\alpha_i-w_\bu(\alpha_{\tau i}).\] 
If $\tau i=i$, we find that $\alpha_i-w_\bu(\alpha_{\tau i})\notin X^+$ by considering Condition (4) in Definition~\ref{def:superad}. Hence, $\up_{\mu-\alpha_i-w_\bu(\alpha_{\tau i})}=0$, leading to $A_i=0$ and $\la F_i,A_i \ra=0$. If $\tau i\neq i$, then for the same reason we also have $\la F_i,A_i \ra=0$.

(ISO2) In this case, we have $p=p^{(1)}+p^{(2)}$ where $p^{(1)}=x_ix_j$ and $p^{(2)}=x_jx_i$. Therefore, we have
\[Head(p^{(1)})=F_i,\quad Head(p^{(2)})=F_j,\quad tail(p^{(1)})=j,\quad tail(p^{(2)})=i.\]
Due to weight considerations, we see that $\la F_i,A_j \ra=\la F_j,A_i \ra=0$ unless $\mu= \alpha_i+\alpha_j$. Assume that  $\mu= \alpha_i+\alpha_j$. We see that $w_\bu\tau (\mu)=\mu$ if and only if $i=\tau j$ and $\Theta(\alpha_i)=-\alpha_j$. According to \eqref{eq:vai} we have $\va_i=\va_{j}$. By \eqref{eq:AiiA} we have
$A_i=-(q_i-q_i^{-1}) q^{(\alpha_i,\alpha_{j})}E_{j}$, and $ A_j=-(q_j-q_j^{-1}) q^{(\alpha_i,\alpha_{j})}E_{i}.$ Thus by \eqref{eq:qiqti} we have
\begin{align*}
    \la F_i,A_{j}\ra+\la F_j,A_i\ra=0.
\end{align*}

(N-ISO) In this case, we may assume $\mu=(1+|a_{ij}|)\alpha_i+\alpha_j$ otherwise all terms vanish. Moreover, we observe that $a_{ij}<0$ when $i\in \Ieven$. If $j\in \Ieven$, the proof follows from \cite[Lemma 6.8]{BK19}. Hence we can assume $j\in \Iodd$ and consequently $j\in \I_\circ$. If $\up_\mu\neq 0$, we must have $w_\bu \circ \tau (\mu)=\mu$.

%Then we observe that \[\mu-\alpha_j-w_\bu \alpha_{\tau j}=(1+|a_{ij}|)\alpha_i-w_\bu\alpha_{\tau j}=(1+|a_{ij}|)\alpha_i-w_\bu\alpha_{\tau j}\notin X^+.\] Thus $A_j=0$. 

If $i\in \I_\bu$, we see that  $w_\bu\tau(\mu)=\mu$ together with $w_\bu\alpha_{\tau i}=-\alpha_i$ implies that
\[w_\bu\alpha_{\tau j}=w_\bu\circ \tau (\mu-(1+|a_{ij}|)\alpha_i)=\mu+(1+|a_{ij}|)\alpha_i=\alpha_j+2(1+|a_{ij}|)\alpha_i.\]
Hence $\tau j=j$, and $|a_{ij}|=2(1+|a_{ij}|)$, which is impossible.

If $i\in I_\circ$, then we have
\begin{equation}
\label{eq:NISO1}
(1+|a_{ij}|)(\alpha_i-\alpha_{\tau i})+(\alpha_j-\alpha_{\tau j})=(w_\bu-\text{id})\circ \tau (\mu).
\end{equation}
Since $(w_\bu-\text{id})(\alpha_k)\in \sum_{\ell \in \I_\bu}\mathbb{N} \alpha_\ell$ for all $k\in \I_\circ$, we can see that \eqref{eq:NISO1} holds only if one of the following condition holds:
\begin{enumerate}
\item $\tau i=i$ and $\tau j=j$,
\item $\tau i=j$ and $a_{ij}=0$.
\end{enumerate}
However, the case where $\tau i=j$ and $a_{ij}=0$ is impossible due to parity in Definition~\ref{def:superad}, and the case where $\tau i=i$ and $\tau j=j$ is impossible due to Condition (4) in Definition~\ref{def:superad}.

(AB) In this case, we may assume $\mu=\alpha_i+2\alpha_j+\alpha_k$ otherwise all terms vanish. We see that
\begin{equation}
    \label{eq:AB1}
(\alpha_i-\alpha_{\tau i})+(\alpha_k-\alpha_{\tau k})+2(\alpha_j-\alpha_{\tau j})=(w_\bu-id)\circ \tau(\mu)\in\sum_{t \in \I_\bu}\mathbb{N} \alpha_t.\end{equation}
Thus we must have $j=\tau j$. Due to Condition (4) in Definition~\ref{def:superad}, at least one of $i,k$ belongs to $\I_\bu$. Thus we also see from \eqref{eq:AB1} that $i=\tau i$ and $k=\tau k$.

If $i\in \I_\circ$ and $k\in \I_\bu$, then $w_\bu\tau(\mu)=\mu$ implies $\Theta(\alpha_i)=-\alpha_i$ and $i\in \Ieven$. Moreover, we have
\[ 
\mu-\alpha_i-w_\bu\alpha_{\tau i}=\mu-2\alpha_i\notin X^+,
\]
and hence $A_i=0$. Since $k\in \I_\bu$, by \eqref{eq:AiiA} we see that $A_k=0$. Moreover, we see that $\up_{\alpha_i}=0$ and hence  by the recursion \eqref{eq:AiiA} we have $A_j=0$ as well.

If $i\in \I_\bu$ and $k\in \I_\circ$, then the theorem follows similarly as in the previous case.

If $i,k\in \I_\bu$, since $\mu=\alpha_i+2\alpha_j+\alpha_k$, the super Satake diagram contains a subdiagram of the form 
$\xy
(-10,0)*{\newmoon};(0,0)*{\otimes}**\dir{-};(10,0)*{\newmoon}**\dir{-};(-10,-4)*{\scriptstyle i};
(0,-4)*{\scriptstyle j};(10,-4)*{\scriptstyle k};
\endxy$, which is excluded by our assumption in \eqref{exclude}. 

(CD1) In this case we may assume $\mu=3\alpha_j+2\alpha_k+\alpha_i$. Due to Condition (4) in Definition~\ref{def:superad}, we must have $i\in \I_\bu$ and $k$ is connected to some $t\in \I_\bu$ as well. Therefore, for weight reason we see that $w_\bu \tau \mu\neq \mu$. Hence $\up_\mu=0$.

(CD2) In this case we may assume $\mu=\alpha_i+2\alpha_j+3\alpha_k+\alpha_\ell$. Again for weight reason we have $w_\bu \tau \mu\neq \mu$. Hence $\up_\mu=0$.

(D) In this case we may assume $\mu=\alpha_k+\alpha_j+\alpha_i$ and $\tau i=i$. There are three possible cases:

If $i\in \I_\circ,\ \tau j=k$, we see that $\mu-\alpha_i-w_\bu \alpha_{\tau i}\notin X^+$, hence $A_i=0$. On the other hand, we see that $\up_{\alpha_i}=0$ and hence  by the recursion \eqref{eq:AiiA} we have $A_j=A_k=0$.

If $i\in \I_\bu,\ \tau j=j,\ \tau k=k$, we have $A_j=A_k=0$ by weight considerations.

If $i\in \I_\bu,\ \tau j=k$, we compute that 
\[ A_j=\va_k (q^2-1)\overline{T_i(E_k)},\quad A_k=\va_j (q^2-1)\overline{T_i(E_j)} \]
and 
\[p=x_kx_jx_i-x_jx_kx_i+[2](x_jx_ix_k-x_kx_ix_j)-x_ix_jx_k+x_ix_kx_j.\]
In order to prove \eqref{eq:2b'} in this case, it suffices to show that \begin{align*}
    \la F_jF_i,A_k\ra=\la F_kF_i,A_j \ra,\quad \la  F_iF_j,A_k\ra=\la F_iF_k,A_j \ra.
\end{align*}
We show that $\la F_jF_i,A_k\ra=\la F_kF_i,A_j \ra$ via the following computation and the other equality can be verified similarly.
\begin{align*}
    \la F_jF_i,A_k \ra=\,&\va_j(q^2-1)\la F_jF_i, E_iE_j-qE_jE_i \ra
    =\va_j(q^2-1)[\la F_jF_i, E_iE_j\ra -q\la F_jF_i, E_jE_i\ra]\\
    \overset{\eqref{eq:Fiy}}{=}&\va_j(q^2-1)(q^{-1}-q). \\
     \la F_kF_i,A_j \ra=\,&\va_k(q^2-1)\la F_kF_i, E_iE_k-qE_kE_i \ra
    =\va_k(q^2-1)[\la F_kF_i, E_iE_k\ra -q\la F_kF_i, E_kE_i\ra]\\
    \overset{\eqref{eq:Fiy}}{=}&\va_j(q^2-1)(q^{-1}-q). 
\end{align*}
By \eqref{eq:vai=vataui} we have $\va_j=\va_k$, and hence $\la F_jF_i,A_k\ra=\la F_kF_i,A_j \ra$.

(F1) In this case we may assume $\mu=\alpha_4+3\alpha_3+5\alpha_2+2\alpha_1$. Hence for weight reason we always have $w_\bu \tau (\mu)\neq \mu$. Thus $\up_\mu=0$.

(G2) In this case we may assume $\mu=\alpha_1+2\alpha_2+2\alpha_3$ and $3\in \I_\bu$. We compute that $w_\bu \tau (\mu)\neq \mu$. Thus $\up_\mu=0$.

(G3) In this case we may assume $\mu=\alpha_1+\alpha_2+\alpha_3$ and $2\in \I_\bu$. We compute that $w_\bu \tau (\mu)\neq \mu$. Thus $\up_\mu=0$.

The theorem is proved.
\end{proof}

We now establish a super generalization of \cite[Theorem~3.16]{WZ22} (compare \cite[Theorem 6.10]{BK19}).

\begin{theorem}
\label{thm:ibar}
Retain the assumption \eqref{exclude}.
There exists a unique element 
\[
\up =\sum_{\mu \in X^+_{\overline{0}}} \up_\mu
\qquad (\text{with } \up_0=1, \up_\mu \in \U^+_\mu), 
\]
such that 
\begin{align*}
B_{\tau i}\up &=\up \sigma \tau (B_{i}),  \qquad \text{ for } i\in \I_\circ,
\\
x \up &=\up x,  \qquad\qquad \text{ for }x \in \U^{\imath 0} \U_\bu.
\end{align*}
Moreover, $\up_\mu=0$ unless $\mu \in X^+_{\overline{0}}$ and $w_\bu \tau (\mu) =\mu$.
\end{theorem}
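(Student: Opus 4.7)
The plan is to translate the stated intertwining requirements into a recursive system of skew-derivative identities that can be solved weight by weight. After the harmless re-indexing $i \leftrightarrow \tau i$ (using $\tau^2 = \text{id}$), the first condition $B_{\tau i}\up = \up\,\sigma\tau(B_i)$ for $i \in \I_\circ$, together with the second condition $x\up = \up x$ for $x \in \U^{\imath 0}\U_\bu$, is precisely the pair \eqref{eq:biup} of Proposition~\ref{prop:biup}. By the equivalence $(1)\Leftrightarrow(3)$ there, it will suffice to produce $\up = \sum_\mu \up_\mu$ with $\up_0 = 1$ and $\up_\mu \in \U^+_\mu$ such that \eqref{eq:derivativeup} holds for every $i \in \I$. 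Note that for $i \in \I_\bu$ this collapses to $r_i(\up_\mu) = {}_ir(\up_\mu) = 0$ under the convention $\va_i = 0$, so once $\up$ is built the $\U_\bu$-centrality half of \eqref{eq:biup} is automatic; nothing needs to be argued separately about commutation with $\U^{\imath 0}\U_\bu$. The support claim $\up_\mu = 0$ unless $w_\bu\tau(\mu)=\mu$ is precisely the conclusion \eqref{eq:upnonzero} of the same proposition and therefore comes for free.

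I will construct the components $\up_\mu$ by induction on the height of $\mu \in X^+_{\overline 0}$, starting with $\up_0 = 1$. Assuming $\up_{\mu'}$ is produced for every $\mu' < \mu$, the right-hand sides $A_i, {}_iA \in \U^+_{\mu - \alpha_i}$ demanded by \eqref{eq:derivativeup} are entirely determined by the formulas \eqref{eq:AiiA}. Proposition~\ref{prop:recursive} then manufactures the desired $\up_\mu$ as soon as its two compatibility conditions are met for this choice. Condition (2a), the identity $r_i({}_jA) = (-1)^{p(i)p(j)}\,{}_jr(A_i)$, is furnished by Lemma~\ref{lemma:2a}. Condition (2b), the orthogonality of the prescribed data against every non-commutative Serre polynomial $p \in \mathcal S(\U)$, is exactly the content of Theorem~\ref{thm:2b}. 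The principal obstacle in the whole argument is therefore Theorem~\ref{thm:2b}: its verification requires a case-by-case examination over all local Dynkin types in Table~\ref{TableSerrePolyn}, parallel in spirit to the analysis in Proposition~\ref{prop:pi00p}. Most cases collapse on purely weight-theoretic grounds, using $w_\bu\tau(\mu)=\mu$ and the parity constraints in Definition~\ref{def:superad}; the genuinely delicate types---notably (ISO2) and (D)---are precisely those where the super parameter constraints \eqref{eq:vai} and \eqref{eq:vai=vataui} and the exclusion \eqref{exclude} of the problematic subdiagrams intervene to force the required cancellation.

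Uniqueness can then be dispatched by the same recursive machinery. Suppose $\up$ and $\up'$ both satisfy the hypotheses with $\up_0 = 1 = \up'_0$, and set $\Delta_\mu = \up_\mu - \up'_\mu$. Both elements obey the skew-derivative recursion \eqref{eq:derivativeup} with identical right-hand sides (since these are built only from strictly lower-weight components), so by induction on height each $\Delta_\mu$ satisfies $r_i(\Delta_\mu) = {}_ir(\Delta_\mu) = 0$ for every $i \in \I$. Since the pairing \eqref{eq:bilinearform} is non-degenerate and the skew derivatives $r_i$ are essentially dual to left multiplication by $F_i$ on $\U^-$, any nonzero element of positive weight in $\U^+$ must have some nonvanishing $r_i$; this forces $\Delta_\mu = 0$ for all $\mu$ and completes the argument.
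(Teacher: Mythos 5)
Your argument is correct and follows exactly the paper's route: invoke Proposition~\ref{prop:biup} to reduce the intertwining relations to the skew-derivative recursion \eqref{eq:derivativeup}, then construct $\up_\mu$ inductively via Proposition~\ref{prop:recursive}, with conditions (2a) and (2b) supplied by Lemma~\ref{lemma:2a} and Theorem~\ref{thm:2b} respectively, and read off the support constraint from \eqref{eq:upnonzero}. The explicit uniqueness step via non-degeneracy of the pairing is a sensible addition (the paper leaves it implicit), and your only minor slip is attributing the role of \eqref{exclude} to cases (ISO2) and (D) — in the proof of Theorem~\ref{thm:2b} it is actually the (AB) type where that exclusion is invoked, while \eqref{eq:vai} and \eqref{eq:vai=vataui} handle (ISO2) and (D).
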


\begin{proof}
    Follows by combining Proposition \ref{prop:biup}, Proposition~\ref{prop:recursive}, Lemma~\ref{lemma:2a} and Theorem~\ref{thm:2b}.
\end{proof}

\begin{example}
 \label{ex:quasi-split}
Consider the following Satake diagram 
$$\xy
(-10,0)*{\otimes};
(0,0)*{\otimes}**\dir{-};(-10,-4)*{\scriptstyle -1};(0,-4)*{\scriptstyle 1};
(-10,2)\ar@/_/@{<-->} (0,2);
\endxy$$

We have $(\alpha_{1},\alpha_{-1})=1$, $d_{-1}=1=-d_1$ and
\begin{align*}
    B_{1}=F_{1}+\va_{1} E_{-1}K_{1}^{-1},\quad
    B_{-1}=F_{-1}+\va_{-1} E_{1}K_{-1}^{-1}.
\end{align*}
In this case, following the recursive constructions we get
\begin{equation*}
    \up=\Big(\sum_{k\geqslant 0}\frac{\va_{1}^k}{\{k\}!}(E_{1}E_{-1}+qE_{-1}E_{1})^k\Big)\Big(\sum_{k\geqslant 0}\frac{\va_{-1}^k}{\{k\}!}(E_{-1}E_{1}+qE_{1}E_{-1})^k \Big).
\end{equation*}
Compare with the formulas in the even case in \cite[Lemma~ 3.1]{DK19} and \cite[(4.1)]{BW18a}.
\end{example}

\begin{comment}
    \subsection{G2}
Consider the following diagram
$$ \xy (0,-6)*{\ };
{\ar@3{-}(0,0)*{\otimes};(10,0)*{\newmoon}};(5,0)*{<};(-10,0)*{\LEFTcircle};(0,0)*{\otimes}**\dir{=};
(-10,-4)*{\scriptstyle 1};(0,-3.5)*{\scriptstyle 2};(10,-3.5)*{\scriptstyle 3};
\endxy$$
where $1,3\in \I_\bu$ and $2\in \I_\circ$. The simple roots are given  by
\[ \alpha_1=\epsilon,\quad \alpha_2=\epsilon-\frac{1}{2}(\delta_1+\delta_2),\quad \alpha_3=\delta_2 \]
where $(\epsilon,\epsilon)=-2,\  (\delta_1,\delta_1)=2$ and $(\delta_2,\delta_2)=6$.

The higher degree Serre relation is given by $p=\ad_q x_2\circ \ad_q x_3 \circ \ad_q x_3\circ \ad_q x_2(x_1)-
 	\ad_q x_3\circ \ad_q x_2 \circ \ad_q x_3\circ \ad_q x_2(x_1)$ of weight $\lambda=\alpha_1+2\alpha_2+2\alpha_3$. Since $T_1T_3(E_2)$ is of weight $2\alpha_1+\alpha_2+\alpha_3$, we see that for weight reason we have \[\pi_{0,0}\circ P_{-\lambda}(p(F_1,B_2,F_3))=0.\]
  Coideal property follows similarly as in the sBII case while the quasi $K$-matrix also exists for weight reason.
\end{comment}

\section{$\imath$Schur duality}
\label{sec:duality}
In this section we establish an $\imath$Schur duality between a distinguished class of $\imath$quantum supergroups and the $q$-Brauer algebras. Such a duality generalizes the $\imath$Schur dualities of type AI and AII in \cite{CS22}.

\subsection{The $\imath$quantum supergroups of type AI-II}

We consider the following super Satake diagram $I$:
\begin{equation}
\label{eq:satake}
{\rm 
\hspace{.75in}\xy
(-20,0)*{\fullmoon};(-10,0)*{\cdots}**\dir{-};(0,0)*{\fullmoon}**\dir{-};(10,0)*{\otimes}**\dir{-};(20,0)*{\newmoon}**\dir{-};(30,0)*{\fullmoon}**\dir{-};(40,0)*{\newmoon}**\dir{-};(50,0)*{\cdots}**\dir{-};(60,0)*{\fullmoon}**\dir{-};(70,0)*{\newmoon}**\dir{-};
(-20,-4)*{\scriptstyle 1};(0,-4)*{\scriptstyle m-1};(10,-4)*{\scriptstyle m};(20,-4)*{\scriptstyle m+1};(30,-4)*{\scriptstyle m+2};(40,-4)*{\scriptstyle m+3};(70,-4)*{\scriptstyle m+2n-1};
\endxy
}\end{equation}
where $\Iodd=\{m\}$,  $I_\bu=\{m+2a-1\mid 1\leq a\leq n\}$ and $I_\circ=I \backslash I_\bu$. In the case $n=0$, we obtain a Satake diagram of type AI; when $m=0$, we obtain a Satake diagram of type AII. In the case $m=n=1$, we obtain a super Satake diagram as in Example~\ref{example:admissible}(1). Note that \eqref{eq:satake} is also a special case of \eqref{eq:splitA}.

\begin{lemma}
The pair $(\I=\I_\circ \cup \I_\bu,\tau=id)$ forms a super admissible pair.
\end{lemma}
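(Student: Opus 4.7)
The plan is to verify the four conditions of Definition~\ref{def:superad} one by one. Condition (1) is immediate since $\tau=\mathrm{id}$. For condition (2), observe that the set $\I_\bu=\{m+2a-1\mid 1\le a\le n\}$ consists of pairwise non-adjacent even nodes, since every consecutive pair of black nodes in \eqref{eq:satake} is separated by a white node. Hence $\g_\bu$ is a direct sum of $n$ commuting copies of $\mathfrak{sl}_2$, and $w_\bu=\prod_{a=1}^{n}r_{m+2a-1}$ acts on each $\alpha_{m+2a-1}$ by negation. Thus $-w_\bu$ acts as the identity on $\I_\bu$, which matches $\tau=\mathrm{id}$.

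For conditions (3) and (4), I would first compute $\rho_\bu^\vee$. Since the simple $\mathfrak{sl}_2$-components of $\g_\bu$ are pairwise orthogonal, $\rho_\bu^\vee=\tfrac12\sum_{a=1}^{n} h_{m+2a-1}$. It then remains to evaluate $\alpha_j(\rho_\bu^\vee)=\tfrac12\sum_{a=1}^n a_{m+2a-1,j}$ for each $j\in \I_\circ$ fixed by $\tau$, which is all of $\I_\circ$. I would split this into cases according to the position of $j$ in the diagram \eqref{eq:satake}.

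For $j\in\{1,\dots,m-1\}$ (even nodes in the left tail), $j$ is not adjacent to any node of $\I_\bu$, so $\alpha_j(\rho_\bu^\vee)=0\in\Z$, verifying (3). For $j=m+2a\in\{m+2,\dots,m+2n-2\}$ (even nodes sandwiched between two black nodes), exactly two terms contribute, giving $\alpha_j(\rho_\bu^\vee)=\tfrac12(a_{m+2a-1,m+2a}+a_{m+2a+1,m+2a})=\tfrac12(-1-1)=-1\in\Z$, again verifying (3). The only isotropic simple root in $\I_\circ$ is $\alpha_m$, and it is adjacent only to $\alpha_{m+1}\in\I_\bu$; since in the super type A Cartan matrix one has $a_{m+1,m}\neq 0$, we obtain $\alpha_m(\rho_\bu^\vee)=\tfrac12 a_{m+1,m}\neq 0$, verifying (4).

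The only point that requires any care is checking the Cartan matrix entries attached to the isotropic node $m$ in condition (4), to confirm $a_{m+1,m}\neq 0$; everything else is a direct computation on the diagram. Since no step presents a genuine obstacle, the proof will be short and essentially combinatorial.
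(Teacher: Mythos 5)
Your verification is correct and is exactly what the paper has in mind: the paper's own proof is a one-line appeal to "direct computation," and your case-by-case check of the four conditions of Definition~\ref{def:superad}—observing that $\g_\bu$ is a direct sum of orthogonal $\mathfrak{sl}_2$'s so that $\rho_\bu^\vee=\tfrac12\sum_a h_{m+2a-1}$, then evaluating $\alpha_j(\rho_\bu^\vee)$ on the three kinds of white nodes—is precisely that computation. No gap; this is the same approach spelled out.
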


\begin{proof}
    The conditions in Definition~\ref{def:superad} are verified by a direct computation.
\end{proof}

Moreover, for the super Satake diagram \eqref{eq:satake} we have
\begin{equation*}
    d_j=\begin{cases}
        1,& \text{ if } 1\leq j\leq m,\\
        -1,& \text{ if } m+1\leq j\leq m+2n-1.
    \end{cases}
\end{equation*}

Let $\U$ denote the quantum supergroup for $\g =\gl(m|2n)$ associated to the Dynkin diagram underlying \eqref{eq:satake} (by ignoring the black color of the nodes). Following Definition~\ref{def:Ui}, we define $\Ui$ to be the $\C(q)$-subalgebra of $\U$ generated by
\begin{align*}
 B_j= %\left\{  \begin{aligned}
 F_j+\va_jT_{w_\bu}( E_{j})  K_j^{-1},\ \ \text{for } j\in I_\circ.
% \\ &F_i,\ \ &\text{if } i\in I_\bu.
%\end{aligned}\right.
\end{align*}
together with  $K_h \ (h\in Y^\io),\  E_j, F_j \ (j\in I_\bu)$, $\new$. %In the case $m=0$, our $\Ui$ specializes to the $\imath$quantum group of type AII. In the case $n=0$, our $\Ui$ specializes to the $\imath$quantum group of type AI.

We identify $\g =\gl(m|2n)$ with the superspace of matrices parameterized by the set
\begin{equation}
\label{eq:Imn}
    I(m|2n)=\{\overline 1,\ldots,\overline {m},\underline 1,\ldots,\underline {2n}\}
\end{equation}
in total order
$$\overline 1<\cdots<\overline {m} %<0
<\underline 1<\cdots< \underline {2n}.$$ 
Let $\mathfrak h$ be the Cartan subalgebra consisting of diagonal matrices. Denote by $\{\epsilon_{a}\}_{a\in I(m|2n)}$ the basis of $\mathfrak h^*$ dual to the set of standard matrices $\{E_{a,a}\}_{a\in I(m|2n)}$. The fundamental root system $\Pi=\set{\alpha_i\mid i\in \I}$ associated to \eqref{eq:satake} is given by
\begin{equation*}
\begin{aligned}
    &\alpha_i=\epsilon_{\overline{i}}-\epsilon_{\overline{i+1}}, & 1\leq i\leq m-1,\\
    &\alpha_{m}=\epsilon_{\overline{m}}-\epsilon_{\underline{1}},&\\
    &\alpha_i=\epsilon_{\underline{i-m}}-\epsilon_{\underline{i-m+1}}, & m+1\leq i\leq m+2n-1.
\end{aligned}
\end{equation*}
where \begin{equation*}
    (\epsilon_{a},\epsilon_{a'})=
    \begin{cases}
    1 & \text{ if }  a=a'<0, \\
    -1 & \text{ if } a=a'>0, \\
    0 & \text{ else.}
    \end{cases}
\end{equation*}

Denote the natural representation of $\U$ by $\V=\Veven \oplus \Vodd$, where $\dim \Veven=m,\ \dim\Vodd=2n$, with a natural (ordered) basis $\{v_{a}\mid a\in I(m|2n)\}$. Let $|\cdot|$ denote the parity function on $\V$ where $|v_a|=0$ for all $a<0$ and $|v_a|=1$ for $a>0$.
The $\C(q)$-vector superspace $\V$ can be identified with the natural representation of $\U$, where 
\begin{align}
  \label{eq:natural}
\begin{aligned}
    &E_i v_{a}=\begin{cases}
        v_{\overline{i}}& \text{ if }1\leq i<m,\ a=\overline{i+1}, \\
        v_{\overline{m}}& \text{ if } i=m,\ a=\underline{1}, \\
        v_{\underline{i}}& \text{ if }m+1\leq i\leq m+2n-1,\ a=\underline{i+1},\\
        0&\text{else}.
        \end{cases}\\
    &F_i v_{a}=\begin{cases}
        v_{\overline{i+1}}& \text{ if }1\leq i<m,\ a=\overline{i}, \\
        v_{\underline{\mathfrak 1}}& \text{ if } i=m,\ a=\overline{m}, \\
        v_{\underline{i+1}}& \text{ if }m+1\leq i\leq m+2n-1,\ a=\underline{i},\\
        0&\text{else}.
        \end{cases}\\
        &K_iv_a=q^{(\alpha_i,\epsilon_a)}v_a.
\end{aligned}
\end{align} 
\subsection{The $q$-Brauer algebras}

The Iwahori-Hecke algebra $\Hy_{\mathfrak S_d}$ is a $\C(q)$-algebra generated by $H_1,\cdots,H_{d-1}$, subject to the following relations:
\begin{align*}
&(Q1)\ (H_i-q)(H_i+q^{-1})=0,\ \ \ \  \\
&(Q2)\ H_iH_{i+1}H_i=H_{i+1}H_iH_{i+1}, \\
&(Q3)\ H_iH_j=H_jH_i,\ \ \ \ \text{for }|i-j|>1.
\end{align*}

We recall the definition of a $q$-Brauer algebra. 

\begin{definition}{\rm (cf. \cite[Definition 3.1]{We12}) }\label{def:qB1}
Fix $d\in \Z_{\geqslant 2}$, and $z \in \C(q)^*$. The $q$-Brauer algebra $\B_n(q,z)$ is a $\C(q)$-algebra with generators $H_1,\ldots, H_{d-1}$ and $\Qy$ subject to the relations (Q1)--(Q3) above and the following relations:
\begin{equation*}
\begin{aligned}
&(Q4)\ \Qy^2=\frac{z-z^{-1}}{q-q^{-1}}\Qy,\quad(Q5)\ H_{1}\Qy=\Qy H_{1}=q\Qy,\quad(Q6)\ \Qy H_{2}\Qy=z\Qy,\\
&(Q7)\ H_i\Qy=\Qy H_i\ \ \text{for }i>2,\\
&(Q8)\ H_{2}H_{3}H_{1}^{-1}H_{2}^{-1}\Qy_{2}=\Qy_{2}=\Qy_{2}H_{2}H_{3}H_{1}^{-1}H_{2}^{-1},\\
&\qquad\text{ where } \Qy_{2}=\Qy(H_{2}H_{3}H_{1}^{-1}H_{2}^{-1})\Qy.
\end{aligned}
\end{equation*}
\end{definition}

It is known (\cite[Theorem 3.8]{We12}) that the dimension of the $q$-Brauer algebra $\B_d(q,z)$ is $(2d-1)!!$. Moreover, $\B_d(q,z)$ contains the Iwahori-Hecke algebra $\Hy_{\mathfrak S_d}$ as a subalgebra. Denote
\begin{align}  \label{club}
    \clubsuit=\set{i\in \Z\mid 4-2d\leq i\leq d-2}\backslash \set{i\in \Z\mid 4-2d<i<3-d,\ 2\nmid i}.
\end{align}
A criterion for the semisimplicity of the $q$-Brauer algebra is given as follows.

\begin{proposition}\cite[Theorem A]{RSS24}
\label{prop:semisimple}
For $d\geq 2$, the $q$-Brauer algebra $\B_d(q,z)$ is (split) semisimple if and only if $z^2\neq q^{2a}$ for $a\in \clubsuit$ in \eqref{club}.
\end{proposition}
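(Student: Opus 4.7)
The plan is to establish the semisimplicity criterion for $\B_d(q,z)$ via a Gram-determinant computation inside a cellular algebra framework. First I would equip $\B_d(q,z)$ with a Graham--Lehrer cellular structure whose cell modules $W_d(f,\lambda)$ are indexed by pairs $(f,\lambda)$ with $0 \le f \le \lfloor d/2 \rfloor$ and $\lambda \vdash d-2f$; this extends the standard cellular structure of $\Hy_{\mathfrak{S}_d}$ (the $f=0$ stratum, where the cell forms are nondegenerate over $\C(q)$) by wrapping off $2f$ strands using the bubble generator $\Qy$, in direct analogy with the classical Brauer algebra. Semisimplicity of $\B_d(q,z)$ is then equivalent to the nonvanishing of every cell Gram determinant $\det G_{f,\lambda} \ne 0$.

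Second, I would derive each $\det G_{f,\lambda}$ by a recursion on $f$. Relation (Q4) makes $\tfrac{q-q^{-1}}{z-z^{-1}}\Qy$ genuinely idempotent, and together with (Q5)--(Q8) this idempotent induces a restriction--induction pair between $\B_{d-2}(q,z)$ and $\B_d(q,z)$ which unwraps one $\Qy$-bubble at a time. At each layer the cell form acquires a factor of the form $\prod_a (z^2 - q^{2a})$, where $a$ ranges over contents of the removable boxes dictated by the skew diagram governing the unwrapping. Iterating $f$ times and then peeling off a Hecke factor (using the known semisimplicity of the generic $\Hy_{\mathfrak{S}_{d-2f}}$) yields an explicit product formula for $\det G_{f,\lambda}$.

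Third, I would collect the set of exponents $a$ that appear with nonzero multiplicity across all cells $(f,\lambda)$. The range $4-2d \le a \le d-2$ corresponds to the extremal contents available in any partition built from $d$ strands, while the parity exclusion of odd integers strictly between $4-2d$ and $3-d$ reflects a sign cancellation in the Gram determinant coming from the interaction of two conjugate $\Qy$-bubbles governed by (Q8): exchanging the paired strands introduces an extra sign exactly in that sub-range, killing the corresponding factor. The surviving set of bad exponents is precisely $\clubsuit$, giving the stated criterion $z^2 \ne q^{2a}$ for all $a \in \clubsuit$.

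The main obstacle will be the bookkeeping in the third step, namely certifying both that each $a \in \clubsuit$ is actually hit by a genuine factor of some $\det G_{f,\lambda}$ and that no spurious factor lies outside $\clubsuit$; this is the delicate point where the cited proof \cite{RSS24} does the hardest work. An alternative route would be to compute Jucys--Murphy-type eigenvalues for a natural commutative family in $\B_d(q,z)$ and read off the blocks from coincidences among them, or to deform from $q=1$ using Rui's semisimplicity criterion for the classical Brauer algebra; both alternatives require essentially the same combinatorial accounting, so the direct cellular computation is the most economical path.
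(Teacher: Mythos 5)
The paper does not prove this statement; it is quoted verbatim as \cite[Theorem A]{RSS24}, and the subsequent sections use it as a black box in the proof of the double-centralizer property in Theorem~\ref{thm:duality}. There is therefore no ``paper's own proof'' to compare against.

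That said, your sketch is a reasonable outline of the kind of argument RSS24 actually carries out, though the emphasis is slightly different from theirs. Your primary route is a direct cellular Gram-determinant recursion, with the Jucys--Murphy computation mentioned only as an alternative; in RSS24 the Jucys--Murphy basis \emph{is} the centerpiece (as the title indicates): one constructs a seminormal-type basis for cell modules out of eigenvectors of a commutative family, reads off Gram determinants as products of eigenvalue differences, and then does the combinatorial bookkeeping to decide which factors $z^2 - q^{2a}$ can actually vanish. The two routes you separate are really the same proof organized in two orders, and RSS24 takes the one you labeled as the alternative. The genuinely hard step, as you correctly flag, is the last one: certifying that the exponent set obtained is exactly $\clubsuit$ (in particular that each $a \in \clubsuit$ actually appears as a nontrivial factor of some $\det G_{f,\lambda}$, and that the odd integers strictly between $4-2d$ and $3-d$ genuinely cancel). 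Your heuristic for the parity exclusion via a sign cancellation coming from (Q8) is speculative and not something I can verify as stated; the actual exclusion in RSS24 emerges from the explicit factorization of the Gram determinants rather than from a sign argument attributable to a single relation. None of this affects the paper at hand, which only needs the statement, not its proof, and only in the special case $z = q^{m-2n}$ to conclude semisimplicity under the hypothesis $\pm(m-2n) \notin \clubsuit$ in Theorem~\ref{thm:duality}(2).
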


\subsection{Action of the $q$-Brauer algebra}

It is well known that $\V^{\otimes d}$ can be endowed with a right $\Hy_{\mathfrak S_d}$-module structure via the following action (cf. \cite{Mi06,Sh22}): 
\begin{equation}
\label{eq:HA}
\begin{aligned}
&v_{a_1}\otimes\ldots \otimes v_{a_n}\cdot H_j\\
=&\left\{\begin{aligned}
&q^{1-2|v_{a_j}|}v_{a_1}\otimes\ldots \otimes v_{a_n}&\hbox{ if } a_j=a_{j+1}, \\
&(-1)^{|v_{a_j}||v_{a_{j+1}}|}\ldots\otimes v_{a_{j+1}}\otimes v_{a_j}\otimes \ldots\ \ &\hbox{ if } a_{j}>a_{j+1}, \\
&(-1)^{|v_{a_j}||v_{a_{j+1}}|}\ldots \otimes v_{a_{j+1}}\otimes v_{a_j}\otimes \ldots+(q-q^{-1})v_{a_1}\otimes\ldots \otimes v_{a_n}\ \ &\hbox{ if } a_{j}<a_{j+1}.
\end{aligned}
\right.
\end{aligned}
\end{equation}

For $i=2,\ldots,m$ and $j=2,\ldots,n$, we set $$\tau_i:=\prod_{j=1}^{i-1}(-\va_j),\quad \zeta_j:=\prod_{k=1}^{j}(\va_{m+2k-2}),\quad \tau_1=1.$$

A $\C(q)$-linear operator $\Xi$ on $\V\otimes \V$ is defined by
\begin{equation}
    \label{eq:operatorXi}
    \begin{aligned}
    \Xi(v_{\overline 1}\otimes v_{\overline 1}) &=q^{-2n}\Big(\sum_{i=1}^{m}\tau_i^{-1} q^{m-2i+1}v_{\overline{i}}\otimes v_{\overline{i}}\Big)
           \\
    &\quad -\tau_{m}^{-1}q^{-m-2n}\Big(\sum_{j=1}^{n} \zeta_j^{-1}q^{j+1}(v_{\underline{2j-1}}\otimes v_{\underline{2j}}-q^{-1}v_{\underline{2j}}\otimes v_{\underline{2j-1}})\Big),
    \\
    \Xi(v_{\overline{i}}\otimes v_{\overline{i}}) &=\tau_i\Xi(v_{\overline 1}\otimes v_{\overline 1}),\ \text{ for } 2\leq i\leq m,
    \\
    \Xi(v_{\underline{1}}\otimes v_{\underline{2}}) &=\tau_{m}\va_m\Xi(v_{\overline 1}\otimes v_{\overline 1}), \qquad\quad
           \Xi(v_{\underline{2}}\otimes v_{\underline{1}})=(-q^{-1})\Xi(v_{\underline{1}}\otimes v_{\underline{2}}), 
    \\
    \Xi(v_{\underline{2j-1}}\otimes v_{\underline{2j}}) &=\zeta_j\va_m^{-1} q^{3j-2} \Xi(v_{\underline{1}}\otimes v_{\underline{2}}),\ \text{ for all } 2\leq j\leq n,
    \\
    \Xi(v_{\underline{2j}}\otimes v_{\underline{2j-1}}) &=(-q^{-1})\Xi(v_{\underline{2j-1}}\otimes v_{\underline{2j}}),\ \text{ for all } 2\leq j\leq n,
    \\
    \Xi(v_a\otimes v_b) &=0,\ \text{ if } (a,b)\notin \{(\overline{i},\overline{i}),(\underline{2j-1},\underline{2j}),(\underline{2j},\underline{2j-1})\mid 1\leq i\leq m,1\leq j\leq n\}.
        \end{aligned}
    \end{equation}

\begin{proposition}
\label{prop:braueract}
For $d\ge 2$, $\V^{\otimes d}$ is a right $\B_d(q,q^{m-2n})$-module by letting $\Hy_{\mathfrak S_d}$ act as in \eqref{eq:HA} and $\Qy$ act as $\Xi\otimes 1^{\otimes d-2}$.
\end{proposition}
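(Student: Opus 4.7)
The plan is to verify that the actions of $H_1, \ldots, H_{d-1}$ and $\Qy$ on $\V^{\otimes d}$ defined by \eqref{eq:HA} and \eqref{eq:operatorXi} satisfy the defining relations (Q1)--(Q8) of $\B_d(q,q^{m-2n})$ listed in Definition~\ref{def:qB1}. The Hecke algebra relations (Q1)--(Q3) for the action \eqref{eq:HA} are standard and go back to \cite{Mi06} (see also \cite{Sh22} in the super setting), so nothing new is required there. Relation (Q7) is immediate by support: $\Xi \otimes 1^{\otimes(d-2)}$ affects only the first two tensor factors, while $H_i$ with $i>2$ acts on factors $i,i+1$, which are disjoint from the first two.

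A structural observation streamlines the remaining verifications: from the explicit formulas \eqref{eq:operatorXi} one reads off that the image of $\Xi$ is one-dimensional, spanned by the distinguished vector
\[ w := \Xi(v_{\overline{1}} \otimes v_{\overline{1}}) \in \V \otimes \V. \]
Indeed, each nonzero $\Xi(v_a \otimes v_b)$ is an explicit scalar multiple of $w$, with the scalar built from telescoping products of the parameters $\tau_i$, $\zeta_j$ and $\va_i$ arranged precisely so that the identities below will hold. Consequently (Q4) reduces to a single scalar computation: $\Xi^2$ is a scalar on $\V \otimes \V$, and summing the coefficients produced by applying $\Xi$ termwise to the expression for $w$ yields $(q^{m-2n} - q^{2n-m})/(q-q^{-1})$, which is the desired eigenvalue. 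Relation (Q5) splits into (i) verifying that $w$ is an $H_1$-eigenvector of eigenvalue $q$, using $H_1 \cdot (v_a \otimes v_a) = q^{1-2|v_a|}(v_a \otimes v_a)$ for the diagonal summands and checking that each odd combination $v_{\underline{2j-1}} \otimes v_{\underline{2j}} - q^{-1} v_{\underline{2j}} \otimes v_{\underline{2j-1}}$ is also an $H_1$-eigenvector of eigenvalue $q$ via \eqref{eq:HA}; and (ii) a dual case-by-case check that $(v \cdot H_1) \cdot \Xi = q(v \cdot \Xi)$ for each basis vector $v \in \V \otimes \V$. Relation (Q6) is a local identity in $\V^{\otimes 3}$: the image of $\Xi \otimes 1$ is spanned by $w \otimes v_c$ as $c$ varies, and one computes $(w \otimes v_c)(1 \otimes H_2)$ via \eqref{eq:HA} and then reapplies $\Xi \otimes 1$, matching against $q^{m-2n}$ times the input.

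The main obstacle will be (Q8), which asserts that $\Qy_2 = \Qy(H_2 H_3 H_1^{-1} H_2^{-1})\Qy$ is fixed by left and right multiplication by $H_2 H_3 H_1^{-1} H_2^{-1}$; this is a verification in $\operatorname{End}(\V^{\otimes 4})$. The approach is to track $\Qy_2$ on a basis tensor $v_{a_1} \otimes \cdots \otimes v_{a_4}$ step by step: the first copy of $\Xi \otimes 1^{\otimes 2}$ lands in the span of $w \otimes v_{a_3} \otimes v_{a_4}$; the Hecke word $H_2 H_3 H_1^{-1} H_2^{-1}$ is then applied using \eqref{eq:HA} and the relation $H_i^{-1} = H_i - (q-q^{-1})$ from (Q1); and a second application of $\Xi \otimes 1^{\otimes 2}$ once again projects onto $\C(q)\cdot w \otimes v_{?} \otimes v_{?}$. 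The one-dimensionality of the image of $\Xi$ collapses the required equality to a finite collection of scalar identities in $\C(q)$, indexed by the parities and positions of the four indices; these scalar identities hold because of the telescoping definitions of $\tau_i$ and $\zeta_j$. The proposition may also be regarded as a common generalization of the type AI ($n=0$) and type AII ($m=0$) results of \cite{CS22}, with the super signs $(-1)^{|v_a||v_b|}$ in \eqref{eq:HA} providing the only additional bookkeeping, which the choice of scalars in \eqref{eq:operatorXi} is designed to accommodate.
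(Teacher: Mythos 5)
Your approach is substantially the same as the paper's: cite \cite{Mi06} for (Q1)--(Q3), observe that $\Qy$ acts only on the first two tensor factors, and reduce the remaining relations to case-by-case scalar computations on $\V\otimes\V$ (or $\V^{\otimes 3}$, $\V^{\otimes 4}$), as in \cite[Props.~4.4, 5.3]{CS22}. The paper carries this out in full only for (Q4); you do the same thing but with a cleaner organizing principle: you explicitly observe from \eqref{eq:operatorXi} that $\Xi$ has rank one, with image spanned by $w=\Xi(v_{\overline 1}\otimes v_{\overline 1})$. That observation is correct and does genuinely streamline (Q4), (Q5), and (Q6) to the scalar bookkeeping you describe (your verification that $w$ is an $H_1$-eigenvector of eigenvalue $q$, including the odd wedge $v_{\underline{2j-1}}\otimes v_{\underline{2j}}-q^{-1}v_{\underline{2j}}\otimes v_{\underline{2j-1}}$, checks out against \eqref{eq:HA}). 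The paper leaves this rank-one structure implicit.

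The one place where your write-up does not fully close is (Q8), and there you are honest about it: you identify it as the main obstacle, outline the reduction via the rank-one image of $\Xi$ and the expansion $H_i^{-1}=H_i-(q-q^{-1})$, but then assert rather than verify that the resulting scalar identities hold ``because of the telescoping definitions of $\tau_i$ and $\zeta_j$.'' That assertion would need to be backed by an actual computation. It is worth noting, however, that the paper's own proof is even more terse here: it states only that ``(Q4)--(Q7) is very similar to \cite{CS22}'' and never mentions (Q8) at all, so you have at least flagged the relation that the paper silently omits. If you are polishing this into a complete argument, (Q8) is where the remaining work lies; either carry out the scalar computation on $\V^{\otimes 4}$ explicitly, or locate and cite the precise statement in \cite{We12} or \cite{CS22} that disposes of it.
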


\begin{proof}
By \cite{Mi06}, the action of $H_i$ satisfies relations (Q1)--(Q3) in Definition \ref{def:qB1}. The verification of the relation (Q4)--(Q7) is very similar to the proof of \cite[Propositions 4.4 and 5.3]{CS22}. 
We prove the relation (Q4) for both statements as an example. Noting that the action of $\Qy$ depends solely on the first two tensor factors, hence it suffices to show that $$v_{\overline{1}}\otimes v_{\overline{1}}\cdot\Qy^2=\frac{q^{m-2n}-q^{-m+2n}}{q-q^{-1}}v_{\overline{1}}\otimes v_{\overline{1}}\cdot\Qy.$$

Indeed we have
\begin{align*}
&v_{\overline{1}}\otimes v_{\overline{1}}\cdot\Qy^2\\
=\,&q^{-2n}\Big(\sum_{i=1}^{m}\tau_i^{-1} q^{m-2i+1}v_{\overline{i}}\otimes v_{\overline{i}}\cdot \Qy\Big)\\
           &\qquad-\tau_{m}^{-1}q^{-m-2n}\Big(\sum_{j=1}^{n} \zeta_j^{-1}q^{j+1}(v_{\underline{2j-1}}\otimes v_{\underline{2j}}-q^{-1}v_{\underline{2j}}\otimes v_{\underline{2j-1}})\cdot \Qy\Big)\\
    =\,&\Bigg[q^{-2n}\Big(\sum_{i=1}^{m}q^{m-2i+1}\Big)-q^{-m-2n}\Big(\sum_{j=1}^{n}q^{4j-1}(1+q^{-2})\Big)\Bigg]v_{\overline{1}}\otimes v_{\overline{1}}\cdot\Qy\\
    =\,&\left(q^{-2n}\frac{q^{m}-q^{-m}}{q-q^{-1}}-q^{-m}\frac{q^{2n}-q^{-2n}}{q-q^{-1}}\right)v_{\overline{1}}\otimes v_{\overline{1}}\cdot\Qy\\
    =\,&\frac{q^{m-2n}-q^{2n-m}}{q-q^{-1}}v_{\overline{1}}\otimes v_{\overline{1}}\cdot\Qy.
\end{align*}
This concludes the proof of the relation (Q4).
\end{proof}

\subsection{The $\Ui$-action}

By a direct calculation using the $\U$-action on $\V$ in \eqref{eq:natural} we obtain that, for $i\in \Ieven$, 
\begin{align}
\label{eq:Biact}
B_i\cdot v_a =\begin{cases}
v_{\overline{i+1}}& \hbox {if } 1\leq i\leq m-1, a=\overline{i}, \\
q\va_i v_{\overline{i}} & \hbox {if } 1\leq i\leq m-1, a=\overline{i+1},\\
v_{\underline{1}}& \hbox {if } i=m, a=\overline{m},\\
-q\va_{m}v_{\overline{m}}& \hbox {if } i=m, a=\underline{ 2},\\
v_{\underline {2k+1}}& \hbox {if } i=m+2k,a=\underline{2k},\\
-q\va_{m+2k} v_{\underline{2k-1}} & \hbox {if } i=m+2k,a=\underline{2k+2},\\
0& \hbox {else}.
\end{cases}
\end{align}

Via the comultiplication $\Delta$, we view $\V^{\otimes d}$ as a $\U$-module and a $\Ui$-module.

\begin{lemma}
\label{lem:Bineqmcommute}    
The action of $B_i$ for $i\neq m$ commutes with the action of $\B_d(q,q^{m-2n})$ on $\V^{\otimes d}$.
\end{lemma}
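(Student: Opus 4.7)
My plan is to check commutation of $B_i$ separately with the Hecke subalgebra generators $H_1,\ldots,H_{d-1}$ and with the extra generator $\Qy$ of $\B_d(q,q^{m-2n})$. The Hecke part follows immediately from Mitsuhashi's quantum Schur duality for the quantum supergroup of $\gl(m|2n)$ \cite{Mi06}, which asserts that each $H_j$ commutes with the full $\U$-action on $\V^{\otimes d}$; since $\Ui$ is a subalgebra of $\U$, every $B_i$ (in particular for $i\neq m$) automatically commutes with every $H_j$. The real content of the lemma is therefore the commutation $[B_i,\Qy]=0$ on $\V^{\otimes d}$.

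To handle $\Qy=\Xi\otimes 1^{\otimes(d-2)}$, I will use the right coideal property $\Delta(\Ui)\subset\Ui\otimes\U$ from Proposition~\ref{prop:coideal}. Iterating it through coassociativity yields $\Delta^{(d-1)}(B_i)\in\Ui\otimes\U^{\otimes(d-1)}$; writing
\[
\Delta^{(d-1)}(B_i)=\sum\Delta(y)\otimes z_1\otimes\cdots\otimes z_{d-2}
\]
with $y\in\Ui$, the commutator $[\Qy,\Delta^{(d-1)}(B_i)]$ acts on $\V^{\otimes d}$ as
\[
\sum\bigl[\Xi,\,(\pi\otimes\pi)\Delta(y)\bigr]\otimes\pi(z_1)\otimes\cdots\otimes\pi(z_{d-2}),
\]
where $\pi:\U\to\End(\V)$ is the natural representation. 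It therefore suffices to prove that $\Xi:V\otimes V\to V\otimes V$ is a $\Ui$-module homomorphism (with $\Ui$ acting via $\Delta$), which by the generating set of $\Ui$ from Definition~\ref{def:Ui} reduces to checking $[\Xi,(\pi\otimes\pi)\Delta(x)]=0$ on $V\otimes V$ for $x\in\{B_i\mid i\in I_\circ\}\cup\{E_j,F_j\mid j\in I_\bu\}$; equivariance under $\Uio$ is automatic from the weight decomposition, since $\Xi$ preserves weights.

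The remaining step is a finite, case-by-case calculation on the basis $\{v_a\otimes v_b\}_{a,b\in I(m|2n)}$, split according to the position of the generator: \emph{(a)} $i\in I_\circ$ with $i<m$ (type AI situation, action on $\Veven\otimes\Veven$); \emph{(b)} $i=m+2k\in I_\circ$ with $k\geq 1$ (type AII situation, action on $\Vodd\otimes\Vodd$); and \emph{(c)} $j\in I_\bu$, for which the coproducts $\Delta(F_j)=F_j\otimes K_j^{-1}+1\otimes F_j$ and $\Delta(E_j)=E_j\otimes 1+K_j\otimes E_j$ are simple. For each such $x$, only a handful of basis pairs lie simultaneously in the support of $\Xi$ and of $(\pi\otimes\pi)\Delta(x)$, and the equality of coefficients is read off by substituting the explicit formulas \eqref{eq:operatorXi}, \eqref{eq:Biact} and \eqref{eq:natural}. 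The hard part is expected to be case~(b), where the $\Z_2$-graded signs interact with the mixed-parity sum defining $\Xi(v_{\overline 1}\otimes v_{\overline 1})$; balancing the two sides relies on the recursive identities $\tau_{i+1}/\tau_i=-\va_i$ and $\zeta_{j+1}/\zeta_j=\va_{m+2j}$ built into the normalizations of \eqref{eq:operatorXi}, which were chosen precisely so that the commutation holds. Cases (a) and (c) essentially reproduce the type AI and type AII $\imath$Schur duality verifications of \cite{CS22}.
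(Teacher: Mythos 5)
Your split into Hecke part (via \cite{Mi06}) and $\Qy$ part is the same as the paper's, and your coideal/coassociativity reduction to an operator identity on $V\otimes V$ is the right general idea. But as written the proposal has an internal mismatch that leaves a genuine gap.

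You assert ``it therefore suffices to prove that $\Xi$ is a $\Ui$-module homomorphism,'' and you correctly note that this in turn requires verifying $[\Xi,(\pi\otimes\pi)\Delta(x)]=0$ for every generator $x$ of $\Ui$, i.e.\ for $\{B_i\mid i\in I_\circ\}\cup\{E_j,F_j\mid j\in I_\bu\}\cup\Uio$. However, $m\in I_\circ$, and your case analysis (a) $i<m$, (b) $i=m+2k$, $k\geq 1$, (c) $j\in I_\bu$ silently skips $B_m$. So the sufficient condition you invoke is strictly stronger than what you then verify; the proof is not closed. (Checking $B_m$ is exactly the content deferred to the main duality theorem — it is a nontrivial computation using the full comultiplication and cancellations between the even and odd blocks of $\Xi(v_{\bar 1}\otimes v_{\bar 1})$ — so it cannot be waved away.)

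The fix is to use the sharper structural information about the Sweedler components rather than the bare coideal property. Equation \eqref{eq:deltabi-} gives $\Delta(B_i)-B_i\otimes K_i^{-1}\in\Ub^+\Uio\otimes\U$, and since $\Ub^+\Uio$ satisfies $\Delta(\Ub^+\Uio)\subset(\Ub^+\Uio)\otimes(\Ub^+\Uio)$ (the Cartan generators are grouplike and $K_j\in\Uio$ for $j\in\I_\bu$, so $\Ub^0\subset\Uio$), iterating shows the first Sweedler components of $\Delta^{(d-2)}(B_i)$ lie in the span of $B_i$ and $\Ub^+\Uio$ — $B_m$ never appears for $i\neq m$. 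Hence it suffices to check $[\Xi,(\pi\otimes\pi)\Delta(y)]=0$ for $y\in\{B_i\}\cup\Ub^+\Uio$, which is exactly your cases (a), (b), (c) together with the weight argument for $\Uio$. With this refined reduction your proposal matches the paper's proof in content; the paper compresses the reduction to ``it suffices to assume $d=2$'' and then runs the same case-by-case check of $B_i$ against $\Xi$ on $\V\otimes\V$, splitting $\Xi(v_{\bar 1}\otimes v_{\bar 1})$ into the even piece $L_1$ and odd piece $L_2$ and quoting the type AI and AII computations of \cite{CS22}.
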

\begin{proof}
    Suppose $i\neq m$. By \cite{Mi06}, we know that the actions of $\U$ commute with the action of $H_j$, for $1\le j \le d-1$. Hence the action of $B_i$ also commutes with the action of $H_j$, for $1\le j \le d-1$. Thus to prove the lemma we only need to show that the action of $B_i$ commutes with the action of $e$. Since $e$ acts on $\V^{\otimes d}$ via $\Xi\otimes 1^{\otimes d-2}$, it suffices to assume that $d=2$ and check that $B_i$ commutes with $\Xi$ on $\V\otimes \V$.

    By \eqref{eq:operatorXi}, the action of $\Xi$ on $\V\otimes \V$ depends heavily on $\Xi(v_{\overline{1}}\otimes v_{\overline{1}})$. We shall show that $B_i$ commutes with $\Xi$ when acting on $v_{\overline{1}}\otimes v_{\overline{1}}$. The checking of the rest actions are either similar or straightforward. We divide the proof into the following two cases:

    (1) $1\le i\le m-1$: In this case we have
    \begin{align}
        \label{eq:comultiBAI}
        \Delta(B_i)=B_i\otimes K_i^{-1}+ \new \otimes F_i
    \end{align}
    By \eqref{eq:Biact} and \eqref{eq:comultiBAI}, we see that
    \[ [B_i\cdot (v_{\overline{1}}\otimes v_{\overline{1}})]\cdot e=\Xi \cdot B_i(v_{\overline{1}}\otimes v_{\overline{1}})=0. \]
    Thus we need to show $B_i\cdot \Xi(v_{\overline{1}}\otimes v_{\overline{1}})=0$ as well. To that end, We write $\Xi(v_{\overline{1}}\otimes v_{\overline{1}})=q^{-2n}L_1-\tau_m^{-1}q^{-m-2n}L_2$ where $$L_1=\sum_{i=1}^{m}\tau_i^{-1} q^{m-2i+1}v_{\overline{i}}\otimes v_{\overline{i}},\ L_2=\sum_{j=1}^{n} \zeta_j^{-1}q^{j+1}(v_{\underline{2j-1}}\otimes v_{\underline{2j}}-q^{-1}v_{\underline{2j}}\otimes v_{\underline{2j-1}}).$$
    We see that the sum $L_1$ is proportional to the action defined in \cite[Proposition 4.4]{CS22}. Hence by \cite[Proposition Theorem~4.6]{CS22}, we see that $B_i(L_1)=0$. By \eqref{eq:Biact} and \eqref{eq:comultiBAI} we have $B_i(L_2)=0$ as well. Hence $B_i\cdot \Xi(v_{\overline{1}}\otimes v_{\overline{1}})=0$.

    (2) $m<i<m+2n-1$: In this case we may suppose $i=m+2l$ for some $1\le l\le n-1$ by \eqref{eq:satake}. Then we calculate directly that (cf. \cite[Example 7.9]{Ko14})
    \begin{equation}
    \label{eq:comultiBAII}
\begin{aligned}
\Delta(B_{i})|_{\V^{\otimes 2
}}=\,&B_{i}\otimes K_{i}^{-1}+\new \otimes F_{i}+\va_{i}(q^2-1)E_{i+1} K_{i-1}\otimes E_{i-1}E_{i}K_i^{-1}\\
&-\va_{i}(q^3-q)E_{i-1}K_{i+1}\otimes E_{i}E_{i+1}K_i^{-1}+q\va_{i} \new K_{i-1}K_{i+1}\otimes E_{i-1}E_{i}E_{i+1}.
\end{aligned}
\end{equation}
 By \eqref{eq:Biact} and \eqref{eq:comultiBAII}, we see that
    \[ [B_i\cdot (v_{\overline{1}}\otimes v_{\overline{1}})]\cdot e=\Xi \cdot B_i(v_{\overline{1}}\otimes v_{\overline{1}})=0. \]
    Thus we need to show $B_i\cdot \Xi(v_{\overline{1}}\otimes v_{\overline{1}})=0$ as well. To that end, We write $\Xi(v_{\overline{1}}\otimes v_{\overline{1}})=q^{-2n}L_1-\tau_m^{-1}q^{2n-m}L_2$ as in case (1).

By \eqref{eq:Biact} and \eqref{eq:comultiBAII}, we observe that $B_i(L_1)=0$. On the other hand, we observe that $L_2$ is proportional to the action defined in \cite[Propositon 5.3]{CS22}
after switching $q$ and $q^{-1}$ (which is due to \eqref{eq:satake}). Hence by \cite[Theorem 5.4]{CS22} we have  we have $B_i(L_2)=0$ as well. Thus $B_i\cdot \Xi(v_{\overline{1}}\otimes v_{\overline{1}})=0$ as we desired.
\end{proof}

\begin{theorem}
\label{thm:duality}
Let $m,n \ge 1$. 
\begin{enumerate}
    \item 
The left action of $\Ui$ on $\V^{\otimes d}$ commutes with the right action of $\B_d(q,q^{m-2n})$ defined in Proposition~\ref{prop:braueract}:
\[
\Ui \stackrel{\Psi}{\curvearrowright} \V^{\otimes d} \stackrel{\Phi}{\curvearrowleft} \B_d(q,q^{m-2n}).
\]
\item 
The following double centralizer property holds if $\pm (m-2n) \notin \clubsuit$ \eqref{club}:
\begin{align*}
\Psi(\Ui) &=\End_{\B_d(q,q^{m-2n})} (\V^{\otimes d}),\\
\Phi(\B_d(q,q^{m-2n}))&=\End_{\Ui} (\V^{\otimes d}).
\end{align*}
\end{enumerate}
\end{theorem}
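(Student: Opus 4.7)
The plan is to treat Part (1) by direct verification on generators of $\Ui$, leveraging Lemma~\ref{lem:Bineqmcommute}, and to deduce Part (2) by combining semisimplicity with a classical-limit/dimension argument. Since $\B_d(q,q^{m-2n})$ is generated by $H_1,\ldots,H_{d-1}$ together with $\Qy$, and since the action of the entire quantum supergroup $\U$ on $\V^{\otimes d}$ is already known to centralize each $H_j$ (cf.\ \cite{Mi06}), the whole of Part (1) reduces to checking that each generator of $\Ui$ commutes with $\Qy=\Xi\otimes 1^{\otimes d-2}$. Lemma~\ref{lem:Bineqmcommute} disposes of $B_i$ for $i\in\I_\circ\setminus\{m\}$, and the remaining generators organize into three families: the ``black'' generators $E_j,F_j$ $(j\in\I_\bu)$, the Cartan elements in $\Uio$ (including $\new$), and the single odd generator $B_m$.

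For the black and Cartan generators one reduces to $d=2$ and uses the decomposition $\Xi(v_{\overline 1}\otimes v_{\overline 1})=q^{-2n}L_1-\tau_m^{-1}q^{-m-2n}L_2$ from the proof of Lemma~\ref{lem:Bineqmcommute}: the generators attached to the ``left'' (type AI) side of \eqref{eq:satake} annihilate $L_1$ by \cite[Thm.~4.6]{CS22} and trivially preserve $L_2$, and symmetrically for the ``right'' (type AII) side via \cite[Thm.~5.4]{CS22}; the Cartan elements commute with $\Xi$ for obvious weight/parity reasons, noting that the pairs $(\overline i,\overline i)$ and $(\underline{2j-1},\underline{2j})$ appearing in \eqref{eq:operatorXi} are all of weight zero under $K_h$ for $h\in Y^\imath$ and even in total parity. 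The delicate step is the odd generator $B_m$. Here I would first write out $\Delta(B_m)|_{\V\otimes\V}$ explicitly---on the two-fold tensor the higher-order coideal corrections from $T_{w_\bu}(E_m)$ simplify considerably because $E_m$ only ``interacts'' with its two $\I_\bu$-neighbours in $\V\otimes\V$---and then verify both that $B_m(v_{\overline 1}\otimes v_{\overline 1})=0$ (so the left-hand side vanishes) and that $B_m\cdot\Xi(v_{\overline 1}\otimes v_{\overline 1})=0$. The latter is the arithmetic heart of the computation: the cross terms between $L_1$ and $L_2$ mix the two halves, and the normalizations $\tau_m$, $\zeta_1$, and in particular the coefficient $\va_m$ in \eqref{eq:operatorXi} are designed so that the contributions cancel exactly.

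For Part (2), I would invoke the standard double-centralizer theorem for a pair of commuting actions in which one algebra is semisimple. Under the assumption that $\B_d(q,q^{m-2n})$ is semisimple, $\V^{\otimes d}$ decomposes as a $\Phi(\B_d(q,q^{m-2n}))$-module into a multiplicity-free sum of isotypic components, and by Part~(1) each isotypic component is $\Psi(\Ui)$-stable; it then suffices to show that $\Psi(\Ui)$ acts irreducibly on each nonzero $\B_d$-isotypic component, equivalently that $\dim_{\C(q)}\End_{\Ui}(\V^{\otimes d})\le\dim_{\C(q)}\Phi(\B_d(q,q^{m-2n}))$. The cleanest route is a flat-deformation/specialization argument at $q\to 1$: by Example~\ref{ex:superA}, $\Ui$ specializes to $U(\mathfrak t)$ with $\mathfrak t\cong\mathfrak{osp}(m|2n)$, and $\B_d(q,q^{m-2n})$ specializes to the classical Brauer algebra $\B_d(m-2n)$; the classical Schur--Weyl--Brauer duality for $\mathfrak{osp}(m|2n)$ on $V^{\otimes d}$ is known, and upper semicontinuity of the dimensions of centralizer algebras together with semisimplicity of $\B_d(q,q^{m-2n})$ forces equality at generic $q$.

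The main obstacle will be the explicit check that $B_m$ commutes with $\Xi$ on $\V\otimes\V$: unlike the purely type AI/AII subdiagrams treated in \cite{CS22}, the odd node $m$ bridges the even and symplectic halves of \eqref{eq:satake}, and a careful bookkeeping of signs, of the parameter $\va_m$, and of the $q$-powers is required to see all mixed terms cancel. A secondary difficulty for Part~(2) is verifying that the specialization $q\to 1$ is compatible with the parameter conditions \eqref{eq:kappai}, \eqref{eq:vai}, \eqref{eq:vai=vataui}; should this cause trouble, one can instead decompose $\V^{\otimes d}$ by weight and count dimensions directly, using the Wedderburn decomposition of the semisimple algebra $\B_d(q,q^{m-2n})$ of total dimension $(2d-1)!!$ together with the known $\mathfrak{osp}(m|2n)$-character of $V^{\otimes d}$.
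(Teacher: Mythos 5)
Your proposal follows essentially the same route as the paper's proof: for Part~(1), reduce to the single odd generator $B_m$ via Lemma~\ref{lem:Bineqmcommute} and the results of \cite{Mi06, CS22}, then check directly on $\V\otimes\V$ that $B_m\cdot\Xi(v_{\overline 1}\otimes v_{\overline 1})=0$ and $\Xi\cdot B_m(v_{\overline 1}\otimes v_{\overline 1})=0$ (the cancellation of the $L_1$ and $L_2$ contributions that you flagged as ``the arithmetic heart'' is exactly the computation carried out in the paper); for Part~(2), invoke semisimplicity of $\B_d(q,q^{m-2n})$ to reduce the double-centralizer property to multiplicity-freeness, and settle that by specializing at $q\to 1$ to the classical $\mathfrak{osp}(m|2n)$--Brauer duality of \cite{ES16}. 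Your added observations (explicit handling of the Cartan generators, and the back-up weight-space dimension count) go slightly beyond what the paper writes but do not change the structure of the argument.
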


\begin{proof}
    (1) By \cite{Mi06}, we know that the actions of $\U$ commute with the action of $H_i$, for $1\le i \le d-1$. Moreover, by \cite{CS22}, we know that the actions of $B_i$ for $i\neq m$ and $\U_\bu$ commutes with the action of $\B_d(q,q^{m-2n})$.
    
    Therefore, to prove (1)--(2) it suffices to show that the action of $B_{m}$ commutes with the action of $\Qy$. Since $\Qy$ acts on $\V^{\otimes d}$ by $\Xi\otimes 1^{\otimes d-2}$, it suffices to verify the commuting actions on the first two tensor factors.

    By definition we have 
    $ B_{m}=F_{m}+q\va_{m}K_{m}^{-1}T_{m+1}(E_{m})$. We  first observe that 
    \begin{equation}
    \label{eq:FXi}
        \begin{aligned}
            F_{m}[\Xi(v_{\overline 1}\otimes v_{\overline 1})]
            &=(F_{m}\otimes K_{m}^{-1}+\new\otimes F_{m})[\Xi(v_{\overline{1}}\otimes v_{\overline 1})]\\
            &=\tau_{m}^{-1}q^{-2n-m+1}(q^{-1}v_{\underline{1}}\otimes v_{\overline{m}}+v_{\overline{m}}\otimes v_{\underline{1}}).
        \end{aligned}
    \end{equation}
    Next we compute that
    \begin{equation}
        \label{eq:EXi}
        \begin{aligned}
            E_{m}E_{m+1}[\Xi(v_{\overline 1}\otimes v_{\overline 1})]&=0,
            \\
            E_{m+1}E_{m}[\Xi(v_{\underline 1}\otimes v_{\underline 1})]&=-\tau_{m}^{-1}\va_m^{-1} q^{2-m-2n}E_{m+1}E_{m}(v_{\underline 1}\otimes v_{\underline 2}-q^{-1}v_{\underline 2}\otimes v_{\underline 1})\\
            &= -\tau_{m}^{-1}\va_m^{-1} q^{2-m-2n}(q^{-1}v_{\underline{1}}\otimes v_{\overline{m}}+v_{\overline{m}}\otimes v_{\underline{1}}).
        \end{aligned}
    \end{equation}
    Combine \eqref{eq:FXi} and \eqref{eq:EXi} we see that
    \begin{align*}
        B_{m}[\Xi(v_{\overline{1}}\otimes v_{\overline{1}})]
        &= \tau_{m}^{-1}(q^{-2n-m+1}-q^{-m-2n+1})(q^{-1}v_{\underline{1}}\otimes v_{\overline{m}}+v_{\overline{m}}\otimes v_{\underline{1}})=0,\\
        \Xi[B_{m}\cdot(v_{\overline{1}}\otimes v_{\overline{1}})]&=0.
    \end{align*}
    Therefore we see that
$B_{m}$ and $\Qy$ commute on the basis vector $v_{\overline{1}}\otimes v_{\overline{1}}$. The verification on other basis vectors are similar.

(2) It is well known that the double centralizer property is synonymous with the multiplicity-free decomposition of $\V^{\otimes d}$ as an ($\Ui, \B_d(q,q^{m-2n})$)-bimodule if the action of $\B_d(q,q^{m-2n})$ is semisimple (cf. \cite[\S3.1.3]{CW12}). Proposition~\ref{prop:semisimple} affirms that the $q$-Brauer algebra $\B_d(q,q^{m-n})$ is semisimple given our assumption. Consequently, proving the double centralizer property is reduced, through a deformation argument, to the case where $q=1$. In the limit as $q$ tends to $1$ and $\va_i$ takes on $-1$ for $1\leq i\leq m-1$, $\U^{\io}$ transforms into the enveloping algebra of the orthosymplectic Lie algebra $osp(m |2n)$, while $\V$ becomes its natural representation. The multiplicity-free decomposition of $\V^{\otimes d}$  has already been established in \cite{ES16}. This concludes the proof.
\end{proof}

\begin{remark}
    The duality presented in Theorem~\ref{thm:duality} unifies the two $\imath$Schur dualities of types AI and AII \cite{CS22}; also see \cite{Mol03} for an earlier variant of such $\imath$Schur duality where a different version of the $q$-Brauer algebra was used. This duality can be further extended to arbitrary super Satake diagrams of the form \eqref{eq:splitA}.
\end{remark}

\begin{remark}
\label{rem:yesno}
    The $\Ui$ above is a new $q$-deformation of the $\text{osp}$ Lie superalgebras, and it is actually one of many different $q$-deformations of the $\mathfrak{osp}$ Lie superalgebras associated to different super type A Satake diagrams \eqref{eq:splitA} in Example~\ref{ex:superA}. The quantum supergroups associated to the same $\g$ with different super Dynkin diagrams are isomorphic as algebras (due to Yamane \cite{Ya94, Ya99}), but such isomorphisms do not preserve the Hopf algebra structures. Hence the quantum supersymmetric pairs associated with different super Satake diagrams are (expected to be) non-isomorphic as Yamane's algebra isomorphisms on quantum supergroups do not respect the coideal subalgebras. 
    
    It is an interesting and nontrivial question to ask if there are algebra isomorphisms among the $\imath$quantum supergroups (compare the classical setting in Example~\ref{ex:superA}). 
\end{remark}

\appendix
\section{Quantum supersymmetric pairs when $\I_\bu \cap \I_\niso \neq \emptyset$}
\label{sec:non-isotropic}

It is required that $\I_\bu\subset \Ieven$ in Definition~\ref{def:superad} of super admissible pairs $(\I=\I_\circ \cup \I_\bu,
\tau)$. In this appendix, we relax the requirement to allow $\I_\bu$ to contain non-isotropic odd simple roots. Non-isotropic odd simple roots appear in Dynkin diagrams associated with type B and type G Lie superalgebras. 

Recall in the definition of $\Ui$, we utilize braid group operators associated with $I_\bu$ in \eqref{eq:Bi}. The braid group operators associated with non-isotropic odd simple roots in general are constructed in \cite[Lemma 7.4.1]{Ya99}. 

Recall $\LEFTcircle$ denotes a non-isotropic odd simple root. In type B, there are four new super Satake diagrams of real rank one, depicted in \eqref{eq:sBI1}--\eqref{eq:sBII-1} below, for $n\ge 1$:
\begin{equation}
    \label{eq:sBI1}
    \xy 
(-10,0)*{\otimes};
(0,0)*{\newmoon}**\dir{-};
(10,0)*{\cdots}**\dir{-};
(20,0)*{\newmoon}**\dir{-};(30,0)*{\LEFTcircle}**\dir{=};
(25,0)*{>};
(-10,-4)*{\scriptstyle 0};(0,-4)*{\scriptstyle 1};(20,-4)*{\scriptstyle n-1};(30,-4)*{\scriptstyle n};
(45,0)*{(\text{sBIII}_{1})}
\endxy
\end{equation}
\begin{equation}
    \label{eq:sBI-1}
     \xy 
(-10,0)*{\fullmoon};
(0,0)*{\newmoon}**\dir{-};
(10,0)*{\cdots}**\dir{-};
(20,0)*{\newmoon}**\dir{-};(30,0)*{\LEFTcircle}**\dir{=};
(25,0)*{>};
(-10,-4)*{\scriptstyle 0};(0,-4)*{\scriptstyle 1};(20,-4)*{\scriptstyle n-1};(30,-4)*{\scriptstyle n};
(45,0)*{(\text{sBIII}_{-1})}
\endxy
\end{equation}
where $I_\bu =\{1,2,\ldots,n\}$ and $\I_\circ =\{0\}$ for both \eqref{eq:sBI1}--\eqref{eq:sBI-1}; and
\begin{equation}
    \label{eq:sBII1}
    \xy 
(-20,0)*{\newmoon};
(-10,0)*{\otimes}**\dir{-};
(0,0)*{\newmoon}**\dir{-};
(10,0)*{\cdots}**\dir{-};
(20,0)*{\newmoon}**\dir{-};(30,0)*{\LEFTcircle}**\dir{=};
(25,0)*{>};
(-10,-4)*{\scriptstyle 0};(0,-4)*{\scriptstyle 1};(20,-4)*{\scriptstyle n-1};(30,-4)*{\scriptstyle n};(-20,-4)*{\scriptstyle -1};
(45,0)*{(\text{sBIV}_{1})}
\endxy
\end{equation}
\begin{equation}
    \label{eq:sBII-1}
    \xy 
(-20,0)*{\newmoon};
(-10,0)*{\fullmoon}**\dir{-};
(0,0)*{\newmoon}**\dir{-};
(10,0)*{\cdots}**\dir{-};
(20,0)*{\newmoon}**\dir{-};(30,0)*{\LEFTcircle}**\dir{=};
(25,0)*{>};
(-10,-4)*{\scriptstyle 0};(0,-4)*{\scriptstyle 1};(20,-4)*{\scriptstyle n-1};(30,-4)*{\scriptstyle n};(-20,-4)*{\scriptstyle -1};
(45,0)*{(\text{sBIV}_{-1})}
\endxy
\end{equation}
where  $I_\bu =\{-1,1,\ldots,n\}$ and $I_\circ =\{0\}$  for both \eqref{eq:sBII1}--\eqref{eq:sBII-1}.

We shall refer to the super Satake diagrams \eqref{eq:sBI1}, \eqref{eq:sBI-1}, \eqref{eq:sBII1} and \eqref{eq:sBII-1} as type sBIII${}_1$, sBIII${}_{-1}$, sBIV${}_{1}$ and sBIV${}_{-1}$, respectively.

\subsection{Super type sB}
\label{subsec:BI}
In this subsection we zoom into sBIII${}_\pi$ with $\pi=\pm1$ to study the behavior of the non-isotropic roots.

Let $\U_\pi$ denote the corresponding quantum supergroup associated with sBIII${}_\pi$. In both cases, we define the $\imath$quantum supergroup $\Ui_\pi$ to be the $\C(q)$-subalgebra generated by $\U_{\pi,\bu}$ and $B_{n-1}$ where 
\[ B_{0}:=F_{0}+\va_{0}T_1\cdots T_{n-1}T_nT_{n-1}\cdots T_1(E_{0})K_{0}^{-1} \]

According to  \cite[Lemma 7.4.1]{Ya99}, we have 
\begin{equation}
\label{eq:Tn}
\begin{aligned}
    &T_n(E_{n-1})=x_{n-1}(E_n^2E_{n-1}+\pi(q^\pi-1)E_nE_{n-1}E_n-q^\pi E_{n-1}E_n^{2}),\\
    &T_n(F_{n-1})=y_{n-1}(F_n^2F_{n-1}+\pi(q^\pi-1)F_nF_{n-1}F_n-q^\pi F_{n-1}F_n^{2}).
    \end{aligned}
\end{equation}
where $x_{n-1}y_{n-1}=q^\pi$.

\begin{lemma}
\label{lemma:BI}
The following identities hold in $\U_\pi$:

(1) $T_n(E_{n-1})=x_{n-1}\ad(E^2_n)(E_{n-1})$. 

(2) $T_1\cdots T_{n-1}T_nT_{n-1}\cdots T_1(E_{0})=x_{n-1}\ad(E_1\cdots E_{n-1}E^2_nE_{n-1}\cdots E_1)(E_{0})$. 
\end{lemma}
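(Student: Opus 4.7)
The plan is to verify part (1) by a direct coproduct calculation and then deduce part (2) by combining Lemma~\ref{lemma:hwt}(1) with the structure of the longest element $w_\bu$.

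For part (1), I would compute $\ad(E_n^2)(E_{n-1})$ directly from the definition $\ad(x)(u)=x_{(1)}uS(x_{(2)})$ in \eqref{eq:ad}. Since $p(n)=1$ (the node $n$ is $\LEFTcircle$), we have $\Delta(E_n)=E_n\otimes 1+\new K_n\otimes E_n$ and $S(E_n)=-\new K_n^{-1}E_n$. Multiplying $\Delta(E_n)^2$ out in the super-tensor product using $E_n\new=-\new E_n$ and $K_nE_n=q^{(\alpha_n,\alpha_n)}E_nK_n$ yields
\[
\Delta(E_n^2)=E_n^2\otimes 1-\bigl(1+q^{-(\alpha_n,\alpha_n)}\bigr)\new K_nE_n\otimes E_n+K_n^2\otimes E_n^2,
\]
together with a parallel closed form for $S(E_n^2)$. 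Substituting both into the ad-formula and pushing every $\new$ and $K_n^{\pm 1}$ past $E_{n-1}$ via \eqref{eq:Urelation} and \eqref{eq:newrelation} expresses $\ad(E_n^2)(E_{n-1})$ as a $\C(q)$-linear combination of the three monomials $E_n^2E_{n-1}$, $E_nE_{n-1}E_n$ and $E_{n-1}E_n^2$. Matching coefficients with \eqref{eq:Tn} proves part (1); the two diagrams sBIII${}_{\pm 1}$ are handled simultaneously because the sign $\pi$ and the power $q^\pi$ in \eqref{eq:Tn} reconcile with the two possible values of $(\alpha_n,\alpha_n)$ and $(\alpha_n,\alpha_{n-1})$.

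For part (2), the first step is to reduce the left-hand side to $T_{w_\bu}(E_0)$. Since $W_\bu$ has type $B_n$, its longest element admits a reduced expression $w_\bu=s_{\epsilon_1}s_{\epsilon_2}\cdots s_{\epsilon_n}$, where $s_{\epsilon_j}=s_js_{j+1}\cdots s_n\cdots s_{j+1}s_j$ is the reflection in the root $\epsilon_j$ (of length $2(n-j)+1$). Because node $0$ is adjacent only to node $1$, we have $a_{0,j}=0$ and $T_j(E_0)=E_0$ for all $j\ge 2$; each $T_{s_{\epsilon_j}}$ with $j\ge 2$ is built only out of $T_i$'s with $i\ge 2$ and thus fixes $E_0$. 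Consequently
\[
T_{w_\bu}(E_0)=T_{s_{\epsilon_1}}(E_0)=T_1T_2\cdots T_{n-1}T_nT_{n-1}\cdots T_2T_1(E_0).
\]
By Lemma~\ref{lemma:hwt}(1), $T_{w_\bu}(E_0)$ is a highest-weight vector in the irreducible $\U_\bu$-module $\ad(\U_\bu)(E_0)$, and a direct calculation gives $w_\bu(\alpha_0)=\alpha_0+2(\alpha_1+\cdots+\alpha_n)$. The weight-$w_\bu(\alpha_0)$ subspace of this irreducible module is one-dimensional and contains the nonzero element $\ad(E_1\cdots E_{n-1}E_n^2E_{n-1}\cdots E_1)(E_0)$; therefore there exists $c\in \C(q)^\times$ with
\[
T_1\cdots T_n\cdots T_1(E_0)=c\cdot \ad(E_1\cdots E_{n-1}E_n^2E_{n-1}\cdots E_1)(E_0).
\]

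The main obstacle is pinning down $c=x_{n-1}$. I would isolate a single distinguished PBW monomial common to both sides, for instance the coefficient of $E_n^2E_{n-1}E_{n-2}\cdots E_1E_0E_1E_2\cdots E_{n-1}$ with respect to a fixed PBW ordering in $\U^+$. On the left side, this coefficient is computed by expanding $T_1\cdots T_n\cdots T_1(E_0)$ step-by-step using \eqref{eq:Ti}, and at the innermost step the explicit formula \eqref{eq:Tn} contributes precisely one factor of $x_{n-1}$; on the right, the coefficient is read off directly from the Hopf-algebra expansion of $\ad(E_1\cdots E_n^2\cdots E_1)(E_0)$. Comparing the two values yields $c=x_{n-1}$, with the intermediate type A scalars from each outer layer cancelling in reciprocal pairs because of the palindromic shape of the monomial $E_1\cdots E_{n-1}E_n^2E_{n-1}\cdots E_1$ around $E_n^2$, which matches the symmetric structure of the outer $T_i$'s on the left. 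Verifying this cancellation cleanly is the delicate technical point; everything else follows from part~(1) and the standard formulas for $T_i(E_j)$.
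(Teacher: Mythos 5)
Your proof of part (1) is essentially identical to the paper's: a direct expansion of $\Delta(E_n^2)$ and $S(E_n^2)$ in the adjoint-action formula, then matching coefficients against \eqref{eq:Tn}.

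For part (2) you take a genuinely different route from the paper. The paper argues by composition: apply $T_{n-1}\cdots T_1$ to $E_0$, which by the $\ad$-formulation \eqref{eq:adZ+} (with all scalars equal to $1$ since each step is a single type-$A$ bond applied to a lowest-weight vector) gives $\ad(E_{n-1}\cdots E_1)(E_0)$; then apply $T_n$, which by part (1) inserts $x_{n-1}\ad(E_n^2)(\cdot)$; then apply the outer $T_1\cdots T_{n-1}$, which again contributes scalar $1$. All of the scalar bookkeeping is thus concentrated in the single application of $T_n$. You instead first rewrite the left side as $T_{w_\bu}(E_0)$ via the reduced expression $w_\bu = s_{\epsilon_1}\cdots s_{\epsilon_n}$ (this step is correct, and the observation that $T_{s_{\epsilon_j}}$ fixes $E_0$ for $j\geq 2$ is right), then invoke Lemma~\ref{lemma:hwt}(1) to land in the one-dimensional top-weight space of $\ad(\U_\bu)(E_0)$, which pins the answer down up to a scalar $c$. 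The comparison is that your route needs the irreducibility/highest-weight structure as an extra input, whereas the paper's route stays entirely at the level of step-by-step $\ad$-composition; on the other hand your route cleanly explains why the two sides must be proportional without chasing intermediate $q$-powers.

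The genuine gap is in the determination of $c=x_{n-1}$. You propose to compare the coefficient of a single PBW monomial, which is a sound idea, but the step where you dispose of the contributions from the outer layers $T_1,\dots,T_{n-1}$ is not an argument: ``the intermediate type A scalars from each outer layer cancelling in reciprocal pairs because of the palindromic shape'' does not identify which scalars are being produced, nor why they would come in reciprocal pairs. In fact there is no pairing to cancel --- each even-node braid operator applied to a lowest-weight vector for that node simply has scalar $1$ in the normalization of \eqref{eq:Ti}, so the correct statement is that the outer layers contribute trivially, not that their contributions cancel. Without carrying out that coefficient computation the argument is incomplete, and as written it would not convince a referee. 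A secondary (and shared) caveat: both your invocation of Lemma~\ref{lemma:hwt} and the paper's appeal to \eqref{eq:adZ+} are being used with $\U_\bu$ of type $\mathfrak{osp}(1|2n)$ rather than a purely even Levi, and that extension is not re-established in the appendix; since you lean on the irreducibility more heavily than the paper does, this dependency is more exposed in your version.
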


\begin{proof}
We only prove for $\pi=1$ as the other case is similar.

    (1) By definition we have \[  \Delta(E_n^2)=\Delta(E_n)^2=E_n^2\otimes 1+E_n\new K_n\otimes E_n+\new K_nE_n\otimes E_n+K_n^2\otimes E_n^2. \] Hence
    \begin{align*}
        \ad(E_n^2)(E_{n-1})=\,&E_n^2E_{n-1}+E_n\new K_nE_{n-1}(-\new K_n^{-1}E_n)\\
        &\quad+\new K_nE_n E_{n-1}(-\new K_n^{-1}E_n)+K_n^2E_{n-1}(-\new K_n^{-1}E_n)^2\\
        =\,&E_n^2E_{n-1}+(q-1)E_nE_{n-1}E_n-qE_{n-1}E_n^2=\frac{1}{x_{n-1}}
        T_n(E_{n-1}).
    \end{align*}

    (2) follows from (1) and \eqref{eq:adZ+}.
\end{proof}

\begin{proposition}
    $\Ui_\pi$ is a right coideal subalgebra of $\U_\pi$.
\end{proposition}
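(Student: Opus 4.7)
The plan is to mimic the argument for Proposition~\ref{prop:coideal}, using Lemma~\ref{lemma:BI} to replace the braid operator $T_1\cdots T_{n-1}T_nT_{n-1}\cdots T_1$ by an adjoint action of an element of $\U^+_{\pi,\bu}$, so that the comultiplication formula \eqref{eq:ad} can be applied directly. First, since $\U_{\pi,\bu}$ is a Hopf subalgebra of $\U_\pi$ and is contained in $\Ui_\pi$, it already satisfies $\Delta(\U_{\pi,\bu})\subset \U_{\pi,\bu}\otimes\U_{\pi,\bu}\subset \Ui_\pi\otimes\U_\pi$. Thus it suffices to check that $\Delta(B_0)\in \Ui_\pi\otimes\U_\pi$.

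By Lemma~\ref{lemma:BI}(2), set $Z:=E_1\cdots E_{n-1}E_n^2 E_{n-1}\cdots E_1\in \U^+_{\pi,\bu}$, so that
\[
B_0=F_0+\va_0 x_{n-1}\,\ad(Z)(E_0)\,K_0^{-1}.
\]
Now apply the standard formula $\Delta(\ad(x)(u))=x_{(1)}u_{(1)}S(u_{(3)})\otimes \ad(x_{(2)})(u_{(2)})$ with $x=Z$ and $u=E_0$. Using $\Delta(E_0)=E_0\otimes 1+\new^{p(0)}K_0\otimes E_0$ together with $S(E_0)=-\new^{p(0)}K_0^{-1}E_0$, a direct expansion (identical in structure to the one used for \eqref{eq:deltaadZ}) yields
\[
\Delta(\ad(Z)(E_0))-\ad(Z)(E_0)\otimes 1\in \U_{\pi,\bu}^+\U_{\pi,\bu}^0\new^{p(0)}K_0\otimes \U_\pi.
\]
Multiplying by $K_0^{-1}$ on the second tensor factor (of the right-hand term $\ad(Z)(E_0)K_0^{-1}$) and using $\Delta(F_0)=F_0\otimes K_0^{-1}+\new^{p(0)}\otimes F_0$, one obtains
\[
\Delta(B_0)-B_0\otimes K_0^{-1}\in \U_{\pi,\bu}^+\U^{\imath 0}_\pi\otimes \U_\pi,
\]
which lies in $\Ui_\pi\otimes \U_\pi$ as desired.

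The only non-routine point is justifying that Lemma~\ref{lemma:BI}(2) really reduces the whole computation to that of Proposition~\ref{prop:coideal}; the presence of the non-isotropic odd root $\alpha_n$ (manifesting in the factor $E_n^2$ inside $Z$) does not create new difficulties because $Z$ itself is a weight vector in the Hopf subalgebra $\U_{\pi,\bu}$, and the sign conventions built into the coproduct \eqref{eq:Hopf} and antipode handle the parity bookkeeping uniformly. The case sBIII${}_{-1}$ is treated in the same way, replacing the relevant scalar $x_{n-1}$ and signs according to \eqref{eq:Tn}.
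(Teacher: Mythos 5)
Your argument is correct and matches the paper's intended proof, which simply states that the claim ``follows from a rerun of the proof of Proposition~\ref{prop:coideal} and Lemma~\ref{lemma:BI}.'' You have faithfully expanded that rerun: Lemma~\ref{lemma:BI}(2) rewrites $\mathcal T_{\rm III}(E_0)$ as $x_{n-1}\,\ad(Z)(E_0)$ with $Z=E_1\cdots E_{n-1}E_n^2E_{n-1}\cdots E_1\in\U_{\pi,\bu}^+$, after which the coproduct computation from the proof of Proposition~\ref{prop:coideal} (using the formula following \eqref{eq:ad} and leading to \eqref{eq:deltaadZ}, \eqref{eq:deltabi-}) carries over verbatim, since it only used that the element whose adjoint acts is a weight vector in the Hopf subalgebra $\U_\bu^+$ and that $E_{\tau i}$ is a Chevalley generator. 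Your closing observation that the non-isotropic root contributes an $E_n^2$ factor but does not affect the coproduct bookkeeping is exactly the point.
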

\begin{proof}
    It follows from a rerun of the proof of Proposition~\ref{prop:coideal} and Lemma~\ref{lemma:BI}.
\end{proof}

\begin{comment}

\begin{align*}
    &T_n(E_{n-1})=q[E_3,[E_3,E_2]_q]_q=q (E_3^2E_2+(q-1)E_3E_2E_3-qE_2E_3^2)=q\ad(E_3^2)(E_2)\\
    &T_3(F_2)=[F_3,[F_3,F_2]_q]_q=F_3^2F_2+(q-1)F_3F_2F_3-qF_2F_3^2\\
    &T_3(K_2)=K_2K_3^2.
\end{align*}
Therefore, we see that $T_1T_3(E_2)=q\ad(E_1E_3^2)(E_2)$.

Also we have 
\[ B_2=F_2+\va_2 T_1T_3(E_2)K_2^{-1}. \]
Hence by the same proof as in Proposition~\ref{prop:coideal}, the subalgebra generated by $B_2$ and $U_\bu$ is still a coideal subalgebra of $\U$.
\end{comment}

Recall $x_{n-1},y_{n-1}$ from \eqref{eq:Tn}. In the context of the quantum supersymmetric pair associated with \eqref{eq:sBI1} and \eqref{eq:sBI-1}, the role played by $T_{w_\bu}$ in the usual quantum (super)symmetric pair is now taken up by the operator $\mathcal T_{\rm III}:=T_1\cdots T_{n-1}T_nT_{n-1}\cdots T_1$. Therefore we have $B_{0}=F_{0}+\va_{0}{\mathcal T}_{\rm III}(E_0)K_{0}^{-1}$.

\begin{proposition}
\label{prop:BI}
    When $x_{n-1}=\pm 1$, the braid group operators $T_n$ restrict to an algebra isomorphism of $\Ui_\pi$.
\end{proposition}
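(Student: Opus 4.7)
The plan is to verify that $T_n$ maps a set of generators of $\Ui_\pi$ into $\Ui_\pi$. Since $T_n$ is an algebra automorphism of $\U_\pi$ and $n\in \I_\bu$, the explicit formulas of Yamane \cite{Ya99} show that $T_n$ restricts to an algebra automorphism of the subalgebra $\U_{\pi,\bu}$, so it remains only to analyze the action of $T_n$ on the generator $B_0$.

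Because $a_{0,n}=0$, we have $T_n(F_0)=F_0$ and $T_n(K_0)=K_0$, and the problem reduces to the identity
\begin{equation*}
T_n\,\mathcal{T}_{\rm III}(E_0)=\mathcal{T}_{\rm III}(E_0).
\end{equation*}
First I would push $T_n$ past the outer $T_1\cdots T_{n-2}$ in the expression $\mathcal{T}_{\rm III}=T_1\cdots T_{n-1}T_nT_{n-1}\cdots T_1$; this is legitimate since the nodes $1,\ldots,n-2$ are not adjacent to $n$ in the Dynkin diagram, so $T_n$ commutes with $T_1,\ldots,T_{n-2}$. Setting $Z:=T_{n-2}\cdots T_1(E_0)$, one observes that $Z$ lies in the subalgebra generated by $E_0,E_1,\ldots,E_{n-2}$ together with the relevant Cartan elements, and hence $T_n$ fixes $Z$. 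The computation is then reduced to evaluating $T_nT_{n-1}T_nT_{n-1}(Z)$.

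The crucial ingredient is the order-four braid relation
\begin{equation*}
T_nT_{n-1}T_nT_{n-1}=T_{n-1}T_nT_{n-1}T_n
\end{equation*}
between the even simple reflection $T_{n-1}$ and the non-isotropic odd simple reflection $T_n$ (linked by $|a_{n-1,n}a_{n,n-1}|=2$). This is precisely where the normalization hypothesis $x_{n-1}=\pm 1$ enters: in \cite{Ya99} the braid relation for a non-isotropic odd simple reflection holds in its standard form only under a specific scaling of $T_n$, and the choices $x_{n-1}=\pm 1$ (equivalently $y_{n-1}=\pm q^{\pi}$) are exactly the ones for which the rank-two braid relation holds on the nose without spurious scalar factors. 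Granting the braid relation and $T_n(Z)=Z$, we obtain
\begin{equation*}
T_n\,\mathcal{T}_{\rm III}(E_0)=T_1\cdots T_{n-2}T_{n-1}T_nT_{n-1}T_n(Z)=T_1\cdots T_{n-2}T_{n-1}T_nT_{n-1}(Z)=\mathcal{T}_{\rm III}(E_0),
\end{equation*}
so that $T_n(B_0)=B_0$, and $T_n$ restricts to an algebra automorphism of $\Ui_\pi$.

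The main obstacle is the verification of the order-four braid relation in the super setting with a non-isotropic odd root: this requires careful bookkeeping of the scalars $x_{n-1}$ and $y_{n-1}$ appearing in \eqref{eq:Tn}, and both sign choices $x_{n-1}=\pm 1$ should be treated uniformly since the signs propagate through the computation. Once this rank-two relation is settled via the explicit formulas in \cite{Ya99}, the remainder of the argument is a direct application of ordinary type A braid moves and the triviality of $T_n$ on generators indexed away from $\{n-1,n\}$.
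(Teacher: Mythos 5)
Your proposal handles only the range $n\geq 2$: the opening step ``Because $a_{0,n}=0$'' assumes that the nodes $0$ and $n$ are non-adjacent, which fails precisely when $n=1$. But $n=1$ is the essential case, and in fact the entire content of the paper's proof. For $n=1$ the Satake diagram is $\otimes\Rightarrow\LEFTcircle$, nodes $0$ and $1$ are joined by a double edge, $T_1(F_0)\neq F_0$, and $T_1(B_0)\neq B_0$; instead the paper computes
\[
T_1(B_0)=y_{0}\bigl(B_1^2B_0+\pi(q^\pi-1)B_1B_0B_1-q^\pi B_0B_1^{2}\bigr),
\]
a genuine element of $\Ui_\pi$, using the normalization $x_{n-1}=\pm 1$ to ensure $T_n(E_n)=-F_nK_n$, $T_n(F_n)=K_n^{-1}E_n$ and to propagate the scalar $q^\pi x_0$ through the nested $q$-commutators. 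Your proof never touches this, so the main step is missing.

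The $n\geq 2$ reduction you propose is conceptually clean but rests on an unverified ingredient: the commutativity $T_nT_j=T_jT_n$ for $|j-n|\geq 2$ and, more seriously, the order-four braid relation $T_nT_{n-1}T_nT_{n-1}=T_{n-1}T_nT_{n-1}T_n$ with $T_n$ the operator attached to a non-isotropic odd root. The paper establishes braid relations only for $T_i$ with $i\in\Ieven$ (via \cite{H10}), and it does not invoke any braid relation for the odd $T_n$ of \cite[Lemma~7.4.1]{Ya99}; its proof deliberately sidesteps this by reducing to the $n=1$ computation. You also attribute the hypothesis $x_{n-1}=\pm 1$ to making the braid relation scalar-free, but in the paper this hypothesis enters instead through the $n=1$ computation (fixing $y_{n-1}=\pm q^\pi$ so the final expression rearranges into a polynomial in $B_0,B_1$); this misreads where the hypothesis is actually used. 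To repair your argument you would need both to supply the adjacent case $n=1$ (for which ``$T_n(B_0)=B_0$'' is simply false) and to establish or cite the rank-two braid relation for the non-isotropic $T_n$.
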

\begin{proof}
When $x_{n-1}=\pm 1$, we have $y_{n-1}=\pm q^\pi$.
Note that $T_n(E_n)=-F_nK_n$ and $T_n(F_n)=K_n^{-1}E_n$, see \cite[Lemma 7.4.1]{Ya99}.
    It suffices to show that $T_n(B_{0})\in \Ui_\pi$. Thus we only need to consider the case when $n=1$. In that case we have
    \begin{align*}
        T_1(B_0)=\,& T_1(F_0)+\va_{0}T_1\mathcal T_{\rm III}(E_{0})T_1(K_0^{-1})\\
        =\,&T_1(F_0)+\va_0\mathcal T_{\rm III}T_1(E_0)T_1(K_0^{-1})
        \\
        =\,&y_{0}(F_1^2F_0+\pi(q^\pi-1)F_1F_0F_1-q^\pi F_0F_1^{2})+\\
        &\qquad+x_{0}\va_0\mathcal T_{\rm III}(E_1^2E_0+\pi(q^\pi-1)E_1E_0E_1-q^\pi E_0E_1^{2})K_1^{-2}K_0^{-1}\\
        =\,&y_{0}(F_1^2F_0+\pi(q^\pi-1)F_1F_0F_1-q^\pi F_0F_1^{2})+\\
        &\qquad+x_{0}\va_0(T_1(E_1)^2\mathcal T_{\rm III}(E_0)+\pi(q^\pi-1)T_1(E_1)\mathcal T_{\rm III}(E_0)T_1(E_1)\\
        &\qquad-q^\pi\mathcal T_{\rm III}(E_0)T_1(E_1)^{2})K_1^{-2}K_0^{-1}\\
        =\,&y_{0}(F_1^2F_0+\pi(q^\pi-1)F_1F_0F_1-q^\pi F_0F_1^{2})+\\
        &\qquad+q^{\pi}x_{0}\va_0(F_1^2\mathcal T_{\rm III}(E_0)K_0^{-1}+\pi(q^\pi-1)F_1\mathcal T_{\rm III}(E_0)K_0^{-1}F_1\\
        &\qquad-q^\pi\mathcal T_{\rm III}(E_0)K_0^{-1}F_1^{2})\\
        =\,&y_{0}(B_1^2B_0+\pi(q^\pi-1)B_1B_0B_1-q^\pi B_0B_1^{2})\in \Ui_\pi.
    \end{align*}
    This concludes the proof.
\end{proof}

\begin{remark}
Consider the other two real rank one super Satake diagram of type sBIV${}_\pi$, i.e. \eqref{eq:sBII1} and \eqref{eq:sBII-1}. Abusing the notation a little bit, we let $\U_\pi$ denote the corresponding quantum supergroup associated with sBIV${}_\pi$. In both cases, we define the $\imath$quantum supergroup $\Ui_\pi$ to be the $\C(q)$-subalgebra generated by $\U_{\pi,\bu}$ and $B_{0}:=F_{0}+\va_{0}\mathcal{T}_{\rm IV}(E_{0})K_{0}^{-1}$ where 
\[ \mathcal T_{\rm IV}:=T_{-1} \cdot T_1\cdots T_{n-1}T_nT_{n-1}\cdots T_1.\]

As in the case sBIII$_\pi$, the operator $\mathcal{T}_{\rm IV}$ plays the role of $T_{w_\bu}$ in the usual quantum (super)symmetric pairs. Recall the braid group operator $T_n$ from \eqref{eq:Tn}. By a similar approach as in Lemma~\ref{lemma:BI} and Proposition~\ref{prop:BI} one can show that $\U_\pi^\io$ is a right coideal subalgebra of $\U_\pi$.
\end{remark}

\subsection{Quantum Iwasawa decomposition}
\label{subsec:QIDBIBII}
We continue to use the notations from Section~\ref{sec:QSP}. Now we consider the super admissible pair $(I_B=I_\circ\cup I_\bu,id)$ where $I_B$ is a Dynkin diagram associated with a type B Lie superalgebra and $I_B$ contains a non-isotropic odd simple root which lies in $I_\bu$. In this case, the possible real rank one subdiagrams of $(I_B=I_\circ\cup I_\bu,id)$ can only be of type sAI, sAII, sBIII${}_\pi$ and sBIV${}_\pi$, for $\pi =\pm 1$. 

By the constructions in \S~\ref{subsec:BI}, we can now define a quantum supersymmetric pair $(\U_B,\Ui)$ following Definition~\ref{def:Ui} (where we substitute some parts of $T_{w_\bu}$ by $\mathcal{T}_{\rm III}$ and $\mathcal{T}_{\rm IV}$ when dealing with subdiagrams of type sBIII${}_\pi$ and sBIV${}_\pi$ respectively).

We want to establish the quantum Iwasawa decomposition with respect to $(\U_B,\Ui)$, the key step is to prove the following variant of Prposition~\ref{prop:pi00p} for $\Ui$.
\begin{proposition}
    Given a type B super admissible pair $(I_B=\I_\circ \cup \I_\bu,\tau)$, we have 
    \[ \pi_{0,0}\circ P_{-\lambda}(p(\underline{B}))\in \Uio\]
    for any  $p\in \mathcal S(\U_B)$ of weight $\lambda$.
\end{proposition}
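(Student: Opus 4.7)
The proof plan parallels the case-by-case analysis of Proposition~\ref{prop:pi00p}. For a type B super admissible pair, the local subdiagrams from Table~\ref{TableSerrePolyn} that can arise are only of types (ISO1), (ISO2), (N-ISO), (AB), and (CD1); the types (CD2), (D), (F1), (F2), (G2), (G3) never embed into a type B Dynkin diagram. For each such local subdiagram in which no index belongs to $\I_\niso \cap \I_\bu$, the argument of Proposition~\ref{prop:pi00p} applies verbatim, so only those local subdiagrams in which some index $i$ lies in $\I_\niso \cap \I_\bu$ require a new treatment.

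The crucial simplification for these new cases is that $B_i = F_i$ whenever $i\in \I_\bu$, regardless of parity. Accordingly, I would first split the analysis into three sub-cases: (a) if every index of $p$ lies in $\I_\bu$, then $p(\underline B) = p(\underline F) = 0$ identically and the conclusion is immediate; (b) if exactly one index $j$ of $p$ lies in $\I_\circ$, then after substituting $B_j = F_j + \va_j \mathcal T(E_{\tau j})K_j^{-1} + \kappa_j K_j^{-1}$ (with $\mathcal T$ being $T_{w_\bu}$, $\mathcal T_{\rm III}$, or $\mathcal T_{\rm IV}$), one reduces $p(\underline B)$ modulo the $F$-Serre relation to an expression linear in $\mathcal T(E_{\tau j})K_j^{-1}$, which should be forced to vanish under $\pi_{0,0}\circ P_{-\lambda}$ by weight considerations; (c) if two indices of $p$ lie in $\I_\circ$, I would argue as in the (D) and (ISO2) cases of Proposition~\ref{prop:pi00p}, invoking condition~\eqref{eq:vai=vataui}. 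Since type B has no non-trivial diagram involution, one has $\tau=\mathrm{id}$ throughout, which simplifies many weight arguments.

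The concrete verification reduces to a short list: (N-ISO) with $i\in \I_\niso\cap\I_\bu$ adjacent to $j$, (AB) with a non-isotropic odd node on an end, and (CD1) with a non-isotropic odd node at one of its three positions. Since the non-isotropic odd simple root is always the tail of a type B Dynkin diagram (Table~\ref{Table1}), the only neighbor of $i$ inside the local subdiagram is the node to its left. Inspecting the real odd rank one diagrams sBIII${}_\pi$ and sBIV${}_\pi$ of \eqref{eq:sBI1}--\eqref{eq:sBII-1}, one sees that in every super admissible pair the neighbor of $i\in \I_\niso\cap\I_\bu$ is again in $\I_\bu$, which places us in sub-case~(a) above and yields $p(\underline B)=0$ identically. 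The (AB) and (CD1) configurations involving a non-isotropic odd root in $\I_\bu$ are similarly ruled out because the required adjacencies force all middle and adjacent indices into $\I_\bu$.

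The main obstacle I anticipate is the careful bookkeeping required when one cannot reduce to sub-case~(a) and must instead compute $\pi_{0,0}\circ P_{-\lambda}$ directly on an expression involving $\mathcal T_{\rm III}(E_0)K_0^{-1}$ or $\mathcal T_{\rm IV}(E_0)K_0^{-1}$; the action of these substitutes for $T_{w_\bu}$ is governed by the more intricate formulas \eqref{eq:Tn} and Lemma~\ref{lemma:BI}, so any residual cancellation must be extracted by an explicit computation of the form carried out in Section~\ref{subsec:BI}. Once all these local cases are verified, the conclusion $\pi_{0,0}\circ P_{-\lambda}(p(\underline B))\in \Uio$ follows by the same projection-technique packaging used in Proposition~\ref{prop:pi00p}.
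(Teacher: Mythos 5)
Your proposal contains a genuine gap, and its central reduction step is false.

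The key claim you rely on is that, by inspecting sBIII$_\pi$ and sBIV$_\pi$, ``the neighbor of $i\in \I_\niso\cap\I_\bu$ is again in $\I_\bu$,'' so that every Serre relation touching the $\LEFTcircle$ node has all indices in $\I_\bu$ and hence $p(\underline B)=p(\underline F)=0$. This is wrong for $n=1$: sBIII$_\pi$ with $n=1$ is precisely $\otimes(0)\Rightarrow\LEFTcircle(1)$ with $\I_\circ=\{0\}$ and $\I_\bu=\{1\}$, and sBIV$_\pi$ with $n=1$ is $\newmoon(-1)-\otimes(0)\Rightarrow\LEFTcircle(1)$ with $0\in\I_\circ$. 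In both, the node adjacent to $\LEFTcircle$ is white. Consequently, the (N-ISO) polynomial with $i=\LEFTcircle$, $j=\otimes$ (weight $\lambda=(1+|a_{ij}|)\alpha_i+\alpha_j$) does not reduce to your sub-case (a); one must actually verify $\pi_{0,0}\circ P_{-\lambda}(p(\underline B))=0$. The paper does this via the weight criterion of Proposition~\ref{prop:pi00p}: since $j\in\I_\iso$, the identity $w_\bu\tau(\lambda)=\lambda$ fails (here concretely $w_\bu(\alpha_0)-\alpha_0 = |a_{10}|\alpha_1 \neq 2(1+|a_{10}|)\alpha_1$), not via adjacency.

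Your handling of (AB) has the same defect. You assert that (AB) configurations ``are similarly ruled out because the required adjacencies force all middle and adjacent indices into $\I_\bu$.'' But the middle index $j=\otimes$ of any (AB) subdiagram is isotropic odd and must lie in $\I_\circ$; it cannot be forced into $\I_\bu$. The genuine (AB) configuration to worry about is $\LEFTcircle(i)\Leftarrow\otimes(j)-\newmoon(k)$ with $i,k\in\I_\bu$, $j\in\I_\circ$ (this is sBIV$_\pi$ with $n=1$). This is not excluded by assumption~\eqref{exclude} (that exclusion refers to the single-bond pattern $\newmoon-\otimes-\newmoon$), so it must be killed by the weight argument: one computes $w_\bu(\lambda)=\alpha_{-1}+2\alpha_0+3\alpha_1\neq\lambda$ because $a_{ij}=-2$ on the double bond, while the necessary condition from the (AB) case of Proposition~\ref{prop:pi00p} requires $a_{ij}=-1$. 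Your proposal skips this computation entirely, and its substitute reasoning is incorrect. (Two smaller issues: (CD1) cannot embed in type B, since the arrow there points to an $\otimes$, whereas in type B the short node is the terminal $\yy$; and you omit the need to note that (ISO2), although it does occur, is killed because $w_\bu(\alpha_i)=\alpha_j$ is impossible for two distinct white nodes.)
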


\begin{proof}
Recall that  the possible real rank one subdiagrams of $(I_B=I_\circ\cup I_\bu,id)$ can only be of type sAI, sAII, sBIII${}_\pi$ and sBIV${}_\pi$. According to Table~\ref{TableSerrePolyn}, the potential local Serre relations of $\U_B$ are (ISO1), (N-ISO), and (AB). For (ISO1) and (N-ISO) of all cases, the statement follows from the same proof as in Proposition~\ref{prop:pi00p}.

     As for (AB) of $\U_B$, we have $p=[2]x_2x_3x_1x_2-(x_1x_2x_3x_2+x_2x_3x_2x_1-x_3x_2x_1x_2-x_2x_1x_2x_3)$. By weight reason the statement is automatic true unless $I_B$ contains a subdiagram of the form $\xy
(-10,0)*{\newmoon};(0,0)*{\otimes}**\dir{-};(10,0)*{\newmoon}**\dir{-};(-10,-4)*{\scriptstyle i};
(0,-4)*{\scriptstyle j};(10,-4)*{\scriptstyle k};
\endxy$, which is excluded by our assumption in \eqref{exclude}. 
This concludes the proof.
\end{proof}
Consequently, there exists a quantum Iwasawa decomposition for $(\U_B,\Ui)$ in the sense of Theorem~\ref{thm:Iwa}.

\subsection{Quasi $K$-matrix}
Keep the assumption of $(I_B,\tau)$ in \S~\ref{subsec:QIDBIBII}. 
The quasi $K$-matrix associated with $(\U_B,\Ui)$ can be defined in the same way as in \S~\ref{sec:quasiK} via the intrinsic relation \eqref{eq:biup}. The key step toward the construction is to prove the following variant of Theorem~\ref{thm:2b} for $(\U_B,\Ui)$.
\begin{proposition}
      For any $p\in \mathcal S(\U_B)$ with $p(\underline{F}
)=f_1p^{(1)}(\underline{F}
)+\cdots+f_tp^{(t)}(\underline{F}
)$, we have
\begin{equation*}
    \sum_{1\leq k\leq t}f_k\la Head(p^{(k)}), A_{tail(p^{(k)})} \ra=0.
\end{equation*}
\end{proposition}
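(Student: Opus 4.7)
The plan is to adapt, essentially verbatim, the case-by-case analysis used in the proof of Theorem~\ref{thm:2b}. Since every real rank~$1$ subdiagram of $(I_B,\mathrm{id})$ is of type sAI, sAII, sBIII${}_\pi$, or sBIV${}_\pi$, the only local types from Table~\ref{TableSerrePolyn} that can furnish a polynomial $p\in \mathcal{S}(\U_B)$ are (ISO1), (N-ISO), and (AB), and moreover $\tau=\mathrm{id}$ throughout. For each such $p$ one sets $\mu=\mathrm{wt}(p)$, invokes the recursive expressions~\eqref{eq:AiiA} for $A_i$ and ${}_iA$, and shows that either all occurring pairings vanish for weight reasons---because $w_\bu\tau(\mu)\neq\mu$ forces $\up_{\mu-\alpha_i-w_\bu\alpha_{\tau i}}=0$---or else a short explicit matching of pairings holds, exactly as in Theorem~\ref{thm:2b}.

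The types (ISO1) and (AB) carry over without alteration: (ISO1) uses condition~(4) of Definition~\ref{def:superad}, and (AB) is ruled out by~\eqref{exclude}. The (N-ISO) analysis splits according to whether both $i,j\in\I_\bu$ (which is trivial, since then $p(\underline F)=0$ and $B_k=F_k$ for $k\in\I_\bu$), both $i,j\in\I_\circ$, or one node lies in $\I_\bu$ and the other in $\I_\circ$. The subcases not involving the non-isotropic odd node in $\I_\bu$ follow the argument already used in Theorem~\ref{thm:2b}, together with \cite[Lemma~6.8]{BK19} when both indices are even.

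The genuinely new ingredient is (N-ISO) involving the non-isotropic odd node $n\in\I_\bu$ together with its neighbor, since there the longest-element operator $T_{w_\bu}$ in~\eqref{eq:AiiA} must be replaced by $\mathcal{T}_{\rm III}$ (for sBIII${}_\pi$) or $\mathcal{T}_{\rm IV}$ (for sBIV${}_\pi$) from \S\ref{subsec:BI}. Here Lemma~\ref{lemma:BI}, which expresses $\mathcal{T}_{\rm III}(E_0)$ as $x_{n-1}\,\ad(E_1\cdots E_{n-1}E_n^2 E_{n-1}\cdots E_1)(E_0)$, is the appropriate analogue of~\eqref{eq:adZ+}. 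This should allow the weight-restriction $(w_\bu-\mathrm{id})\circ\tau(\mu)\in\sum_{t\in\I_\bu}\N\alpha_t$ to continue to force $\up_\mu=0$ outside a few trivial weight configurations; in those configurations the explicit pairing computations from the proof of Theorem~\ref{thm:2b} should transfer after tracking the scalar $x_{n-1}$ from~\eqref{eq:Tn}.

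The main obstacle will be the cubic-in-$E_n$ form of the non-isotropic odd Serre polynomial, whose three monomials generate three pairings which must cancel simultaneously. The scalar bookkeeping hinges on the compatibility $x_{n-1}y_{n-1}=q^\pi$ from~\eqref{eq:Tn}, and one must verify that this matches with the parameter conditions on $\va_i$ imposed in Section~\ref{sec:QSP}. Once the scalars are matched and the vanishing is established, combining with Proposition~\ref{prop:recursive} and the evident analogue of Lemma~\ref{lemma:2a} completes the desired identity in the super type sB setting and, via the construction in \S\ref{sec:quasiK}, yields the quasi $K$-matrix.
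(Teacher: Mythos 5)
Your overall strategy, mirroring the case-by-case treatment of Theorem~\ref{thm:2b} over the local Serre types (ISO1), (N-ISO), (AB) that can actually occur in the type-$B$ diagram, is exactly what the paper does, and your identification of the relevant Serre types is correct. However, the argument you give for (AB) is not the paper's, and it has a genuine gap. You claim (AB) is ``ruled out by \eqref{exclude}.'' That is the mechanism used for the companion appendix proposition on the Iwasawa decomposition (where the $\pi_{0,0}\circ P_{-\lambda}$ projection dies for weight reasons unless both outer nodes of the local diagram are black even nodes). For the present proposition, the new (AB) configuration sits at the tail of the type-$B$ diagram, with local subdiagram involving the non-isotropic odd node $n\in I_\bu$ and the adjacent isotropic node $n-1\in I_\circ$; since condition \eqref{exclude} excludes only the all-even subdiagram $\newmoon - \otimes - \newmoon$, it does not dispose of this configuration. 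The paper instead fixes the (AB) Serre weight $\mu=2\alpha_{n-1}+\alpha_{n-2}+\alpha_n$ --- the only weight not already covered by the even case analysis of Theorem~\ref{thm:2b} --- and observes by a direct computation that $w_\bu\tau(\mu)\neq\mu$, which forces $\up_\mu=0$ by Proposition~\ref{prop:biup} and kills all the relevant pairings outright. Your plan needs this explicit weight check rather than an appeal to \eqref{exclude}.

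On the other hand, you overestimate what is needed for (N-ISO). You describe a ``main obstacle'' of showing that three pairings from the cubic Serre polynomial cancel simultaneously, requiring scalar bookkeeping with $x_{n-1}$ from \eqref{eq:Tn} and Lemma~\ref{lemma:BI}. None of this is necessary. In the proof of Theorem~\ref{thm:2b}, the (N-ISO) subcase with $i\in I_\bu$ and $j$ an isotropic odd simple root in $I_\circ$ is already killed by a pure weight argument: imposing $w_\bu\tau(\mu)=\mu$ (which is necessary for $\up_\mu\neq 0$) together with $w_\bu\alpha_i=-\alpha_i$ leads to the impossible equality $|a_{ij}|=2(1+|a_{ij}|)$. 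The only input about the black node $i$ is the relation $w_\bu\alpha_i=-\alpha_i$, which holds verbatim when $\alpha_i$ is non-isotropic odd rather than even; no pairing computation, and no $x_{n-1}$ tracking, is required. Lemma~\ref{lemma:BI} is indeed the right substitute for \eqref{eq:adZ+} elsewhere in the appendix (for the coideal property and braid-group action), but it plays no role in the proof of this particular proposition.
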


\begin{proof}
   Once again, as indicated in Table~\ref{TableSerrePolyn}, the potential local Serre relations consist of (ISO1), (N-ISO), and (AB). To substantiate the assertion, it is adequate to provide a proof for (AB), as the reasoning for the other two cases aligns with that in Theorem~\ref{thm:2b}.

   Regarding (AB) of $\U_B$, we can assume $\mu=2\alpha_{n-1}+\alpha_{n-2}+\alpha_n$; otherwise, the statement is automatically true. However, through a direct computation, it becomes evident that $w_\bu\circ \tau (\mu)\neq \mu$, thus implying $\up_\mu=0$.
\end{proof}
Consequently, there exists a unique element $\up=\sum_{\mu}\up_\mu$, with $\up_0=1$ and $\up_\mu\in (\U_B)^{+}_\mu$, such that the equality \eqref{eq:biup} holds for all $i\in I$.

\subsection{Type $G_3$}

    Another case where the non-isotropic simple root shows up is in a type G Lie superalgebra with Dynkin diagram of the form $$ \xy (0,-6)*{\ };
{\ar@3{-}(0,0)*{\otimes};(10,0)*{\newmoon}};(5,0)*{<};(-10,0)*{\LEFTcircle};(0,0)*{\otimes}**\dir{=};
(-10,-4)*{\scriptstyle 1};(0,-3.5)*{\scriptstyle 2};(10,-3.5)*{\scriptstyle 3};
\endxy$$
where $\I_\bu =\{1,3\}$ and $\I_\circ =\{2\}$. The simple roots are given  by
\[ 
\alpha_1=\epsilon,\quad \alpha_2=\epsilon-\frac{1}{2}(\delta_1+\delta_2),\quad \alpha_3=\delta_2 
\]
where $(\epsilon,\epsilon)=-2,\  (\delta_1,\delta_1)=2$ and $(\delta_2,\delta_2)=6$. Similarly to the type B case, one can establish the existence of the coideal subalgebra property, the quantum Iwasawa decomposition, and the quasi $K$-matrix in exactly the same way.

	\vspace{3mm}
\noindent{\bf Funding and Competing Interests.}

We are grateful to the referees for careful reading and corrections (especially with regard to Table 3). YS was partially supported by the Jefferson Dissertation Fellowship at University of Virginia in 2023-24. WW is partially supported by DMS--2401351. The results of this paper have been presented at Osaka Metropolitan University in February 2024. 

The authors have no competing interests to declare that are relevant to the content of this article.
The authors declare that the data supporting the findings of this study are available within the paper.

\end{document}